\documentclass[11pt,onecoulme]{article}
\usepackage{amsfonts}
\usepackage{mathrsfs}
\usepackage{amssymb}
\usepackage{color, amsmath,amssymb, amsfonts, amstext,amsthm, latexsym}

\usepackage{amssymb, epsfig, amssymb, latexsym}
\usepackage{amsmath}
\usepackage{graphicx}
\usepackage{longtable}
\usepackage{appendix}
\DeclareMathOperator{\esssup}{esssup}

\numberwithin{equation}{section}

\allowdisplaybreaks

\newtheorem{definition}{Definition}[section]
\newtheorem{theorem}[definition]{Theorem}
\newtheorem{lemma}[definition]{Lemma}
\newtheorem{remark}[definition]{Remark}

\newtheorem{example}[definition]{Example}
\newtheorem{assumption}[definition]{Assumption}

  \renewcommand\appendix{\par
    \setcounter{section}{0}
    \setcounter{subsection}{0}
    \gdef\thesection{ Appendix \Alph{section}}}

\title{Optimal Control of Unbounded Stochastic Evolution Systems in Hilbert Spaces}
\author{Shanjian Tang\footnote{ Institute of Mathematical Finance and Department of Finance and Control Sciences,
	School of Mathematical Sciences, Fudan University,
	Shanghai 200433, P. R. China, sjtang@fudan.edu.cn. This author is partially supported by National Science Foundation of China (Grant
	No. 11631004) and National Key R\&D Program of China (Grant No. 2018YFA0703903)} \quad Jianjun Zhou\footnote{Corresponding author. College of Science,
	Northwest A\&F University, Yangling 712100, Shaanxi, P. R.
	China, zhoujianjun@nwsuaf.edu.cn. This author is partially supported by  the Shaanxi Natural Science Foundation
	(Grant No. 2025JC-YBMS-021)}  }

\date{}

\begin{document}

\maketitle

\pagestyle{plain}

\begin{abstract}
	Optimal control and the associated second-order Hamilton-Jacobi-Bellman (HJB) equation are studied for unbounded stochastic evolution systems in Hilbert spaces. A new notion of viscosity solution, featured by absence of $B$-continuity,  is introduced for the second-order HJB equation in the sense of Crandall and Lions, and is shown to coincide with the classical solutions and to satisfy a stability property. The value functional is proved to be the unique continuous viscosity solution to the second-order HJB equation, with the coefficients being not necessarily  $B$-continuous. Our result provides a new theory of viscosity solutions to the HJB equation for optimal control of stochastic evolutionary equations---driven by a linear unbounded operator---in a Hilbert space, and removes the $B$-continuity assumption on the coefficients which is used in the existing literature.
	

\medskip

{\bf Key Words:} unbounded HJB equations, viscosity solutions without $B$-continuity, optimal control, stochastic evolution equations.

\end{abstract}
{\bf 2020 AMS Subject Classification:} 49L20, 49L25, 93C25, 93E20

%
%
%
%

\section{Introduction}

\par
Let $\{W_t,t\geq0\}$ be a cylindrical Wiener process in
Hilbert space $\Xi$ on a  complete probability  space  $(\Omega,{\mathcal {F}}, \mathbb{P})$, with $\mathbb{F}=\{{\mathcal {F}}_{s}\}_{0\leq s\leq T}$ being the  natural filtration, augmented with the totality ${\mathcal{N}}$ of all $P$-null sets in $\mathcal{F}$. The process  $\mathbf{u}=\{u_s, s\in [t,T]\}$ is  $\mathbb{F}$-progressively measurable,  taking values in a Polish space $(U,d_{U})$; and the totality of such processes is denoted by  $ {\mathcal{U}}[t,T]$.
Consider a strongly continuous semi-group   $\{e^{tA}, t\geq0\}$  of bounded linear operators  in Hilbert space
$H$ with $A$ being the generator.


Assume that the pair of functionals $(b, \sigma):[0,T]\times H\times U\rightarrow H\times  L_2(\Xi, H)$   are jointly continuous with $(b,\sigma)(\cdot, u)$ being uniformly  Lipschitz continuous  for each $u\in U$.

For a fixed finite time $T>0$, consider in Hilbert space $H$ the  following controlled stochastic evolution
equation (SEE):
\begin{eqnarray}\label{state1}
\begin{cases}
dX^{t,x,\mathbf{u}}_s= AX^{t,x,\mathbf{u}}_sds+
b\left(s,X_s^{t,x,\mathbf{u}},u_s\right)ds +\sigma\left(s,X_s^{t,x,\mathbf{u}},u_s\right)dW_s,  \quad s\in (t,T];\\
\  X_t^{t,x,\mathbf{u}}=x\in H,
\end{cases}
\end{eqnarray}
with $X^{t,x,\mathbf{u}}$ being  the $H$-valued unknown process.

For given  $(t,x)\in [0,T]\times H$,   we aim  at  maximizing the following utility functional
\begin{eqnarray}\label{cost1}
J(t,x,\mathbf{u}):=Y^{t,x,\mathbf{u}}_t,\quad \mathbf{u}\in {{\mathcal 	{U}}}[t,T]
\end{eqnarray}
where the process $Y^{t,x,\mathbf{u}}$ is defined via solution of the following backward stochastic differential equation (BSDE):
\begin{eqnarray}\label{fbsde1}
\begin{aligned}
Y^{t,x,\mathbf{u}}_s=&\phi\left(X_T^{t,x,\mathbf{u}}\right)+\int^{T}_{s}q\left(l,X_l^{t,x,\mathbf{u}},Y^{t,x,\mathbf{u}}_l,Z^{t,x,\mathbf{u}}_l,u_l\right)dl\\
&-\int^{T}_{s}Z^{t,x,\mathbf{u}}_ldW_l,\quad \ a.s.\ \ \mbox{for all}\ \ s\in [t,T].
\end{aligned}
\end{eqnarray}
Here  $q: [0,T]\times H\times \mathbb{R}\times \Xi\times U\to \mathbb{R}$ and $\phi: H\to \mathbb{R}$ are given real functionals and are  uniformly  Lipschitz continuous. The optimal stochastic control problem is to look for $\bar{\mathbf{u}}\in \mathcal{U}[0,T]$ such that
$$J(t,x,\bar{\mathbf{u}})=\sup_{\mathbf{u}\in \mathcal{U}[0,T]} J(t,x, \mathbf{u}). $$
The value functional of the  optimal control problem is given by
\begin{eqnarray}\label{value1}
V(t,x):=\mathop{\esssup}\limits_{\mathbf{u}\in{\mathcal{U}}[t,T]} Y^{t,x,\mathbf{u}}_t,\ \  (t,x)\in [0,T]\times H.
\end{eqnarray}

It is well-known that a powerful basic tool to  the optimal stochastic control problem is the so-called dynamic programming method---initially due to R. Bellman---which, in particular, indicates that
the value functional  $V$ should be ``the solution" of  the following second-order Hamilton-Jacobi-Bellman (HJB) equation:
\begin{eqnarray}\label{hjb1}
\quad&\begin{cases}
V_{t}(t,x)+\langle A^*\nabla_xV(t,x), x\rangle _H
+{\mathbf{H}}(t,x,V(t,x),\nabla_xV(t,x),\nabla^2_{x}V(t,x))\\[2mm]
\qquad\qquad = 0,\quad   (t,x)\in
[0,T)\times H;\\[3mm]
V(T,x)=\phi(x),\quad x\in H,
\end{cases}
\end{eqnarray}
with
\begin{eqnarray*}
	&&{\mathbf{H}}(t,x,r,p,l)
	:=\sup_{u\in{
			{U}}}\bigg{\{}
	\langle p, b(t,x,u)\rangle_H+\frac{1}{2}\mbox{Tr}[ l \sigma(t,x,u)\sigma^*(t,x,u)]   \\
&& \qquad \ \ +q(t,x,r,p\sigma(t,x,u),u)\bigg{\}},  \quad  (t,x,r,p,l)\in [0,T]\times H\times \mathbb{R}\times H\times {\mathcal{S}}(H).
\end{eqnarray*}
Here,   $A^*$ and $\sigma^*$ are the adjoint operators of $A$ and $\sigma$,  respectively, ${\mathcal{S}}(H)$ the space of bounded, self-adjoint operators on $H$ equipped with the operator norm,  ${\langle\cdot,\cdot\rangle_{H}}$ the scalar product of $H$,  and
$V_{t}$ the time-derivative of $V$, $\nabla_x$ and $\nabla^2_{x}$  the first- and  second-order Fr\'{e}chet derivatives.

\par
A notion of viscosity solutions for second-order HJB equations in Hilbert spaces has been introduced  by Lions~\cite{lio1, lio3} for a bounded operator $A$, and by \'{S}wi\c{e}ch~\cite{swi} for an unbounded operator $A$ in the framework of the so-called $B$-continuous viscosity solutions---which was initially introduced for first-order equations by
Crandall and  Lions~\cite{cra6, cra7}. In an earlier paper,  Lions~\cite{lio2}  considered
a specific second-order HJB equation for optimal control of the Zakai
equation using  some ideas of the $B$-continuous viscosity solutions. Based on the {Crandall-Ishii lemma} of Lions~\cite{lio3}, the first comparison theorem for $B$-continuous viscosity sub- and super-solutions was proved by \'{S}wi\c{e}ch~\cite{swi}.   In the introduction of Chapter 3  in the preface (see page x), Fabbri et al.~\cite{fab1} commented:`` In this book, we focus on the notion of a so-called B-continuous viscosity solution which was introduced by Crandall and Lions in [141, 142] for first-order equations and later adapted to second-order equations in [539]. The key result in the theory is the comparison principle, which is very technical. " The reader is referred to   the monograph   of  Fabbri et al.~\cite{fab1} for a detailed account
of  the theory of viscosity solutions in a separable Hilbert space.

\par
As mentioned above, in the case of unbounded linear operator $A$, the existing  Crandall-Lions viscosity solutions have to be $B$-continuous, and  the $B$-continuity of the coefficients (which ensures the $B$-continuity of the value functional  and   the comparison theorem)
 has to be assumed  so as to overcome the difficulty caused by the unbounded operator $A$.
  {In this article, we give a new notion  of  Crandall-Lions viscosity solutions  to HJB equation~\eqref{hjb1},  featured by absence of  $B$-continuity, 
and show that the value functional
$V$  defined in  (\ref{value1}) is the  unique viscosity solution to HJB equation (\ref{hjb1}), without assuming the $B$-continuity of  the coefficients.
\par
 We observe that the  norm $|\cdot|_H$ can be added to the test function in the definition of viscosity solution in  Fabbri et al. \cite{fab1},  for  it satisfies  It\^o inequality (1.111) in~\cite[page 84]{fab1} when $A$ is the generator of a $C_0$ contraction semi-group.  While the shifted norm $|\cdot-y|_H$ with fixed $y\ne0$, could not satisfy  the It\^o inequality (1.111)
 in~\cite[page 84]{fab1},   it cannot be added to the test function. This fact constitutes  the essential difficulty in the  theory of viscosity solutions  to  HJB equations with unbounded linear term. To circumvent this difficulty, Fabbri et al. \cite{fab1} assume that the coefficients satisfy  the $B$-continuity condition in~\cite[(3.21) and (3.22), page 184]{fab1}).
 \par
 We make use of the crucial fact  that the  modified  functional
 \begin{eqnarray}\label{fung}
 g(t,x):=|x-e^{(t-\hat{t})A}\hat{y}|^4,\quad (t,x)\in [\hat{t},T]\times H
 \end{eqnarray}
   with fixed $(\hat{t},\hat{y})\in [0,T)\times H$ satisfies
It\^o inequality (\ref{jias510815jia11}) (see Lemma \ref{fbjia4}) and can be added to the test functions in our viscosity solutions.
 The introduction of this term significantly helps  us to establish the comparison principle of viscosity solutions. Indeed, we can define an auxiliary function $\Psi$ which includes the  functional $g$ in the proof of  our comparison theorem.  By this, we only need the continuity  under $|\cdot|_H$ rather than  the $B$-continuity of the value functional. In particular, the comparison theorem is  established
when the coefficients is uniformly  Lipschitz continuous  with respect to the norm $|\cdot|_H$.
We emphasize that, in our viscosity solution theory in infinite dimensional spaces, the
$B$-continuity assumption on the coefficients, existing in the literature (see Fabbri et al.~\cite[Chapter 3, p. 171-365 ]{fab1}),  is  dispensed with  in our framework.
\par
In the
proof of comparison principle (see ~\cite[Theorem 3.50, page 206]{fab1}),  the Ekeland-Lebourg theorem
(see ~\cite[Theorem 3.25, page 188]{fab1})  in $H_{-2}$ is used to ensure the existence of  a maximum point of the auxiliary function. Note that our viscosity sub-solution  and super-solution  are merely strong-continuous in the Hilbert space $H$.  Application of the Ekeland-Lebourg theorem in $H$ to  the auxiliary function,  would incur a linear functional  $\langle p,\cdot\rangle_H$ (with some fixed $p\in H$)---which unfortunately does not satisfy It\^o inequality (\ref{jias510815jia11})---to be added to the test function.
   Therefore, the Ekeland-Lebourg theorem does not work straightforwardly in our case.
   \par
 Recently, the Borwein-Preiss variational principle  (see   Borwein \& Zhu~\cite[Theorem 2.5.2]{bor1}) 
 has been found to be powerful in  the viscosity solution theory for path-dependent HJB equation; see, for example the second author~\cite{zhou5}.
 Applying it to functional defined on $ H$ with gauge-type
   functional $|x-y|_H^4$, will  incur the functional $\sum^{\infty}_{i=0}\frac{1}{2^i}|x-x^i|_H^{4}$ with fixed  $\{x^i\}_{i\geq0}\in H$---
    which also unfortunately does not satisfy  It\^o inequality (\ref{jias510815jia11})--- to be added to the test functions.
    However, applying it to functional defined on  space $[0,T]\times H$ with 
    functional
 \begin{eqnarray}\label{upsilon}
{\Upsilon}((t,x),(s,y)):=|s-t|^2+|e^{(t\vee s-t)A}x-e^{(t\vee s-s)A}y|_H^4, \ (t,x;s,y)\in ([0,T]\times H)^2,
 \end{eqnarray}
 will incur the functional  $\sum^{\infty}_{i=0}\frac{1}{2^i}[|t-t_i|^2+|x-e^{(t-t_i)A}x^i|_H^{4}]$ with fixed
  $\{(t_i,x^i)\}_{i\geq0}\in [0,\hat{t}]\times H$ and $\hat{t}\in [0,T)$---which satisfies  It\^o inequality (\ref{jias510815jia11})---
   to be added to the test functions.
  This is why we use   in our paper 
  the Borwein-Preiss variational principle with 
  $\Upsilon$ rather than the the Ekeland-Lebourg theorem.
\par
 Since our value functional is merely strong-continuous rather than necessarily $B$ continuous, the existing  Crandall-Ishii lemma for $B$-upper semi-continuous  functional defined on Hilbert space $H$, established in \cite[Theorem 3.27, page 189]{fab1} so as to prove the comparison theorem for $B$-continuous viscosity solutions, does not apply to our current situation. We give a Crandall-Ishii lemma for strongly upper semi-continuous  functional defined on space $[0,T]\times H$ in Theorem \ref{theorem0513}, so as to prove the comparison theorem for strong-continuous viscosity solutions.
\par
Regarding existence, we  show that the value functional  $V$  defined in  (\ref{value1}) is  a viscosity solution to the HJB
equation given in  (\ref{hjb1}) by  It\^o  formula, It\^o inequality  and dynamic programming principle.
\par
The rest of the paper is organized as follows. In Section 2,  we give preliminaries on notations and the
Borwein-Preiss variational principle.
In {Section} 3, we introduce  preliminary results on stochastic  optimal control problems, and  the dynamic programming principle,
 which will be used in the subsequent {sections}. An It\^o inequality for $g$ defined in (\ref{fung}) is also provided.
In {Section} 4, we define classical and viscosity solutions to
HJB equations (\ref{hjb1}) and  prove  that the value functional $V$ defined by (\ref{value1}) is a viscosity solution.   We also show
the consistency with the notion of classical solutions and the stability result.
 A Crandall-Ishii lemma  for  strongly upper semi-continuous  functional defined on space $[0,T]\times H$
is given in {Section} 5.
The uniqueness of viscosity solutions to  (\ref{hjb1}) is proved in {Section} 6.

\section{Preliminaries
}  \label{RDS}

\subsection{Some Notations}
Let $\Xi$, $K$ and $H$ denote real separable Hilbert spaces of  inner products
$\langle\cdot,\cdot\rangle_\Xi$, $\langle\cdot,\cdot\rangle_K$ and $\langle\cdot,\cdot\rangle_H$, respectively.  The notation $|\cdot|$ stands for the norm in
various spaces, and a subscript is attached to indicate the underlying Hilbert space whenever  necessary.
$L(\Xi,H)$ is the space of all
bounded linear operators from $\Xi$ into $H$, and  the subspace of
all Hilbert-Schmidt operators, equipped with the Hilbert-Schmidt norm, is
denoted by $L_2(\Xi,H)$. Write $L(H):=L(H,H)$. Denote by ${\mathcal{S}}(H)$ the Banach space of all bounded and self-adjoint
operators in Hilbert space $H$,  endowed with the operator norm. The operator $A$,  with the domain being denoted by ${\mathcal {D}}(A)$, generates a strongly continuous semi-group   $\{e^{tA}, t\geq0\}$ of bounded linear operators in Hilbert space $H$. $A^*$ is  the adjoint operator of $A$, whose domain is denoted by ${\mathcal {D}}(A^*)$. Let
$T>0$ be a fixed number. We define a metric on $[0,T]\times H$ as follows:
$$
                        {d}((t,x),(s,y)):=|s-t|+|x-y|, \quad (t,x; s,y)\in ([0,T]\times H)^2.
$$
Then $([0,T]\times H, {d})$ is a complete metric space. 
We define a functional on $[0,T]\times H$ as follows:
\begin{eqnarray}\label{upsilon3}
                       \Upsilon((t,x),(s,y)):=|s-t|^2+|x_{t,t\vee s}^A-y_{s,t\vee s}^A|^4, \quad (t,x; s,y)\in ([0,T]\times H)^2,
\end{eqnarray}
where
$$
x_{t,r}^A:=e^{(r-t)A}x, \quad r\geq t, (t,x)\in [0,T]\times H.
$$
\begin{definition}\label{definitionc0409}
	Let $f:[t,T]\times H\rightarrow K$ be given for  some $t\in[0,T)$.
	\begin{description}
		\item[(i)]
		We say $f\in C^0([t,T]\times H,K)$ (resp., $USC^0([t,T]\times H,K)$, $LSC^0([t,T]\times H,K)$) if $f$ is continuous (resp., upper semi-continuous, lower semi-continuous)  on the metric space $([t,T]\times H, d)$.
		\item[(ii)]
		We say $f\in C_p^0([t,T]\times H,K)$ if $f\in C^0([t,T]\times H,K)$ and $f$ grows  in a polynomial way, i.e.,  there exist constants $L>0$ and $q\geq 0$ such that, for all $(s,x)\in [t,T]\times H$,
		$$
		|f(s,x)|\leq L(1+|x|^q).
		$$
\item[(iii)] We say $f\in C^{0+}([t,T]\times H,K)  (\mbox{resp.,} \  USC^{0+}([t,T]\times H,K), LSC^{0+}([t,T]\times H,K) )$ if 
  $f$ is continuous (resp., upper semi-continuous, lower semi-continuous) under $\Upsilon$, i.e.,
 for every fixed  $(s,x)\in [t,T]\times H$ and constant $C>0$, we have $\lim_{\Upsilon((l,y),(s,x))\rightarrow0,|y|\leq C}f(l,y)= f(s,x)$ (resp., $\limsup_{\Upsilon((l,y),(s,x))\rightarrow0,|y|\leq C}f(l,y)\leq f(s,x)$, $\liminf_{\Upsilon((l,y),(s,x))\rightarrow0,|y|\leq C}f(l,y)\geq f(s,x)$).
 \item[(iv)]
		We say $f\in C_p^{0+}([t,T]\times H,K)$ if $f\in C^{0+}([t,T]\times H,K)$ and $f$ grows  in a polynomial way.
	\end{description}
\end{definition}
For simplicity, we write $C^0([t,T]\times H):=C^0([t,T]\times H,\mathbb{R})$ and  similarly for $C^0_p([t,T]\times H), C^{0+}([t,T]\times H), C^{0+}_p([t,T]\times H), USC^{0+}([t,T]\times H), LSC^{0+}([t,T]\times H)$.
\begin{definition}\label{definitionc04091}
	Let $t\in[0,T)$ and $f:[t,T]\times H\rightarrow \mathbb{R}$ be given.
	\begin{description}
		\item[(i)]  We say $f\in C^{1,2}([t,T]\times H)\subset C^{0+}([t,T]\times H)$ if   $f_t$, $\nabla_{x}f$ and $\nabla^2_{x}f$ exist and are continuous in $(t,x)$  on the metric space $([t,T]\times H, d)$.
		\par
		\item[(ii)] We say
		$f\in C^{1,2}_p([t,T]\times H)$ if $f\in C^{1,2}([t,T]\times H)$ and $f$ together with  all its derivatives grow  in a polynomial way.
\par
		\item[(iii)] {We say
		$f\in C^{0,2}_p([t,T]\times H)$ if $f\in C^{0}_p([t,T]\times H)$, $\nabla_xf\in C^{0}_p([t,T]\times H,H)$  and $\nabla^2_{x}f\in C^{0}_p([t,T]\times H,\mathcal{S}(H))$.}
	\end{description}
\end{definition}
Notice that, $$
|x^A_{t,s}-y|\leq |x-y|+|x-x^A_{t,s}|, \ \mbox{if}\  (t,x), (s,y)\in [0,T]\times H \ \mbox{and}\ t\leq s; \ \mbox{and}
$$
$$
|x-y^A_{s,t}|\leq  |x^A_{s,t}-y^A_{s,t}|+|x^A_{s,t}-x|, \ \mbox{if}\  (t,x), (s,y)\in [0,T]\times H \ \mbox{and}\ t> s.
$$
we have, for every fixed $(t,x)\in [0,T]\times H$, $\Upsilon((s,y),(t,x))\rightarrow0$ as $d((s,y),(t,x))\rightarrow0$. Therefore,
\begin{lemma}\label{lemma-09021}
Let $t\in[0,T)$ and $f:[t,T]\times H\rightarrow K$ be given.
If  $f\in C^{0+}([t,T]\times H,K)$ (resp., $USC^{0+}([t,T]\times H,K)$, $LSC^{0+}([t,T]\times H,K)$), 
then $f\in C^0([t,T]\times H,K)$ (resp., $USC^0([t,T]\times H,K)$, $LSC^0([t,T]\times H,K)$).
\end{lemma}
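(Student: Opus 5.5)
The plan is to reduce everything to the observation made just before the statement: for every fixed $(s,x)\in[0,T]\times H$, $\Upsilon((l,y),(s,x))\to 0$ whenever $d((l,y),(s,x))\to 0$. Granting this, take $f\in C^{0+}([t,T]\times H,K)$, a point $(s,x)\in[t,T]\times H$, and any sequence $(l_n,y_n)\to(s,x)$ in the metric $d$. Then $|y_n|\le |x|+1=:C$ for $n$ large, and $\Upsilon((l_n,y_n),(s,x))\to 0$. The defining limit in Definition \ref{definitionc0409}(iii), restricted to $|y|\le C$, therefore gives $f(l_n,y_n)\to f(s,x)$. Since $([t,T]\times H,d)$ is a metric space, sequential continuity is continuity, so $f\in C^0$. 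The upper and lower semicontinuous cases are identical, with $\lim$ replaced by $\limsup\le$ and $\liminf\ge$ respectively.

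The only real step is to justify the observation. Let $d((l,y),(s,x))\to0$; in particular $|l-s|\to0$, so the term $|l-s|^2$ in $\Upsilon$ vanishes. For the second term, split into the two cases $l\le s$ and $l>s$.

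If $l\le s$, the term is $|y^A_{l,s}-x|^4$, and
$$|y^A_{l,s}-x|\le |e^{(s-l)A}(y-x)|+|e^{(s-l)A}x-x|\le M|y-x|+|e^{(s-l)A}x-x|,$$
where $M:=\sup_{0\le r\le T}\|e^{rA}\|<\infty$ by the uniform boundedness principle for $C_0$-semigroups. Both pieces tend to $0$, the second by strong continuity of the semigroup at the fixed point $x$.

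If $l>s$, the term is $|y-x^A_{s,l}|^4$, and
$$|y-x^A_{s,l}|\le |y-x|+|x-e^{(l-s)A}x|\to0,$$
again by strong continuity at the fixed $x$.

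This case split is the only point needing care. The essential feature is that the semigroup always acts on the fixed point $x$, which is where strong continuity applies, or acts on the difference $y-x$, which is controlled by the uniform bound $M$. No uniformity in $y$ is ever required. Combining the two cases gives the observation, and hence the lemma.
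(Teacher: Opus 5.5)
Your proposal is correct and follows the paper's own argument. The paper likewise splits according to which time is larger and bounds the $H$-part of $\Upsilon$ by $|x-y|$ (or $|e^{(\cdot)A}(x-y)|$) plus $|e^{(\cdot)A}x-x|$, so that strong continuity is used only at the fixed point; it then concludes that $d$-convergence forces $\Upsilon$-convergence, which gives the lemma. Your remark that $|y_n|\le |x|+1$ eventually, so that the bounded-set restriction in the definition of $C^{0+}$ does no harm, makes explicit a step the paper leaves implicit.
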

By a cylindrical Wiener process defined on a  complete probability  space
$(\Omega,{\mathcal {F}},\mathbb{P})$, and with values in a Hilbert space $\Xi$,
we mean a family $\{W_t,t\geq0\}$ of linear mappings $\Xi\rightarrow
L^2(\Omega)$ such that for every $\xi, \eta \in \Xi$,
$\{W_t\xi,t\geq0\}$ is a real Wiener process and
${\mathbb{E}}[W_t\xi\cdot W_t\eta]=t \langle\xi,\eta\rangle_\Xi $.  The filtration $\mathbb{F}=\{{\mathcal {F}}_{t}\}_{0\leq t\leq T}$  is the natural one of $W$, augmented with the totality  ${\mathcal{N}}$ of all the $\mathbb{P}$-null sets of $\mathcal{F}$:
$$
{\mathcal{F}}_{t}=\sigma(W_s:s\in[0,t])\bigvee \mathcal
{N}.
$$
The filtration
$\mathbb{F}$  satisfies the usual conditions.
For every $[t_1,t_2]\subset[0,T]$, we also use the
notation:
$$
{\mathcal{F}}_{t_1}^{t_2}=\sigma(W_s-W_{t_1}:s\in[t_1,t_2])\bigvee \mathcal
{N}.
$$
We also write ${\mathcal{F}}^t:=\{{\mathcal{F}}^s_t, t\leq s\leq T\}$.
\par
By $\mathcal{P}$,   we denote the predictable $\sigma$-algebra generated by predictable processes,  and by $\mathcal{B}(\Theta)$,  we denote the Borel $\sigma$-algebra of a topological space $\Theta$.

Next, we introduce spaces of random variables or stochastic processes, taking values in a Hilbert space $K$.

For  $p\in[1,\infty)$ and $t\in [0,T]$, $L^p(\Omega,{\mathcal{F}}_{t},\mathbb{P};K)$ is the space of $K$-valued ${\mathcal{F}}_{t}$-measurable random variables $\xi$, equipped with the norm
$$|\xi|=\mathbb{E}[|\xi|^p]^{1/p}<\infty;$$
$L^{p}_{\mathcal{P}}(\Omega\times [0,T];K)$ is  the space of all predictable  processes $y\in L^p(\Omega\times [0,T];K)$, equipped  with the norm
$$|{y}|=\mathbb{E}\left[\int_{0}^{T}|{y}_t|^{p}dt\right]^{1/p}<\infty;$$
$L^{p}_{\mathcal{P}}(\Omega;L^{2}([0,T];K))$, is  the space of all predictable  processes $\{y_t,t\in [0,T]\}$, equipped with  the  norm
$$|{y}|=\mathbb{E}\left[\left(\int_{0}^{T}|{y}_t|^{2}dt\right)^{p/2}\right]^{1/p}<\infty;$$
for  $t\in (0,T]$, {$ L^{p}_{\mathcal{P}}(\Omega,C([0,t];K))$} is  the space of all $K$-valued continuous predictable processes
$\{y_s,s\in[0,t]\}$, equipped with the norm
$$|{y}|=\mathbb{E}\left[\sup_{s\in [0,T]}|y_s|^{p}\right]^{1/p}<\infty.$$
Elements of $L^{p}_{\mathcal{P}}(\Omega,C([0,t];K))$ are identified if they are  indistinguishable.

{ In the paper,}  whenever convenient, we also use the notation $f|_a$ or $f(\cdot)|_a$ or $f(x)|_{x=a}$ to denote the image of a map $f$ at the point $a$, and use the notation  $f|^b_a$, $f(\cdot)|^b_a$ or $f(x)|^{x=b}_{x=a}$ to denote the difference $f(b)-f(a)$ of the images of a map $f$ between two points $b$ and $a$.

\subsection{Borwein-Preiss variational principle}
In the literature, the Ekeland-Lebourg Theorem (see ~\cite[Theorem 3.25, page 188]{fab1}) in Hilbert spaces $H_{-2}$ is used to  {\it find out  a maximum point} of a perturbed auxiliary function  in the proof of comparison principle. Unfortunately, the Ekeland-Lebourg theorem does not apply
here since 
our viscosity sub-solution and super-solution are merely strong-continuous in the Hilbert space $H$ (see Remark \ref{remarkv0129120240129e} for details). Therefore,  a variational principle in a metric space is quite appealing  in our case.

In this {subsection},  we introduce  {the Borwein-Preiss variational principle (see Borwein \& Zhu  \cite[Theorem 2.5.2]{bor1}), which is} crucial to proving  the uniqueness and  stability  of viscosity solutions.

\begin{lemma}\label{theoremleft}
	Let the operator $A$ be the generator of a $C_0$ contraction
		{semi-group} and  $f\in USC^{0+}([0,T]\times H)$ bounded from above and satisfy
\begin{eqnarray}\label{09032025c}
\limsup_{|x|\rightarrow\infty}\sup_{t\in[0,T]}\left[\frac{f(t,x)}{|x|}\right]<0.
\end{eqnarray}
	Suppose that  $\{\delta_i\}_{i\geq0}$ is a sequence of positive number, and suppose that $\varepsilon>0$ and $(t_0,x_0)\in [0,T]\times H$ satisfy
	$$
	f\left(t_0,x_0\right)\geq \sup_{(t,x)\in [0,T]\times H}f(x)-\varepsilon.
	$$
	Then there exist $(\hat{t},\hat{x})\in [0,T]\times H$, $\{(t_i,x_i)\}_{i\geq1}\subset [0,T]\times H$ and constant $C>0$ such that,
   denoting $\mathbf{f}:=f-\sum^{\infty}_{i=0}\delta_i\Upsilon(\cdot, (t_i,x_i))\leq f$,
	\begin{description}
		\item[(i)]  $\Upsilon((\hat{t},\hat{x}),(t_i,x_i))\leq \frac{\varepsilon}{2^{i}\delta}, |x_i|\leq C$  for all $  i\geq 0$, and $t_i\uparrow \hat{t}$ as $i\rightarrow\infty$,
		\item[(ii)]  $\mathbf{f}(\hat{t},\hat{x})\geq f(t_0,x_0)$, and
		\item[(iii)]  $\mathbf{f}(s,x)<\mathbf{f}(\hat{t},\hat{x})$ for all $(s,x)\in [\hat{t},T]\times H\setminus \{(\hat{t},\hat{x})\}$.
		
	\end{description}
\end{lemma}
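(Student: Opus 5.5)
We prove the statement with the iterative construction behind the Borwein--Preiss principle, run directly with the gauge $\Upsilon$. We write $\delta:=\delta_0$ as in (i). The one property of $\Upsilon$ we use repeatedly comes from contractivity of $e^{tA}$: for $t\le s\le r$,
\[
|x^A_{t,r}-y^A_{s,r}|=|e^{(r-s)A}(x^A_{t,s}-y)|\le |x^A_{t,s}-y|,
\]
so that $\Upsilon((t,x),(s,y))\ge \Upsilon((t',x'),(s,y))$-type monotonicity holds along forward time shifts. Consequently the quantity $|x^A_{t_i,r}-x^A_{t_j,r}|$ does not increase in $r$.

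\textbf{Step 1: the inductive construction.} Set $f_0:=f-\delta_0\Upsilon(\cdot,(t_0,x_0))$ and $S_0:=\{(s,y): s\ge t_0,\ f_0(s,y)\ge f_0(t_0,x_0)\}$. Given $(t_i,x_i)$, define
\[
f_i:=f-\sum_{j\le i}\delta_j\Upsilon(\cdot,(t_j,x_j)),\qquad S_i:=\{(s,y): s\ge t_i,\ f_i(s,y)\ge f_i(t_i,x_i)\}.
\]
Choose $(t_{i+1},x_{i+1})\in S_i$ with
\[
f_i(t_{i+1},x_{i+1})\ge \sup_{S_i}f_i-\frac{\varepsilon\,\delta_{i+1}}{2^{i+1}\delta}.
\]
The inclusions $S_{i+1}\subset S_i$ and the monotonicity $t_i\uparrow$ are built in. For $(s,y)\in S_{i+1}$ we get
\[
\delta_{i+1}\Upsilon((s,y),(t_{i+1},x_{i+1}))\le f_i(s,y)-f_i(t_{i+1},x_{i+1})\le \frac{\varepsilon\,\delta_{i+1}}{2^{i+1}\delta}.
\]
Hence $\Upsilon\le \varepsilon/(2^{i+1}\delta)$ on $S_{i+1}$. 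At $i=0$ the corresponding bound uses the $\varepsilon$-maximality of $(t_0,x_0)$.

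\textbf{Step 2: boundedness.} Condition (\ref{09032025c}) gives $f(t,x)\le -c|x|$ for $|x|$ large. Together with $f$ bounded above and $f_i\le f$, this shows that every $S_i$ lies in a ball $\{|y|\le C\}$, with $C$ depending only on $f(t_0,x_0)$ and $\sup f$. This yields $|x_i|\le C$.

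\textbf{Step 3: convergence.} Set $\hat t:=\lim t_i$. We show that $x_i$ is Cauchy in a shifted sense. From the Step 1 bound,
\[
|x^A_{t_i,t_j}-x_j|^4\le \frac{\varepsilon}{2^i\delta}\quad (j>i).
\]
Applying $e^{(\hat t-t_j)A}$ and using contractivity, the sequence $y_i:=x^A_{t_i,\hat t}$ is Cauchy in $H$; let $\hat x:=\lim y_i$. We must check that $(\hat t,\hat x)\in\bigcap S_i$. Strong continuity of the semigroup together with $|x_j|\le C$ gives $x_j-y_j\to0$, hence $x_j\to\hat x$ in $H$ and $\Upsilon((t_j,x_j),(\hat t,\hat x))\to0$. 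By upper semicontinuity under $\Upsilon$ on bounded sets, $USC^{0+}$, each $f_i$ satisfies $f_i(\hat t,\hat x)\ge\limsup_j f_i(t_j,x_j)\ge f_i(t_i,x_i)$. The continuity of $\Upsilon(\cdot,(t_k,x_k))$ along this sequence needs the same shift estimate. Passing to the limit yields $\mathbf f$, and (ii) follows because $f_i(t_i,x_i)$ is nondecreasing. Item (i) also follows, since $(\hat t,\hat x)\in S_i$ for every $i$.

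\textbf{Step 4: strict maximality and the main obstacle.} For $(s,y)$ with $s\ge\hat t$ and $\mathbf f(s,y)\ge\mathbf f(\hat t,\hat x)$, we have $(s,y)\in S_i$ for all $i$. Therefore $\Upsilon((s,y),(t_i,x_i))\to0$. Combining this with $\Upsilon((\hat t,\hat x),(t_i,x_i))\to0$, the triangle-type inequality for the shifted norms gives
\[
|s-\hat t|=0,\qquad |y-e^{(s-\hat t)A}\hat x|=0,
\]
so $(s,y)=(\hat t,\hat x)$, which is (iii).

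The delicate point, which we expect to be the main obstacle, is Step 3. Since $\Upsilon$ is not a metric, we must verify that $\hat x$ is a genuine $H$-limit and that $\Upsilon(\cdot,(t_k,x_k))$ is continuous enough to pass through the infinite sum. Both facts rest on contractivity and strong continuity of $e^{tA}$, applied on the bounded set supplied by Step 2. The restriction $s\ge\hat t$ in (iii) is exactly what allows the forward shifts used there.
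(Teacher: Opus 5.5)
Your construction is the paper's Borwein--Preiss iteration: your sets $S_i$ are exactly the paper's $B_i$, since, as you observe, the intersection with the previous set is automatic. Steps 1, 2 and 4 are correct.

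The step that fails as written is in Step 3. There you assert that strong continuity of $e^{tA}$, together with $|x_j|\le C$, yields $x_j-y_j=x_j-e^{(\hat t-t_j)A}x_j\to0$. Strong continuity is uniform only on compact sets. Indeed, $\sup_{|x|\le C}|e^{hA}x-x|=C\|e^{hA}-I\|$ tends to $0$ as $h\downarrow0$ only when $A$ is bounded. So for a merely bounded sequence $(x_j)$ this argument gives nothing, and this is exactly where the unboundedness of $A$ matters.

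The conclusion is still true, and you can get it from the estimate you already have. Set $\eta_i:=(\varepsilon/(2^i\delta))^{1/4}$, so that $|x_j-(x_i)^A_{t_i,t_j}|\le\eta_i$ for $j>i$. Applying the contraction $e^{(\hat t-t_j)A}$ gives $|y_j-(x_i)^A_{t_i,\hat t}|\le\eta_i$, and letting $j\to\infty$ gives $|\hat x-(x_i)^A_{t_i,\hat t}|\le\eta_i$. Hence
\[
|x_j-\hat x|\le 2\eta_i+\bigl|e^{(t_j-t_i)A}x_i-e^{(\hat t-t_i)A}x_i\bigr|,
\]
where strong continuity is now used only at the fixed vector $x_i$. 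Thus $\limsup_j|x_j-\hat x|\le2\eta_i$ for every $i$, i.e. $x_j\to\hat x$.

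Alternatively, you can avoid this altogether, as the paper does:
\begin{itemize}
\item $\Upsilon((t_j,x_j),(\hat t,\hat x))=|t_j-\hat t|^2+|y_j-\hat x|^4\to0$ follows directly from the definition.
\item For the penalty terms you only need $\liminf_j\Upsilon((t_j,x_j),(t_k,x_k))\ge\Upsilon((\hat t,\hat x),(t_k,x_k))$. This follows from contractivity, $|x_j-(x_k)^A_{t_k,t_j}|\ge|y_j-(x_k)^A_{t_k,\hat t}|$, which is the paper's observation that $\Upsilon(\cdot,(\bar t,\bar x))\in LSC^{0+}$.
\end{itemize}

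With either repair, $f_i(\hat t,\hat x)\ge f_i(t_i,x_i)$ follows and the rest of your proof goes through. The first repair in fact shows $(t_j,x_j)\to(\hat t,\hat x)$ in the metric $d$. The limit passage then needs only $f\in USC^0$ (Lemma \ref{lemma-09021}) and the $d$-continuity of $\Upsilon(\cdot,(t_k,x_k))$ on $[t_k,T]\times H$.
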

\begin{proof} The proof is similar to that of Borwein \& Zhu  \cite[Theorem 2.5.2 ]{bor1}, and is given below for  convenience of the reader.

Define sequences $\{(t_i,x_i)\}_{i\geq1}$ and $\{B_i\}_{i\geq1}$ inductively as follows. Set
\begin{eqnarray}\label{left1}
B_0:=\left\{(s,x)\in [t_0,T]\times H\bigg{|} \ f(s,x)-\delta_0\Upsilon((s,x),(t_0,x_0))\geq f(t_0,x_0)
                                         \right\}.
\end{eqnarray}
Since $(t_0,x_0)\in B_0$, $B_0$ is nonempty. Moreover it is closed because  $f\in USC^0([0,T]\times H,K)$ 
 by Lemma \ref{lemma-09021} and $\Upsilon(\cdot,(t_0,x_0))\in C^0([t_0,T]\times H,K)$.
 Indeed, for $(t,x;s,y)\in [t_0,T]\times H$,
\begin{eqnarray*}
\begin{aligned}
        &\big|\Upsilon((t,x),(t_0,x_0))-\Upsilon((s,y),(t_0,x_0))\big|\\
        =&\big||t-t_0|^2-|s-t_0|^2+|x-(x_0)^A_{t_0,t}|^4-|y-(x_0)^A_{t_0,s}|^4\big|\\
        \leq& 2T|s-t|^2+(|x-(x_0)^A_{t_0,t}|+|y-(x_0)^A_{t_0,s}|)^3[|x-y|+|(x_0)^A_{t_0,t}-(x_0)^A_{t_0,s}|]\\
        \rightarrow&0 \ \mbox{as}\ (s,y)\rightarrow(t,x)\ \mbox{in}\ ([t_0,T]\times H,d).
\end{aligned}
\end{eqnarray*}
 We also have that, for
all $(s,x)\in B_0$,
\begin{eqnarray}\label{left2}
\delta_0\Upsilon((s,x),(t_0,x_0))\leq f(s,x)-f(t_0,x_0)\leq \sup_{(l,y)\in [t,T]\times H}f(l,y)-f(t_0,x_0)\leq \varepsilon.
\end{eqnarray}
Take $(t_1,x_1)\in B_0$ such that
\begin{eqnarray}\label{left21111}
f(t_1,x_1)-\delta_0\Upsilon((t_1,x_1),(t_0,x_0))\geq \sup_{(s,y)\in B_0}[f(s,y)-\delta_0\Upsilon((s,y),(t_0,x_0))]-\frac{\delta_1\varepsilon}{2\delta_0}, \
\end{eqnarray}
and define in a similar way
\begin{eqnarray}\label{left3}
B_1:=\left\{\begin{matrix} (s,x)\\
\in B_0\cap [t_1,T]\times H\end{matrix} \,\, \Bigg{|} \,\,  \begin{matrix} f(s,x)-\sum_{k=0}^{1}\delta_k\Upsilon((s,x),(t_k,x_k))\\[3mm]
\quad \geq f(t_1,x_1)-\delta_0\Upsilon((t_1,x_1),(t_0,x_0))\end{matrix}\right\}.
\end{eqnarray}
In general, suppose that we have defined $(t_j,x_j)$ and $B_j$ for $j=1,2,\ldots, i-1$ such that
\begin{eqnarray}\label{left4}
\begin{aligned}
&\quad f(t_j,x_j)-\sum_{k=0}^{j-1}\delta_k\Upsilon((t_j,x_j),(t_k,x_k))\\
&\geq \sup_{(s,y)\in B_{j-1}}\bigg{[}f(s,y)-\sum_{k=0}^{j-1}\delta_k\Upsilon((s,y),(t_k,x_k))\bigg{]}-\frac{\delta_j\varepsilon}{2^j\delta_0},
\end{aligned}
\end{eqnarray}
and
\begin{eqnarray}\label{left5}
&B_j:=\left\{ \begin{matrix} (s,x)\\
\in B_{j-1}\cap [t_j,T]\times H\end{matrix} \,\, \Bigg{|}
\begin{matrix} f(s,x)-\sum_{k=0}^{j}\delta_k\Upsilon((s,x),(t_k,x_k))\\[3mm]
 \geq f(t_j,x_j)-\sum_{k=0}^{j-1}\delta_k\Upsilon((t_k,x_k),(t_j,x_j))
 \end{matrix}\right\}.
\end{eqnarray}
We choose $(t_i,x_i)\in B_{i-1}$ such that
\begin{eqnarray}\label{left6}
\begin{aligned}
&\quad f(t_i,x_i)-\sum_{k=0}^{i-1}\delta_k\Upsilon((t_i,x_i),(t_i,x_k))\\
&\geq \sup_{(s,y)\in B_{i-1}}\bigg{[}f(s,y)-\sum_{k=0}^{i-1}\delta_k\Upsilon((s,y),(t_k,x_k))\bigg{]}-\frac{\delta_i\varepsilon}{2^i\delta_0},
\end{aligned}
\end{eqnarray}
and we define
\begin{eqnarray}\label{left7}
 &B_i:=\left\{\begin{matrix} (s,x)\\
\in B_{i-1}\cap [t_i,T]\times H\end{matrix}\,  \Bigg{|}  \begin{matrix} f(s,x)-\sum_{k=0}^{i}\delta_k\Upsilon((s,x),(t_k,x_k))\\
 \geq f(t_i,x_i)-\sum_{k=0}^{i-1}\delta_k\Upsilon((t_i,x_i),(t_k,x_k))
 \end{matrix}\right\}.
\end{eqnarray}
We can see that  for every $i=1,2,\ldots$,  $B_i$ is a closed and nonempty set. By (\ref{09032025c}) and (\ref{left1}), there exists a constant $C>0$ such that \begin{eqnarray}\label{09042025}
|x_i|\leq C\quad \mbox{for all}\quad i\geq 0.
\end{eqnarray}
  It follows from (\ref{left6}) and (\ref{left7}) that, for all $(s,x)\in B_i$,
\begin{eqnarray*}
\begin{aligned}
	&\delta_i\Upsilon((s,x),(t_i,x_i))\\
\leq& \bigg{[}f(s,x)-\sum_{k=0}^{i-1}\delta_k\Upsilon((s,x),(t_k,x_k))\bigg{]}-\bigg{[}f(t_i,x_i)-\sum_{k=0}^{i-1}\delta_k\Upsilon((t_i,x_i),(t_k,x_k))\bigg{]}\\
	\leq&\sup_{(s,y)\in B_{i-1}}\bigg{[}f(s,y)-\sum_{k=0}^{i-1}\delta_k\Upsilon((s,y),(t_k,x_k))\bigg{]}-\bigg{[}f(t_i,x_i)-\sum_{k=0}^{i-1}\delta_k\Upsilon((t_i,x_i),(t_k,x_k))\bigg{]}\\
	\leq& \frac{\delta_i\varepsilon}{2^i\delta_0},
\end{aligned}
\end{eqnarray*}
which implies that
\begin{eqnarray}\label{left8}
\Upsilon((s,x),(t_i,x_i))\leq \frac{\varepsilon}{2^i\delta_0},\ \ \mbox{for all}\  (s,x)\in B_i.
\end{eqnarray}
 By the following Lemma \ref{lemma-0902},
 there  exists a unique
$(\hat{t},\hat{x})\in \bigcap_{i=0}^{\infty}B_i$ 
 and $t_i\uparrow \hat{t}$  as $i\rightarrow \infty$.
 Then $(\hat{t},\hat{x})$
satisfies (i) by (\ref{09042025}), (\ref{left2}) and  (\ref{left8}). For any $(s,x)\in [\hat{t},T]\times H$ and $(s,x)\neq (\hat{t},\hat{x})$, we have $(s,x)\notin\bigcap_{i=0}^{\infty}B_i$, and therefore, for some $j$,
\begin{eqnarray}\label{left9}
\begin{aligned}
f(s,x)-\sum_{k=0}^{\infty}\delta_k\Upsilon((s,x),(t_k,x_k))&\leq f(s,x)-\sum_{k=0}^{j}\delta_k\Upsilon((s,x),(t_k,x_k))\\
&<
f(t^j,x_j)-\sum_{k=0}^{j-1}\delta_k\Upsilon((t_j,x_j),(t_k,x_k)).
\end{aligned}
\end{eqnarray}
On the other hand, it follows from (\ref{left1}), (\ref{left7}) and $(\hat{t},\hat{x})\in \bigcap_{i=0}^{\infty}B_i$ that, for any $q\geq j$,
\begin{eqnarray}\label{left10}
\begin{aligned}
&f(t_0,x_0)\leq f(t_j,x_j)-\sum_{k=0}^{j-1}\delta_k\Upsilon((t_j,x_j),(t_k,x_k))\\
\leq& f(t_q,x_q)-\sum_{k=0}^{q-1}\delta_k\Upsilon((t_q,x_q),(t_k,x_k))
\leq f(\hat{t},{\hat{x}})-\sum_{k=0}^{q}\delta_k\Upsilon((\hat{t},\hat{x}),(t_k,x_k)).
\end{aligned}
\end{eqnarray}
Letting $q\rightarrow\infty$ in (\ref{left10}), we obtain
\begin{eqnarray}\label{left11}
\begin{aligned}
f(t_0,x_0)\leq & f(t_j,x_j)-\sum_{k=0}^{j-1}\delta_k\Upsilon((t_j,x_j),(t_k,x_k))\\
\leq& f(\hat{t},\hat{x})-\sum_{k=0}^{\infty}\delta_k\Upsilon((\hat{t},\hat{x}),(t_k,x_k)),
\end{aligned}
\end{eqnarray}
which verifies (ii). Combining  (\ref{left9}) and (\ref{left11}) yields (iii).
\end{proof}
\begin{lemma}\label{lemma-0902}
Let all the conditions in Lemma \ref{theoremleft}  hold true. Let $(t_i,x_i)$ and  $B_i$ be given in (\ref{left6}) and (\ref{left7}), respectively.
Then there exists  a unique $(\hat{t},\hat{x})\in \bigcap_{i=0}^{\infty}B_i$.
\end{lemma}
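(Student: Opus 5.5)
We need to show that $\bigcap_i B_i$ contains exactly one point. The sets $B_i$ are nonempty, closed in $([0,T]\times H,d)$, and nested. By (\ref{left8}), $\Upsilon((s,x),(t_i,x_i))\le \varepsilon/(2^i\delta_0)$ for all $(s,x)\in B_i$. Since $B_i\subset[t_i,T]\times H$ and $(t_{i+1},x_{i+1})\in B_i$, the times $t_i$ are nondecreasing and bounded by $T$. Hence $t_i\uparrow \hat t$ for some $\hat t\le T$.

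\textbf{Cauchy property.} The plan is a Cantor-intersection argument in the complete metric space $([0,T]\times H,d)$, carried out for the sequence $(t_i,x_i)$. For $j>i$ we have $(t_j,x_j)\in B_{j-1}\subset B_i$ and $t_j\ge t_i$. The definition of $\Upsilon$ then gives
\begin{equation*}
|t_j-t_i|^2+|x_j-e^{(t_j-t_i)A}x_i|^4\le \frac{\varepsilon}{2^i\delta_0}.
\end{equation*}
So $|t_j-t_i|\to0$, and it remains to control $|x_j-x_i|$. We split
\begin{equation*}
|x_j-x_i|\le |x_j-e^{(t_j-t_i)A}x_i|+|e^{(t_j-t_i)A}x_i-x_i|.
\end{equation*}
The first term is at most $(\varepsilon/(2^i\delta_0))^{1/4}$.

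\textbf{Main obstacle.} The second term, $|e^{(t_j-t_i)A}x_i-x_i|$, is the hard part. We only know $|x_i|\le C$ from (\ref{09042025}), and strong continuity of the semigroup is not uniform on bounded sets. I would handle this with a two-index limit. First, the bound on $|x_i|$ gives a subsequence with $x_i\rightharpoonup \hat x$ weakly in $H$. Second, we pass to the limit $j\to\infty$ with $i$ fixed:
\begin{itemize}
\item $e^{(t_j-t_i)A}x_i\to e^{(\hat t-t_i)A}x_i$ strongly, by strong continuity of the semigroup at the fixed vector $x_i$;
\item therefore $\{x_j\}$ lies eventually within $(\varepsilon/(2^i\delta_0))^{1/4}+o(1)$ of the single point $y_i:=e^{(\hat t-t_i)A}x_i$.
\end{itemize}
Since $i$ is arbitrary and the radius $(\varepsilon/(2^i\delta_0))^{1/4}\to0$, the sequence $\{x_j\}$ is Cauchy in $H$. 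Indeed, for $j,k$ large,
\begin{equation*}
|x_j-x_k|\le 2(\varepsilon/(2^i\delta_0))^{1/4}+o(1).
\end{equation*}
This step does not even need the weak-limit argument. Hence $x_j\to\hat x$ strongly, and also $y_i\to\hat x$.

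\textbf{Membership and uniqueness.} Closedness of each $B_i$ (which holds by Lemma \ref{lemma-09021} together with the continuity of $\Upsilon(\cdot,(t_k,x_k))$ shown for $B_0$) and $(t_j,x_j)\in B_i$ for $j>i$ give $(\hat t,\hat x)\in B_i$ for every $i$. For uniqueness, let $(s,y)\in\bigcap_iB_i$. Then $s\ge t_i$ for all $i$, so $s\ge\hat t$. Moreover
\begin{equation*}
|s-t_i|^2+|y-e^{(s-t_i)A}x_i|^4\le \frac{\varepsilon}{2^i\delta_0}\to0.
\end{equation*}
This forces $t_i\to s$, so $s=\hat t$. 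It also forces $y=\lim_i e^{(\hat t-t_i)A}x_i=\lim_i y_i=\hat x$.

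The last identification uses the limit $y_i\to\hat x$ from the Cauchy step. That step, which controls $e^{(\hat t-t_i)A}x_i$ with $i$ fixed and uses the contraction bound $\|e^{rA}\|\le1$ when comparing the terms, is where I expect the care to be needed.
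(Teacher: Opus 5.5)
Your proof is correct, but it takes a different route from the paper's.

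The paper transports everything to the common time $\hat t$. Using the contraction bound it shows
$|e^{(\hat t-t_i)A}x_i-e^{(\hat t-t_j)A}x_j|\le \|e^{(\hat t-t_j)A}\|\,|e^{(t_j-t_i)A}x_i-x_j|\le(\varepsilon/(2^i\delta_0))^{1/4}$.
It then defines $\hat x$ as the limit of $e^{(\hat t-t_j)A}x_j$, so it only obtains $\Upsilon((t_j,x_j),(\hat t,\hat x))\to0$. Since $\Upsilon$-convergence does not in general give convergence in $d$, the paper cannot simply invoke $d$-closedness of $B_i$. Instead it passes to the limit in the defining inequality of $B_i$, using three extra ingredients: the $USC^{0+}$ property of $f$, the a priori bound $|x_i|\le C$ from (\ref{09042025}), and a separately proved $LSC^{0+}$ property of $\Upsilon(\cdot,(\bar t,\bar x))$.

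You instead fix $i$ and use strong continuity of $s\mapsto e^{sA}x_i$ at the single vector $x_i$. This shows that the $x_j$ themselves are norm-Cauchy, so $(t_j,x_j)\to(\hat t,\hat x)$ in $d$. Then $d$-closedness of $B_i$, which needs only $f\in USC^0$, gives membership. The two candidate limits coincide, because $e^{(\hat t-t_j)A}x_j-x_j\to0$ once $x_j$ converges.

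Your route is more economical. It needs neither the contraction property, nor the bound on $|x_i|$, nor the $\Upsilon$-semicontinuity of $f$ and $\Upsilon$. It also gives the extra information that the Borwein--Preiss points converge in the strong metric $d$. The paper's argument stays inside its $\Upsilon$-framework, at the cost of those additional hypotheses.

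Two parts of your write-up can be dropped, since the argument as you wrote it uses neither:
\begin{itemize}
\item the weak-compactness detour;
\item your closing remark that the contraction bound $\|e^{rA}\|\le1$ is needed.
\end{itemize}
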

\begin{proof}
It is clear that there exists $\hat{t}\in [0,T]$ such that $t_i\uparrow \hat{t}$ as $i\rightarrow \infty$.
Notice that the operator $A$ is the generator of a $C_0$ contraction
		{semi-group}.
        By (\ref{left8}) and the definition of $\Upsilon$, for all $j>i$,
      \begin{eqnarray}\label{09022025}
             |(x_i)^A_{t_i,\hat{t}}-(x_j)^A_{t_j,\hat{t}}|^4\leq|e^{(\hat{t}-t_j)A}|^4|(x_i)_{t_i,t_j}-x_j|^4\leq 
             \frac{\varepsilon}{2^i\delta_0}.
   \end{eqnarray}
        This means $\{(x_j)^A_{t_j,\hat{t}}\}_{j\geq0}$ is a Cauchy sequence in $H$, then there exists a unique $\hat{x}\in H$ such that
        $|(x_j)^A_{t_j,\hat{t}}-\hat{x}|\rightarrow0$ as $j\rightarrow\infty$. Thus, $\lim_{j\rightarrow\infty}\Upsilon((t_j,x_j),(\hat{t},\hat{x}))=0$.

        By (\ref{left7}), for all $j>i$,
\begin{eqnarray}\label{09032025}
        f(t_j,x_j)-\sum_{k=0}^{i}\delta_k\Upsilon((t_j,x_j),(t_k,x_k))
 \geq f(t_i,x_i)-\sum_{k=0}^{i-1}\delta_k\Upsilon((t_i,x_i),(t_k,x_k)).
\end{eqnarray}
        Notice that, for every fixed $(t,x), (\bar{t},\bar{x})\in [0,T]\times H$ satisfying $t\geq \bar{t}$,
        $$
              |x-\bar{x}^A_{\bar{t},t}|\leq |x-y^A_{s,t}|+|y^A_{s,t}-\bar{x}^A_{\bar{t},t}|\leq  |x-y^A_{s,t}|+|y-\bar{x}^A_{\bar{t},s}|,\ \mbox{for all} \ (s,y)\in [\bar{t},t]\times H;
        $$
         $$
              |x-\bar{x}^A_{\bar{t},t}|\leq |x-x^A_{t,s}|+|x^A_{t,s}-y|+|y-\bar{x}^A_{\bar{t},s}|+|\bar{x}^A_{\bar{t},s}-\bar{x}^A_{\bar{t},t}|,\ \mbox{for all} \ (s,y)\in [t,T]\times H.
        $$
        We have,
        $$
                   |x-\bar{x}^A_{\bar{t},t}|\leq \liminf_{\Upsilon((t,x),(s,y))\rightarrow0,\ t,s\geq \bar{t}}|y-\bar{x}^A_{\bar{t},s}|.
        $$
Then, for every fixed $(\bar{t},\bar{x})\in [0,T]\times H$, $\Upsilon(\cdot,(\bar{t},\bar{x}))\in LSC^{0+}([\bar{t},T]\times H)$.
      Since $f\in USC^{0+}([0,T]\times H)$, letting $j\rightarrow\infty$ in (\ref{09032025}), by (\ref{09042025}) we obtian
 $$
        f(\hat{t},\hat{x})-\sum_{k=0}^{i}\delta_k\Upsilon(\hat{t},\hat{x}),(t_k,x_k))
 \geq f(t_i,x_i)-\sum_{k=0}^{i-1}\delta_k\Upsilon((t_i,x_i),(t_k,x_k)).
        $$
        Thus, $(\hat{t},\hat{x})\in B_i$ for $i\geq 0$. Therefore, $(\hat{t},\hat{x})\in \bigcap_{i=0}^{\infty}B_i$. Obviously, $\hat{t}$ is unique.  If there exists another $\hat{y}\in H$ such that $(\hat{t},\hat{y})\in \bigcap_{i=0}^{\infty}B_i$, by (\ref{left8}),
        $$
               |\hat{x}-\hat{y}|\leq |(x_i)^A_{t_i,\hat{t}}-\hat{x}|+|(x_i)^A_{t_i,\hat{t}}-\hat{y}|\leq 2\left(\frac{\varepsilon}{2^i\delta_0}\right)^{\frac{1}{4}}.
        $$
        Letting $i\rightarrow\infty$, we have $\hat{x}=\hat{y}$.
\end{proof}

\section{ Stochastic optimal control problems} 
\par
In this {section}, we consider the controlled state
equation (\ref{state1}) and cost equation (\ref{fbsde1}).
Let us introduce the admissible control. Let $t$ and $s$ be two deterministic times such that $0\leq t\leq s\leq T$.
\begin{definition}
	An admissible control process $\mathbf{u}=\{u_r,  r\in [t,s]\}$ on $[t,s]$  is an $\{{\mathcal{F}}_r\}_{t\leq r\leq s}$-progressively measurable process taking values in a Polish  space $(U,d_U)$. The set of all admissible controls on $[t,s]$ is denoted by ${\mathcal{U}}[t,s]$. We identify two processes $\mathbf{u}$ and $\tilde{\mathbf{u}}$ in ${\mathcal{U}}[t,s]$
	and write $\mathbf{u}\equiv\tilde{\mathbf{u}}$ on $[t,s]$, if $\mathbb{P}(u_{\cdot}=\tilde{u}_{\cdot} \ a.e. \ \mbox{in}\ [t,s])=1$.
\end{definition}
The precise notion of solution to equation (\ref{state1}) will be given next. We make the following assumption.
\begin{assumption}\label{hypstate}
	\begin{description}
		\item[(i)]
		The operator $A$ is the generator of
		a $C_0$  {semi-group}   of bounded linear operator $\{e^{tA}, t\geq0\}$ in Hilbert space
		$H$.
		\item[(i')]
		The operator $A$ is the generator of a $C_0$ contraction
		{semi-group}  of bounded linear operators  $\{e^{tA}, t\geq0\}$ in
		Hilbert space $H$.
		\par
\par
		\item[(ii)] $(b,\sigma) :[0,T]\times H\times U\to  H\times  L_2(\Xi,H)$ is continuous, {and
        $(b,\sigma)(\cdot,\cdot,\cdot,u)$ is continuous in $ [0,T]\times H$, uniformly in $u\in U$.} Moreover,
		there is a constant $L>0$ such that,  for all $(t, x, u) \in [0,T]\times H\times U$,
		\begin{eqnarray*}
		&&\displaystyle \left|b(t,x,u)\right|^2\vee\left|\sigma(t,x,u)\right|^2_{L_2(\Xi,H)}\leq
		L^2(1+|x|^2),\\
		&&\displaystyle \left|b(t,x,u)-b(t,y,u)\right|\vee\left|\sigma(t,x,u)-\sigma(t,y,u)\right|_{L_2(\Xi,H)}\leq
		L|x-y|.
		\end{eqnarray*}
\item[(iii)]
	$
	q: [0,T]\times H\times \mathbb{R}\times \Xi\times U\rightarrow \mathbb{R}$ and $ \phi:  H\rightarrow \mathbb{R}$ are continuous, {and
        ${q}$ is continuous in $(t,x)\in [0,T]\times H$, uniformly in $u\in U$. Moreover,}     there is a  constant $L>0$
	such that, for all $(t, x, y, z, x', y',z', u)
	\in [0,T]\times (H\times\mathbb{R}\times \Xi)^2\times U$,
	\begin{eqnarray*}
		&&| q(t,x,y,z,u)|\leq L(1+|x|+|y|+|z|),
		\\
		&&| q(t,x,y,z,u)-q(t,x',y',z',u)|\leq L(|x-x'|+|y-y'|+|z-z'|),
		\\
		&&|\phi(x)-\phi(x')|\leq L|x-x'|.
	\end{eqnarray*}
	\end{description}
\end{assumption}
\par
We say that $X$ is a mild solution of equation $(\ref{state1})$ with initial data $(t,\xi)\in [0,T)\times L^p(\Omega,\mathcal{F}_t,\mathbb{P};H)$ for some $p>2$ if it is a continuous, $\mathbb{F}$-predictable $H$-valued process such that  $\mathbb{P}$-a.s.,
\begin{eqnarray*}
	X_s=e^{(s-t)A}\xi+\int_{t}^{s}{e^{(s-l)A}}b(l,X_l,u_l)dl+\int_{t}^{s}{e^{(s-l)A}}\sigma(l,X_l,u_l)dW_l,  \ s\in [t,T].
\end{eqnarray*}
To emphasize the dependence on initial data and control, we denote the solution by $X^{t,\xi,u}$.
\par
The following lemma is standard, and is available, for example,  in \cite[Theorems 1.127, 1.130, 1.131]{fab1} and  \cite[Proposition 4.3]{fuh0}.

\begin{lemma}\label{lemmaexist0409}
Assume that Assumption \ref{hypstate} (i), (ii) and (iii)  be satisfied. Then for every $p>2$, $\mathbf{u}\in {\mathcal{U}}[0,T]$, and
	$\xi\in L^{p}(\Omega,\mathcal{F}_t,\mathbb{P}; H)$, SEE~(\ref{state1}) admits a
	unique mild solution $X^{t,\xi,\mathbf{u}}$, and BSDE (\ref{fbsde1}) has a unique
	pair of solutions $(Y^{t,\xi,\mathbf{u}}, Z^{t,\xi,\mathbf{u}})$. Furthermore, let  $X^{t,\eta,\mathbf{v}}$ and $(Y^{t,\eta,\mathbf{v}}, Z^{t,\eta,\mathbf{v}})$ be the solutions of SEE (\ref{state1}) and BSDE (\ref{fbsde1})
	corresponding to $(t,\eta,\mathbf{v})\in [0,T)\times L^p(\Omega,\mathcal{F}_t,\mathbb{P}; H)\times {\mathcal{U}}[0,T]$. Then, there is a positive constant  $C$ depending only on  $p$, $T$, $L$ and
	$M_1=:\sup_{s\in [0,T]}|e^{sA}|$, such that
	\begin{eqnarray}\label{state1est}
	\mathbb{E}\left[\sup_{s\in [t,T]}\left|X_s^{t,\xi,\mathbf{u}}\right|^p\right]
	+\mathbb{E}\bigg{[}\sup_{s\in [t,T]}\left|Y^{t,\xi,\mathbf{u}}_s\right|^p\bigg{]}
	+ {\mathbb{E}}\bigg{(}\int^{T}_{t}\left|Z^{t,\xi,\mathbf{u}}_s\right|^2ds\bigg{)}^{\frac{p}{2}}\leq C(1+\mathbb{E}[|\xi|^p]); \
	\end{eqnarray}
\begin{eqnarray}\label{2.6}
 \sup_{\mathbf{u}\in {\mathcal{U}}[0,T]}\mathbb{E}\left[\sup_{l\in[t,s]}\left|X^{t,\xi,\mathbf{u}}_l-\xi^A_{t,l}\right|^p\right]
	\leq
	C\left(1+\mathbb{E}\left[|\xi|^p\right]\right)|s-t|^{\frac{p}{2}}, \quad s\in[t,T];
	\end{eqnarray}
\begin{eqnarray}\label{fbjia4}
\begin{aligned}
	&\mathbb{E}\left[\sup_{s\in [t,T]}\left|X^{t,\xi,\mathbf{u}}_s-X^{t,\eta,\mathbf{v}}_s\right|^p\right]+\mathbb{E}\left[\sup_{s\in [t,T]}\left|Y^{t,\xi,\mathbf{u}}_s-Y^{t,\eta,\mathbf{v}}_s\right|^p\right]\\
&
\qquad \qquad+ {{\mathbb{E}}\bigg{(}\int^{T}_{t}\left|Z^{t,\xi,\mathbf{u}}_s-Z^{t,\eta,\mathbf{v}}_s\right|^2ds\bigg{)}^{\frac{p}{2}}}\\
	\leq & C \mathbb{E}[|\xi-\eta|^p]\\[3mm]
	&+C\mathbb{E}\!\left[\left|(b, \sigma)(X^{t,\eta,\mathbf{v}}_r,\cdot)\Bigm|^{u_r}_{v_r}\right|^p_{H\times L_2(\Xi, H)}
	\!\!\!\!\!\!+\left|q(r,X^{t,\eta,\mathbf{v}}_r,Y^{t,\eta,\mathbf{v}}_r,Z^{t,\eta,\mathbf{v}}_r,\cdot)\Bigm|^{u_r}_{v_r}\right|^p\right]\! dr.
\end{aligned}
	\end{eqnarray}
	\par
	Moreover,  let $A_\mu:=\mu A(\mu I-A)^{-1}$ be the Yosida approximation of $A$ and
	let $X^\mu$ be the solution of the following SEE
	\begin{eqnarray}\label{07162}
	\begin{aligned}
	X^\mu_s=e^{(s-t)A_\mu}\xi+\int^{s}_{t}e^{(s-l)A_\mu}b(l,X^{\mu}_l,u_l)dl+\int^{s}_{t}e^{(s-l)A_\mu}\sigma(l,X^{\mu}_l,u_l)dW_l, s\in [t,T].\
	\end{aligned}
	\end{eqnarray}
	Then, we have
	\begin{eqnarray}\label{0717}
	\lim_{\mu\rightarrow\infty}\mathbb{E}\left[\sup_{s\in [t,T]}\left|X^{t,\xi,\mathbf{u}}_s-X^\mu_s\right|^p\right]=0.
	\end{eqnarray}
\end{lemma}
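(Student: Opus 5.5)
The plan follows the standard fixed-point and a priori estimate scheme for mild solutions of SEEs and for BSDEs, treating the forward and backward equations separately. The Yosida claim (\ref{0717}) comes last.

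\emph{Forward equation.} For fixed $\mathbf{u}$ and $\xi\in L^p(\Omega,\mathcal{F}_t,\mathbb{P};H)$, $p>2$, define on $L^p_{\mathcal{P}}(\Omega,C([t,T];H))$ the map
\[
\Gamma(X)_s=e^{(s-t)A}\xi+\int_t^s e^{(s-l)A}b(l,X_l,u_l)\,dl+\int_t^s e^{(s-l)A}\sigma(l,X_l,u_l)\,dW_l .
\]
The stochastic convolution has a continuous version and satisfies $\mathbb{E}\sup_s|\int_t^s e^{(s-l)A}\Phi_l\,dW_l|^p\le C\,\mathbb{E}\int_t^T|\Phi_l|^p_{L_2}dl$. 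This follows from the factorization method of Da Prato--Kwapie\'n--Zabczyk, which is where $p>2$ is used, with a constant depending on $M_1$. With the Lipschitz bounds of Assumption \ref{hypstate}(ii), $\Gamma$ becomes a contraction under a weighted norm, or on short intervals that are then concatenated. This yields existence and uniqueness. The same convolution inequality together with Gronwall gives the moment part of (\ref{state1est}) and the $X$-part of (\ref{fbjia4}). For the latter, write
\[
b(X^{\xi,\mathbf{u}})-b(X^{\eta,\mathbf{v}})=\big[b(X^{\xi,\mathbf{u}},u)-b(X^{\eta,\mathbf{v}},u)\big]+b(X^{\eta,\mathbf{v}},\cdot)\big|^{u}_{v},
\]
and similarly for $\sigma$.

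\emph{Time-continuity estimate (\ref{2.6}).} Write $X_l-\xi^A_{t,l}$ as the sum of the two convolutions. Bound each over $[t,s]$ by H\"older and the factorization estimate, with $|b|,|\sigma|\le L(1+|X|)$. Combined with (\ref{state1est}), this gives $C(1+\mathbb{E}|\xi|^p)|s-t|^{p/2}$ uniformly in $\mathbf{u}$.

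\emph{Backward equation.} Given $X$, the driver $(s,y,z)\mapsto q(s,X_s,y,z,u_s)$ is Lipschitz, and by the linear growth of $q$ it satisfies $\mathbb{E}(\int|q(s,X_s,0,0,u_s)|ds)^p<\infty$. The terminal value satisfies $\phi(X_T)\in L^p$. The classical $L^p$ theory of Lipschitz BSDEs (El Karoui--Peng--Quenez, Briand et al.) then gives a unique solution $(Y,Z)$ with the bound
\[
\mathbb{E}\sup|Y|^p+\mathbb{E}\Big(\int|Z|^2\Big)^{p/2}\le C\,\mathbb{E}\Big[|\phi(X_T)|^p+\Big(\int|q(\cdot,X,0,0,u)|\Big)^p\Big].
\]
Martingale representation holds for the Brownian filtration of the cylindrical $W$. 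The stability version of this estimate applies to the difference of two solutions. Write the driver difference as Lipschitz terms in $(X,Y,Z)$ plus $q(r,X^{\eta,\mathbf{v}},Y^{\eta,\mathbf{v}},Z^{\eta,\mathbf{v}},\cdot)|^{u}_{v}$, and use the forward estimate for the $X$-difference. This completes (\ref{state1est}) and (\ref{fbjia4}).

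\emph{Yosida convergence (\ref{0717}).} This is the main obstacle, since $e^{sA_\mu}\to e^{sA}$ only strongly, uniformly on compacts in $s$, and not in norm. The key uniform bound is $\sup_{\mu\ge\mu_0,\,s\le T}|e^{sA_\mu}|\le C$, from Hille--Yosida. Split $X-X^\mu$ into three pieces.

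The first piece is $(e^{(s-t)A}-e^{(s-t)A_\mu})\xi$. Strong convergence, uniform in $s$, holds pointwise in $\omega$; dominated convergence then shows its $\mathbb{E}\sup$ tends to $0$.

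The second piece consists of the terms $\int e^{(s-l)A_\mu}[b(X_l)-b(X^\mu_l)]$ and the analogous $\sigma$ term. These are absorbed by Gronwall, using the uniform bound on $e^{sA_\mu}$ in the factorization estimate.

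The third piece consists of the remainders
\[
\int_t^s\big(e^{(s-l)A}-e^{(s-l)A_\mu}\big)b(l,X_l,u_l)\,dl
\]
and the analogous stochastic convolution of $\sigma(l,X_l,u_l)$. These tend to $0$ in $L^p(\Omega;C)$. Apply the factorization formula with the fixed integrands $b(X),\sigma(X)$. Then use strong convergence of the semigroups, applied to a precompact family or via approximation of the integrand by simple processes, together with dominated convergence and the uniform operator bound.

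Gronwall then closes the argument and gives (\ref{0717}).
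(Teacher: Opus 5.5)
The paper gives no proof of its own here. It defers to Fabbri et al.\ (Theorems 1.127, 1.130, 1.131) and Fuhrman--Tessitore (Proposition 4.3), and your outline is essentially the argument behind those references: factorization plus contraction and Gronwall for the SEE, the $L^p$ theory of Lipschitz BSDEs with $\Xi$-valued $Z$ for $(Y,Z)$ and for the stability estimate (\ref{fbjia4}), and the three-piece splitting with the uniform Hille--Yosida bound on $e^{sA_\mu}$ for (\ref{0717}). One step, however, does not give what you claim as written: the rate $|s-t|^{p/2}$ in (\ref{2.6}).

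You quote the factorization inequality $\mathbb{E}\sup_l|\int_t^l e^{(l-r)A}\Phi_r\,dW_r|^p\le C\,\mathbb{E}\int|\Phi_r|^p_{L_2(\Xi,H)}dr$ with $C$ independent of the interval length. Applied on $[t,s]$ with $|\sigma|\le L(1+|X|)$ and (\ref{state1est}), it only yields $C(1+\mathbb{E}|\xi|^p)\,|s-t|$ for the stochastic convolution. For $p>2$ this is strictly weaker than $|s-t|^{p/2}$ as $s\downarrow t$. The drift term is harmless, since H\"older gives $|s-t|^{p}$.

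The fix is to keep track of $h:=s-t$ inside the factorization lemma. Take $\frac1p<\alpha<\frac12$ and $Y_r:=\int_t^r(r-\tau)^{-\alpha}e^{(r-\tau)A}\Phi_\tau\,dW_\tau$. H\"older on the outer deterministic integral produces the factor $h^{\alpha p-1}$. BDG followed by Young's convolution inequality in $L^{p/2}$ gives $\mathbb{E}\int_t^s|Y_r|^pdr\le C\,h^{(1-2\alpha)p/2}\,\mathbb{E}\int_t^s|\Phi_r|^p_{L_2(\Xi,H)}dr$. Combining the two,
\[
\mathbb{E}\sup_{l\in[t,s]}\Big|\int_t^l e^{(l-r)A}\Phi_r\,dW_r\Big|^p\le C\,h^{\frac p2-1}\,\mathbb{E}\int_t^s|\Phi_r|^p_{L_2(\Xi,H)}\,dr\le C\,h^{\frac p2}\sup_{r\in[t,s]}\mathbb{E}|\Phi_r|^p_{L_2(\Xi,H)},
\]
with $C$ depending only on $p$, $\alpha$ and $M_1$. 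With this scaled version, (\ref{2.6}) follows from (\ref{state1est}) uniformly in $\mathbf{u}$. The exponent is not cosmetic: it is what produces the $(s-t)^{1/2}$ modulus in Theorem \ref{theorem3.9}.
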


For the particular case of a deterministic $\xi$, i.e. $\xi=x\in H$, denote by $X^{t,x,\mathbf{u}}$ the solution of equation (\ref{state1}) corresponding to $(t,x,\mathbf{u})\in [0,T]\times H\times {\mathcal{U}}[t,T]$. The following    lemma is needed to prove the existence and uniqueness of viscosity solutions.

\begin{lemma}\label{theoremito2}
	Let Assumption \ref{hypstate} be satisfied. For every $p\geq2$ and $(t,x, y)\in [0,T)\times H\times H$,  $\mathbb{P}$-a.s.,  for all $s\in [t,T]$, we have
	\begin{eqnarray}\label{jias510815jia11}
	\begin{aligned}
	\ \ \ \ &\quad
	|X^{t,x,\mathbf{u}}_s-y^A_{t,s}|^{p}=|\hat{X}_s|^{p}\\
	&\leq |\hat{X}_t|^{p}+p\int^{s}_{{t}}\biggl[|\hat{X}_l|^{p-2}\left\langle \hat{X}_l,\,  b(l,X^{t,x,\mathbf{u}}_l,u_l)\right\rangle_H\\
	&\quad+\frac{1}{2}\mbox{\rm Tr}\biggl((|\hat{X}_l|^{p-2}I+(p-2)|\hat{X}_l|^{p-4}\hat{X}_l\langle \hat{X}_l, \cdot\rangle_H) (\sigma\sigma^*)(l,X^{t,x,\mathbf{u}}_l,u_l)\biggr)\biggr]dl\\
	&\quad
	+p\int^{s}_{t}|\hat{X}_l|^{p-2}\langle \hat{X}_l, \, \sigma(l,X^{t,x,\mathbf{u}}_l,u_l)dW_l\rangle_H,
	\end{aligned}
	\end{eqnarray}
	where
	$\hat{X}_s:=X^{t,x,\mathbf{u}}_s-y^A_{t,s}$ for $s\in [t, T]$. Moreover,
   \begin{eqnarray}\label{fbjia41022}
	\begin{aligned}
	&\quad
	\mathbb{E}[|X^{t,x,\mathbf{u}}_s-X^{t,y,\mathbf{u}}_s|^2|{\mathcal{F}}_t]
+	\mathbb{E}[|Y^{t,x,\mathbf{u}}_s-Y^{t,y,\mathbf{u}}_s|^2|{\mathcal{F}}_t]\\
	 &\quad+\int^{T}_{s}\!\!\!\mathbb{E}[|Y^{t,x,\mathbf{u}}_l-Y^{t,y,\mathbf{u}}_l|^2+|Z^{t,x,\mathbf{u}}_l-Z^{t,y,\mathbf{u}}_l|^2|{\mathcal{F}}_t]dl
	\leq C|x-y|^2, \ {s\in[t,T]}
	\end{aligned}
	\end{eqnarray}
	and
	\begin{eqnarray}\label{fbjia51022}
	\mathbb{E}[|X^{t,x,\mathbf{u}}_s|^2|{\mathcal{F}}_t]+\mathbb{E}[|Y^{t,x,\mathbf{u}}_s|^2|{\mathcal{F}}_t]
	+\int^{T}_{t}\!\mathbb{E}[|Z^{t,x,\mathbf{u}}_s|^2|{\mathcal{F}}_t]ds\leq C(1+|x|^2), \ {s\in[t,T]} \
	\end{eqnarray}
	for a positive constant $C$ depending only on  $T$, $L$  and $M_1$.

\end{lemma}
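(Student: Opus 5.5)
The proof has two parts: the Itô inequality (\ref{jias510815jia11}) via Yosida approximation, and the conditional estimates (\ref{fbjia41022})--(\ref{fbjia51022}) via standard SEE/BSDE a priori bounds conditioned on ${\mathcal{F}}_t$.

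\textbf{Itô inequality.} Replace $A$ by the Yosida approximation $A_\mu$ and let $X^\mu$ solve (\ref{07162}) with $\xi=x$. Set $\hat X^\mu_s:=X^\mu_s-e^{(s-t)A_\mu}y$. Since $A_\mu$ is bounded, $\hat X^\mu$ is a strong Itô process:
\begin{eqnarray*}
d\hat X^\mu_s=\bigl[A_\mu\hat X^\mu_s+b(s,X^\mu_s,u_s)\bigr]ds+\sigma(s,X^\mu_s,u_s)dW_s .
\end{eqnarray*}
The function $h(z)=|z|^p$ with $p\ge2$ is $C^2$ on $H$, with $\nabla h(z)=p|z|^{p-2}z$ and $\nabla^2h(z)=p(|z|^{p-2}I+(p-2)|z|^{p-4}z\langle z,\cdot\rangle_H)$. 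Applying Itô's formula to $h(\hat X^\mu_s)$ gives exactly the right-hand side of (\ref{jias510815jia11}) with $X^\mu$ in place of $X$, plus the extra drift term $p\int_t^s|\hat X^\mu_l|^{p-2}\langle \hat X^\mu_l,A_\mu\hat X^\mu_l\rangle_H dl$.

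Under Assumption \ref{hypstate} (i'), $e^{tA}$ is a contraction, so $A$ is dissipative. Hence $A_\mu$ is dissipative too: $\langle A_\mu z,z\rangle=\mu^2\langle(\mu-A)^{-1}z,z\rangle-\mu|z|^2\le0$, using $|(\mu-A)^{-1}|\le1/\mu$. The extra term is therefore nonpositive and can be dropped, which yields the inequality for $X^\mu$.

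Passing to the limit $\mu\to\infty$ is the step I expect to be the main obstacle. By (\ref{0717}), $X^\mu\to X^{t,x,\mathbf{u}}$ in $L^p(\Omega;C([t,T];H))$. Also $e^{(s-t)A_\mu}y\to e^{(s-t)A}y$ uniformly in $s$, by the standard Hille--Yosida convergence. Thus $\hat X^\mu\to\hat X$ uniformly on $[t,T]$ in $L^p$, and by continuity of $b,\sigma$, dominated convergence in $l$ handles the Lebesgue integrals. For the trace term, write $\mathrm{Tr}(P\sigma\sigma^*)=\sum_k\langle P\sigma e_k,\sigma e_k\rangle$ and control it by $|P|\,|\sigma|_{L_2}^2$ together with the Lipschitz bound on $\sigma$. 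The stochastic integrals converge in probability, uniformly in $s$, by the BDG inequality. Along a subsequence the convergence is a.s. uniform in $s$, which gives the inequality $\mathbb{P}$-a.s. for all $s$ simultaneously; continuity of both sides in $s$ secures the "for all $s$" quantifier.

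\textbf{Conditional estimates.} Since $x,y$ are deterministic, (\ref{fbjia41022}) and (\ref{fbjia51022}) follow from the unconditional estimates, applied conditionally on ${\mathcal{F}}_t$. I would argue directly. For the forward part, apply (\ref{jias510815jia11})-type Itô arguments (or the mild formulation with $M_1$) to $|X^{t,x}-X^{t,y}|^2$, multiply by $1_G$ for $G\in{\mathcal{F}}_t$, take expectations, and use Gronwall to get $\mathbb{E}[|X_s^{t,x}-X_s^{t,y}|^2\mid{\mathcal{F}}_t]\le C|x-y|^2$. For the backward part, apply Itô's formula to $|Y^{t,x}_s-Y^{t,y}_s|^2$ on $[s,T]$, use the Lipschitz bounds on $q$ and $\phi$, absorb the $Z$-term with Young's inequality, and condition on ${\mathcal{F}}_t$. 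This yields the $Y$ and $Z$ bounds after Gronwall, and the integrated $Y$ term then follows by integrating the pointwise bound. The growth estimate (\ref{fbjia51022}) is proved identically, using the linear growth conditions in place of the Lipschitz ones.
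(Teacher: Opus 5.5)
Your proposal is correct and follows the paper's route. The paper obtains (\ref{jias510815jia11}) by viewing $\hat X$ as a mild process with drift $b(l,X_l,u_l)$ and diffusion $\sigma(l,X_l,u_l)$ and citing the It\^o inequality of Fabbri et al.\ (Proposition 1.166); your Yosida argument (dissipativity $\langle A_\mu z,z\rangle_H\le 0$ under the contraction hypothesis, then $\mu\to\infty$ via (\ref{0717})) just proves that cited result explicitly. For $X$ the paper likewise uses the case $p=2$ plus Gronwall, and for $(Y,Z)$ it applies It\^o's formula to $e^{\beta s}|\tilde Y_s|^2$ with $\beta=2L^2+3L+1$ where you use backward Gronwall, which is an equivalent choice. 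One small correction: $\mathbf{u}$ may depend on ${\mathcal{F}}_t$, so the conditional bounds do not follow from the unconditional ones as your opening remark suggests; what is actually needed is your direct argument testing against $1_G$, $G\in{\mathcal{F}}_t$, which is also the rigorous form of the paper's step ``take conditional expectation and apply Gronwall''.
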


\begin{proof} 
Define $\hat{X}$ by $\hat{X}_s:=X_s-y^A_{t,s}$ for $s\in [t,T]$. 
 Then, $\hat{X}$ solves the following SEE
\begin{eqnarray*}
	\hat{X}_s=e^{(s-t)A}\hat{X}_t+\int^{s}_{t}e^{(s-l)A}b(l,X_l, u_l)dl
	+\int^{s}_{t}e^{(s-l)A}\sigma(l,X_l,u_l)dW_l,\ s\in [t,T].
\end{eqnarray*}
By It\^o inequality  (see \cite[Proposition 1.166]{fab1}), we obtain (\ref{jias510815jia11}).

Letting $y=0$, $p=2$ and taking conditional expectation with respect to $\mathcal{F}_t$ in (\ref{jias510815jia11}),  by Assumption \ref{hypstate} (ii),
\begin{eqnarray*}
	\mathbb{E}[|X^{t,x,\mathbf{u}}_s|^{2}|\mathcal{F}_t]
\leq |x|^{2}+C\int^{s}_{{t}}\mathbb{E}[(1+|X^{t,x,\mathbf{u}}_l|^{2})|\mathcal{F}_t]dl.
	\end{eqnarray*}
Using  Gronwall's inequality, we get $ \mathbb{E}[|X^{t,x,\mathbf{u}}_s|^{2}|\mathcal{F}_t]$ satisfies (\ref{fbjia51022}). Similarly, we can show that
 $\mathbb{E}[|X^{t,x,\mathbf{u}}_s-X^{t,y,\mathbf{u}}_s|^2|{\mathcal{F}}_t]$ satisfies (\ref{fbjia41022}).

 Now we only prove (\ref{fbjia41022}), and (\ref{fbjia51022}) is proved in a similar way.  For simplicity, we write
\begin{eqnarray*}
	\begin{aligned}
		\tilde{X}_s:=&X^{t,x,\mathbf{u}}_s-X^{t,y,\mathbf{u}}_s,\ \tilde{Y}_s:=Y^{t,x,\mathbf{u}}_s-Y^{t,y,\mathbf{u}}_s, \
		\tilde{Z}_s:=Z^{t,x,\mathbf{u}}_s-Z^{t,y,\mathbf{u}}_s,\\
		\tilde{q}_s:=&\, q(s,\gamma,u_s)\Bigm|^{\gamma=(X^{t,x,\mathbf{u}}_s,Y^{t,x,\mathbf{u}}_s,Z^{t,x,\mathbf{u}}_s)}_{\gamma=(X^{t,y,\mathbf{u}}_s,Y^{t,y,\mathbf{u}}_s,Z^{t,y,\mathbf{u}}_s)}\, ,\ \quad s\in [t,T].
	\end{aligned}
\end{eqnarray*}
Applying  It\^o formula  to $e^{\beta s}|\tilde{Y}_s|^2$ on $[t,T]$, we obtain
\begin{eqnarray}\label{jias510815jia1022}
\begin{aligned}
&\quad e^{\beta s}|\tilde{Y}_s|^2+\int^{T}_{s}e^{\beta l}[\beta|\tilde{Y}_l|^2+|\tilde{Z}_l|^2]dl\\
&= e^{\beta T}|\phi(X_T^{t,x,\mathbf{u}})-\phi(X_T^{t,y,\mathbf{u}})|^2+2\int^{T}_{{s}}e^{\beta l}\langle\tilde{Y}_l,
\tilde{q}_l\rangle_Hdl-2\int^{s}_{t}e^{\beta l}\langle\tilde{Y}_l, \tilde{Z}_ldW_l\rangle_H.
\end{aligned}
\end{eqnarray}
Taking conditional expectation with respect to ${\mathcal{F}}_t$, 
we obtain,  {for $\beta>0$},
\begin{eqnarray*}
	\begin{aligned}
		&e^{\beta s}\mathbb{E}[|\tilde{Y}_s|^2|{\mathcal{F}}_t]+\int^{T}_{s}e^{\beta l}\mathbb{E}[\beta|\tilde{Y}_l|^2+|\tilde{Z}_l|^2|{\mathcal{F}}_t]dl\\
		\leq& e^{\beta T}L^2\mathbb{E}[|\tilde{X}_T|^2|{\mathcal{F}}_t]+\int^{T}_{{s}}e^{\beta l}\mathbb{E}\left[(2L^2+3L)|\tilde{Y}_l|^2+L|\tilde{X}_l|^2+\frac{1}{2}|\tilde{Z}_l|^2
		\bigg{|}{\mathcal{F}}_t\right]dl.
	\end{aligned}
\end{eqnarray*}
Let $\beta=2L^2+3L+1$,  we obtain (\ref{fbjia41022}).
\end{proof}

\begin{remark}\label{remarks}
	\begin{description}
		\item[(i)]      Since in general the functional $|x-\hat{y}|^p$ with fixed $\hat{y}\in H$ and $p\geq 2$ does not satisfies It\^o inequality (\ref{jias510815jia11}), then, it fails to be used directly  to construct  any auxiliary functional to prove the
		uniqueness and stability of viscosity solutions. However, by the preceding lemma,
		the  modified  functional  $g(t,x):=|x-\hat{y}^A_{\hat{t},t}|^p$ with fixed $(\hat{t},\hat{y})\in [0,T)\times H$ behaves very much like $|x-\hat{a}|^p$, and even gets rid of the effect of the unbounded operator $A$.  It shall be used to prove the uniqueness  of viscosity solutions when  the coefficients are merely strong-continuous  under $|\cdot|_H$.
\item[(ii)]  In light of (\ref{jias510815jia11}), it is convenient to choose $p$ as an even integer. Some estimates later will fail for $p=2$, so we choose $p=4$. Any larger even integer will serve for our purpose as well.
		\item[(iii)] As we stated in Item (i), $|x-\hat{y}^A_{\hat{t},t}|^4$ rather than $|x-\hat{y}|^4$ satisfies It\^o inequality (\ref{jias510815jia11}),
then the function ${\Upsilon}((t,x),(s,y))$
 rather than $|s-t|^2+|x-y|^4$
 can be applied 
   to Lemma \ref{theoremleft} to
		ensure the existence of  a maximizing point of a perturbed auxiliary functional in the proof of uniqueness.
  This explains why we use the function ${\Upsilon}$ in Lemma \ref{theoremleft}.
	\end{description}
\end{remark}
\noindent
The functionals $J(t,\xi,\mathbf{u}):=J(t,x,\mathbf{u})|_{x=\xi}$ and $Y^{t,\xi,\mathbf{u}}_t$, $(t,\xi)\in [0,T]\times L^p(\Omega,\mathcal{F}_t,\mathbb{P}; H)$, are related by the following
theorem.
\begin{theorem}\label{theoremj=y}
	\ \
	Under  Assumption \ref{hypstate} (i), (ii) and (iii),    for every $p>2$,  $(t,\xi, \eta)\in [0,T]\times (L^p(\Omega,\mathcal{F}_t,\mathbb{P}; H))^2$
and $\mathbf{u}\in {\mathcal{U}}[t,T]$, we have
	\begin{eqnarray}\label{j=y}
	J(t,\xi,\mathbf{u})=Y^{t,\xi,\mathbf{u}}_t,\ \mbox{and}
	\end{eqnarray}
	\begin{eqnarray}\label{0903jia1022}
	|Y^{t,\xi,\mathbf{u}}_t-Y^{t,\eta,\mathbf{u}}_t|\leq C|\xi-\eta|.
	\end{eqnarray}
\end{theorem}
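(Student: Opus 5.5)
The approach is the standard one for $J(t,\xi,\mathbf{u})=Y^{t,\xi,\mathbf{u}}_t$ with random initial data (cf. Peng's and Buckdahn--Li's arguments): first prove the identity for simple random variables, then pass to general $\xi$ by density, using the Lipschitz bound \eqref{0903jia1022}. So I would prove \eqref{0903jia1022} first.

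\textbf{Step 1: the estimate \eqref{0903jia1022}.} Apply \eqref{fbjia4} of Lemma \ref{lemmaexist0409} with $\mathbf{v}=\mathbf{u}$, so that the control-difference terms vanish. This gives
$$\mathbb{E}\bigl[|Y^{t,\xi,\mathbf{u}}_t-Y^{t,\eta,\mathbf{u}}_t|^p\bigr]\le C\,\mathbb{E}|\xi-\eta|^p.$$
The statement, however, is pathwise, so we need the conditional version. Repeat the proof of \eqref{fbjia41022} with initial data $\xi,\eta$ in place of $x,y$. That is, apply It\^o's formula to $e^{\beta s}|\tilde Y_s|^2$ and the It\^o inequality \eqref{jias510815jia11} (with $p=2$) to $|\tilde X_s|^2$. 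Then take $\mathbb{E}[\cdot|\mathcal{F}_t]$, which is legitimate since $\xi,\eta$ are $\mathcal{F}_t$-measurable and the stochastic integrals are martingales with zero conditional mean. Gronwall then yields
$$\mathbb{E}\bigl[|\tilde Y_t|^2\,\big|\,\mathcal{F}_t\bigr]\le C|\xi-\eta|^2.$$
Since $\tilde Y_t$ is $\mathcal{F}_t$-measurable, this is exactly \eqref{0903jia1022}.

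\textbf{Step 2: simple $\xi$.} Let $\xi=\sum_{i=1}^N x_i\mathbf 1_{A_i}$ with $\{A_i\}\subset\mathcal{F}_t$ a partition of $\Omega$. By uniqueness of mild solutions and the fact that $\mathbf 1_{A_i}$ is $\mathcal{F}_t$-measurable, multiplying the equation for $X^{t,x_i,\mathbf{u}}$ by $\mathbf 1_{A_i}$ and summing shows
$$X^{t,\xi,\mathbf{u}}=\sum_i \mathbf 1_{A_i}X^{t,x_i,\mathbf{u}}.$$
The same holds for $(Y,Z)$, because $\mathbf 1_{A_i}$ commutes with the drift, the terminal condition and $\int\cdot\,dW$ on $[t,T]$. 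Hence
$$Y^{t,\xi,\mathbf{u}}_t=\sum_i\mathbf 1_{A_i}Y^{t,x_i,\mathbf{u}}_t=\sum_i\mathbf 1_{A_i}J(t,x_i,\mathbf{u})=J(t,\xi,\mathbf{u}).$$

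\textbf{Step 3: general $\xi$.} Take simple $\mathcal{F}_t$-measurable $\xi_n\to\xi$ in $L^p$ and a.s. By Step 1, $Y^{t,\xi_n,\mathbf{u}}_t\to Y^{t,\xi,\mathbf{u}}_t$ a.s. On the other side, the map $x\mapsto J(t,x,\mathbf{u})$ is Lipschitz, either by \eqref{0903jia1022} with deterministic data or by \eqref{fbjia41022}. But $J(t,x,\mathbf{u})$ is itself a random variable (it is $\mathcal{F}_t$-measurable, since $\mathbf{u}$ may depend on the past), so $J(t,\xi,\mathbf{u})$ must be defined as a composition $J(t,x,\mathbf{u})(\omega)|_{x=\xi(\omega)}$. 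This is the delicate point. I would choose a version of $J(t,\cdot,\mathbf{u})$ that is continuous in $x$ for a.e.\ $\omega$: take it on a countable dense set and extend using the uniform Lipschitz bound, which holds off a single null set. With that version, $J(t,\xi_n,\mathbf{u})\to J(t,\xi,\mathbf{u})$ a.s., and the identity passes to the limit.

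I expect this measurability/version issue in Step 3 to be the main obstacle. The natural fix is the one above: establish the Lipschitz estimate off a common null set for rational combinations, extend by continuity, and note that Step 2 is unaffected by the choice of version.
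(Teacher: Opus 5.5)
Your argument is correct, but it is organized differently from the paper's.

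\textbf{The paper's route.} It proves \eqref{j=y} first. It approximates $\xi$ by the countably-valued $f^k=\sum_n h^n\mathbf{1}_{\{\xi\in B_{n,k}\}}$, with $|f^k-\xi|<1/k$ everywhere. It obtains $Y^{t,f^k,\mathbf{u}}_t=J(t,f^k,\mathbf{u})$ by the same indicator-and-uniqueness trick as your Step 2. It then passes to the limit in $L^p$, using \eqref{fbjia4} for $Y^{t,\cdot,\mathbf{u}}_t$ and the deterministic bound \eqref{fbjia41022} for $J(t,\cdot,\mathbf{u})$. Finally, \eqref{0903jia1022} is read off from \eqref{j=y} by evaluating the deterministic Lipschitz estimate at $x=\xi$, $y=\eta$.

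\textbf{Your route.} You reverse the order. You prove the pathwise estimate \eqref{0903jia1022} directly, by running the conditional energy estimate with $\mathcal{F}_t$-measurable initial data. You then use it to take an a.s.\ limit from finitely-valued approximations.

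\textbf{Comparison.} Your route makes \eqref{0903jia1022} independent of \eqref{j=y}. It also handles the composition $J(t,x,\mathbf{u})|_{x=\xi}$ properly, via a version of $J(t,\cdot,\mathbf{u})$ that is Lipschitz off one null set. The paper implicitly needs exactly such a version twice but does not discuss it: once in the bound $\mathbb{E}|J(t,f^k,\mathbf{u})-J(t,\xi,\mathbf{u})|^p\le C\mathbb{E}|f^k-\xi|^p$, and again in its final display. The paper's route is shorter, since it only reuses the already-established deterministic estimate.

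\textbf{A small point affecting both arguments.} The It\^o inequality \eqref{jias510815jia11} behind \eqref{fbjia41022} is proved under the contraction hypothesis (i'), whereas the theorem assumes only (i). Under (i) you can get $\mathbb{E}[|\tilde X_s|^2|\mathcal{F}_t]\le C|\xi-\eta|^2$ directly from the mild formulation, using the conditional It\^o isometry for the stochastic convolution and Gronwall. The rest of your Step 1 is unchanged.
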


From the uniqueness of the solution of (\ref{fbsde1}), it
follows that
$$
Y^{t,x,\mathbf{u}}_{t+\delta}=Y^{t+\delta,X^{t,x,\mathbf{u}}_{t+\delta},\mathbf{u}}_{t+\delta}=J\left(t+\delta,X^{t,x,\mathbf{u}}_{t+\delta},\mathbf{u}\right),\ \ \mathbb{P}\mbox{-a.s.}
$$
Formally,  under the assumptions  Assumption \ref{hypstate}  (i), (ii) and (iii), the  value functional $V(t,x)$
defined by (\ref{value1})
is  ${\mathcal{F}}_t$-measurable.
However, we have
\begin{theorem}\label{valuedet}  Let  Assumption \ref{hypstate} (i), (ii) and (iii) hold true.
	Then,  $V$ is a deterministic functional.
\end{theorem}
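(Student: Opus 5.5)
We follow the standard argument of Peng for BSDE-based control problems (Buckdahn--Li): show that $V(t,x)$ equals its expectation. The idea is to reduce to controls that do not depend on the past $\mathcal{F}_t$, for which $Y^{t,x,\mathbf{u}}_t$ is deterministic. We then use the invariance of the Wiener law under shifts of the path before time $t$ to show that the esssup cannot be larger than a deterministic quantity.

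\textbf{Step 1 (reduction to $\mathcal{F}^t$-controls).} Let $\mathcal{U}^t[t,T]$ denote the controls adapted to $\mathcal{F}^t=\{\mathcal{F}^s_t\}$. For such $\mathbf{v}$, $X^{t,x,\mathbf{v}}$ and $(Y^{t,x,\mathbf{v}},Z^{t,x,\mathbf{v}})$ are $\mathcal{F}^t$-adapted, by uniqueness in Lemma \ref{lemmaexist0409} applied on the filtration generated by $W_\cdot-W_t$. Hence $Y^{t,x,\mathbf{v}}_t$ is $\mathcal{F}^t_t$-measurable, i.e.\ deterministic.

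Now take a general $\mathbf{u}\in\mathcal{U}[t,T]$. Using the Polish structure of $U$ and a countable partition argument, approximate $\mathbf{u}$ by controls of the form $\sum_{j=1}^N \mathbf{1}_{A_j}\mathbf{v}^j$. Here $\{A_j\}$ is an $\mathcal{F}_t$-partition of $\Omega$ and $\mathbf{v}^j\in\mathcal{U}^t[t,T]$; the existence of such a representation uses that $\mathcal{F}_s=\mathcal{F}_t\vee\mathcal{F}^s_t$. By estimate (\ref{fbjia4}) with $\xi=\eta=x$, $Y^{t,x,\mathbf{u}}_t$ is approximated in $L^p$ by $Y^{t,x,\sum \mathbf{1}_{A_j}\mathbf{v}^j}_t$. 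By uniqueness of the FBSDE and $\mathcal{F}_t$-measurability of $\mathbf{1}_{A_j}$, the latter equals $\sum_j \mathbf{1}_{A_j}Y^{t,x,\mathbf{v}^j}_t$. Hence
\[
Y^{t,x,\mathbf{u}}_t\le \sup_{\mathbf{v}\in\mathcal{U}^t[t,T]} Y^{t,x,\mathbf{v}}_t \quad \mathbb{P}\text{-a.s.}
\]
Since the reverse inequality is trivial ($\mathcal{U}^t\subset\mathcal{U}$), we get $V(t,x)=\sup_{\mathbf{v}\in\mathcal{U}^t}Y^{t,x,\mathbf{v}}_t$. This is a supremum of deterministic numbers, hence deterministic.

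\textbf{Main obstacle.} The main obstacle is the approximation step. The general $\mathbf{u}$ is only progressively measurable with respect to $\mathcal{F}_s=\mathcal{F}_t\vee\mathcal{F}^s_t$, so a bare limit argument is delicate. The route we would try first is Peng's shift argument instead. Take the Cameron--Martin-type transformations $\tau_h(\omega)=\omega+h$, with $h$ in the Cameron--Martin space supported on $[0,t]$. These preserve $\mathbb{P}$ (up to absolute continuity), leave $W_\cdot-W_t$ on $[t,T]$ unchanged, and map $\mathcal{U}[t,T]$ onto itself. Uniqueness of the mild solution and of the BSDE gives $Y^{t,x,\mathbf{u}}_t\circ\tau_h=Y^{t,x,\mathbf{u}\circ\tau_h}_t$, hence $V(t,x)\circ\tau_h=V(t,x)$ a.s. An $\mathcal{F}_t$-measurable random variable invariant under all such shifts is a.s.\ constant. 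This conclusion follows by testing against $\exp(\int_0^t h\,dW)$-type functionals, or by Girsanov: $\mathbb{E}[V\,\mathcal{E}_h]=\mathbb{E}[V]$ for all $h$ forces $V=\mathbb{E}V$.

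Integrability of $V(t,x)$, which is needed for these expectations, follows from (\ref{state1est}). That estimate gives $|Y^{t,x,\mathbf{u}}_t|\le C(1+|x|)$ uniformly in $\mathbf{u}$, so $V(t,x)$ is bounded.
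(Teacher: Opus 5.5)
Your Step 1 is essentially the paper's own proof in Appendix A. There too, a control is replaced, ``by (\ref{fbjia4}) without loss of generality'', by $\sum_j\mathbf{1}_{A_{ij}}\mathbf{u}_{ij}$ with $A_{ij}\in\mathcal{F}_t$ and $\mathbf{u}_{ij}$ $\mathcal{F}^t$-progressive. Uniqueness then splits $J$ as in (\ref{10081118}), and each $J(t,x,\mathbf{u}_{ij})$ is deterministic.

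The paper also builds a nondecreasing maximizing sequence from the lattice property (\ref{jiale1}) and then bounds by $\max_j J(t,x,\mathbf{u}_{ij})$. Your direct inequality $Y^{t,x,\mathbf{u}}_t\le\sup_{\mathbf{v}\in\mathcal{U}^t[t,T]}Y^{t,x,\mathbf{v}}_t$ makes that detour unnecessary. It also yields the explicit identity $V(t,x)=\sup_{\mathbf{v}\in\mathcal{U}^t[t,T]}Y^{t,x,\mathbf{v}}_t$.

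The approximation step you flag as the main obstacle is exactly what the paper takes for granted, and it is standard. Approximate $\mathbf{u}$ in $d\mathbb{P}\otimes dr$-measure, first via a countable dense subset of $U$. Then use a monotone-class argument on predictable rectangles $(r_1,r_2]\times F$: each $F\in\mathcal{F}_{r_1}=\mathcal{F}_t\vee\mathcal{F}^{r_1}_t$ is approximated by finite unions of sets $A\cap B$ with $A\in\mathcal{F}_t$ and $B\in\mathcal{F}^{r_1}_t$. When you feed this into (\ref{fbjia4}), put the fixed control $\mathbf{u}$ in the role of $\mathbf{v}$. The coefficient differences are then evaluated along the fixed solution $(X,Y,Z)^{t,x,\mathbf{u}}$. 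Continuity of $(b,\sigma,q)$ in $u$, linear growth and dominated convergence give the convergence, and the a.s.\ inequality passes to the limit.

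The Cameron--Martin shift argument you would ``try first'' is a genuinely different route, not used in the paper. It avoids the approximation lemma. The price is rewriting every object as a measurable functional of the cylindrical Wiener path. You must also check that the stochastic convolutions and BSDE integrals on $[t,T]$ commute with $\tau_h$; this holds since $\dot h=0$ on $[t,T]$ and $\mathbb{P}\circ\tau_h^{-1}\sim\mathbb{P}$. Your final Girsanov/totality step is correct.

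One correction in that route: (\ref{state1est}) only gives $\sup_{\mathbf{u}}\mathbb{E}|Y^{t,x,\mathbf{u}}_t|^p<\infty$. That does not by itself control an esssup over uncountably many controls. The a.s.\ bound $|Y^{t,x,\mathbf{u}}_t|\le C(1+|x|)$, and hence $V(t,x)\in L^\infty$, comes from the conditional estimate (\ref{fbjia51022}) at $s=t$. Alternatively, test the shift invariance on $\arctan V(t,x)$ instead of $V(t,x)$.
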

{The proof of the above two theorems is essentially the same as that in finite dimensional case, we postpone it to \ref{appendixc}}.

The following  property of the value functional $V$ which we present is an immediate consequence of {Lemma \ref{theoremito2}} and Theorem \ref{valuedet}.
\begin{lemma}\label{lemmavaluev}
	Let  Assumption \ref{hypstate} (i), (ii)  and (iii) be satisfied. Then,  
	for all 
	$0\leq t\leq T$, $x, y\in H$,
	\begin{eqnarray}\label{valuelip}
	|V(t,x)-V(t,y)|\leq C|x-y|, \quad
	|V(t,x)|\leq C(1+|x|).
	\end{eqnarray}
\end{lemma}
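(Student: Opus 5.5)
We derive both bounds in \eqref{valuelip} from the fact that $V(t,\cdot)$ is a supremum of the maps $x\mapsto Y^{t,x,\mathbf{u}}_t$. By Theorem \ref{valuedet} we may treat $V(t,x)$ as a deterministic number. By Lemma \ref{theoremito2}, the constants bounding these maps are uniform in the control $\mathbf{u}$, and the rest is elementary manipulation of the essential supremum.

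For the Lipschitz bound, fix $t\in[0,T]$, $x,y\in H$ and $\mathbf{u}\in\mathcal{U}[t,T]$. Take \eqref{fbjia41022} at $s=t$. Since $x,y$ are deterministic, $Y^{t,x,\mathbf{u}}_t-Y^{t,y,\mathbf{u}}_t$ is $\mathcal{F}_t$-measurable, so the conditional expectation of its square is the square itself. This gives, $\mathbb{P}$-a.s.,
\begin{eqnarray*}
|Y^{t,x,\mathbf{u}}_t-Y^{t,y,\mathbf{u}}_t|\leq C|x-y|,
\end{eqnarray*}
with $C$ depending only on $T,L,M_1$, not on $\mathbf{u}$. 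This is also the content of \eqref{0903jia1022}. Hence $Y^{t,x,\mathbf{u}}_t\leq Y^{t,y,\mathbf{u}}_t+C|x-y|\leq V(t,y)+C|x-y|$ a.s. for every $\mathbf{u}$. Taking the essential supremum over $\mathbf{u}$ gives $V(t,x)\leq V(t,y)+C|x-y|$ a.s., hence surely, since both sides are deterministic by Theorem \ref{valuedet}. Exchanging $x$ and $y$ yields the first inequality in \eqref{valuelip}.

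For the growth bound, use \eqref{fbjia51022} at $s=t$ in the same way: $|Y^{t,x,\mathbf{u}}_t|^2\leq C(1+|x|^2)$ a.s., uniformly in $\mathbf{u}$. Therefore $|Y^{t,x,\mathbf{u}}_t|\leq C(1+|x|)$. The upper bound passes to the essential supremum directly. The lower bound follows because $V(t,x)\geq Y^{t,x,\mathbf{u}}_t\geq -C(1+|x|)$ for any single fixed $\mathbf{u}$.

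The only delicate point is the uniformity of the constants in $\mathbf{u}$ together with the passage from a.s. inequalities to deterministic ones. Lemma \ref{theoremito2} provides the first, since its constants depend only on $T,L,M_1$. Theorem \ref{valuedet} provides the second. So no real obstacle remains: the proof is essentially immediate from these two results.
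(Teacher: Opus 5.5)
Your proposal is correct and follows the paper's own route: the paper calls the lemma an immediate consequence of Lemma \ref{theoremito2} and Theorem \ref{valuedet}, and you write that out by taking the conditional estimates at $s=t$ (uniform in $\mathbf{u}$), passing the resulting bounds through the essential supremum, and getting the lower growth bound from a single fixed control. One caveat, which the paper's one-line justification shares: Lemma \ref{theoremito2} is formally stated under the full Assumption \ref{hypstate}, including the contraction hypothesis (i'), whereas here only (i)--(iii) are assumed; the two conditional estimates you use still hold under (i)--(iii) by the standard mild-solution and BSDE estimates (cf.\ \eqref{0903jia1022}), so the argument goes through.
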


\par
We also have
\begin{lemma}\label{lemma3.6}
	For all $(t,\xi,\mathbf{u})\in[0,T]\times L^p(\Omega,\mathcal{F}_t,\mathbb{P};H)\times{\mathcal
		{U}}[t,T]$ for some $p>2$, we have
	\begin{eqnarray}\label{3.14}
	V(t,\xi)\geq Y^{t,\xi,\mathbf{u}}_t,\
	\ \mathbb{P}\mbox{-a.s.},
	\end{eqnarray}
	and for any $\varepsilon>0$ there exists an
	admissible control $\mathbf{u}\in{\mathcal
		{U}}[t,T]$ such that
	\begin{eqnarray}\label{3.15}
	V(t,\xi)\leq Y^{t,\xi,\mathbf{u}}_t+\varepsilon\
	\ \mathbb{P}\mbox{-a.s.}.
	\end{eqnarray}
\end{lemma}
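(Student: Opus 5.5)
We follow the standard finite-dimensional argument (Peng's approach), approximating $\xi$ by simple random variables and using the Lipschitz continuity of $V$ in $x$ (Lemma \ref{lemmavaluev}) together with the stability estimate \eqref{0903jia1022} of Theorem \ref{theoremj=y}. Throughout, $V(t,\xi)$ means $V(t,x)|_{x=\xi}$; this is well defined since $V(t,\cdot)$ is deterministic (Theorem \ref{valuedet}) and Lipschitz.

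\emph{Simple $\xi$.} First take $\xi=\sum_{i=1}^N 1_{A_i}x_i$ with $\{A_i\}\subset\mathcal F_t$ a partition of $\Omega$ and $x_i\in H$. Since $(b,\sigma,q,\phi)$ do not depend on $\omega$ and solutions are unique, we have $Y^{t,\xi,\mathbf{u}}_t=\sum_i 1_{A_i}Y^{t,x_i,\mathbf{u}}_t$. This follows by checking that $\sum_i 1_{A_i}X^{t,x_i,\mathbf u}$ and $\sum_i1_{A_i}(Y,Z)^{t,x_i,\mathbf u}$ solve the equations with initial datum $\xi$, using $1_{A_i}\in\mathcal F_t$. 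From $V(t,x_i)\ge Y^{t,x_i,\mathbf{u}}_t$ a.s., which holds by the definition \eqref{value1} as an esssup, we get \eqref{3.14} for simple $\xi$.

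For \eqref{3.15} with simple $\xi$, for each $i$ we need a control $\mathbf u^i$ with $V(t,x_i)\le Y^{t,x_i,\mathbf u^i}_t+\varepsilon$ a.s. This is the step requiring care. The family $\{Y^{t,x_i,\mathbf u}_t\}_{\mathbf u}$ is upward directed: given $\mathbf u,\mathbf v$, set $B=\{Y^{t,x_i,\mathbf u}_t\ge Y^{t,x_i,\mathbf v}_t\}\in\mathcal F_t$ and $\mathbf w=1_B\mathbf u+1_{B^c}\mathbf v$; the same pasting argument gives $Y^{t,x_i,\mathbf w}_t=\max$. Hence there is a sequence $\mathbf u^{k}$ with $Y^{t,x_i,\mathbf u^k}_t\uparrow V(t,x_i)$ a.s. Setting $\Gamma_k=\{V(t,x_i)-Y^{t,x_i,\mathbf u^k}_t\le\varepsilon\}\setminus\Gamma_{k-1}$ and $\mathbf u^i=\sum_k1_{\Gamma_k}\mathbf u^k$, these sets partition $\Omega$ up to a null set, so pasting once more gives the required $\mathbf u^i$. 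Then $\mathbf u=\sum_i1_{A_i}\mathbf u^i$ works for $\xi$. The countable pasting must be justified: the indicator sequence is $\mathcal F_t$-measurable, and the identity $Y^{t,x,\sum 1_{\Gamma_k}\mathbf u^k}_t=\sum_k1_{\Gamma_k}Y^{t,x,\mathbf u^k}_t$ follows from uniqueness after localizing on each $\Gamma_k$, with estimates \eqref{state1est} ensuring integrability. This is the main technical point.

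\emph{General $\xi\in L^p$.} Choose simple $\xi_n\to\xi$ in $L^p$ and a.s. with $|\xi_n-\xi|\le|\xi|$-type control, for instance by truncating and quantizing onto a countable dense set; then $|\xi-\xi_n|\to0$ a.s. By Lemma \ref{lemmavaluev}, $|V(t,\xi)-V(t,\xi_n)|\le C|\xi-\xi_n|$, and by \eqref{0903jia1022}, $|Y^{t,\xi,\mathbf u}_t-Y^{t,\xi_n,\mathbf u}_t|\le C|\xi-\xi_n|$ a.s. uniformly in $\mathbf u$. Passing to the limit in \eqref{3.14} for $\xi_n$ gives \eqref{3.14} for $\xi$. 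For \eqref{3.15}, we avoid a limit over controls by a direct choice. Take a countable $\mathcal F_t$-partition and points $x_j$ with $|\xi-x_j|\le\varepsilon/(3C)$ on $A_j$, which is possible by separability of $H$. This gives $|\xi-\tilde\xi|\le \varepsilon/(3C)$ everywhere for $\tilde\xi:=\sum_j1_{A_j}x_j$ (countably-valued). Apply the step above with $\varepsilon/3$ and countable pasting to $\tilde\xi$. Then
\[V(t,\xi)\le V(t,\tilde\xi)+\tfrac\varepsilon3\le Y^{t,\tilde\xi,\mathbf u}_t+\tfrac{2\varepsilon}3\le Y^{t,\xi,\mathbf u}_t+\varepsilon.\]
The main obstacle remains rigorously handling the countable pasting and the identity for countably-valued initial data; it is resolved by uniqueness of mild and BSDE solutions localized on $\mathcal F_t$-sets, as in the appendix proof of Theorem \ref{theoremj=y}.
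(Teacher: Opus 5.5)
Your argument is correct and is essentially the paper's. You approximate $\xi$ uniformly by a countably-valued $\mathcal F_t$-measurable variable (the paper's $f^k$), paste pointwise $\varepsilon/3$-optimal controls along the partition, and conclude with \eqref{valuelip} and \eqref{0903jia1022}. There are only two variations. You obtain \eqref{3.14} via simple approximations rather than substituting $y=\xi$ into $J(t,y,\mathbf u)$ through Theorem \ref{theoremj=y}. You build the pointwise $\varepsilon$-optimal controls by upward-directedness and countable pasting instead of invoking the determinism argument from the proof of Theorem \ref{valuedet}.

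One small indexing fix: with $E_k:=\{V(t,x_i)-Y^{t,x_i,\mathbf u^k}_t\le\varepsilon\}$ increasing in $k$, take $\Gamma_k:=E_k\setminus E_{k-1}$. As you wrote it, $E_k\setminus\Gamma_{k-1}$ does not yield disjoint sets.
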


\begin{proof}  From Theorem \ref{theoremj=y} and  the definition of $V(t,x)$,  we have, for any $\mathbf{u}\in{\mathcal
	{U}}[t,T]$,
$$
V(t,\xi)=V(t,y)|_{y=\xi}=\left[\mathop{\esssup}\limits_{\mathbf{v}\in{\mathcal
		{U}}[t,T]}J(t,y,\mathbf{v})\right]\bigg{|}_{y=\xi}\geq
J(t,y,\mathbf{u})|_{y=\xi}=Y^{t,\xi,\mathbf{u}}_t.
$$
We now prove (\ref{3.15}).
Let $f^k$ be the functions defined in the proof procedure of Theorem \ref{theoremj=y}.
Then $f^k(\omega)$ is ${\mathcal
	{F}}_{t}$-measurable and
converges to $\xi$ strongly and
uniformly.
  For {$k\varepsilon>3(1+C)$},  we have
$$
|f^k-\xi|_H\leq \frac{\varepsilon}{3(1+C)}\leq \frac{\varepsilon}{3}.
$$
Therefore, for every $\mathbf{u}\in {\mathcal {U}}[t,T]$, by (\ref{0903jia1022}) and (\ref{valuelip}),
$$
\left|Y^{t,f^k,\mathbf{u}}_t-Y^{t,\xi,\mathbf{u}}_t\right|\leq
\frac{\varepsilon}{3},
\ \
\left|V(t,f^k)-V(t,\xi)\right|\leq
\frac{\varepsilon}{3},\ \mathbb{P}\mbox{-a.s.}.
$$
Moreover, for every $h^n\in H$,  as in the proof  of Theorem \ref{valuedet},
we  choose an admissible control
$\mathbf{v}^n$ such that
$$
V(t,h^n)\leq
Y^{t,h^n, \mathbf{v}^n}_t+\frac{\varepsilon}{3}, \quad  \mathbb{P}\mbox{-a.s.}.
$$
Set
$\mathbf{u}:=\sum^{\infty}_{n=1}\mathbf{v}^n1_{\{\xi\in
	B_{n,k}\}}$.  Then,
\begin{eqnarray*}
\begin{aligned}
	Y^{t,\xi,\mathbf{u}}_t&\geq
	-|Y^{t,f^k,\mathbf{u}}_t-Y^{t,\xi,\mathbf{u}}_t|+Y^{t,f^k,\mathbf{u}}_t\geq-\frac{\varepsilon}{3}+\sum^{\infty}_{n=1}Y^{t,h^n,\mathbf{v}^{n}}_t1_{\{\xi\in
		B_{n,k}\}}\\
	&\geq-\frac{2\varepsilon}{3}+\sum^{\infty}_{n=1}V(t,h^n)1_{\{\xi\in
		B_{n,k}\}}=-\frac{2\varepsilon}{3}+V(t,f^k)
	\geq-\varepsilon+V(t,\xi),  \ \  \mathbb{P}\mbox{-a.s.}.
\end{aligned}
\end{eqnarray*}
Thus, we have (\ref{3.15}).
\end{proof}

\par
We now discuss the dynamic programming principle  (DPP) for the optimal control problem (\ref{state1}), (\ref{fbsde1}) and (\ref{value1}).
For this purpose, we define the family of backward semigroups associated with BSDE (\ref{fbsde1}), following the
idea of Peng \cite{peng11}.
\par
Given the initial path $(t,x)\in [0,T)\times H$, a positive number $\delta\leq T-t$, an admissible control $\mathbf{u}\in {\mathcal{U}}[t,t+\delta]$ and
a real-valued random variable $\zeta\in L^2(\Omega,{\mathcal{F}}_{t+\delta},\mathbb{P};\mathbb{R})$, we define
\begin{eqnarray}\label{gdpp}
G^{t,x,\mathbf{u}}_{s,t+\delta}[\zeta]:=\tilde{Y}^{t,x,\mathbf{u}}_s,\ \
\ \ \ \ s\in[t,t+\delta],
\end{eqnarray}
where $(\tilde{Y}^{t,x,\mathbf{u}}_s,\tilde{Z}^{t,x,\mathbf{u}}_s)_{t\leq s\leq
	t+\delta}$ is the solution of the following
BSDE:
\begin{eqnarray}\label{bsdegdpp}
\begin{cases}
d\tilde{Y}^{t,x,\mathbf{u}}_s =-q(s,X^{t,x,\mathbf{u}}_s,\tilde{Y}^{t,x,\mathbf{u}}_s,\tilde{Z}^{t,x,\mathbf{u}}_s,u_s)ds
 +\tilde{Z}^{t,x,\mathbf{u}}_sdW_s, \quad s\in [t, t+\delta]; \\
\ \tilde{Y}^{t,x,\mathbf{u}}(t+\delta)=\zeta
\end{cases}
\end{eqnarray}
with $X^{t,x,u}$ being the solution of SEE (\ref{state1}).
%
%
%

\begin{theorem}\label{theoremddp} 
	Let  Assumption \ref{hypstate} (i), (ii) and (iii)  be satisfied. Then, the value functional
	$V$ satisfies the following: for
	any $(t,x)\in [0,T)\times H$ and $0\leq t<t+\delta\leq T$,
	\begin{eqnarray}\label{ddpG}
	V(t,x)=\mathop{\esssup}\limits_{\mathbf{u}\in{\mathcal
			{U}}[t,t+\delta]}G^{t,x,\mathbf{u}}_{t,t+\delta}[V(t+\delta,X^{t,x,\mathbf{u}}_{t+\delta})].
	\end{eqnarray}
\end{theorem}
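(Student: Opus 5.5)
The plan is to follow Peng's backward-semigroup argument. The tools are the flow property of the state and cost equations, the comparison theorem for the BSDE (\ref{bsdegdpp}), and the $L^2$-stability of $G$ with respect to the terminal value. Write $W_\delta(t,x)$ for the right-hand side of (\ref{ddpG}). By uniqueness for (\ref{fbsde1}), for every $\mathbf{u}\in\mathcal{U}[t,T]$ we have
\begin{eqnarray*}
Y^{t,x,\mathbf{u}}_t=G^{t,x,\mathbf{u}}_{t,t+\delta}\big[Y^{t,x,\mathbf{u}}_{t+\delta}\big]=G^{t,x,\mathbf{u}}_{t,t+\delta}\big[Y^{t+\delta,X^{t,x,\mathbf{u}}_{t+\delta},\mathbf{u}}_{t+\delta}\big].
\end{eqnarray*}
This identity was already recorded before Theorem \ref{valuedet}. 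I will prove the two inequalities $V\le W_\delta$ and $V\ge W_\delta$ separately.

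\emph{Step 1: $V(t,x)\le W_\delta(t,x)$.} Fix $\mathbf{u}\in\mathcal{U}[t,T]$ and put $\xi:=X^{t,x,\mathbf{u}}_{t+\delta}$. By (\ref{state1est}), $\xi\in L^p(\Omega,\mathcal{F}_{t+\delta},\mathbb{P};H)$. Lemma \ref{lemma3.6}, applied at time $t+\delta$ to $\xi$ and the restriction of $\mathbf{u}$ to $[t+\delta,T]$, gives
\begin{eqnarray*}
Y^{t+\delta,\xi,\mathbf{u}}_{t+\delta}\le V(t+\delta,\xi)\quad \mathbb{P}\mbox{-a.s.}
\end{eqnarray*}
The comparison theorem for Lipschitz BSDEs then yields $Y^{t,x,\mathbf{u}}_t\le G^{t,x,\mathbf{u}}_{t,t+\delta}[V(t+\delta,\xi)]\le W_\delta(t,x)$. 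The generator $q$ is Lipschitz in $(y,z)$, and both BSDEs have the same forward process on $[t,t+\delta]$, so the comparison theorem applies. Taking the esssup over $\mathbf{u}$ gives the claim.

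\emph{Step 2: $V(t,x)\ge W_\delta(t,x)$.} Fix $\mathbf{u}\in\mathcal{U}[t,t+\delta]$ and $\varepsilon>0$. The second half of Lemma \ref{lemma3.6}, with $\xi=X^{t,x,\mathbf{u}}_{t+\delta}$, provides $\mathbf{v}\in\mathcal{U}[t+\delta,T]$ with
\begin{eqnarray*}
V(t+\delta,\xi)\le Y^{t+\delta,\xi,\mathbf{v}}_{t+\delta}+\varepsilon\quad \mathbb{P}\mbox{-a.s.}
\end{eqnarray*}
I then concatenate: $\tilde{\mathbf{u}}:=\mathbf{u}$ on $[t,t+\delta)$ and $\tilde{\mathbf{u}}:=\mathbf{v}$ on $[t+\delta,T]$. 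This process is progressively measurable, and $X^{t,x,\tilde{\mathbf{u}}}=X^{t,x,\mathbf{u}}$ on $[t,t+\delta]$. By monotonicity of $G$ and the standard a priori estimate $|G[\zeta_1]-G[\zeta_2]|\le C(\mathbb{E}[|\zeta_1-\zeta_2|^2\,|\,\mathcal{F}_t])^{1/2}$,
\begin{eqnarray*}
G^{t,x,\mathbf{u}}_{t,t+\delta}[V(t+\delta,\xi)]\le G^{t,x,\mathbf{u}}_{t,t+\delta}\big[Y^{t+\delta,\xi,\mathbf{v}}_{t+\delta}+\varepsilon\big]\le Y^{t,x,\tilde{\mathbf{u}}}_t+C\varepsilon\le V(t,x)+C\varepsilon.
\end{eqnarray*}
Letting $\varepsilon\to0$ and taking the esssup over $\mathbf{u}$ concludes. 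Theorem \ref{valuedet} guarantees that both sides are deterministic, so the esssup in (\ref{ddpG}) is a genuine number.

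\emph{Main obstacle.} The delicate point is Step 2, specifically the $\varepsilon$-optimal control for the random initial state $\xi$. This is exactly where Lemma \ref{lemma3.6} is needed, and its proof uses the discretization $f^k$ of $\xi$ together with the Lipschitz bounds (\ref{0903jia1022}) and (\ref{valuelip}). One must also check that $V(t+\delta,\xi)$ means the composition $V(t+\delta,y)|_{y=\xi}$ and coincides a.s. with the esssup at a random point; Theorem \ref{theoremj=y} handles this. Everything else reduces to standard BSDE comparison and stability.
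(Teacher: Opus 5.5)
Your proposal is correct and follows the paper's own argument. For the upper bound you use the flow identity $Y^{t,x,\mathbf{u}}_t=G^{t,x,\mathbf{u}}_{t,t+\delta}[Y^{t+\delta,X^{t,x,\mathbf{u}}_{t+\delta},\mathbf{u}}_{t+\delta}]$ together with (\ref{3.14}) and BSDE comparison. For the lower bound you use (\ref{3.15}), concatenation of controls, and the Lipschitz stability of $G$ in its terminal value from Lemma \ref{lemmaexist0409}.

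The only cosmetic difference is in that last estimate. You go through monotonicity of $G$ and the shift $\zeta\mapsto\zeta+\varepsilon$, whereas the paper bounds $|G[Y^{t+\delta,\xi,\bar{\mathbf{u}}}_{t+\delta}]-G[V(t+\delta,\xi)]|\le C\varepsilon$ directly.
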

\par

\begin{proof}
By the definition of $V(t,x)$ we have
$$
V(t,x)=\mathop{\esssup}\limits_{\mathbf{u}\in{\mathcal
		{U}}[t,T]}G^{t,x,\mathbf{u}}_{t,T}[\phi(X^{t,x,\mathbf{u}}_T)]=\mathop{\esssup}\limits_{\mathbf{u}\in{\mathcal
		{U}}[t,T]}G^{t,x,\mathbf{u}}_{t,t+\delta}\left[Y^{t+\delta,X^{t,x,\mathbf{u}}_{t+\delta},\mathbf{u}}_{t+\delta}\right].
$$
From (\ref{3.14}) and the comparison theorem of BSDE (see  \cite[Theorem 2.7]{zhou1}), we have
$$
V(t,x)\leq \mathop{\esssup}\limits_{\mathbf{u}\in{\mathcal
		{U}}[t,T]}G^{t,x,\mathbf{u}}_{t,t+\delta}[V(t+\delta,X^{t,x,\mathbf{u}}_{t+\delta})].
$$
On the other hand, from (\ref{3.15}), for any
$\varepsilon>0$ and $\mathbf{u}\in {\mathcal {U}}[t,T]$,  there
is an admissible control $\overline{\mathbf{u}}\in{\mathcal
	{U}}[t+\delta,T]$ such that
$$
V(t+\delta,X^{t,x,\mathbf{u}}_{t+\delta})\leq Y^{t+\delta,X^{
		t,x,\mathbf{u}}_{t+\delta},\overline{\mathbf{u}}}_{t+\delta}+\varepsilon, \ \  \mathbb{P}\mbox{-a.s.}
$$
For any $\mathbf{u}\in {\mathcal {U}}[t,T]$ with $\overline{\mathbf{u}}\in{\mathcal {U}}[t+\delta,T]$ from above, define
$$
\mathbf{v}=\mathbf{u}1_{[t,t+\delta]}(\cdot)+\overline{\mathbf{u}}1_{(t+\delta,T]}(\cdot).
\in {\mathcal
	{U}}[t,T].
$$
Then, by Lemma \ref{lemmaexist0409}, 
 we have
\begin{eqnarray*}
	&&\left|G^{t,x,\mathbf{u}}_{t,t+\delta}\left[Y^{t+\delta,X^{t,x,\mathbf{u}}_{t+\delta},\overline{\mathbf{u}}}_{t+\delta}\right]
	-G^{t,x,\mathbf{u}}_{t,t+\delta}[V(t+\delta, X^{t,x,\mathbf{u}}_{t+\delta})]\right|\leq C\varepsilon,
\end{eqnarray*}
where the constant $C$ is independent of
admissible control processes.
Therefore,
\begin{eqnarray*}
\begin{aligned}
	V(t,x)&\geq
	 G^{t,x,\mathbf{v}}_{t,t+\delta}\left[Y^{t+\delta,X^{t,x,\mathbf{v}}_{t+\delta},\mathbf{v}}(t+\delta)\right]
   =G^{t,x,\mathbf{u}}_{t,t+\delta}\left[Y^{t+\delta,X^{t,x,\mathbf{u}}_{t+\delta},\overline{\mathbf{u}}}_{t+\delta}\right]\\
	&\geq G^{t,x,\mathbf{u}}_{t,t+\delta}[V(t+\delta,X^{t,x,\mathbf{u}}_{t+\delta})]-C\varepsilon.
\end{aligned}
\end{eqnarray*}%
For the arbitrariness of $\varepsilon$, we have (\ref{ddpG}).
\end{proof}

From  Theorem \ref{theoremddp}, we  have

\begin{theorem}\label{theorem3.9}
	Under  Assumption \ref{hypstate} (i), (ii) and (iii), the value functional $V\in C^{0+}([0,T]\times H)$ and
	there is a constant $C>0$ such that for every  $0\leq t\leq s\leq T, x,y\in H$,
	\begin{eqnarray}\label{hold}
	\ \ \ \ \      |V(t,x)-V(s,y^A_{t,s})|\leq
	C(1+|x|)(s-t)^{\frac{1}{2}}+C|x-y|.
	\end{eqnarray}
\end{theorem}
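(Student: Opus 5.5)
By the Lipschitz bound of Lemma \ref{lemmavaluev}, $|V(s,y^A_{t,s})-V(s,x^A_{t,s})|\le C|y^A_{t,s}-x^A_{t,s}|\le CM_1|x-y|$. It therefore suffices to prove
\[
|V(t,x)-V(s,x^A_{t,s})|\le C(1+|x|)(s-t)^{1/2}.
\]
We then deduce membership in $C^{0+}$ from (\ref{hold}).

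For the time estimate we use the dynamic programming principle, Theorem \ref{theoremddp}, with $\delta=s-t$:
\[
V(t,x)=\mathop{\esssup}_{\mathbf{u}} G^{t,x,\mathbf{u}}_{t,s}[V(s,X^{t,x,\mathbf{u}}_s)].
\]
For each fixed $\mathbf{u}$ we compare $G^{t,x,\mathbf{u}}_{t,s}[V(s,X^{t,x,\mathbf{u}}_s)]$ with the deterministic number $V(s,x^A_{t,s})$ in two steps.

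First, by standard BSDE estimates (the difference of two BSDE solutions, with the same driver, on $[t,s]$), we bound
\[
\bigl|G^{t,x,\mathbf{u}}_{t,s}[\zeta]-G^{t,x,\mathbf{u}}_{t,s}[\zeta']\bigr|\le C\,\mathbb{E}[|\zeta-\zeta'|^2|\mathcal{F}_t]^{1/2}.
\]
With $\zeta=V(s,X_s)$, $\zeta'=V(s,x^A_{t,s})$, the Lipschitz property of $V$ and (\ref{2.6}) (for $p=2$, or $p>2$ plus Hölder, conditionally on $\mathcal F_t$ since $x$ is deterministic) give a bound $C(1+|x|)(s-t)^{1/2}$.

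Second, we estimate $G^{t,x,\mathbf{u}}_{t,s}[c]-c$ for the constant $c=V(s,x^A_{t,s})$. Writing $\tilde Y_t-c=\mathbb{E}[\int_t^s q(l,X_l,\tilde Y_l,\tilde Z_l,u_l)dl\,|\,\mathcal F_t]$, we use the linear growth of $q$, the bound $|c|\le C(1+|x|)$, estimate (\ref{fbjia51022}) for $X$, and the standard bounds for $(\tilde Y,\tilde Z)$ in terms of $|c|$ and $\int|q(\cdot,X,0,0,u)|$. Cauchy–Schwarz on $\int_t^s|\tilde Z_l|dl\le (s-t)^{1/2}(\int_t^s|\tilde Z|^2)^{1/2}$ then gives $|\tilde Y_t-c|\le C(1+|x|)(s-t)^{1/2}$, with $C$ independent of $\mathbf{u}$. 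Combining the two steps and taking the esssup over $\mathbf u$ (the bounds are uniform in $\mathbf{u}$) gives the desired estimate; the constants depend only on $T,L,M_1$.

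The step I expect to be the main obstacle is the $\tilde Z$ term in the second step: we need $\mathbb{E}\int_t^s|\tilde Z|^2\le C(1+|x|^2)$ uniformly, not merely some bound that is small in $s-t$. I would handle it by the Itô formula for $|\tilde Y|^2$ as in the proof of Lemma \ref{theoremito2}.

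Finally, for continuity under $\Upsilon$, fix $(s,x)$ and let $\Upsilon((l,y),(s,x))\to0$ with $|y|\le C$. If $l\le s$, apply (\ref{hold}) with the pair $(l,y),(s,\cdot)$: $|V(l,y)-V(s,x)|\le |V(l,y)-V(s,y^A_{l,s})|+C|y^A_{l,s}-x|\le C(1+C)(s-l)^{1/2}+C\Upsilon^{1/4}$. If $l>s$, symmetrically $|V(s,x)-V(l,y)|\le C(1+|x|)(l-s)^{1/2}+C|x^A_{s,l}-y|$. Both tend to zero, so $V\in C^{0+}([0,T]\times H)$.
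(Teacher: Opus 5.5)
Your proposal is correct and follows essentially the same route as the paper: the dynamic programming principle, then a split into a terminal-value comparison (BSDE stability, Lipschitz continuity of $V$, and (\ref{2.6})) plus the term $G^{t,x,\mathbf{u}}_{t,s}[c]-c$, which is controlled by the linear growth of $q$ and Cauchy--Schwarz, and finally the same triangle-inequality argument for membership in $C^{0+}([0,T]\times H)$. The differences are cosmetic. You first reduce to $y=x$ via Lemma \ref{lemmavaluev}, and you prove bounds uniform in $\mathbf{u}$ conditionally on $\mathcal{F}_t$ before taking the esssup; the conditional version of (\ref{2.6}) follows from (\ref{fbjia51022}) and the conditional It\^o isometry, not from $x$ being deterministic. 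The paper instead picks an $\varepsilon$-optimal control and takes expectations, which is legitimate because $V(t,x)$ is deterministic.
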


\begin{proof}
Let $(t,x,y)\in[0,T)\times H\times H$ and $s\in[t,T]$.
From Theorem \ref{theoremddp} it follows that for any
$\varepsilon>0$ there exists an admissible control
$\mathbf{u}^\varepsilon\in{\mathcal {U}}[t,s]$ such that
\begin{eqnarray}\label{3.19}
G^{t,x,\mathbf{u}^\varepsilon}_{t,s}[V(s,X^{t,x,\mathbf{u}^\varepsilon}_{s})]
+\varepsilon\geq V(t,x)\geq G^{t,x,\mathbf{u}^\varepsilon}_{t,s}[V(s,X^{t,x,\mathbf{u}^\varepsilon}_{s})].
\end{eqnarray}
Therefore,
\begin{eqnarray}\label{3.20}
|V(t,x)-V(s,y^A_{t,s})|\leq
|I_1|+|I_2|+\varepsilon,
\end{eqnarray}
where
\begin{eqnarray*}
&&I_1=\mathbb{E}G^{t,x,\mathbf{u}^\varepsilon}_{t,s}[V(s,X^{t,x,\mathbf{u}^\varepsilon}_{s})]-
\mathbb{E}G^{t,x,\mathbf{u}^\varepsilon}_{t,s}[V(s,y^A_{t,s})], \\
&&I_2=\mathbb{E}G^{t,x,\mathbf{u}^\varepsilon}_{t,s}[V(s,y^A_{t,s})]-
V(s,y^A_{t,s}).
\end{eqnarray*}
We have from Proposition 4.3 in \cite{fuh0} and  Lemmas \ref{lemmaexist0409} and \ref{lemmavaluev}  that, for some suitable constant
$C$ that is independent of the control
$\mathbf{u}^\varepsilon$ and may change from line to line,
\begin{eqnarray*}
	|I_1|&\leq& C\mathbb{E}\left|V\left(s,X^{t,x,\mathbf{u}^\varepsilon}_{s}\right)-V(s,y^A_{t,s})\right|\\
	&\leq&C\mathbb{E}\left|X^{t,x,\mathbf{u}^\varepsilon}_{s}-y^A_{t,s}\right|\leq C(1+|x|)(s-t)^{\frac{1}{2}}+C|x-y|.
\end{eqnarray*}
From the definition of
$G^{t,x,\mathbf{u}^\varepsilon}_{t,s}[\cdot]$, the
second term $I_2$ can be written as
\begin{eqnarray*}
\begin{aligned}
	I_2=& \mathbb{E}\bigg{[}V(s,y^A_{t,s})+\int^{s}_{t}
	q\left(l,X_l^{t,x,\mathbf{u}^{\varepsilon}},Y^{t,x,\mathbf{u}^{\varepsilon}}_l,Z^{t,x,\mathbf{u}^{\varepsilon}}_l,\mathbf{u}^{\varepsilon}_l\right)dl-
	\int^{s}_{t}Z^{t,x,\mathbf{u}^{\varepsilon}}_ldW_l\bigg{]}-V(s,y^A_{t,s})\\
	=&\mathbb{E}\int^{s}_{t}
	q\left(l,X_l^{t,x,\mathbf{u}^{\varepsilon}},Y^{t,x,\mathbf{u}^{\varepsilon}}_l,Z^{t,x,\mathbf{u}^{\varepsilon}}_l,\mathbf{u}^{\varepsilon}_l\right)dl,
\end{aligned}
\end{eqnarray*}
where $(Y^{t,x,\mathbf{u}^{\varepsilon}}_l,Z^{t,x,\mathbf{u}^{\varepsilon}}_l)_{t\leq l\leq
	s}$ is the solution of
(\ref{bsdegdpp}) with the terminal
condition $\zeta = V(s,y^A_{t,s})$
and the control $\mathbf{u}^\varepsilon$.
Using Schwartz inequality, we have
\begin{eqnarray*}
\begin{aligned}
	|I_2|&\leq (s-t)^{\frac{1}{2}}\bigg{[}\int^{s}_{t}
	 \mathbb{E}\left|q\left(l,X_l^{t,x,\mathbf{u}^{\varepsilon}},Y^{t,x,\mathbf{u}^{\varepsilon}}_l,Z^{t,x,\mathbf{u}^{\varepsilon}}_l,
\mathbf{u}^{\varepsilon}_l\right)\right|^2dl
	\bigg{]}^{\frac{1}{2}}\\
	&\leq C(s-t)^{\frac{1}{2}}\bigg{[}\int^{s}_{t}
	 \mathbb{E}\left(1+|X_l^{t,x,\mathbf{u}^{\varepsilon}}|^{2}+|Y^{t,x,\mathbf{u}^{\varepsilon}}_l|^2
+|Z^{t,x,\mathbf{u}^{\varepsilon}}_l|^2\right)dl
	\bigg{]}^{\frac{1}{2}}\\
	&\leq C(1+|x|)(s-t)^{\frac{1}{2}}.
\end{aligned}
\end{eqnarray*}
Hence, from (\ref{3.20}),
$$
|V(t,x)-V(s,y^A_{t,s})|\leq
C(1+|x|)(s-t)^{\frac{1}{2}}+C|x-y|
+\varepsilon,
$$
and letting $\varepsilon\downarrow
0$,  we have   (\ref{hold}).
\par
Finally, let $(t,x;s,y)\in([0,T]\times H)^2$ and $s\in[t,T]$, by (\ref{valuelip}) and (\ref{hold})
\begin{eqnarray*}
\begin{aligned}
	|V(t,x)-V(s,y)|&\leq  |V(t,x)-V(s,x^A_{t,s})|+ |V(s,x^A_{t,s})-V(s,y)|\\
	&\leq  C(1+|x|)(s-t)^{\frac{1}{2}}+C|x^A_{t,s}-y|.
\end{aligned}
\end{eqnarray*}
Thus, $V\in C^{0+}([0,T]\times H)$.
The proof is complete.
\end{proof}

\section{Viscosity solutions to  HJB equations: existence}\label{sect-exist}

\par
In this {section}, we consider the  second  order 
HJB equation (\ref{hjb1}). As usual, we begin with classical solutions.
\par
\begin{definition}\label{definitionccc}     (Classical solution)
	A functional $v\in C_p^{1,2}([0,T]\times H)$  with  $A^*\nabla_xv\in C_p^0([0,T]\times H,H)$    is called a classical solution to  HJB equation (\ref{hjb1}) if it satisfies
	HJB equation (\ref{hjb1})  {point-wisely}.
\end{definition}
\par
We shall prove that the value functional $V$ defined by (\ref{value1}) is a viscosity solution of HJB equation (\ref{hjb1}).
First, we give the following definition.
For $p>2$, $(\vartheta, \varpi)\in L^{p}_{\mathcal{P}}(\Omega\times [0,T];H\times L_2(\Xi,H))$,
and $(t,\xi)\in [0,T)\times L^p(\Omega,\mathcal{F}_t,\mathbb{P};H)$,  
then the following process
\begin{eqnarray}\label{formular1}
\begin{aligned}
X_s=e^{(s-t)A}\xi+\int^{s}_{t}e^{(s-l)A}\vartheta_ldl+\int^{s}_{t}e^{(s-l)A}\varpi_ldW_l,\quad s\in [\theta,T],
\end{aligned}
\end{eqnarray}
is well defined and $\mathbb{E}[\sup_{s\in [t,T]}|X_s|_H^{p}]<\infty$ (see~\cite[Proposition  7.3]{da}).
\begin{definition}\label{definitionc202402061a}
	Let $t\in[0,T)$ and $g:[0,T]\times H\rightarrow \mathbb{R}$ be given, and $A$ satisfy Assumption \ref{hypstate} (i').
		  We say $g\in C^{1,2}_{p,A-}([t,T]\times H)\subset C^{0,2}_p([t,T]\times H)\cap LSC^{0+}([t,T]\times H)$ if,
for the solution $X$ of (\ref{formular1}) with initial condition $(t,x)\in [0,T)\times H$,
{\begin{eqnarray}\label{statesop020240206a}
	\quad&\begin{aligned}
	g(s,X_s)\leq&\, g(t,X_t)
+\int^{s}_{t}\langle \nabla_xg(l, X_l), \, \varpi_ldW_l\rangle_H\\[3mm]
	&+\!\int_{t}^{s}\!\!\left[\langle \nabla_xg(l, X_l), \, \vartheta_l\rangle_H+\frac{1}{2}\mbox{\rm Tr}(\nabla^2_{x}g(l,X_l)(\varpi\varpi^*)_l)\right]\! dl.
	\end{aligned}
	\end{eqnarray}}
\end{definition}

Define for $t\in[0,T)$,
$$
\Phi_t:=\left\{\varphi\in C_p^{1,2}([t,T]\times H): A^*\nabla_x\varphi\in C_p^0([t,T]\times H,H)\right\}
$$
and
{\begin{eqnarray*}
	\begin{aligned}
		{\mathcal{G}}_t:=&\bigg{\{}g\in C^{0}_p([t,T]\times H):  \exists \  (h_i, g_i)\in {C^1([0,T];[0,+\infty))}\times  C^{1,2}_{p,A-}([t,T]\times H), \\
      &  \ \ \ \ i=1,\ldots, \ \mathbf{N}  \ \mbox{such that for all} \ (s,x)\in [t,T]\times H, \,  g(s,x)=\sum^{\mathbf{N}}_{i=1}h_i(s)g_i(s,x)
		\bigg{\}}.
	\end{aligned}
\end{eqnarray*}}
{
   For $g\in {\mathcal{G}}_t$, we write
\begin{eqnarray*}
\partial_{t}^og(s,x)&:=&\sum^{\mathbf{N}}_{i=1}h_i'(s)g_i(s,x).
\end{eqnarray*}
}
{
\begin{remark}\label{remarkv0129120240206ab}
\begin{description}
\item[(i)]
By the proof procedure of Lemma \ref{lemma-0902}, for every $(\hat t,\hat x)\in [0,T)\times H$,
the functional $f(s,x)=|x-\hat{x}^A_{\hat{t}, s}|\in LSC^{0+}([\hat{t},T]\times H)$.
	Then, by Lemma \ref{theoremito2}, for   $A$ satisfying Assumption  \ref{hypstate} (i'), the functional $ f^{2m}(s,x)\in C^{1,2}_{p,A-}([\hat t,T]\times H)$ for all integer $m\geq 1$. Therefore, for $h\in C^1([0,T];[0,+\infty))$, $\delta, \delta_i,\delta'_i\geq0, N>0,$ and $(\hat x,t_i,x^i)\in H\times [0,T]\times H,
		i=0,1,2,\ldots, $ such that $t_i\leq \hat t$,  and $ \sum_{i=0}^{\infty}(\delta_i+\delta'_i)\leq N$, the functional :  $g(s,x):=h(s)|x|^4+\delta|x-\hat{x}^A_{\hat{t},s}|^{2}+\sum_{i=0}^{\infty}[\delta_i|x-(x^i)^A_{\hat{t},s}|^4
		+\delta^{'}_i|s-t_i|^2
		]$,  $(s,x)\in [\hat{t},T]\times H$,  which arises in the proof of  the comparison theorem,  belongs to ${\mathcal{G}}_{\hat{t}}$.
\item[(ii)]  We note that $\Phi_t\cap{\mathcal{G}}_t\neq\emptyset$. For example,  for every $ h\in C^1([0,T];[0,+\infty))$, define
$
 g(s,x):=h(s), \ (t,x)\in [t,T]\times H$.
Then $g\in \Phi_t\cap{\mathcal{G}}_t$.
\end{description}
\end{remark}}
Now,  we define our notion of  viscosity solutions to the HJB equation (\ref{hjb1}).

\begin{definition}\label{definition4.1} A  functional
	$w\in USC^{0+}([0,T]\times H)$ (resp., $w\in LSC^{0+}([0,T]\times H)$) is called a
	viscosity sub-solution (resp., super-solution)
	to HJB equation (\ref{hjb1}) if the terminal condition,  $w(T,y)\leq   \phi(T,y)$(resp., $w(T,y)\geq   \phi(T,y)$),
	$y\in H$ is satisfied, and for any $(\varphi, g)\in  \Phi_t\times {\mathcal{G}}_t$ with $t\in [0,T)$, whenever the function $w-\varphi-g$  (resp.,  $w+\varphi+g$) satisfies for $(t,x)\in [0,T)\times H$,
	$$
	0=({w}-\varphi-g)(t,x)=\sup_{(s,y)\in [t,T]\times H}
	({w}- \varphi-g)(s,y)
	$$
	$$
	\left(\mbox{resp.,}\ \
	0=({w}+\varphi+g)(t,x)=\inf_{(s,y)\in [t,T]\times H}
	({w}+\varphi+g)(s,y)\right),
	$$
	we have
	\begin{eqnarray*}
		&&\displaystyle \varphi_{t}(t,x)+\partial_{t}^og(t,x)+\left\langle A^*\nabla_x\varphi(t,x),\, x\right\rangle_H\\[3mm]
		&&
		+{\mathbf{H}}(t,x, (\varphi+g)(t,x),\nabla_x(\varphi+g)(t,x),\nabla^2_{x}(\varphi+g)(t,x))\geq0
	\end{eqnarray*}
	\begin{eqnarray*}\begin{aligned}
			\biggl(\mbox{resp.,} &-\varphi_{t}(t,x)-\partial_{t}^og(t,x)-\left\langle A^*\nabla_x\varphi(t,x),\,  x\right\rangle_H\\[3mm]
			&+{\mathbf{H}}(t,x,
			-(\varphi+g)(t,x),-\nabla_x(\varphi+g)(t,x),-\nabla^2_{x}(\varphi+g)(t,x))
			\leq0\biggr).
		\end{aligned}
	\end{eqnarray*}
	A  functional   $w\in C^{0+}([0,T]\times H)$ is said to be a
	viscosity solution to HJB equation (\ref{hjb1}) if it is
	both a viscosity sub- and
	super-solution.
\end{definition}

\begin{remark}\label{remarkv12241}
Assuming the $B$-continuity  on the coefficients, Fabbri et al. \cite{fab1} studied  viscosity solution of  HJB equation (\ref{hjb1}). The additive  term $|\cdot|_H$ in a test function  will help   to ensure inequality (1.111) in~\cite[page 84]{fab1}, for  $\langle Ax, x\rangle_H\leq 0$ for all $x\in {\mathcal{D}}(A)$ when $A$ is the generator of a $C_0$ contraction semi-group.  While the inequality $\langle Ax, x-y\rangle_H\leq 0$ for all $x\in {\mathcal{D}}(A)$ fails to be true anymore for $y\ne0$,  and thus the introduction of the term $|\cdot-y|_H$ in a test function for fixed $y\in H$ will incur a trouble to ensure  inequality (1.111) in~\cite[page 84]{fab1}. This explains why the coefficients are assumed to satisfy  the $B$-continuity condition in~\cite[(3.21) and (3.22), page 184]{fab1}) to ensure
 the $B$-continuity of the value functional  and  the comparison theorem.
\end{remark}
\begin{remark}\label{remarkv012912024}
		As noted in Remark \ref{remarks}, the term $|x-\hat{y}^A_{\hat{t},t}|^{2m}_H$ with fixed $(\hat{t},\hat{y})\in [0,T)\times H$
can enter into the test functions for our viscosity solutions, for it satisfies
It\^o inequality (\ref{jias510815jia11}). The appearance of this term, increases some slight difficulty in the existence of viscosity solutions, but as we will see, significantly helps  us to establish the comparison principle of viscosity solutions.
Indeed,  in contrast to the $B$-continuity in~\cite[Definition 3.35, page 198]{fab1}, the viscosity solution $w$ is assumed here to be merely continuous,
 and therefore the strong continuity of the value functional in the state space $H$ is sufficient to ensure that the value functional is a viscosity solution,
  and the comparison theorem is also established without assuming the $B$-continuity  on the coefficients. In this way, 
 in our viscosity solution theory in infinite dimension, the  $B$-continuity assumption on the coefficients in~\cite[(3.21) and (3.22), page 184]{fab1} is not required at all.
\end{remark}
{
\begin{remark}\label{remarkv0129120240129e}
	Assuming the $B$-upper   and $B$-lower semi-continuity of $ {W}_1$ and $ {W}_2$, respectively, the Ekeland-Lebourg Theorem
(see ~\cite[Theorem 3.25, page 188]{fab1})  in $H_{-2}$ is used to ensure the existence of  a maximum point of the auxiliary function in the
proof of \cite[Theorem 3.50, page 206]{fab1}. Our functionals  $ {W}_1$ and $ {W}_2$ are merely strong-continuous in the Hilbert space $H$,
 and the Ekeland-Lebourg Theorem does not apply here. In fact, application in $H$ of Ekeland-Lebourg Theorem  (to yield an extremal point
 of the auxiliary function in the proof of comparison theorem) would incur a linear functional  $\langle p,\cdot\rangle_H$ (with some fixed $p\in H$),
   which does not necessarily belong to $ {\Phi}_{\hat t}$ or $ {\mathcal{G}}_{\hat t}$,  to be added to the test function. However,
the Borwein-Preiss variational principle applies,  since  its application (to get a maximum point of the auxiliary function) incurs the functional  $\sum^{\infty}_{i=0}\frac{1}{2^i}|x-(x^i)^A_{t_i,t}|^{4}$ with fixed  $\{(t_i,x^i)\}_{i\geq0}\in [0,\hat{t}]\times H$,  which belongs to $ {\mathcal{G}}_{\hat t}$,  to be added to the test functions.
\end{remark}
\begin{remark}\label{remarkv0129120240129f}
{As we stated in Remark \ref{remarkv0129120240206ab}}, the test functions in our definition of viscosity solutions {include}  the following three type functionals:  $|x|^{4}$, $\sum^{\infty}_{i=0}\frac{1}{2^i}[|x-(x^i)^A_{t_i,t}|^{4}+|t-t_i|^2]$  with fixed  $\{(t_i,x^i)\}_{i\geq0}\in [0,\hat{t}]\times H$ and $|x-\hat{x}^A_{\hat{t},t}|^{2}$ with fixed $(\hat{t},\hat{x})\in [0,T)\times H$, while the test functions in ~\cite[Definition 3.35, page 198]{fab1} consist of  only the norm functional: $|x|, x\in H$. The first functional is introduced to control the  quadratic growth condition (\ref{w2024}); as we see in Remark \ref{remarkv0129120240129e}, the second functional is introduced in our test function as a  consequence of  applying  the Borwein-Preiss variational principle; the third term is introduced in terms of the form $2|x-\hat{x}^A_{\hat{t},t}|^{2}+2|y-\hat{x}^A_{\hat{t},t}|^{2}$ to control $|x-y|^2$ in the proof of the comparison theorem.
\end{remark}}



\begin{theorem}\label{theoremvexist}
	Let Assumption \ref{hypstate}   be satisfied. Then,  the value
	functional $V$ defined by (\ref{value1}) is a
	viscosity solution to equation (\ref{hjb1}).
\end{theorem}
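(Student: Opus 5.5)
We follow the standard route for showing that a value functional is a viscosity solution: the dynamic programming principle (Theorem \ref{theoremddp}) combined with It\^o's formula for $\varphi$ and the It\^o inequality \eqref{statesop020240206a} for the $\mathcal{G}_t$-part of the test function. By Theorem \ref{theorem3.9} we have $V\in C^{0+}([0,T]\times H)$, and $V(T,\cdot)=\phi$ by definition, so only the two differential inequalities remain. We treat the sub-solution property in detail; the super-solution property is symmetric, using the near-optimal control of Theorem \ref{theoremddp} instead of an arbitrary constant control.

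\emph{Sub-solution.} Fix $(\varphi,g)\in\Phi_t\times\mathcal{G}_t$ with $0=(V-\varphi-g)(t,x)=\sup_{[t,T]\times H}(V-\varphi-g)$. Set $\Gamma:=\varphi+g$ and suppose, for contradiction, that
\[
\varphi_t+\partial^o_tg+\langle A^*\nabla_x\varphi,x\rangle_H+\mathbf{H}(t,x,\Gamma,\nabla_x\Gamma,\nabla_x^2\Gamma)\le -\rho<0 .
\]
Then there is a $u\in U$ for which the quantity inside the supremum in $\mathbf{H}$ is $\le -\rho$. Actually it is cleaner to argue directly: for every $u\in U$ and every small $\delta>0$, use the constant control $u$ on $[t,t+\delta]$. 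By Theorem \ref{theoremddp}, $V(t,x)\ge G^{t,x,u}_{t,t+\delta}[V(t+\delta,X_{t+\delta})]$. Since $V\le\Gamma$ on $[t,T]\times H$, the comparison theorem for BSDEs gives $\Gamma(t,x)\ge G^{t,x,u}_{t,t+\delta}[\Gamma(t+\delta,X_{t+\delta})]$.

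Next we expand $\Gamma(s,X_s)$ on $[t,t+\delta]$. For $\varphi$ we use It\^o's formula for mild solutions. This is legitimate because $A^*\nabla_x\varphi\in C^0_p$: we apply it first to the Yosida approximation $X^\mu$ of \eqref{07162} and pass to the limit via \eqref{0717}. For each $g_i$ we use the one-sided inequality \eqref{statesop020240206a}, and for $h_ig_i$ we use the product rule with $h_i\ge0$, $h_i\in C^1$. Together these give an It\^o \emph{inequality}
\[
\Gamma(s,X_s)\le \Gamma(t,x)+\int_t^s\mathcal{L}^u\Gamma(l,X_l)\,dl+\int_t^s\langle\nabla_x\Gamma(l,X_l),\sigma\,dW_l\rangle_H ,
\]
where $\mathcal{L}^u\Gamma=\varphi_t+\partial^o_tg+\langle A^*\nabla_x\varphi,X\rangle+\langle\nabla_x\Gamma,b\rangle+\frac12\mathrm{Tr}(\nabla_x^2\Gamma\,\sigma\sigma^*)$.

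We then compare the two BSDEs on $[t,t+\delta]$. The first is $(\tilde Y,\tilde Z)$, with terminal value $\Gamma(t+\delta,X_{t+\delta})$ and driver $q$. The second is the semimartingale $\hat Y_s:=\Gamma(s,X_s)$, $\hat Z_s:=\nabla_x\Gamma(s,X_s)\sigma(s,X_s,u)$, whose drift is dominated by $\mathcal{L}^u\Gamma$. The difference $Y^1:=\tilde Y-\hat Y$ solves a linear BSDE whose driver contains the term $\mathcal{L}^u\Gamma(l,X_l)+q(l,X_l,\hat Y_l,\hat Z_l,u)$ plus Lipschitz-linear terms in $(Y^1,Z^1)$, together with a nonnegative correction coming from the inequality. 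A Girsanov/linearisation argument yields
\[
Y^1_t\ \ge\ \mathbb{E}\int_t^{t+\delta}\bigl[\mathcal{L}^u\Gamma+q(\cdot,\Gamma,\nabla_x\Gamma\,\sigma,u)\bigr](l,X_l)\,dl-C\delta^{3/2}.
\]
The direction works because the It\^o inequality makes $\hat Y$ a subsolution in the correct sense. The error term uses \eqref{2.6} and \eqref{fbjia41022}--\eqref{fbjia51022}, and the $C^0$-continuity of the coefficients uniformly in $u$ from Assumption \ref{hypstate}. Combining this with $\Gamma(t,x)\ge G[\cdot]=\hat Y_t+Y^1_t$, where $\hat Y_t=\Gamma(t,x)$, dividing by $\delta$ and letting $\delta\to0$ gives
\[
0\ge \mathcal{L}^u\Gamma(t,x)+q(t,x,\Gamma,\nabla_x\Gamma\,\sigma,u).
\]
The continuity needed at $(t,x)$ is continuity of the integrand along $X_l$, which holds since $X_l\to x$ in $L^p$ by \eqref{2.6} together with the polynomial growth of $\Gamma$ and its derivatives. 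Taking the supremum over $u$ gives the required inequality, with the correct sign convention.

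\emph{Super-solution.} Fix $(\varphi,g)$ with $0=(V+\varphi+g)(t,x)=\inf_{[t,T]\times H}(V+\varphi+g)$, and suppose the super-solution inequality fails with margin $\rho>0$. By Theorem \ref{theoremddp}, for each $\delta$ there is a $\mathbf{u}^\delta$ with $V(t,x)\le G^{t,x,\mathbf{u}^\delta}[V(t+\delta,X_{t+\delta})]+\delta^2$. Since $V\ge-\Gamma$, this gives $-\Gamma(t,x)\le G[-\Gamma(t+\delta,X_{t+\delta})]+\delta^2$. Running the same It\^o-inequality/linear-BSDE argument on $-\Gamma$ (the sign again matches) gives
\[
-\delta^2\le \mathbb{E}\int_t^{t+\delta}\bigl[-\mathcal{L}^{u^\delta_l}\Gamma+q(\cdot,-\Gamma,-\nabla_x\Gamma\,\sigma,u^\delta_l)\bigr]dl+o(\delta).
\]
The integrand is bounded above by the Hamiltonian expression at $(l,X_l)$, which is $\le-\rho/2$ near $(t,x)$, uniformly in $u$ by the uniform continuity in $u$. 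This gives a contradiction for small $\delta$.

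The main obstacle is this uniform-in-$u$ localisation. Replacing $(l,X_l)$ by $(t,x)$ requires modulus bounds on $b,\sigma,q$ uniform in $u$, and $\langle A^*\nabla_x\varphi(l,X_l),X_l\rangle$ must be controlled via the $C^0_p$ property and \eqref{2.6}. A further delicate point is making the one-sided It\^o inequality for $g$ compatible with the nonlinear BSDE comparison. We handle it by writing $\hat Y$ as the solution of a BSDE with an extra nonnegative (resp. nonpositive) finite-variation term and invoking the BSDE comparison theorem of \cite{zhou1}.
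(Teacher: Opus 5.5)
Your overall architecture matches the paper's: the DPP, It\^o's formula for $\varphi$, the one-sided inequality \eqref{statesop020240206a} for $g$, and linearisation of the BSDE difference. However, you have attached the two kinds of control to the wrong halves, and the key comparison step then fails in both.

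\textbf{Sub-solution half.} The inequalities $V(t,x)\ge G^{u}[V(t+\delta,X_{t+\delta})]$ and $V\le\Gamma$ point in opposite directions, since $V\le\Gamma$ gives $G^{u}[V(t+\delta,X_{t+\delta})]\le G^{u}[\Gamma(t+\delta,X_{t+\delta})]$. So $\Gamma(t,x)\ge G^{u}[\Gamma(t+\delta,X_{t+\delta})]$ does not follow.

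The conclusion you reach is also wrong. You get $\mathcal{L}^u\Gamma+q\le0$ for every $u$, i.e.\ $\varphi_t+\partial_t^og+\langle A^*\nabla_x\varphi,x\rangle_H+\mathbf{H}\le0$. This is the reverse of what Definition \ref{definition4.1} requires of a sub-solution, and it is false in general. Indeed, $\varphi+K(s-t)$ with $K>0$ is again an admissible test function touching $V$ from above on $[t,T]\times H$, and it raises the left-hand side by exactly $K$.

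The It\^o step is mis-oriented as well. The inequality \eqref{statesop020240206a} only bounds $\Gamma(s,X_s)$ from above by its It\^o expansion. So the nonnegative correction decreases your $Y^1_t$, and the lower bound $Y^1_t\ge\mathbb{E}\int_t^{t+\delta}[\mathcal{L}^u\Gamma+q]\,dl-C\delta^{3/2}$ is unjustified.

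\textbf{Super-solution half.} This fails symmetrically. The relations $V(t,x)\le G^{\mathbf{u}^\delta}[V]+\delta^2$ and $V\ge-\Gamma$ give no bound of $-\Gamma(t,x)$ by $G^{\mathbf{u}^\delta}[-\Gamma]$. Moreover, if the super-solution inequality failed with margin $\rho$, the Hamiltonian expression would be $\ge\rho$, not $\le-\rho/2$, so no contradiction arises.

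\textbf{The fix.} Reverse the pairing; this is what the paper does for the sub-solution, and the super-solution is the easier symmetric case.

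For the sub-solution, take an $\varepsilon\delta$-optimal control $\mathbf{u}^{\varepsilon,\delta}$ from Theorem \ref{theoremddp}. This gives $\Gamma(\hat t,\hat x)-\varepsilon\delta\le G^{\mathbf{u}^{\varepsilon,\delta}}[V(\hat t+\delta,X_{\hat t+\delta})]\le G^{\mathbf{u}^{\varepsilon,\delta}}[\Gamma(\hat t+\delta,X_{\hat t+\delta})]$. Now the upper bound $\Gamma(s,X_s)\le Y^{1,\delta}_s$ from the It\^o inequality is exactly the direction needed; it is the paper's $I_1\le0$. The terms $I_3,I_4,I_5$ vanish as $\delta\to0$. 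Bounding $\mathcal{L}^{u_l}\Gamma(\hat t,\hat x)$ by its supremum over $u$ then yields $-\varepsilon\le\varphi_t+\partial_t^og+\langle A^*\nabla_x\varphi,\hat x\rangle_H+\mathbf{H}$. This is where the uniform-in-$u$ continuity you flagged is genuinely needed, because $\mathbf{u}^{\varepsilon,\delta}$ is not constant.

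For the super-solution, use each constant control $u$. Then $-\Gamma(t,x)=V(t,x)\ge G^u[V]\ge G^u[-\Gamma]$, and the It\^o inequality $-\Gamma(s,X_s)\ge-(\text{It\^o expansion})$ now points the right way. This gives the inequality for every $u$, and hence for the supremum.
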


\begin{proof} 
First, let  $(\varphi, g)\in \Phi_{\hat{t}}\times  {\mathcal{G}}_{\hat{t}}$ with $\hat{t}\in [0,T)$
such that for  some $\hat{x}\in H$,
\begin{eqnarray}\label{0714}
0=(V-\varphi-g)(\hat{t},\hat{x})=\sup_{(s,y)\in [\hat{t},T]\times H}
(V- \varphi-g)(s,y).
\end{eqnarray}
Thus,  by the DPP of Theorem \ref{theoremddp}, we obtain that, for all ${\hat{t}}\leq {\hat{t}}+\delta\leq T$,
\begin{eqnarray}\label{4.9}
 0=V(\hat{t},\hat{x})-({{\varphi}}+g) (\hat{t},\hat{x})
=\mathop{\esssup}\limits_{\mathbf{u}\in {\mathcal{U}}[\hat{t},\hat{t}+\delta]} G^{\hat{t},\hat{x},\mathbf{u}}_{{\hat{t}},\hat{t}+\delta}\left[V\left(\hat{t}+\delta, X^{\hat{t},\hat{x},\mathbf{u}}_{{\hat{t}}+\delta}\right)\right]
-({{\varphi}}+g) (\hat{t},\hat{x}).
\end{eqnarray}
Then, for any $\varepsilon>0$ and $0<\delta\leq T-\hat{t}$,  we can  find a control  $
\mathbf{u}^{{\varepsilon},\delta}\in {\mathcal{U}}[\hat{t},\hat{t}+\delta]$ such
that
\begin{eqnarray}\label{4.10}
-{\varepsilon}\delta
\leq G^{\hat{t},\hat{x},\mathbf{u}^{{\varepsilon},\delta}}_{{\hat{t}},\hat{t}+\delta}\left[V\left(\hat{t}+\delta,X^{\hat{t},\hat{x},\mathbf{u}^{{\varepsilon},\delta}}_{{\hat{t}}+\delta}\right)\right]-({{\varphi}}+g) (\hat{t},\hat{x}).
\end{eqnarray}
We note that the process
$\left\{G^{\hat{t},\hat{x},\mathbf{u}^{{\varepsilon},\delta}}_{s,\hat{t}+\delta}\left[V\left(\hat{t}+\delta,X^{\hat{t},\hat{x},\mathbf{u}^{{\varepsilon},\delta}}_{{\hat{t}}+\delta}\right)\right],  \, s\in[\hat{t},\hat{t}+\delta]\right\}$
is the first component of  the solution $\left(Y^{\hat{t},\hat{x},\mathbf{u}^{{\varepsilon},\delta}}, \,  Z^{\hat{t},\hat{x},\mathbf{u}^{{\varepsilon},\delta}}\right)$ of the following
BSDE
\begin{eqnarray}\label{bsde4.10}
\begin{cases}
dY_s =
-q\left(s,X^{\hat{t},\hat{x},\mathbf{u}^{{\varepsilon},\delta}}_s,\, Y_s,\,  Z_s, \, u^{{\varepsilon},\delta}_s\right)\, ds
+Z_s\, dW_s, \quad   s\in[\hat{t},\hat{t}+\delta]; \\[2mm]
Y_{\hat{t}+\delta}=V\left(\hat{t}+\delta, X^{\hat{t},\hat{x},
	\mathbf{u}^{{\varepsilon},\delta}}_{{\hat{t}}+\delta}\right).
\end{cases}
\end{eqnarray}
Applying  It\^{o} formula (see \cite[Proposition 1.165]{fab1}) to ${\varphi}\left(s,X^{\hat{t},\hat{x},\mathbf{u}^{{\varepsilon},\delta}}_s\right)$ and
 inequality  (\ref{statesop020240206a}) 
to
$g\left(s,X^{\hat{t},\hat{x},\mathbf{u}^{{\varepsilon},\delta}}_s\right)$,   we get that
\begin{eqnarray}\label{bsde4.21}
\begin{aligned}
&\qquad \qquad \left({\varphi}+g\right)\left(s,X^{\hat{t},\hat{x},\mathbf{u}^{{\varepsilon},\delta}}_s\right)\leq Y^{1,\delta}_s\\
&\qquad=:\qquad  \left({\varphi}+g\right)\left(\hat{t},\hat{x}\right)+\int^{s}_{{\hat{t}}} \left({\mathcal{L}}\left({\varphi}+g\right)\right)\left(l,X^{\hat{t},\hat{x},\mathbf{u}^{{\varepsilon},\delta}}_l,
u^{{\varepsilon},\delta}_l\right)dl \\
&\qquad\qquad-\int^{s}_{{\hat{t}}}q\left(\gamma,\left({\varphi}+g\right)\left(\gamma\right), \nabla_x\left({\varphi}+g\right)\left(\gamma\right)
\sigma\left(\gamma,v\right), v\right)\Biggm|_{\scriptsize \begin{matrix}
v=u^{{\varepsilon},\delta}_l\\
\gamma=(l,X^{\hat{t},\hat{x},\mathbf{u}^{{\varepsilon},\delta}}_l)
\end{matrix}} dl\\
&\qquad\qquad
+\int^{s}_{{\hat{t}}}\left\langle \nabla_x\left({\varphi}+g\right)\left(l,X^{\hat{t},\hat{x},\mathbf{u}^{{\varepsilon},\delta}}_l\right),
\sigma\left(l,X^{\hat{t},\hat{x},\mathbf{u}^{{\varepsilon},\delta}}_l,u^{{\varepsilon},\delta}_l\right)dW_l\right\rangle_H,
\end{aligned}
\end{eqnarray}
where for  $\left(t,x,u\right)\in [0,T]\times H\times U$,
\begin{eqnarray*}
	&&\left({\mathcal{L}}\left({\varphi}+g\right)\right)\left(t,x,u\right)\\
	&:=&{{\varphi}_{t}\left(t,x\right)+{ \partial_t^o}g\left(t,x\right)}+\left\langle A^*\nabla_x {\varphi}\left(t,x\right),x\right\rangle_H
	+\left\langle\nabla_x \left({\varphi}+g\right)\left(t,x\right),b\left(t,x,u\right)\right\rangle_{H}\\
	&&+\frac{1}{2}\mbox{Tr}[\nabla_{x}^2\left({\varphi}+g\right)\left(t,x\right)\sigma\left(t,x,u\right)\sigma^*\left(t,x,u\right)]\\
	&&+
	 q\left(t,x,\left({\varphi}+g\right)\left(t,x\right),{\nabla_x\left({\varphi}+g\right)\left(t,x\right)}\sigma\left(t,x,u\right),u\right).
\end{eqnarray*}
It is clear that
\begin{eqnarray}\label{y_1}
Y^{1,\delta}_{\hat{t}}=({\varphi}+g)(\hat{t},\hat{x}).
\end{eqnarray}
Set for $s\in[\hat{t},\hat{t}+\delta]$,
\begin{eqnarray*}
	Y^{2,{\hat{t},\hat{x},\mathbf{u}^{{\varepsilon},\delta}}}_s&:=&
	Y^{1,\delta}_s-Y^{\hat{t},\hat{x},\mathbf{u}^{{\varepsilon},\delta}}_s\geq
	\left({\varphi}+g\right)\left(s,X_s^{\hat{t},\hat{x},\mathbf{u}^{{\varepsilon},\delta}}\right)-Y^{\hat{t},\hat{x},\mathbf{u}^{{\varepsilon},\delta}}_s,  \\
	Y^{3,\delta}_s&:=&
	Y^{1,\delta}_s-
	\left({\varphi}+g\right)\left(s,X_s^{\hat{t},\hat{x},\mathbf{u}^{{\varepsilon},\delta}}\right)\geq0,  \quad \hbox{\rm and}\\
	Z^{2,{\hat{t},\hat{x},\mathbf{u}^{{\varepsilon},\delta}}}_s  &:=&\nabla_x\left({\varphi}+g\right)\left(s,X^{\hat{t},\hat{x},\mathbf{u}^{{\varepsilon},\delta}}_s\right) \sigma\left(s,X^{\hat{t},\hat{x},\mathbf{u}^{{\varepsilon},\delta}}_s,u^{{\varepsilon},\delta}_s\right)-Z^{\hat{t},\hat{x},\mathbf{u}^{{\varepsilon},\delta}}_s.
\end{eqnarray*}
Comparing (\ref{bsde4.10}) and (\ref{bsde4.21}), we have, $\mathbb{P}$-a.s.,
\begin{eqnarray*}
	&&dY^{2,{\hat{t},\hat{x},\mathbf{u}^{{\varepsilon},\delta}}}_s\\
	 &=&\bigg{[}[\left({\mathcal{L}}\left({\varphi}+g\right)\right)\left(\gamma,v\right)
	 -q\left(\gamma,\left({\varphi}+g\right)\left(\gamma\right),
	\nabla_x\left({\varphi}+g\right)\left(\gamma\right)
	\sigma\left(\gamma,v\right),v\right)]\Biggm|_{\scriptsize\begin{matrix}\gamma=(s,X^{\hat{t},\hat{x},\mathbf{u}^{{\varepsilon},\delta}}_s)\\
		v=u^{{\varepsilon},\delta}_s\end{matrix}}\\
	&&+q\left(s,X^{\hat{t},\hat{x},\mathbf{u}^{{\varepsilon},\delta}}_s,Y^{\hat{t},\hat{x},\mathbf{u}^{{\varepsilon},\delta}}_s,
	Z^{\hat{t},\hat{x},\mathbf{u}^{{\varepsilon},\delta}}_s,u^{{\varepsilon},\delta}_s\right)\bigg{]}ds
	+Z^{2,{\hat{t},\hat{x},\mathbf{u}^{{\varepsilon},\delta}}}_sdW_s\\
	 &=&\bigg{[}\left({\mathcal{L}}\left({\varphi}+g\right)\right)\left(s,X^{\hat{t},\hat{x},\mathbf{u}^{{\varepsilon},\delta}}_s,u^{{\varepsilon},\delta}_s\right)
	+A_{1,\delta}(s)Y^{3,\delta}_s
	-A_{1,\delta}(s)Y^{2,{\hat{t},\hat{x},\mathbf{u}^{{\varepsilon},\delta}}}_s\\
	&&-\left\langle A_{2,\delta}(s), Z^{2,{\hat{t},\hat{x},\mathbf{u}^{{\varepsilon},\delta}}}_s\right\rangle_{\Xi}\bigg{]}ds+Z^{2,{\hat{t},\hat{x},\mathbf{u}^{{\varepsilon},\delta}}}_sdW_s,
\end{eqnarray*}
where $|A_{1,\delta}|\vee|A_{2,\delta}|\leq L$.
Applying It\^o  formula  (see~\cite[Proposition 2.2]{el}), we have
\begin{eqnarray}\label{4.14}
\begin{aligned}
&\qquad\qquad\qquad Y^{2,{\hat{t},\hat{x},\mathbf{u}^{{\varepsilon},\delta}}}_{\hat{t}}
=\mathbb{E}\bigg{[}Y^{2,{\hat{t},\hat{x},\mathbf{u}^{{\varepsilon},\delta}}}_{t+\delta}\Gamma^{\hat{t},\delta}_{\hat{t}+\delta}\\
&\qquad\qquad-
\int^{\hat{t}+\delta}_{{\hat{t}}}\!\!\!\!\! \Gamma^{\hat{t},\delta}_l
\left[\left({\mathcal{L}}\left({\varphi}+g\right)\right)
\left(l,X^{\hat{t},\hat{x},\mathbf{u}^{{\varepsilon},\delta}}_l,u^{{\varepsilon},\delta}_l\right)+A_{1,\delta}\left(l\right)Y^{3,\delta}_l\right]dl\bigg{|}
{\mathcal{F}}_{\hat{t}}\bigg{]},
\end{aligned}
\end{eqnarray}
where $\Gamma^{\hat{t},\delta}$ solves the linear SDE
$$
d\Gamma^{\hat{t},\delta}_s=\Gamma^{\hat{t},\delta}_s(A_{1,\delta}(s)ds+A_{2,\delta}(s)dW_s),\ s\in [{\hat{t}},{\hat{t}}+\delta];\ \ \ \Gamma^{\hat{t},\delta}_{\hat{t}}=1.
$$
Obviously, $\Gamma^{\hat{t},\delta}\geq 0$. Combining (\ref{4.10}), (\ref{y_1}) and (\ref{4.14}), we have
\begin{eqnarray}\label{4.15}
\begin{aligned}
-\varepsilon&\leq \frac{1}{\delta}[Y^{\hat{t},\hat{x},\mathbf{u}^{{\varepsilon},\delta}}_{\hat{t}}-({\varphi}+g)(\hat{t},\hat{x})]
=\frac{1}{\delta}(Y^{\hat{t},\hat{x},\mathbf{u}^{{\varepsilon},\delta}}_{\hat{t}}-Y^{1,\delta}_{\hat{t}})
=-\frac{1}{\delta}Y^{2,{\hat{t},\hat{x},\mathbf{u}^{{\varepsilon},\delta}}}_{\hat{t}}\\
&= \frac{1}{\delta}\mathbb{E}\bigg{[}-Y^{2,{\hat{t},\hat{x},\mathbf{u}^{{\varepsilon},\delta}}}_{\hat{t}+\delta}\Gamma^{\hat{t},\delta}_{\hat{t}+\delta}\\
&\quad
+\int^{\hat{t}+\delta}_{{\hat{t}}}\!\!\!\!\! \Gamma^{\hat{t},\delta}_l\left[({\mathcal{L}}({\varphi}+g))\left(l,X^{\hat{t},\hat{x},\mathbf{u}^{{\varepsilon},\delta}}_l,u^{{\varepsilon},\delta}_l\right)
+A_{1,\delta}(l)Y^{3,\delta}_l\right]dl\bigg{]}\\[3mm]
&:=I_1+I_2+I_3+I_4+I_5
\end{aligned}
\end{eqnarray}
with the five terms
\begin{eqnarray*}
	 I_1&:=&-\frac{1}{\delta}\mathbb{E}\bigg{[}Y^{2,{\hat{t},\hat{x},\mathbf{u}^{{\varepsilon},\delta}}}_{\hat{t}+\delta}\Gamma^{\hat{t},\delta}_{\hat{t}+\delta}\bigg{]}, \\
	 I_2&:=&\frac{1}{\delta}\mathbb{E}\bigg{[}\int^{\hat{t}+\delta}_{{\hat{t}}}\!\!\!({\mathcal{L}}({\varphi}+g))(\hat{t},\hat{x},u^{{\varepsilon},\delta}_l)\, dl\bigg{]},\nonumber\\
	 I_3&:=&\frac{1}{\delta}\mathbb{E}\bigg{[}\int^{\hat{t}+\delta}_{{\hat{t}}}\!\!\left[({\mathcal{L}}({\varphi}+g))
\left(l,X^{\hat{t},\hat{x},u^{{\varepsilon},\delta}}_l,u^{{\varepsilon},\delta}_l\right)-
	({\mathcal{L}}({\varphi}+g))\left(\hat{t},\hat{x},u^{{\varepsilon},\delta}_l\right)\right]dl\bigg{]},\nonumber\\
	 I_4&:=&\frac{1}{\delta}\mathbb{E}\bigg{[}\int^{\hat{t}+\delta}_{{\hat{t}}}\!\!\!\!\!(\Gamma^{\hat{t},\delta}_l-1)
({\mathcal{L}}({\varphi}+g))\left(l,X^{\hat{t},\hat{x},\mathbf{u}^{{\varepsilon},\delta}}_l,
	u^{{\varepsilon},\delta}_l\right)dl\bigg{]}, \quad \hbox{\rm and }\nonumber\\
	 I_5&:=&\frac{1}{\delta}\mathbb{E}\bigg{[}\int^{\hat{t}+\delta}_{{\hat{t}}}\!\!\!\Gamma^{\hat{t},\delta}_lA_{1,\delta}(l)Y^{3,\delta}_l\, dl\bigg{]}. \nonumber
\end{eqnarray*}
Since the coefficients of the operator ${\mathcal{L}}$  have a  linear growth,
combining the regularity of $(\varphi, g)\in \Phi_{\hat{t}}\times {\mathcal{G}}_{\hat{t}}$, there exist a integer
$\bar{p}\geq1$ and a constant $C>0$ independent of $u\in U$ such that, for all $(t,x,{u})\in [0,T]\times H\times U$,
\begin{eqnarray}\label{4.4444}|({\varphi}+g)(t,x)|\vee  |
({\mathcal{L}}({\varphi}+g))(t,x,u)|
\leq  C(1+|x|_H)^{\bar{p}}.
\end{eqnarray}
In view of Lemma \ref{lemmaexist0409}, we also have
\begin{eqnarray*}
	\sup_{l\in[\hat{t},\hat{t}+\delta]}\mathbb{E}|\Gamma^{\hat{t},\delta}_l-1|^2\leq C\delta.
\end{eqnarray*}
Thus, in view of (\ref{0714}) and (\ref{bsde4.21}), we have
\begin{eqnarray}\label{4.1611}
I_1&=& -\frac{1}{\delta}\mathbb{E}\left[\left(Y^{1,\delta}_{\hat{t}+\delta}-Y^{\hat{t},\hat{x},\mathbf{u}^{{\varepsilon},\delta}}_{\hat{t}+\delta}\right)
\Gamma^{\hat{t},\delta}_{\hat{t}+\delta}\right]\nonumber\\
&\leq&-\frac{1}{\delta}\mathbb{E}\left[\left(({\varphi}+g)\left(\hat{t}+\delta,X_{\hat{t}+\delta}^{\hat{t},\hat{x},\mathbf{u}^{{\varepsilon},\delta}}\right)
-Y^{\hat{t},\hat{x},\mathbf{u}^{{\varepsilon},\delta}}_{\hat{t}+\delta}\right)
\Gamma^{\hat{t},\delta}_{\hat{t}+\delta}\right]\nonumber\\
&=&\frac{1}{\delta}\mathbb{E}\left[\left(V\left(\hat{t}+\delta,X^{\hat{t},\hat{x},
	 u^{{\varepsilon},\delta}}_{{\hat{t}}+\delta}\right)-({\varphi}+g)\left(\hat{t}+\delta,X_{\hat{t}+\delta}^{\hat{t},\hat{x},\mathbf{u}^{{\varepsilon},\delta}}\right)\right)
\Gamma^{\hat{t},\delta}_{\hat{t}+\delta}\right]\leq 0
\end{eqnarray}
and
\begin{eqnarray}\label{4.16}
I_2&\leq&\frac{1}{\delta}\bigg{[}\int^{\hat{t}+\delta}_{{\hat{t}}}\!\!\!\sup_{u\in U}({\mathcal{L}}({\varphi}+g))({\hat{t},\hat{x},{u}})\, dl\bigg{]}
= {\varphi}_{t}(\hat{t},\hat{x})+{ \partial_t^o}g(\hat{t},\hat{x})+\left\langle A^*\nabla_x{\varphi}(\hat{t},\hat{x}),\hat{x}\right\rangle_H\nonumber\\[3mm]
&&\quad +\, {\mathbf{H}}(\hat{t},\hat{x},\, ({\varphi}+g)(\hat{t},\hat{x}),\, \nabla_x({\varphi}+g)(\hat{t},\hat{x}),\,
\nabla^2_{x}({\varphi}+g)(\hat{t},\hat{x})).
\end{eqnarray}
Now we prove that other three terms  $I_3$, $I_4$ and $I_5$ are of higher order.  In view of  (\ref{2.6}),
$$
\lim_{\delta\rightarrow0}\mathbb{E}\left[\sup_{\hat{t}\leq l\leq \hat{t}+\delta}
|X^{\hat{t},\hat{x},\mathbf{u}^{{\varepsilon},\delta}}_{l}-\hat{x}|^{\bar{p}}\right]\leq
C\lim_{\delta\rightarrow0}\mathbb{E}\left[\sup_{\hat{t}\leq l\leq \hat{t}+\delta}\left[
|X^{\hat{t},\hat{x},\mathbf{u}^{{\varepsilon},\delta}}_{l}-\hat{x}^A_{\hat{t},l}|+|\hat{x}^A_{\hat{t},l}-\hat{x}|\right]^{\bar{p}}\right]=0.
$$
Then,  in view of (\ref{4.4444}), using the dominated convergence theorem, we have
$$
\lim_{\delta\rightarrow0}\, \!\!\sup_{\hat{t}\leq l\leq\hat{t}+ \delta}\mathbb{E}\left|({\mathcal{L}}({\varphi}+g))\left(l,X^{{\hat{t},\hat{x},\mathbf{u}^{{\varepsilon},\delta}}}_l,u^{{\varepsilon},\delta}_l\right)-
({\mathcal{L}}({\varphi}+g))(\hat{t},\hat{x},u^{{\varepsilon},\delta}_l)\right|=0\quad \mbox{and}
$$
\begin{eqnarray*}
	&&\lim_{\delta\rightarrow0} \sup_{\hat{t}\leq l\leq \hat{t}+\delta}\mathbb{E}\left|\Gamma^{\hat{t},\delta}_lA_{1,\delta}(l)Y^{3,\delta}_l\right|\\
	&\leq& L\lim_{\delta\rightarrow0} \sup_{\hat{t}\leq l\leq \hat{t}+\delta} \mathbb{E}\left[\left|\Gamma^{\hat{t},\delta}_l\right|\left(|Y^{1,\delta}_l-(\varphi+g)(\hat{t},\hat{x})|
	+\left|\varphi+g\right|(\gamma)\biggm|^{\gamma=(l,X^{\hat{t},\hat{x},\mathbf{u}^{{\varepsilon},\delta}}_l)}_{\gamma=(\hat{t},\hat{x})}\right)\right] \\
	&=&\, 0.
\end{eqnarray*}
Furthermore,  we have
\begin{eqnarray}\label{4.18}
\qquad \lim_{\delta\rightarrow0}|I_3|
\leq \lim_{\delta\rightarrow0}\sup_{\hat{t}\leq l\leq \hat{t}+\delta}\mathbb{E}\bigg{|}&{\mathcal{L}}({\varphi}+g)(\gamma,v)\biggm|^{(\gamma,v)=\left(l,X^{{\hat{t},\hat{x},\mathbf{u}^{{\varepsilon},\delta}}}_l,\,\,  u^{{\varepsilon},\delta}_l\right)}_{(\gamma,v)=(\hat{t},\hat{x},\,\, u^{{\varepsilon},\delta}_l)}\bigg{|}=0
\end{eqnarray}
and
\begin{eqnarray}\label{4.180714}
\lim_{\delta\rightarrow0}|I_5| \leq\lim_{\delta\rightarrow0}\sup_{\hat{t}\leq l\leq \hat{t}+\delta}\mathbb{E}\left|\Gamma^{\hat{t},\delta}_lA_{1,\delta}(l)Y^{3,\delta}_l\right|=0.
\end{eqnarray}
Moreover, for some finite constant $C>0$,
\begin{eqnarray}\label{4.19}
\begin{aligned}
&\qquad |I_4|
\leq\frac{1}{\delta}\int^{\hat{t}+\delta}_{{\hat{t}}}\mathbb{E}|\Gamma^{\hat{t},\delta}_l-1|
\left|({\mathcal{L}}({\varphi}+g))(l,X^{\hat{t},\hat{x},\mathbf{u}^{{\varepsilon},\delta}}_l,
u^{{\varepsilon},\delta}_l)\right|
dl\\
&\qquad \leq\frac{1}{\delta}\!\!\int^{\hat{t}+\delta}_{{\hat{t}}}\!\!\! \left(\mathbb{E}\left(\Gamma^{\hat{t},\delta}_l-1\right)^2\right)^{\frac{1}{2}}\!\!\! \left(\mathbb{E}\left[\left({\mathcal{L}}\left({\varphi}+g\right)\right)\left(l,X^{\hat{t},\hat{x},
	\mathbf{u}^{{\varepsilon},\delta}}_l,
u^{{\varepsilon},\delta}_l\right)\right]^2\right)^{\frac{1}{2}}\!\! dl
\\
&\qquad\leq C(1+|\hat{x}|_H)^{\bar{p}}\delta^\frac{1}{2}.
\end{aligned}
\end{eqnarray}
Substituting  (\ref{4.1611}), (\ref{4.16}) and (\ref{4.19}) into (\ref{4.15}), we have
\begin{eqnarray}\label{4.2000000}
\begin{aligned}
-\varepsilon&\leq \varphi_{t}(\hat{t},\hat{x})+ \partial_t^og(\hat{t},\hat{x}) +\left\langle A^*\nabla_x\varphi(\hat{t},\hat{x}),\,  \hat{x}\right\rangle_H\\
&\quad+\mathbf{H}(\hat{t},\hat{x},\, (\varphi+g)(\hat{t},\hat{x}),\, \nabla_x(\varphi+g)(\hat{t},\hat{x}),
\, \nabla^2_{x}(\varphi+g)(\hat{t},\hat{x}))\\
&\quad+|I_3|+|I_5|
+C(1+|\hat{x}|_H)^{\bar{p}}\,\delta^\frac{1}{2}.
\end{aligned}
\end{eqnarray}
Sending $\delta$ to $0$,  in view of  (\ref{4.18}) and (\ref{4.180714}),  we have
\begin{eqnarray*}
	\begin{aligned}
		-\varepsilon
		&\leq\varphi_{t}(\hat{t},\hat{x})+\partial_t^og(\hat{t},\hat{x})+\left\langle A^*\nabla_x{\varphi}\left(\hat{t},\hat{x}\right), \hat{x}\right\rangle_H\\
		 &\quad+{\mathbf{H}}\left(\hat{t},\hat{x},\, \left({\varphi}+g\right)\left(\hat{t},\hat{x}\right),\, \nabla_x\left({\varphi}+g\right)\left(\hat{t},\hat{x}\right),\,
		\nabla^2_{x}\left({\varphi}+g\right)\left(\hat{t},\hat{x}\right)\right).
	\end{aligned}
\end{eqnarray*}
Since $\varepsilon$ is arbitrary, the value functional
$V$ is  a viscosity sub-solution to (\ref{hjb1}).
\par
In a symmetric (even easier) way, we show that  $V$ is also a viscosity super-solution to equation (\ref{hjb1}).
The proof is complete.
\end{proof}


{Now, let us give the consistency and stability results for the viscosity
solutions.  }
\begin{theorem}\label{theorem1223}
	Let Assumption \ref{hypstate} hold true, $v\in C_p^{1,2}([0,T]\times H)$ and $A^*\nabla_xv\in C_p^0([0,T]\times H,H)$. Then
	$v$ is a classical solution of  equation (\ref{hjb1}) if and only if it is a viscosity solution.
\end{theorem}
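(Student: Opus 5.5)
The proof splits into the two implications, using the structure of Definition \ref{definition4.1}: the test functions are pairs $(\varphi,g)\in\Phi_t\times\mathcal{G}_t$, and only the $\varphi$-part carries the term $\langle A^*\nabla_x\varphi,x\rangle_H$.

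\emph{Viscosity $\Rightarrow$ classical.}
Let $v$ be a viscosity solution and fix $(t,x)\in[0,T)\times H$.

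1. Take $\varphi=v$ and $g\equiv 0$. This pair is admissible: $v\in\Phi_t$ by the hypotheses, and $0\in\mathcal{G}_t$ by Remark \ref{remarkv0129120240206ab}(ii) with $h\equiv0$.
2. Then $v-\varphi-g\equiv0$, so its supremum over $[t,T]\times H$ is attained (with value $0$) at $(t,x)$.
3. The sub-solution inequality gives $\mathcal{T}v(t,x)\ge0$, where $\mathcal{T}v$ denotes the left-hand side of (\ref{hjb1}).
4. For the super-solution side, take $\varphi=-v$ and $g\equiv0$, so that $v+\varphi+g\equiv0$ attains its infimum at $(t,x)$. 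Substituting, the super-solution inequality reads $v_t+\langle A^*\nabla_x v,x\rangle_H+\mathbf{H}(t,x,v,\nabla_x v,\nabla_x^2 v)\le 0$, i.e. $\mathcal{T}v(t,x)\le0$.
5. Hence the equation holds pointwise on $[0,T)\times H$. The terminal condition is part of the viscosity definition, so $v$ is a classical solution.

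\emph{Classical $\Rightarrow$ viscosity.}
Let $v$ be classical. The terminal condition is immediate, and $v\in C^{0+}$ because $C^{1,2}\subset C^{0+}$ by Definition \ref{definitionc04091}.

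1. Let $(\varphi,g)\in\Phi_t\times\mathcal{G}_t$ with $v-\varphi-g$ attaining its maximum $0$ at $(t,x)$ over $[t,T]\times H$. We must show $\varphi_t+\partial^o_tg+\langle A^*\nabla_x\varphi,x\rangle_H+\mathbf{H}(\ldots,\varphi+g,\ldots)\ge0$ at $(t,x)$.
2. Finite-dimensional calculus is not directly available, since $g$ need not be $C^1$ in time and the $A$-term is unbounded. Instead I will reuse the stochastic argument of Theorem \ref{theoremvexist}, with the dynamic programming step replaced by an exact representation of $v$.
3. Fix a constant control $u\in U$ and apply It\^o's formula to $v(s,X^{t,x,u}_s)$, via Yosida approximation and (\ref{0717}). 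Since $v$ solves (\ref{hjb1}) and $\mathbf{H}$ is a supremum over $U$, the process $v(s,X_s)$ is a BSDE supersolution. By the BSDE comparison theorem, $v(t,x)\ge G^{t,x,u}_{t,t+\delta}[v(t+\delta,X_{t+\delta})]$; this is the direction needed.
4. By the maximum property, $v\le\varphi+g$ on $[t,T]\times H$, with equality at $(t,x)$. Apply It\^o's formula to $\varphi$ and inequality (\ref{statesop020240206a}) to $g$, exactly as in (\ref{bsde4.21}).
5. Repeat the linearization: the terms $Y^2,Y^3$, $\Gamma$, and the decomposition $I_1,\dots,I_5$ of Theorem \ref{theoremvexist}. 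Here the inequality in step 3 replaces (\ref{4.10}), with no $\varepsilon$.
6. This yields
\[
0\le \varphi_t+\partial^o_tg+\langle A^*\nabla_x\varphi,x\rangle_H+\langle\nabla_x(\varphi+g),b(t,x,u)\rangle_H+\tfrac12\mathrm{Tr}[\nabla^2_x(\varphi+g)\sigma\sigma^*]+q(\cdot,u)
\]
at $(t,x)$.
7. Bounding the right-hand side by its supremum over $u\in U$ gives the sub-solution inequality. The super-solution case is symmetric.

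\emph{Main obstacle.}
The delicate point is step 3, obtaining the exact BSDE inequality for $v$. The function $v$ is only $C^{1,2}_p$ with $A^*\nabla_x v\in C^0_p$, and the state is a mild solution, so It\^o's formula must be justified by the Yosida approximation $A_\mu$. One then passes to the limit $\mu\to\infty$ in the term $\langle A_\mu^*\nabla_x v,X^\mu\rangle_H$. For this I would use the standard It\^o formula for mild solutions (\cite[Proposition 1.165]{fab1}), which is already invoked for $\varphi$ in the proof of Theorem \ref{theoremvexist}; it applies verbatim to $v\in\Phi_0$.

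An alternative for the converse direction avoids BSDEs. Compare $v$ with $\varphi+g$ along $X$ directly, using the drift estimate $\mathbb{E}[(v-\varphi-g)(t+\delta,X_{t+\delta})]\le0$ for every constant control $u$. Dividing by $\delta$ and letting $\delta\to0$ recovers the same inequality, with the $q$-dependence on $(r,z)$ handled by the Lipschitz linearization as before.
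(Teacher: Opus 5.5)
Your ``viscosity $\Rightarrow$ classical'' half is exactly the paper's argument: test with $\varphi=\pm v$ and $g\equiv0$.

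\textbf{The gap is at step~3 of the converse.} For every constant $u$ you obtain $v(t,x)\ge G^{t,x,u}_{t,t+\delta}[v(t+\delta,X^{t,x,u}_{t+\delta})]$. This is the \emph{opposite} of what (\ref{4.10}) supplies in the proof of Theorem~\ref{theoremvexist}. There one has a \emph{lower} bound $G^{\hat t,\hat x,\mathbf{u}^{\varepsilon,\delta}}_{\hat t,\hat t+\delta}[V(\hat t+\delta,\cdot)]\ge(\varphi+g)(\hat t,\hat x)-\varepsilon\delta$ for a near-optimal control. Your inequality, combined with $v\le\varphi+g$ and $v(t,x)=(\varphi+g)(t,x)$, only gives the upper bound $G^{t,x,u}_{t,t+\delta}[v(t+\delta,\cdot)]\le(\varphi+g)(t,x)$. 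The It\^o/linearization estimate is also an upper bound, namely $G^{t,x,u}_{t,t+\delta}[v(t+\delta,\cdot)]\le(\varphi+g)(t,x)+\delta\,\mathcal{L}(\varphi+g)(t,x,u)+o(\delta)$. Two upper bounds say nothing about the sign of $\mathcal{L}(\varphi+g)(t,x,u)$.

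In fact step~6 is false as stated. The pair $\varphi=v$, $g\equiv0$ is admissible. For it, step~6 would assert $v_t+\langle A^*\nabla_xv,x\rangle_H+\langle\nabla_xv,b(t,x,u)\rangle_H+\frac12\mbox{Tr}[\nabla^2_xv\,\sigma\sigma^*(t,x,u)]+q(t,x,v,\nabla_xv\,\sigma(t,x,u),u)\ge0$ for \emph{every} $u$. The equation only says that the supremum over $u$ of this quantity is $0$.

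What your step~3 actually proves is the \emph{super}-solution property. There one needs the generator applied to $-(\varphi+g)\le v$ to be $\le0$ for all $u$, and your inequality gives exactly that. So the two halves are not symmetric: the sub-solution half needs the other direction of dynamic programming.

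\textbf{How the paper avoids this.} It uses no controls or BSDEs. It runs the process (\ref{formular1}) with constant $\vartheta\equiv\alpha\in H$ and $\varpi\equiv\beta\in L_2(\Xi,H)$. It applies It\^o's formula to $\varphi-v$ and (\ref{statesop020240206a}) to $g$, and obtains $\widetilde{\mathcal{H}}(t,x)\ge0$.
\begin{itemize}
\item With $\beta=0$ and $\alpha$ arbitrary, this gives $\nabla_x(\varphi+g)(t,x)=\nabla_xv(t,x)$ and $\varphi_t+\partial^o_tg+\langle A^*\nabla_x(\varphi-v),x\rangle_H\ge v_t$.
\item With $\beta=\sigma(t,x,u)$, it gives the trace inequality.
\item Hence, for each $u$, the test-function expression at $u$ dominates the $v$-expression at $u$. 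The $q$-terms coincide because $\varphi+g=v$ and the gradients agree at $(t,x)$.
\item Only after taking $\sup_u$ on \emph{both} sides does the equation for $v$ give $\ge0$.
\end{itemize}

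Your closing ``alternative'' is essentially this argument with $(\alpha,\beta)=(b,\sigma)(\cdot,u)$, and it can be repaired the same way. It must end with this comparison followed by the supremum, not with ``$0\le$ test expression for every $u$''. The gradient identity should come from the global maximum of the Fr\'echet-differentiable map $y\mapsto(v-\varphi-g)(t,y)$, not from a Lipschitz linearization.

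The BSDE route can also be saved. In step~3, choose $u^\eta$ that $\eta$-maximizes the Hamiltonian of $v$ at $(t,x)$. Continuity of $(b,\sigma,q)$ in $(t,x)$, uniformly in $u$, then gives $v(t,x)\le G^{t,x,u^\eta}_{t,t+\delta}[v(t+\delta,X^{t,x,u^\eta}_{t+\delta})]+(\eta+o(1))\delta$, which is the direction actually required.
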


\begin{theorem}\label{theoremstability}
	Let $b,\sigma,q,\phi$ satisfy Assumption \ref{hypstate}, and $v\in C^{0+}([0,T]\times H)$. Assume that
	\item[(i)]      for any $\varepsilon>0$, there exist $b^\varepsilon, \sigma^\varepsilon, q^\varepsilon, \phi^\varepsilon$ and $v^\varepsilon\in C^{0+}([0,T]\times H)$ such that  $b^\varepsilon, \sigma^\varepsilon, q^\varepsilon, \phi^\varepsilon$ satisfy  Assumption \ref{hypstate} and $v^\varepsilon$ is a viscosity  {sub-solution} (resp.,  {super-solution}) of HJB equation (\ref{hjb1}) with generators $b^\varepsilon, \sigma^\varepsilon, q^\varepsilon, \phi^\varepsilon$;
	\item[(ii)] as $\varepsilon\rightarrow0$, $(b^\varepsilon, \sigma^\varepsilon, q^\varepsilon, \phi^\varepsilon,v^\varepsilon)$ converge to
	$(b, \sigma, q, \phi, v)$  uniformly in the following sense: 
	\begin{eqnarray}\label{sss}
&&	\lim_{\varepsilon\rightarrow 0}\sup_{\scriptsize\begin{matrix} (t,x,y,z,u)\\
		\in [0,T]\times H \times \mathbb{R}\times H\times U\end{matrix}}[(|b^\varepsilon-b|+|\sigma^\varepsilon-\sigma|)(t,x,u)
	\nonumber\\
	&&~~~~~+|q^\varepsilon-q|(t,x,{y},z\sigma(t,x,u),u)+|\phi^\varepsilon-\phi|(x)+|v^\varepsilon-v|(t,x)]=0.
	\end{eqnarray}
	Then,  $v$ is a viscosity  {sub-solution} (resp., super-solution) of HJB equation (\ref{hjb1}) with generators $(b,\sigma,q,\phi)$.
\end{theorem}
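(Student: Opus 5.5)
We argue the sub-solution case; the super-solution case is symmetric. The terminal inequality $v(T,y)\le\phi(y)$ follows by passing to the limit in $v^\varepsilon(T,y)\le\phi^\varepsilon(y)$, using the uniform convergence (\ref{sss}). For the interior inequality, fix $(\varphi,g)\in\Phi_{\hat t}\times\mathcal{G}_{\hat t}$ and $(\hat t,\hat x)\in[0,T)\times H$ such that
\[
0=(v-\varphi-g)(\hat t,\hat x)=\sup_{[\hat t,T]\times H}(v-\varphi-g).
\]
The maximum is not strict and $H$ is not locally compact, so we cannot simply pick maximizers of $v^\varepsilon-\varphi-g$. We therefore first make the maximum strict. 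Replace $g$ by
\[
\tilde g(s,x):=g(s,x)+|s-\hat t|^2+|x-\hat x^A_{\hat t,s}|^4 .
\]
By Remark \ref{remarkv0129120240206ab}(i), $\tilde g\in\mathcal{G}_{\hat t}$. The added term vanishes together with its first and second $x$-derivatives and its $\partial^o_t$ at $(\hat t,\hat x)$. Hence the final inequality for $\tilde g$ at $(\hat t,\hat x)$ coincides with the one for $g$. The point $(\hat t,\hat x)$ is now a maximum of $v-\varphi-\tilde g$ that is strict in the sense of $\Upsilon$: $(v-\varphi-\tilde g)(s,x)\le-\Upsilon((s,x),(\hat t,\hat x))$.

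Next we perturb. Set $f^\varepsilon:=v^\varepsilon-\varphi-\tilde g$ on $[\hat t,T]\times H$. It lies in $USC^{0+}$, since $-\tilde g$ and $-\varphi$ are upper semicontinuous under $\Upsilon$ (the $|x-\hat x^A|^4$ parts are $LSC^{0+}$ by Lemma \ref{lemma-0902}). To make it bounded above and satisfy (\ref{09032025c}), add $\kappa|x|^4$ to $\tilde g$ with small $\kappa>0$; this term also lies in $\mathcal{G}_{\hat t}$ and gets sent to zero at the end. By uniform convergence, $f^\varepsilon(\hat t,\hat x)\ge\sup f^\varepsilon-\rho_\varepsilon$ with $\rho_\varepsilon\to0$. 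Apply Lemma \ref{theoremleft} (on $[\hat t,T]$) with $\delta_i=2^{-i}$ and $\varepsilon$ replaced by $\rho_\varepsilon$. This yields a point $(t_\varepsilon,x_\varepsilon)$ and centers $(t_i,x_i)$ such that $f^\varepsilon-\sum_i\delta_i\Upsilon(\cdot,(t_i,x_i))$ attains its maximum over $[t_\varepsilon,T]\times H$ at $(t_\varepsilon,x_\varepsilon)$. Moreover $\Upsilon((t_\varepsilon,x_\varepsilon),(t_i,x_i))\le 2\rho_\varepsilon/2^i$, and $t_i\le t_\varepsilon$, so the perturbation $\sum_i\delta_i[|s-t_i|^2+|x-(x_i)^A_{t_i,s}|^4]$ belongs to $\mathcal{G}_{t_\varepsilon}$ by Remark \ref{remarkv0129120240206ab}(i). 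Property (ii) of Lemma \ref{theoremleft} together with the $\Upsilon$-strictness gives $\Upsilon((t_\varepsilon,x_\varepsilon),(\hat t,\hat x))\lesssim\rho_\varepsilon+$(uniform error), which tends to $0$. Since $t_\varepsilon\ge\hat t$, this means $t_\varepsilon\to\hat t$ and $|x_\varepsilon-\hat x^A_{\hat t,t_\varepsilon}|\to0$, hence $x_\varepsilon\to\hat x$ in $H$ by strong continuity of the semigroup.

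Now apply the sub-solution property of $v^\varepsilon$ at $(t_\varepsilon,x_\varepsilon)$. Use the test pair $\varphi$ and
\[
g_\varepsilon:=\tilde g+\sum_i\delta_i\Upsilon(\cdot,(t_i,x_i))+c_\varepsilon ,
\]
where $c_\varepsilon$ is a constant normalizing the maximum value to $0$; a constant lies in $\mathcal{G}$ by Remark \ref{remarkv0129120240206ab}(ii). This gives the viscosity inequality with $\mathbf H^\varepsilon$ evaluated at $(t_\varepsilon,x_\varepsilon)$.

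Finally we pass to the limit, and this is the main obstacle. The derivatives of the perturbation are $O(\sum_i\delta_i\,\Upsilon_i^{1/2})$ in gradient and $O(\sum_i\delta_i\,\Upsilon_i^{1/4}\cdot\Upsilon_i^{1/4})$-type in Hessian (from $|y|^4$: gradient $4|y|^2y$, Hessian $\le12|y|^2$), and these are $O(\rho_\varepsilon^{1/2})\to0$. The $\partial_t^o$ of the perturbation is $2\sum_i\delta_i(t_\varepsilon-t_i)\to0$. The first-order time contribution of $|x-(x_i)^A|^4$ is absorbed in $\mathcal G$'s Itô inequality and does not appear, which is precisely why the $\Upsilon$-perturbation is admissible. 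Also $c_\varepsilon\to0$. For the Hamiltonian, use continuity of $\varphi_t$, $A^*\nabla\varphi$, $\nabla\varphi$, $\nabla^2\varphi$ and of the derivatives of $\tilde g$ in $(t,x)$. We also need $|\mathbf H^\varepsilon-\mathbf H|\to0$ uniformly on bounded sets; this follows from (\ref{sss}) together with the Lipschitz bounds, since the trace term is controlled by $|\sigma^\varepsilon-\sigma|_{L_2}$ times bounded Hessians. Continuity of $\mathbf H$ in $(t,x)$ uniformly in $u$ holds by Assumption \ref{hypstate}. Letting $\varepsilon\to0$ and then $\kappa\to0$ yields the required inequality at $(\hat t,\hat x)$ with $\varphi$ and $g$.
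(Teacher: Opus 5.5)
Your proposal follows the paper's proof essentially step for step. Your $\tilde g$ is the paper's $g_1=g+\Upsilon(\cdot,(\hat t,\hat x))$. The paper likewise applies Lemma \ref{theoremleft} with $\delta_i=2^{-i}$ to $v^\varepsilon-\varphi-g_1$, and shows $|t_\varepsilon-\hat t|+|x^\varepsilon-\hat x^A_{\hat t,t_\varepsilon}|\to0$; it argues by contradiction where you estimate directly. It then uses the sub-solution property of $v^\varepsilon$ with $g_2=g_1+\sum_i2^{-i}\Upsilon(\cdot,(t_i,x^i))$ and passes to the limit with the same derivative bounds on the perturbation. Your explicit normalizing constant $c_\varepsilon$ is something the paper leaves implicit. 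Since $c_\varepsilon$ can be negative, absorb it into $\varphi$ (constants lie in $\Phi_{t_\varepsilon}$) rather than citing Remark \ref{remarkv0129120240206ab}(ii), which only covers $h\ge0$.

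One step should be deleted: the extra term $\kappa|x|^4$.
\begin{itemize}
\item It is unnecessary. Since $v-\varphi-g\le0$, you have $v^\varepsilon-\varphi-g\le\sup|v^\varepsilon-v|$. Together with the term $|x-\hat x^A_{\hat t,s}|^4$ already in $\tilde g$, this gives both boundedness from above and (\ref{09032025c}); this is exactly how the paper obtains them.
\item With $\kappa$ fixed it breaks your argument, because $(\hat t,\hat x)$ is no longer a $\rho_\varepsilon$-approximate maximizer with $\rho_\varepsilon\to0$. Already for $v=\varphi+g$ and $\hat x\neq0$, the point $(\hat t,(1-\eta)\hat x)$ exceeds the value at $(\hat t,\hat x)$ by $[\kappa(1-(1-\eta)^4)-\eta^4]|\hat x|^4>0$ for small $\eta$, independently of $\varepsilon$.
\item Consequently, property (ii) of the lemma only yields $\Upsilon((t_\varepsilon,x_\varepsilon),(\hat t,\hat x))\le2\sup|v^\varepsilon-v|+\kappa|\hat x|^4$. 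So $x_\varepsilon\to\hat x$ is not available in the order ``$\varepsilon\to0$, then $\kappa\to0$''. In infinite dimensions the $\varepsilon$-limit need not even exist for fixed $\kappa$.
\end{itemize}
Drop the term, or couple $\kappa=\kappa(\varepsilon)\to0$. With that change the rest of your argument goes through as written.
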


{The proof of Theorems \ref{theorem1223} and \ref{theoremstability} is rather standard.  For the sake
of the completeness of the article and the convenience of readers, we postpone their proof to \ref{appendixd}.}

\section{Viscosity solutions to  PHJB equations: Crandall-Ishii lemma}
\par
In this {section},  we extend Crandall-Ishii lemma to functionals  belonging to  $ USC^{0+}([0,T]\times H)$. 
It is the cornerstone of the theory of viscosity solutions, and is the key  to prove the comparison theorem  that will be given in the next {section}. \par
Let$\{e_i\}_{i\geq1}$ be an orthonormal basis of $H$ such that $e_i\in {\mathcal{D}}(A^*)$ for all $i\geq 1$. For every $N\geq 1$, we denote by $H_N$ the vector space generated by vectors $e_1,\ldots,e_N$, and by $P_N$  the orthogonal projection onto $H_N$. Define $Q_N:=I-P_N$.  We have
the orthogonal decomposition $H=H_N+H^\bot_N$ with $H^\bot_N:=Q_NH$. We denote by $x_N,y_N, \ldots$ points in $H_N$ and by $x^-_N,y^-_N,\ldots$ points in $H^\bot_N$, and write $x=(x_N,x^-_N)$ for $x\in H$.
\begin{definition}\label{definition0513}  For every $N\geq 1$, let $(\hat{t},\hat{x}_N)\in (0,T)\times H_N$ and $f\in USC^0([0,T]\times H_N)$
 bounded from above. We say $f\in \Phi_N(\hat{t},\hat{x}_N)$ if there is a constant $r>0$ such that,
	for every $L>0$, there is  a constant {$\tilde{C}_0\geq0$} depending only on $L$ such that, for every  function $\varphi\in C^{1,2}([0,T]\times H_N)$ such that the function
	$$f(s,y_N)-\varphi(s,y_N), \quad [0,T]\times H_N$$
	 attains a  maximum  at a point $(t,x_N)\in (0,T)\times H_N$, 
	if   
	\begin{eqnarray*}
		|t-\hat{t}|+|x_N-\hat{x}_N|<r,\quad
		(|f|+|\nabla_x\varphi|
		+|\nabla^2_x\varphi|)(t,x_N)\leq L,
	\end{eqnarray*}
	then
	\begin{eqnarray}\label{0528110}
	{\varphi}_{t}(t,x_N) \geq -\tilde{C}_0.
	\end{eqnarray}
\end{definition}
Write  $e^{lA^*}:=(e^{lA})^*$. Then,  $e^{lA^*}$ is a $C_0$ {semi-group} on $H$ generated by $A^*$.
Notice that, for all $0\leq t\leq s\leq T$ and $x,y\in H$, $i\geq 1$,
$$
\langle x, e_i\rangle-\langle y, e_i\rangle=\langle x-x^A_{t,s}, e_i\rangle+\langle x^A_{t,s}-y, e_i\rangle
=\langle x, e_i-e^{(s-t)A^*}e_i\rangle+\langle x^A_{t,s}-y, e_i\rangle;
$$
and for all $0\leq s< t\leq T$ and $x,y\in H$,
$$
\langle x, e_i\rangle-\langle y, e_i\rangle=\langle x-y^A_{s,t}, e_i\rangle+\langle y^A_{s,t}-y, e_i\rangle
=\langle x-y^A_{s,t}, e_i\rangle+\langle y, e^{(s-t)A^*}e_i-e_i\rangle.
$$
We have,  for $\varphi:[0,T]\times H_N\rightarrow \mathbb{R}$, $\varphi\in C^{0+}([0,T]\times H_N)$ if and only
if  $\varphi\in C^{0}([0,T]\times H_N)$. Therefore, we can write $\varphi\in C^{1,2}([0,T]\times H_N)$ without any confusion.
%
\begin{definition}\label{definition06072024}
	Let $\hat{t}\in [0,T)$ be fixed and  $w\in USC^0([t,T]\times H)$
 bounded from above.
	For every $N\geq 1$, define,  for $(t,x_N)\in [0,T]\times H_N$,
	\begin{eqnarray*}
		&&\widetilde{w}^{\hat{t},N}(t,x_N):=\sup_{y_N=x_N}
		w(t,y), \ \   t\in [\hat{t},T];\\
		&&\widetilde{w}^{\hat{t},N}(t,x_N):=\widetilde{w}^{\hat{t},N}(\hat{t},x_N)-(\hat{t}-t)^{\frac{1}{2}}, \ \   t\in[0,\hat{t}\,).
	\end{eqnarray*}
	Let $\widetilde{w}^{\hat{t},N,*}$ be the upper
	semi-continuous envelope of $\widetilde{w}^{\hat{t},N}$ (see    
	\cite[Definition D.10]{fab1}), i.e.,
	$$
	\widetilde{w}^{\hat{t},N,*}(t,x_N)=\limsup_{\scriptsize\begin{matrix} (s,y_N)\in [0,T]\times H_N\\ (s,y_N)\rightarrow(t,x_N)\end{matrix}}\widetilde{w}^{\hat{t},N}(s,y_N).
	$$
\end{definition}
In what follows, by a $modulus \ of \ continuity$, we mean a continuous function $\rho_1:[0,\infty)\rightarrow[0,\infty)$, with $\rho_1(0)=0$; by a $local\ modulus\ of $ $ continuity$, we mean  a continuous function $\rho_1:[0,\infty)\times[0,\infty) \rightarrow[0,\infty)$ such that the function $\rho_1(\cdot,r)$ is a modulus of continuity for each $r\geq0$ and $\rho_1$ is non-decreasing in the  second variable.
\begin{theorem}\label{theorem0513} (Crandall-Ishii lemma)\ \    Let $N\geq1$. Let $w_1,w_2\in USC^{0+}([0,T]\times H)$
bounded from above and such that
	\begin{eqnarray}\label{051312024}
	\limsup_{|x|\rightarrow\infty}\sup_{t\in[0,T]}\left[\frac{w_1(t,x)}{|x|}\right]<0;\ \ \  \limsup_{|x|\rightarrow\infty}\sup_{t\in[0,T]}\left[\frac{w_2(t,x)}{|x|}\right]<0.
	\end{eqnarray}
	Let $\varphi\in C^2( H_N\times H_N)$ be such that
	$$
	w_1(t,x)+w_2(t,y)-\varphi(x_N,y_N)
	$$
	has a strict
	maximum over $[\hat{t},T]\times H\times H$ at a point $(\hat{t},\hat x, \hat y)$ with $\hat{t}\in (0,T)$ and there exists a  strictly monotone increasing function ${\tilde{\rho}}:[0,\infty)\rightarrow[0,\infty)$ with $\tilde{\rho}(0)=0$ such that, for all $(t,x,y)\in [\hat{t},T]\times H\times H$,
	\begin{eqnarray}\label{101012024}
	\begin{aligned}
	&\quad w_1(\hat{t}, \hat x)+w_2(\hat{t}, \hat y)-\varphi(\hat{x}_N,\hat{y}_N)\\
	&\geq w_1(t,x)+w_2(t,y)-\varphi(x_N,y_N)+\tilde{\rho}\left(|t-\hat{t}|^2+\left|x-\hat{x}^A_{\hat t,t}\right|^4+\left|y-\hat{y}^A_{\hat t, t}\right|^4\right).
	\end{aligned}
	\end{eqnarray} 
	Assume, moreover, $\widetilde{w}_{1}^{\hat{t},N,*}\in \Phi_N(\hat{t},\hat{x}_N)$ and $\widetilde{w}_{2}^{\hat{t},N,*}\in \Phi_N(\hat{t},\hat{y}_N)$, and there exists a local modulus of continuity  $\rho_1$  such that, for all  $\hat{t}\leq t\leq s\leq T, \ x\in H$ and $x_{t,s}^{A,N}:=(e^{(s-t)A}x)_N^{-}+x_N$,
	\begin{eqnarray}\label{0608a2024}
	\begin{aligned}
	w_i(t,x)-w_i(s,x_{t,s}^{A,N})\leq \rho_1(|s-t|,|x|),\quad i=1,2.
	\end{aligned}
	\end{eqnarray}
	Then for every $\kappa>0$, there exist
	sequences  $(t_{k},x^{k}; s_{k},y^{k})\in \left([\hat{t},T]\times H\right)^2$ and
	sequences of functions $(\varphi_{1,k}, \psi_{1,k}, \varphi_{2,k}, \psi_{2,k})\in \Phi_{\hat{t}}\times \Phi_{\hat{t}}\times  {\mathcal{G}}_{t_k}\times {\mathcal{G}}_{s_k}$ bounded from below   
	such that 
	$$
	(w_{1}-\varphi_{1,k}-\varphi_{2,k})(t,x)
	$$
	has a strict  maximum $0$ at  $(t_k,x^{k})$ over $[{t_k},T]\times H$,
	$$
	(w_{2}-\psi_{1,k}-\psi_{2,k})(t,y)
	$$
	has a strict  maximum $0$ at $({s_k},y^k)$ over $[{s_k},T]\times H$, and
	\begin{eqnarray}\label{0608v2024}
	&&\left(t_{k}, x^{k};\,  (w_1, (\varphi_{1,k})_{t},\nabla_x\varphi_{1,k},\nabla^2_{x}\varphi_{1,k}, \partial_{t}^o\varphi_{2,k},\nabla_x\varphi_{2,k},\nabla^2_{x}\varphi_{2,k})(t_{k},{x}^{k})\right)\nonumber\\
	&&\underrightarrow{k\rightarrow\infty}\, \left({\hat{t}},\hat{x};\,  w_1(\hat{t},\hat{x}), b_1, \nabla_{x_1}\varphi(\hat x_N,\hat y_N), X_N, 0,\mathbf{0},\mathbf{0}\right),
	\end{eqnarray}
	\begin{eqnarray}\label{0608vw2024}
	&&\left(s_{k}, y^{k}; \, (w_2,(\psi_{1,k})_{t},\nabla_x\psi_{1,k},\nabla^2_{x}\psi_{1,k}, \partial_{t}^o\psi_{2,k},\nabla_x\psi_{2,k},\nabla^2_{x}\psi_{2,k})(s_{k},y^{k})\right)\nonumber\\
	&&\underrightarrow{k\rightarrow\infty}\, \left({\hat{t}},\hat{y};\,  w_2(\hat{t}, \hat{y}),  b_2, \nabla_{x_2}\varphi(\hat{x}_N,\hat{y}_N), Y_N, 0,\mathbf{0},\mathbf{0}\right),
	\end{eqnarray}
	where $b_{1}+b_{2}=0$ and $X_N, Y_N\in \mathcal{S}(H_N)$ satisfying  the following inequality
	\begin{eqnarray}\label{II0615}
	-\left(\frac{1}{\kappa}+|B|\right)I\leq
	\left(\begin{matrix}
	X_N&0\\
	0&Y_N \end{matrix}\right)\leq B+\kappa B^2
	\end{eqnarray}
for $B:=\nabla^2_{x}\varphi(\hat{x}_N,\hat{y}_N). $
Here,  $\nabla^2_{x}\varphi$  is  the Hessian of $\varphi$
	with respect to  the  variable $x=(x_1,x_2)\in H_N\times H_N$, and $\nabla_{x_1}\varphi$ and $\nabla_{x_2}\varphi$ are  gradients of $\varphi$
	with respect to  the first and second variables, respectively.  
\end{theorem}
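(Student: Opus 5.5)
The plan is to reduce to the finite-dimensional parabolic Crandall--Ishii lemma (e.g.\ \cite[Theorem 8.3]{cra6}-type statements, as used in \cite[Theorem 3.29]{fab1}) applied to the envelopes $\widetilde{w}_1^{\hat t,N,*}$ and $\widetilde{w}_2^{\hat t,N,*}$ on $[0,T]\times H_N$. Then we lift the resulting finite-dimensional test functions back to $H$, using the Borwein--Preiss principle of Lemma \ref{theoremleft} to recover genuine maximum points in $H$.

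\emph{Step 1: the finite-dimensional maximum.} I will first show that
\[
\widetilde{w}_1^{\hat t,N,*}(t,x_N)+\widetilde{w}_2^{\hat t,N,*}(t,y_N)-\varphi(x_N,y_N)
\]
has a strict maximum over $[0,T]\times H_N\times H_N$ at $(\hat t,\hat x_N,\hat y_N)$, with value $w_1(\hat t,\hat x)+w_2(\hat t,\hat y)-\varphi(\hat x_N,\hat y_N)$.

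For $t\geq\hat t$ this follows from the definition of $\widetilde w_i^{\hat t,N}$ as a supremum over fibres, together with (\ref{101012024}). To pass to the upper semicontinuous envelope, I need that points nearly realizing the supremum in the fibre over $(s,y_N)$ with $(s,y_N)\to(t,x_N)$ stay bounded. This is guaranteed by (\ref{051312024}). Condition (\ref{0608a2024}) is then used to compare $w_i(t,x)$ with $w_i(s,x^{A,N}_{t,s})$: the $H_N$-component is unchanged, while the orthogonal component is transported along the semigroup. This gives upper semicontinuity in time of $\widetilde w_i^{\hat t,N}$ from the left/right, and the strict inequality coming from $\tilde\rho$.

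For $t<\hat t$, the $-(\hat t-t)^{1/2}$ penalty makes these points strictly worse. The hypotheses $\widetilde w_i^{\hat t,N,*}\in\Phi_N$ are exactly the one-sided time-derivative bounds needed for the parabolic finite-dimensional lemma. Applying it yields, for each $\kappa$, points $(\tau_k,a_k,c_k)\to(\hat t,\hat x_N,\hat y_N)$ and $C^{1,2}$ functions $\phi_{1,k},\phi_{2,k}$ on $[0,T]\times H_N$. The function $\widetilde w_1^{\hat t,N,*}-\phi_{1,k}$ has a strict maximum at $(\tau_k,a_k)$ (similarly for the second envelope), with time derivatives tending to $b_1,b_2$ where $b_1+b_2=0$, gradients tending to $\nabla_{x_i}\varphi$, Hessians tending to $X_N,Y_N$ satisfying (\ref{II0615}), and envelope values converging to $w_i$ at the hat points.

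\emph{Step 2: lifting to $H$.} I will set $\varphi_{1,k}(t,x):=\phi_{1,k}(t,x_N)$, which lies in $\Phi_{\hat t}$ because $e_i\in\mathcal D(A^*)$ makes $A^*\nabla_x\varphi_{1,k}$ a finite combination of the $A^*e_i$, with polynomial growth after a standard cutoff. The function $w_1-\varphi_{1,k}$ on $[\tau_k,T]\times H$ then nearly attains its supremum near the fibre over $(\tau_k,a_k)$, but a maximizer may not exist. To obtain one, I apply Lemma \ref{theoremleft} to $f=w_1-\varphi_{1,k}$ on $[\tau_k,T]\times H$ (growth via (\ref{051312024})) with $\varepsilon=\varepsilon_k\to0$ and $\delta_i=2^{-i}$. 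This produces $(t_k,x^k)$ and
\[
\varphi_{2,k}(s,x):=\sum_i\delta_i\Upsilon((s,x),(t_i,x_i))+\text{constant}.
\]
By Remark \ref{remarkv0129120240206ab}(i), $\varphi_{2,k}\in\mathcal G_{t_k}$, and it is bounded from below. Adding $|s-t_k|^2+|x-(x^k)^A_{t_k,s}|^4$ makes the maximum strict, with value normalized to $0$ by the constant.

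Since $\Upsilon((t_k,x^k),(t_i,x_i))\le\varepsilon_k 2^{-i}$, the derivatives $\partial_t^o\varphi_{2,k}$, $\nabla_x\varphi_{2,k}$ and $\nabla_x^2\varphi_{2,k}$ at $(t_k,x^k)$ are bounded by powers of $\varepsilon_k^{1/4}$ and $\varepsilon_k^{1/2}$, hence tend to $0$. Because $\varphi_{1,k}$ is continuous and converges together with its derivatives, the first derivatives of $\varphi_{1,k}$ at $(t_k,x^k)$ converge to the limits obtained in Step 1. The same construction is carried out for $w_2$ with $(s_k,y^k)$.

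\emph{Step 3: convergence of the points.} It remains to show $(t_k,x^k)\to(\hat t,\hat x)$ and $w_1(t_k,x^k)\to w_1(\hat t,\hat x)$. Here I use the strict maximum with modulus $\tilde\rho$ in (\ref{101012024}): near-optimality of $(t_k,x^k)$ and $(s_k,y^k)$ for the doubled functional forces $\tilde\rho(|t_k-\hat t|^2+|x^k-\hat x^A_{\hat t,t_k}|^4+\cdots)\to0$. This gives convergence in $\Upsilon$, and hence in $d$ since $\hat x^A_{\hat t,t}\to\hat x$. Upper semicontinuity plus near-optimality then gives convergence of the values.

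\emph{Main obstacle.} I expect the hardest part to be the interplay in Step 1 between the envelope and the time variable. Specifically, one must show that the envelope's maximum is still at $\hat t$ and that the finite-dimensional maximizers correspond to near-maximizers of $w_1-\varphi_{1,k}$ in $H$ at nearby times. Since $w_i$ is only $\Upsilon$-upper semicontinuous, moving in time requires transporting the $H_N^\perp$ component by $e^{(s-t)A}$. Hypothesis (\ref{0608a2024}) is designed for exactly this, and I would handle it first by proving $\widetilde w_i^{\hat t,N}(t,x_N)\le \widetilde w_i^{\hat t,N}(s,x_N)+\rho_1(s-t,R)$ on bounded sets.
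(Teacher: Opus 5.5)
Your architecture matches the paper's: envelopes on $[0,T]\times H_N$, the finite-dimensional parabolic lemma via the $\Phi_N$ hypothesis, lifting by $\phi_{1,k}(t,x_N)$, and Borwein--Preiss with $\Upsilon$. However, two steps do not go through as written.

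\emph{First}, the finite-dimensional lemma only gives $(\tau_k,a_k)\to(\hat t,\hat x_N)$, and nothing in your argument excludes $\tau_k<\hat t$. For such $k$ the envelope near $(\tau_k,a_k)$ is not built from $w_1(\tau_k,\cdot)$ at all: it is a function of $x_N$ alone minus $(\hat t-t)^{1/2}$. So your Step~2 claim that $w_1-\varphi_{1,k}$ nearly attains its supremum near the fibre over $(\tau_k,a_k)$ is unfounded, and the jet of $\phi_{1,k}$ at $(\tau_k,a_k)$ carries no information about $w_1$. Hypothesis (\ref{0608a2024}) cannot repair this, since it only concerns times $\ge\hat t$.

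The missing idea is the second role of the square-root extension; you use it only to make earlier times worse in Step~1. At an interior maximizer with $\tau_k<\hat t$ one gets $(\phi_{1,k})_t(\tau_k,a_k)=\tfrac12(\hat t-\tau_k)^{-1/2}\to\infty$. This contradicts $(\phi_{1,k})_t(\tau_k,a_k)\to b_1\in\mathbb{R}$, so $\tau_k\ge\hat t$ for large $k$; this is exactly how the paper argues.

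It is then simplest to apply Lemma \ref{theoremleft} on $[\hat t,T]\times H$, as the paper does. There $\sup_{[\hat t,T]\times H}(w_1-\varphi_{1,k})\ge\widetilde w_1^{\hat t,N,*}(\tau_k,a_k)-\phi_{1,k}(\tau_k,a_k)$ is immediate from the definition of the envelope. On your domain $[\tau_k,T]\times H$ you would need (\ref{0608a2024}) to push approximating points forward to time $\tau_k$. This inequality, the strictness of the finite-dimensional maximum and the bound from (\ref{051312024}) together give convergence of the $H_N$-projections of the Borwein--Preiss points to $(\tau_k,a_k)$, as in Lemma \ref{lemma4.40615}. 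You assert this convergence without argument.

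\emph{Second}, in Step~3 you appeal to near-optimality of $(t_k,x^k)$ and $(s_k,y^k)$ for the doubled functional. But (\ref{101012024}) compares $w_1(t,x)+w_2(t,y)$ at a \emph{common} time, while your two points come from independent Borwein--Preiss applications with $t_k\neq s_k$ in general. $\Upsilon$-upper semicontinuity does not let you shift the time argument of $w_2$, so you must use (\ref{0608a2024}), which again needs $t_k\ge\hat t$.

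For instance, $w_2(\hat t,\hat y)\le w_2\bigl(t_k,\hat y^{A,N}_{\hat t,t_k}\bigr)+\rho_1(t_k-\hat t,|\hat y|)$, and $\bigl(\hat y^{A,N}_{\hat t,t_k}\bigr)_N=\hat y_N$. Insert $(t_k,x^k,\hat y^{A,N}_{\hat t,t_k})$ into (\ref{101012024}) and use $\liminf_k w_1(t_k,x^k)\ge w_1(\hat t,\hat x)$ and $x^k_N\to\hat x_N$. This yields $\tilde\rho\bigl(|t_k-\hat t|^2+|x^k-\hat x^A_{\hat t,t_k}|^4\bigr)\to0$, hence $x^k\to\hat x$; the argument for $y^k$ is symmetric. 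The paper instead transports $y^k$ from $s_k$ to $t_k$ along a subsequence with $s_k\le t_k$. With these two additions your outline becomes the paper's proof.
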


\begin{proof}
From Lemma \ref{lemma4.30615} below, we know that the function
\begin{eqnarray}\label{05211}
\begin{aligned}
&\widetilde{w}_{1}^{\hat{t},N,*}(t,x_N)+ \widetilde{w}_{2}^{\hat{t},N,*}(t,y_N)-\varphi(x_N,y_N), \quad
(t,x_N,y_N) \in [0,T]\times H_N\times H_N \\
& \mbox{ is maximized at} \ (\hat{t},\hat{x}_N,\hat{y}_N).
\end{aligned}
\end{eqnarray}
Moreover, we have
$\widetilde{w}_{1}^{\hat{t},N,*}(\hat{t},\hat{x}_N)={w}_{1}(\hat{t},\hat{x})$, $\widetilde{w}_{2}^{\hat{t},N,*}(\hat{t},\hat{y}_N)={w}_{2}(\hat{t},{y})$.
Then, by $(\widetilde{w}_{1}^{\hat{t},N,*},$ $ \widetilde{w}_{2}^{\hat{t},N,*})\in \Phi_N(\hat{t},\hat{x}_N) \times \Phi_N(\hat{t},\hat{y}_N)$ and Remark \ref{remarkv0528}, Theorem 8.3 in \cite{cran2} and
Lemma 5.4 of Chapter 4 in \cite{yong} yield the existence of  sequences of   functions
$\tilde{\varphi}_k,\tilde{\psi}_k\in C^{1,2}([0,T]\times  H_N)$  bounded from below such that both functions
$$\widetilde{w}_{1}^{\hat{t},N,*}(t, x_N)-\tilde{\varphi}_k(t,x_N), \quad (t,x_N)\in [0,T]\times H_N,\ \mbox{and}$$
$$\widetilde{w}_{2}^{\hat{t},N,*}(s,y_N)-\tilde{\psi}_k(s,y_N), \quad (s,y_N)\in [0,T]\times H_N$$
 attain a strict  maximum $0$ at some points  $(t_k,x^{k}_N)\in (0,T)\times  H_N$  and $(s_k,y^{k}_N)\in (0,T)\times  H_N$, respectively, and moreover,
\begin{eqnarray}\label{0607a}
&&\lim_{k\to \infty}\left(t_{k}, x^{k}_N, \widetilde{w}_{1}^{\hat{t},N,*}(t_{k}, x^{k}_N), (\tilde{\varphi}_k)_t(t_{k}, x^{k}_N),\nabla_x\tilde{\varphi}_k(t_{k}, x^{k}_N),\nabla^2_x\tilde{\varphi}_k(t_{k}, x^{k}_N)\right)\nonumber\\
&&\qquad=\left({\hat{t}},\hat{x}_N, w_1(\hat{t},\hat{x}),b_1, \nabla_{x_1}\varphi(\hat{x}_N,\hat{y}_N), X_N\right)
\end{eqnarray}
and
\begin{eqnarray}\label{0607b}
&&\lim_{k\to \infty}\left(s_{k}, y^{k}_N, \widetilde{w}_{2}^{\hat{t},N,*}(s_{k}, y^{k}_N),(\tilde{\psi}_k)_t(s_{k}, y^{k}_N),\nabla_x\tilde{\psi}_k(s_{k}, y^{k}_N),\nabla^2_x\tilde{\psi}_k(s_{k}, y^{k}_N)\right)\nonumber\\
&&\qquad =\left({\hat{t}},\hat{y}, w_2(\hat{t},\hat{y}),b_2, \nabla_{x_2}\varphi(\hat{x}_N,\hat{y}_N), Y_N\right),
\end{eqnarray}
with $b_{1}+b_{2}=0$ and the inequality~(\ref{II0615}) being satisfied.

We assert that   $(t_{k}, s_{k})\in [\hat{t},T)\times [\hat{t},T)$ for all $k\ge 1$. Otherwise, for example, there exists a subsequence of $\{t_{k}\}_{k\geq1}$ still denoted by itself such that $t_{k}<\hat{t}$ for all $k\geq1$.
Since the function
$$\widetilde{w}_{1}^{\hat{t},N,*}(t, x_N)-\tilde{\varphi}_k(t,x_N), \quad (t,x_N)\in [0,T]\times  H_N $$
attains the maximum at $(t_{k},x^{k}_N)$, we obtain that
$$
(\tilde{\varphi}_k)_t(t_{k},x^{k}_N)={\frac{1}{2}}(\hat{t}-t_{k})^{-\frac{1}{2}}\rightarrow\infty,\ \mbox{as}\ k\rightarrow\infty,
$$
which  contradicts  that $(\tilde{\varphi}_k)_t(t_{k},x^{k}_N)\rightarrow b_{1}\in \mathbb{R}$. 
\par
Now we consider the functionals,
for $(t,x), (s,y)\in [\hat{t},T]\times H$,
\begin{eqnarray}\label{4.11116}
\quad\quad\quad       \Gamma^1_{k}(t,x):=w_{1}(t,x)-\tilde{\varphi}_k(t,x_N),\  \Gamma^2_{k}(s,y):= w_{2}(s,y)
-\tilde{\psi}_k(s,y_N).
\end{eqnarray}
Clearly,
 $\Gamma^1_{k}, \Gamma^2_{k} \in USC^{0+}([\hat{t},T]\times H)$   bounded from above and satisfy (\ref{051312024}).
Take
$\delta_i:=\frac{1}{2^i}$ for all $i\geq0$. For every $k$ and $j>0$,
applying  Lemma \ref{theoremleft} to $\Gamma^1_{k}$ and  $\Gamma^2_{k}$, respectively, we have that,
for every  $(\check{t}_{0},\check{x}^{0}; \check{s}_{0},\check{y}^{0})\in ([\hat{t},T]\times  H)^2$ satisfying
\begin{eqnarray}\label{20210509a}
\begin{aligned}
&
\Gamma^1_{k}(\check{t}_{0},\check{x}^{0})\geq \sup_{(t,x)\in [\hat{t},T]\times H}\Gamma^1_{k}(t,x)-\frac{1}{j},\\
&\Gamma^2_{k}(\check{s}_{0},\check{y}^{0})\geq \sup_{(s,y)\in [\hat{t},T]\times H}\Gamma^2_{k}(s,y)-\frac{1}{j},
\end{aligned}
\end{eqnarray}
there exist $(t_{k,j},{x}^{k,j}; s_{k,j},{y}^{k,j})\in ([\hat{t},T]\times H)^2,\,  j\ge 1$, two sequences $\{(\check{t}_{i},\check{x}^{i})\}_{i\geq1}\subset
[\check{t}_{0},T]\times H$, $\{(\check{s}_{i},\check{y}^{i})\}_{i\geq1}\subset
[\check{s}_{0},T]\times H$ and constant $C>0$ such that
\begin{description}
	\item[(i)] 
	$\Upsilon((\check{t}_i,\check{x}^{i}),(t_{k,j},{x}^{k,j}))
	\vee\Upsilon((\check{s}_i,\check{y}^{i}),(s_{k,j},{y}^{k,j}))\leq \frac{1}{2^ij}$, $|\check{x}^{i}|\vee|\check{y}^{i}|\leq C$ for all $i\geq0$,
	and $\displaystyle \check{t}_{i}\uparrow t_{k,j}$, $\check{s}_{i}\uparrow s_{k,j}$ as $i\rightarrow\infty$,
	\item[(ii)]  $\displaystyle \mathbf{\Gamma}^1_k(t_{k,j},{x}^{k,j}):=\Gamma^1_k(t_{k,j},{x}^{k,j})
	-\sum_{i=0}^{\infty}\frac{1}{2^i}\Upsilon((\check{t}_i,\check{x}^{i}),(t_{k,j},{x}^{k,j}))
	\geq \Gamma^1_{k}(\check{t}_0,\check{x}^{0})$,\\
	\ \ \ $\displaystyle\mathbf{ \Gamma}^2_k(s_{k,j},{y}^{k,j}):=\Gamma^2_k(s_{k,j},{y}^{k,j})
	-\sum_{i=0}^{\infty}\frac{1}{2^i}\Upsilon((\check{s}_i,\check{y}^{i}),(s_{k,j},{y}^{k,j}))\geq \Gamma^2_{k}(\check{s}_0,\check{y}^{0})$,
	and
	\item[(iii)]   $\mathbf{\Gamma}^1_{k}(t,x)<\mathbf{\Gamma}^1_{k}(t_{k,j},{x}^{k,j})$ for all $(t,x)\in [t_{k,j},T]\times H\setminus \{(t_{k,j},{x}^{k,j})\}$,
	and $\mathbf{\Gamma}^2_{k}(s,y)<\mathbf{\Gamma}^2_{k}(s_{k,j},{y}^{k,j})$ for all $(s,y)\in [s_{k,j},T]\times H\setminus \{(s_{k,j},{y}^{k,j})\}$,
\end{description}
By (\ref{0608a2024}) and (\ref{20210509a}), we can assume $\check{t}_{0},\check{s}_{0}>\hat{t}$, then we have
\begin{eqnarray}\label{2512141}
t_{k,j},s_{k,j}>\hat{t}.
\end{eqnarray}
By Lemma \ref{lemma4.40615} below, we have
\begin{eqnarray}\label{4.226}
\lim_{j\to \infty}\left(\begin{aligned}
&
t_{k,j},  & ({x}^{k,j})_N\\
& s_{k,j},  & ({y}^{k,j})_N
\end{aligned}\right)
=\left(\begin{aligned}
&t_k, &x_N^k\\
&s_k, &y_N^k
\end{aligned}\right),
\end{eqnarray}
\begin{eqnarray}\label{05231}
\lim_{j\to \infty}\left(\begin{aligned}
&
\widetilde{w}_{1}^{\hat{t},N,*}(t_{k,j}, ({x}^{k,j})_N)\\
&\widetilde{w}_{2}^{\hat{t},N,*}(s_{k,j},  ({y}^{k,j})_N)
\end{aligned}\right)
=\left(\begin{aligned}
&
\widetilde{w}_{1}^{\hat{t},N,*}(t_k,x_N^k)\\
&\widetilde{w}_{2}^{\hat{t},N,*}(s_k,y_N^k)
\end{aligned}\right),\quad \mbox{and}
\end{eqnarray}
\begin{eqnarray}\label{05232}
\lim_{j\to \infty} \left(\begin{aligned}
&w_1(t_{k,j},{x}^{k,j})\\
 &w_2(s_{k,j},{y}^{k,j})
\end{aligned}\right)
=\left(\begin{aligned}
&\widetilde{w}_{1}^{\hat{t},N,*}(t_k,x_N^k)\\
&\widetilde{w}_{2}^{\hat{t},N,*}(s_k,y_N^k)
\end{aligned}\right).
\end{eqnarray}
In view of  (\ref{0607a}) and (\ref{0607b}),  we  have a subsequence $j_k$ such that
\begin{eqnarray}\label{08181}
\begin{aligned}
	&\left(t_{k,j_k}, ({x}^{k,j_k})_N, w_1({t_{k,j_k}},{x}^{k,j_k}),((\tilde{\varphi}_k)_t,\nabla_x\tilde{\varphi}_k,\nabla^2_x\tilde{\varphi}_k)(t_{k,j_k}, ({x}^{k,j_k})_N)\right)\\
	&\underrightarrow{k\rightarrow\infty}\left({\hat{t}},\hat{x}_N, w_1(\hat{t},\hat{x}),(b_1, \nabla_{x_1}\varphi(\hat{x}_N,\hat{y}_N), X)\right),
\end{aligned}
\end{eqnarray}
\begin{eqnarray}\label{08182}
\begin{aligned}
	&\left(s_{k,j_k}, ({y}^{k,j_k})_N, w_2({s_{k,j_k}},{y}^{k,j_k}),((\tilde{\psi}_k)_t,\nabla_x\tilde{\psi}_k,\nabla^2_x\tilde{\psi}_k)(s_{k,j_k}, ({y}^{k,j_k})_N)\right)\\
	&\underrightarrow{k\rightarrow\infty}\left({\hat{t}},\hat{y}_N, w_2(\hat{t},\hat{y}),(b_2, \nabla_{x_2}\varphi(\hat{x}_N,\hat{y}_N), Y)\right).
\end{aligned}
\end{eqnarray}
It remains to show that $ {x}^{k,j_k}\rightarrow\hat{x}$ and ${y}^{k,j_k}\rightarrow\hat{y}$ as $k\rightarrow\infty$.  Noting that $({x}^{k,j_k})_N$ $\rightarrow \hat{x}_N$ and $({y}^{k,j_k})_N\rightarrow \hat{y}_N$  as $k\rightarrow\infty$,
by the continuity of $\varphi$, there exists a constant {$\tilde{M}_1>0$} such that
$$|\varphi(({x}^{k,j_k})_N,({y}^{k,j_k})_N)|\leq \tilde{M}_1.$$
Then by (\ref{051312024}) and
\begin{eqnarray}\label{1004a}
\begin{aligned}
&w_1({t_{k,j_k}},{x}^{k,j_k})+w_2({s_{k,j_k}},{y}^{k,j_k})-\varphi(({x}^{k,j_k})_N,
({y}^{k,j_k})_N)\\
\rightarrow&
w_1(\hat{t},\hat{x})+w_2(\hat{t},\hat{y})-\varphi(\hat{x}_N,\hat{y}_N),
\end{aligned}
\end{eqnarray}
there exists a constant $M_2>0$ such that
$$
|{x}^{k,j_k}|_H \vee     |{y}^{k,j_k}|_H\leq M_2.
$$
By (\ref{2512141}) and (\ref{08182}), we can find a subsequence of  $s_{k,j_k}$  still denoted by ifself such that $s_{k,j_k}\leq t_{k,j_k}$. Then, by (\ref{0608a2024}),
\begin{eqnarray*}
\begin{aligned}
	& w_1(t_{k,j_k},{x}^{k,j_k})+w_2(s_{k,j_k},y^{k,j_k})-\varphi(({x}^{k,j_k})_N,
	({y}^{k,j_k})_N)\\
	\leq& w_1(t_{k,j_k},{x}^{k,j_k})+w_2(t_{k,j_k},({y}^{k,j_k})_{s_{k,j_k},t_{k,j_k}}^{A,N})-\varphi(({x}^{k,j_k})_N,
	({y}^{k,j_k})_N)\\
	&+\rho_1(|s_{k,j_k}-t_{k,j_k}|,M_2).
\end{aligned}
\end{eqnarray*}
Letting $k\rightarrow\infty$, by (\ref{08181}) and (\ref{08182}),
\begin{eqnarray*}
	&& \liminf_{k\rightarrow\infty} \left[w_1(t_{k,j_k},{x}^{k,j_k})+w_2(t_{k,j_k},{({y}^{k,j_k})_{s_{k,j_k},t_{k,j_k}}^{A,N}})-\varphi(({x}^{k,j_k})_N,
	({y}^{k,j_k})_N)\right]\\
	&&\geq w_1(\hat{t},\hat{x})+w_2(\hat{t},\hat{y})-\varphi(\hat{x}_N,\hat{y}_N).
\end{eqnarray*}
Thus, by (\ref{101012024}) 
we get that $|{x}^{k,j_k}-\hat{x}^A_{\hat{t},t_{k,j_k}}|\rightarrow0$ as $k\rightarrow\infty$.
Notice that $|{x}^{k,j_k}-\hat{x}|\leq |\hat{x}-\hat{x}^A_{\hat{t},t_{k,j_k}}|+|{x}^{k,j_k}-\hat{x}^A_{\hat{t},t_{k,j_k}}|
$,
we have $x^{k,j_k}\rightarrow\hat{x}$ as $k\rightarrow\infty$. By (\ref{2512141}) and (\ref{08181}), we can also find a subsequence of $t_{k,j_k}$  still denoted by itself such that $s_{k,j_k}\geq t_{k,j_k}$. From the similar procedure above, we have
 $y^{k,j_k}\rightarrow\hat{y}$ as $k\rightarrow\infty$.

We notice that, by  
the property (i) of $(t_{k,j}, x^{k,j}$, $s_{k,j}, y^{k,j})$, there exists a generic constant $C>0$ such that
\begin{eqnarray*}
	2\sum_{i=0}^{\infty}\frac{1}{2^i}\left[(s_{k,j_k}-\check{s}_{i})+(t_{k,j_k}-\check{t}_{i})\right]
	\leq Cj_k^{-\frac{1}{2}};
\end{eqnarray*}
\begin{eqnarray*}
	\left|\nabla_x\left[\sum_{i=0}^{\infty}\frac{1}{2^i}
	\left|{x}^{k,j_k}-(\check{x}^{i})^A_{\check{t}_{i},t_{k,j_k}}\right|^4\right]\right|
	+\left|\nabla_x\left[\sum_{i=0}^{\infty}\frac{1}{2^i}
	\left|y^{k,j_k}-(\check{y}^{i})^A_{\check{s}_{i},s_{k,j_k}}\right|^4\right]\right|
	\leq Cj_k^{-\frac{3}{4}};
\end{eqnarray*}
and
\begin{eqnarray*}
	\left|\nabla^2_{x}\left[\sum_{i=0}^{\infty}\frac{1}{2^i}
	\left|{x}^{k,j_k}-(\check{x}^{i})^A_{\check{t}_{i},t_{k,j_k}}\right|^4\right]\right|
	+\left|\nabla^2_{x}\left[\sum_{i=0}^{\infty}\frac{1}{2^i}
	\left|y^{k,j_k}-\check{y}^{i}_{\check{s}_{i},s_{k,j_k}}\right|^4\right]\right|
	\leq Cj_k^{-\frac{1}{2}}.
\end{eqnarray*}
Therefore the theorem  holds with
\begin{eqnarray*}
\begin{aligned}
	\varphi_{1,k}(t,x):=&\tilde{\varphi}_k(t,x_N)
	-\tilde{\varphi}_k(t_{k,j_k},({x}^{k,j})_N)
	+w_1(t_{k,j_k},{x}^{k,j_k}), \\
   \varphi_{2,k}(t,x):=&\sum_{i=0}^{\infty}\frac{1}{2^i}\Upsilon((\check{t}_{i},\check{x}^{i}),(t,x))
	-\sum_{i=0}^{\infty}\frac{1}{2^i}\Upsilon(({\check{t}_{i}}^{i},\check{x}),(t_{k,j_k},{x}^{k,j_k})),\\
	\psi_{1,k}(s,y):=&\tilde{\psi}_k(s,y_N)
	-\tilde{\psi}_k(s_{k,j_k},(s_{k,j},{y}^{k,j})_N)+w_2(s_{k,j_k},y^{k,j_k}),\\
	\psi_{2,k}(s,y):=&\sum_{i=0}^{\infty}\frac{1}{2^i}\Upsilon(({\check{s}_{i}},\check{\eta}^{i}),(s,y))
	-\sum_{i=0}^{\infty}\frac{1}{2^i}\Upsilon(({\check{s}_{i}}^{i},\check{\eta}),(s_{k,j_k},y^{k,j_k}))\quad \mbox{and}\\
t_{k}:=&t_{k,j_k},\quad {x}^{k}:={x}^{k,j_k},\quad s_{k}:=s_{k,j_k}, \quad {y}^{k}:=y^{k,j_k}.
\end{aligned}
\end{eqnarray*}
The proof is now complete.
 \end{proof}

\begin{remark}\label{remarkv0528}
	As mentioned in Remark 6.1 in Chapter V of \cite{fle1},  Condition (\ref{0528110}) is stated with reverse inequality in Theorem 8.3 of \cite{cran2}. However, we  immediately obtain  results (\ref{0608v2024})-(\ref{II0615}) from Theorem
	8.3 of \cite{cran2} by considering the functions $u_1(t,x) :=\widetilde{w}_{1}^{\hat{t},N,*}(T-t,x)$ and
	$u_2(t,x) := \widetilde{w}_{2}^{\hat{t},N,*}(T-t,x)$.
\end{remark}

\par
To complete the  proof of Theorem \ref{theorem0513}, it remains to state and prove the following  two lemmas.
\begin{lemma}\label{lemma4.30615}\ \ Let all the conditions in Theorem  \ref{theorem0513} be satisfied. Recall that $(\hat{t},\hat{x},\hat{y})$ is given  in Theorem  \ref{theorem0513},  $\widetilde{w}_{1}^{\hat{t},N,*}$ and $ \widetilde{w}_{2}^{\hat{t},N,*}$ are defined in Definition \ref{definition06072024} with respect to $w_1$ and $w_2$ given in Theorem  \ref{theorem0513}, respectively.
	Then the function:
	$$\widetilde{w}_{1}^{\hat{t},N,*}(t,x_N)+ \widetilde{w}_{2}^{\hat{t},N,*}(t,y_N)-\varphi(x_N,y_N),   \quad (t,x_N,y_N)\in [0,T]\times  H_N\times  H_N $$
attains the maximum  at $(\hat{t},\hat{x}_N,\hat{y}_N)$.
	Moreover, we have
	\begin{eqnarray}\label{2020020206}
	\widetilde{w}_{1}^{\hat{t},N,*}(\hat{t},\hat{x}_N)={w}_{1}(\hat{t},\hat{x}), \quad \widetilde{w}_{2}^{\hat{t},N,*}(\hat{t},\hat{y}_N)={w}_{2}(\hat{t},\hat{y}).
	\end{eqnarray}
\end{lemma}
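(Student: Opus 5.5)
We argue in two steps: first the maximization inequality on $[\hat t,T]$ and for $t<\hat t$, then the identities (\ref{2020020206}), using only the definitions of $\widetilde{w}_i^{\hat t,N}$ and their upper semicontinuous envelopes.

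\textbf{Maximization on $[\hat t,T]$, before taking envelopes.} For $t\in[\hat t,T]$ and $x_N,y_N\in H_N$, take any $x,y\in H$ with $P_Nx=x_N$ and $P_Ny=y_N$. The maximality (\ref{101012024}) (with $\tilde\rho\ge0$) gives
$w_1(t,x)+w_2(t,y)-\varphi(x_N,y_N)\le w_1(\hat t,\hat x)+w_2(\hat t,\hat y)-\varphi(\hat x_N,\hat y_N)=:M$.
Taking the supremum over such $x,y$ separately yields
\[
\widetilde w_1^{\hat t,N}(t,x_N)+\widetilde w_2^{\hat t,N}(t,y_N)-\varphi(x_N,y_N)\le M .
\]
For $t<\hat t$, the definition subtracts $(\hat t-t)^{1/2}$ from each term, so the same bound holds there as well. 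Hence the inequality holds on all of $[0,T]\times H_N\times H_N$. At $(\hat t,\hat x_N,\hat y_N)$ we also have $\widetilde w_1^{\hat t,N}(\hat t,\hat x_N)\ge w_1(\hat t,\hat x)$, and similarly for $w_2$.

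\textbf{Passing to the envelopes.} Since $\widetilde w_i^{\hat t,N,*}$ is a limsup, the envelope of a sum is at most the sum of envelopes. This is the wrong direction, so we treat both $\limsup$'s simultaneously. Fix $(t,x_N,y_N)$ and pick sequences $(s_n,a_n)\to(t,x_N)$ and $(r_n,b_n)\to(t,y_N)$ realizing the two limsups. The time arguments $s_n$ and $r_n$ may differ; this is the main obstacle, because the bound above is only for a common time. We handle it with the finite-dimensional-projection continuity condition (\ref{0608a2024}). Suppose, say, $\hat t\le r_n\le s_n$. For $y$ with $y_N=b_n$, we have $w_2(r_n,y)\le w_2(s_n,y^{A,N}_{r_n,s_n})+\rho_1(|s_n-r_n|,|y|)$, and $(y^{A,N}_{r_n,s_n})_N=b_n$. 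This gives $\widetilde w_2^{\hat t,N}(r_n,b_n)\le \widetilde w_2^{\hat t,N}(s_n,b_n)+\rho_1(|s_n-r_n|,R)$, where the restriction to $|y|\le R$ is justified as follows. Near-maximizing $y$ lie in a bounded set, because of the growth condition (\ref{051312024}) together with boundedness from above. Since $w_2(r_n,y)\to-\infty$ as $|y|\to\infty$, the supremum may be taken over a ball.

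Times below $\hat t$ are handled by the explicit formula. If $r_n<\hat t\le s_n$, we compare with time $\hat t$: the $-(\hat t-r_n)^{1/2}$ term only helps, and we then apply (\ref{0608a2024}) from $\hat t$ to $s_n$. If both times are below $\hat t$, both terms are evaluated at $\hat t$ minus nonnegative quantities.

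In all cases we reduce to a common time, up to an error $\rho_1(|s_n-r_n|,R)\to0$. The bound from the first step then gives
\[
\widetilde w_1^{\hat t,N,*}(t,x_N)+\widetilde w_2^{\hat t,N,*}(t,y_N)-\varphi(x_N,y_N)\le M
\]
after letting $n\to\infty$, using continuity of $\varphi$.

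\textbf{The identities (\ref{2020020206}).} From the previous steps,
\[
\widetilde w_1^{\hat t,N,*}(\hat t,\hat x_N)+\widetilde w_2^{\hat t,N,*}(\hat t,\hat y_N)\le w_1(\hat t,\hat x)+w_2(\hat t,\hat y).
\]
Each term on the left dominates the corresponding term on the right, since $\widetilde w_i^{\hat t,N,*}\ge \widetilde w_i^{\hat t,N}\ge w_i$ at those points. Hence equality holds termwise, which is (\ref{2020020206}). In particular, the maximum of the envelope function equals $M$ and is attained at $(\hat t,\hat x_N,\hat y_N)$.
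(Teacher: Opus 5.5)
Your proof is correct and follows essentially the paper's route: the common-time bound from (\ref{101012024}), sequences realizing the two envelopes, shifting the earlier time to the later one via (\ref{0608a2024}) with the supremum restricted to a bounded set, and the sandwich $\widetilde w_i^{\hat t,N,*}\ge \widetilde w_i^{\hat t,N}\ge w_i$ to get (\ref{2020020206}). The one step to make explicit is that $R$ does not depend on $n$. This holds because $\widetilde w_2^{\hat t,N}(r_n,b_n)$ converges to the finite number $\widetilde w_2^{\hat t,N,*}(t,y_N)$, which lies between $\widetilde w_2^{\hat t,N}(t,y_N)>-\infty$ and $\sup w_2<\infty$, so it is bounded below for large $n$; together with the decay in (\ref{051312024}), which is uniform in time, this confines near-maximizers to one fixed ball (the paper obtains the corresponding bound $M_4$ by a contradiction argument instead).
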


\begin{proof}
For every $\hat{t}\leq t\leq s\leq T$ and $x_N\in   H_N$,
from the definition of $\widetilde{w}^{\hat{t},N}_1$ 
it follows that
\begin{eqnarray}\label{220831}
\begin{aligned}
\widetilde{w}^{\hat{t},N}_1(t,x_N)-\widetilde{w}^{\hat{t},N}_1(s,x_N)
=\sup_{y_N=x_N}
w_{1}(t,y)
\,\,-\sup_{y_N=x_N}
w_{1}(s,y).
\end{aligned}
\end{eqnarray}
By (\ref{051312024}), there exist constants {$M_3>0$} and $\varepsilon>0$ such that
\begin{eqnarray}\label{07061}
w_1(t,z)\leq -\varepsilon|z|_H,\ \ \mbox{if}\ |z|_H \geq M_3, \ (t,z)\in [\hat{t},T]\times H.
\end{eqnarray}
By (\ref{0608a2024}),
\begin{eqnarray*}
	w_1(\hat{t},x_N)-\sup_{l\in[\hat{t},T]}\rho_1(|l-\hat{t}|,|x_N|_H)\leq w_1(t,x_N+(e^{(t-\hat{t})A}x_N)^{-}_N)\leq \sup_{y_N=x_N}w_{1}(t,y).
\end{eqnarray*}
Notice that $w_1(\hat{t},x_N)-\sup_{l\in[\hat{t},T]}\rho_1(|l-\hat{t}|,|x_N|_H)$ depends only on $x_N$, there exists a constant $C^1_{x_N}>0$ depending only on   $x_N$ such that, for all $(t,z)\in [\hat{t},T]\times H$ satisfying $|z|_H\geq C^1_{x_N}$,
\begin{eqnarray}\label{07062}
-\varepsilon|z|_H< w_1(\hat{t},x_N)-\sup_{l\in[\hat{t},T]}\rho_1(|l-\hat{t}|,|x_N|_H)\leq \sup_{y_N=x_N}w_{1}(t,y).
\end{eqnarray}
Taking $C_{x_N}=M_3\vee C^1_{x_N}$, by (\ref{07061}) and (\ref{07062}),
\begin{eqnarray}\label{220831a}
w_1(t,z)<\sup_{y_N=x_N}w_{1}(t,y),\ \ \mbox{if} \ |z|_H\geq C_{x_N}, (t,z)\in [\hat{t},T]\times H.
\end{eqnarray}
Combining (\ref{220831}) and (\ref{220831a}),  from (\ref{0608a2024}) we have
\begin{eqnarray}\label{202105083}
\begin{aligned}
&\quad \widetilde{w}^{\hat{t},N}_1(t,x_N)-\widetilde{w}^{\hat{t},N}_1(s,x_N)\\
&=\sup_{y_N=x_N,
	|y|_H\leq C_{x_N}}\!\!\!\!\!
w_{1}(t,y)
\,\, -\sup_{y_N=x_N} w_{1}(s,y)
 \\
\qquad &\leq \sup_{
	y_N=x_N,
	|y|_H\leq C_{x_N}}\!\!\!\!\!
{[}w_{1}(t,y)-w_{1}(s,x_N+(e^{(s-t)A}y)^{-}_N))
{]}\\
&\leq  \sup_{y_N=x_N,
	|y|_H\leq C_{x_N}}    \!\!\! \!\!\rho_1(|s-t|,\, |y|_H)
\, \leq \, \rho_1(|s-t|,\, C_{x_N}).
\end{aligned}
\end{eqnarray}
Clearly, if $0\leq t\leq s\leq \hat{t}$, we have
\begin{eqnarray}\label{202105084}
\widetilde{w}^{\hat{t},N}_1(t,x_N)-\widetilde{w}^{\hat{t},N}_1(s,x_N)=-(\hat{t}-t)^{\frac{1}{2}}+(\hat{t}-s)^{\frac{1}{2}}\leq 0,
\end{eqnarray}
and, if $0\leq t\leq  \hat{t}\leq s\leq T$ , we have
\begin{eqnarray}\label{202105085}
\ \ \                \widetilde{w}^{\hat{t},N}_1(t,x_N)-\widetilde{w}^{\hat{t},N}_1(s,x_N)\leq\widetilde{w}^{\hat{t},N}_1(\hat{t},x_N)-\widetilde{w}^{\hat{t},N}_1(s,x_N) \leq \rho_1(|s-\hat{t}|,C_{x_N}).
\end{eqnarray}
On the other hand,
for every $(t,x_N,y_N)\in [0,T]\times  H_N\times  H_N$,
by the definitions of $\widetilde{w}_{j}^{\hat{t},N,*}$, $j=1,2$, there exist  sequences  $(l_i,x^N_i), (\tau_i,y^N_i)\in [0,T]\times  H_N$ such that
$(l_{i},x^N_i)\rightarrow (t,x_N)$ and $(\tau_{i},y^N_i)\rightarrow (t,y_N)$
as $i\rightarrow\infty$ and
\begin{eqnarray}\label{202105081}
\widetilde{w}_{1}^{\hat{t},N,*}(t,x_N)=\lim_{i\rightarrow\infty}\widetilde{w}^{\hat{t},N}_{1}(l_i,x^N_i), \ \ \ \widetilde{w}_{2}^{\hat{t},N,*}(t,y_N)=\lim_{i\rightarrow\infty}\widetilde{w}^{\hat{t},N}_2(\tau_i,y^N_i).
\end{eqnarray}
Without loss of generality, we may assume  $l_i\leq \tau_i$ for all $i>0$.
By (\ref{202105083})-(\ref{202105085}), we have
\begin{eqnarray}\label{202105082jiaa}
\begin{aligned}
\widetilde{w}_{1}^{\hat{t},N,*}(t,x_N)=\lim_{i\rightarrow\infty}\widetilde{w}^{\hat{t},N}_{1}(l_i,x^N_i)\leq \liminf_{i\rightarrow\infty}[\widetilde{w}^{\hat{t},N}_{1}(\tau_i,x^N_i)+\rho_1(|\tau_i-l_i|,C_{x^N_i})].
\end{aligned}
\end{eqnarray}
Here $C_{x^N_i}>0$ is the constant  that makes the following formula true:
$$
w_1(t,z)<\sup_{
	y_N=x^N_i} w_{1}(t,y),
\quad \mbox{if} \ |z|_H\geq C_{x^N_i}\  \text{and} \ (t,x)\in [\hat{t},T]\times H.
$$
We claim that we can assume that there exists a constant $M_4>0$ such that $C_{x^N_i}\leq M_4$ for all $i\geq 1$. Indeed, if not, for every $n$, there exists  $i_n$ such that
\begin{eqnarray}
\qquad\quad
\widetilde{w}^{\hat{t},N}_{1}(l_{i_n},x^N_{i_n})=\begin{cases}\displaystyle \sup_{
	y_N=x^N_{i_n},
	|y|_H> n}
{[}w_{1}(l_{i_n},y){]}, \qquad\qquad \quad \ l_{i_n}\geq \hat{t};\\[5mm]
\displaystyle \sup_{y_N=x^N_{i_n},
	|y|_H> n}
{[}w_{1}(\hat{t},y){]}-({\hat{t}}-l_{i_n})^{\frac{1}{2}}, \quad \ l_{i_n}< \hat{t}.
\end{cases}
\end{eqnarray}
Letting $n\rightarrow\infty$, by (\ref{051312024}), we get that
$$
\widetilde{w}^{\hat{t},N}_{1}(l_{i_n},x^N_{i_n})\rightarrow-\infty \ \mbox{as}\ n\rightarrow\infty,
$$
which contradicts the convergence that $ \widetilde{w}_{1}^{\hat{t},N,*}(t,x_N)=\lim_{i\rightarrow\infty}\widetilde{w}^{\hat{t},N}_{1}(l_i,x^N_i)$. Then, by  (\ref{202105082jiaa}),
\begin{eqnarray}\label{20210704a}
\begin{aligned}
\widetilde{w}_{1}^{\hat{t},N,*}(t,x_N)
\leq \, \liminf_{i\rightarrow\infty}\, [\widetilde{w}^{\hat{t},N}_{1}(\tau_i,x^N_i)+\rho_1(|\tau_i-l_i|,M_4)]
 = \, \liminf_{i\rightarrow\infty}\, \widetilde{w}^{\hat{t},N}_{1}(\tau_i,x^N_i).
\end{aligned}
\end{eqnarray}
Therefore, by (\ref{202105081}), (\ref{20210704a}) and the definitions of $\widetilde{w}^{\hat{t},N}_{1}$ and $\widetilde{w}^{\hat{t},N}_{2}$,
\begin{eqnarray}\label{20210508b6}
\begin{aligned}
&\quad\widetilde{w}_{1}^{\hat{t},N,*}(t,x_N)+\widetilde{w}_{2}^{\hat{t},N,*}(t,y_N)-\varphi(x_N,y_N)\\
&\leq\liminf_{i\rightarrow\infty}\, [\widetilde{w}^{\hat{t},N}_{1}(\tau_i,x^N_i)+\widetilde{w}^{\hat{t},N}_{2}(\tau_i,y^N_i)-\varphi(x^N_i,y^N_i)]\\
&\leq\sup_{\scriptsize\begin{matrix}(l,x^N_0,y^N_0)\\
	\in [0,T]\times  H_N\times H_N\end{matrix}}\!\!\!\!\! [\widetilde{w}^{\hat{t},N}_{1}(l,x^N_0)+\widetilde{w}^{\hat{t},N}_{2}(l,y^N_0)-\varphi(x^N_0,y^N_0)]\\
&=\sup_{\scriptsize\begin{matrix}(l,x^N_0,y^N_0)\\
	\in [\hat{t},T]\times  H_N\times H_N\end{matrix}}\!\!\!\!\!
[\widetilde{w}^{\hat{t},N}_{1}(l,x^N_0)+\widetilde{w}^{\hat{t},N}_{2}(l,y^N_0)-\varphi(x^N_0,y^N_0)].
\end{aligned}
\end{eqnarray}
We also have, for $(l,x^N_0,y^N_0)\in [\hat{t},T]\times  H_N\times  H_N$,
\begin{eqnarray}\label{0608aa}
\begin{aligned}
\ \ \ \ \ \ &\quad
\widetilde{w}^{\hat{t},N}_{1}(l,x^N_0)+  \widetilde{w}^{\hat{t},N}_{2}(l,y^N_0)-\varphi(x^N_0,y^N_0)  \\
&=\sup_{
	z_N=x^N_0,
	y_N=y^N_0}\!\!\!
\left[w_{1}(l,z)
+w_{2}(l,y)
-\varphi(z_N,y_N)\right]\\
&\leq w_{1}(\hat{t},\hat{x})+w_{2}(\hat{t},\hat{y})
-\varphi(\hat{x}_N,\hat{y}_N),
\end{aligned}
\end{eqnarray}
where the  inequality becomes equality if  $l={\hat{t}}$ 
and $x^N_0=\hat{x}_N,y^N_0=\hat{y}_N$.
Combining  (\ref{20210508b6}) and (\ref{0608aa}), we obtain that
\begin{eqnarray}\label{0608a1}
\begin{aligned}
\widetilde{w}_{1}^{\hat{t},N,*}(t,x_N)+\widetilde{w}_{2}^{\hat{t},N,*}(t,y_N)-\varphi(x_N,y_N)
\leq w_{1}(\hat{t},\hat{x})+w_{2}(\hat{t},\hat{y})
-\varphi(\hat{x}_N,\hat{y}_N).
\end{aligned}
\end{eqnarray}
By the definitions of $\widetilde{w}_1^{\hat{t},N,*}$ and $\widetilde{w}_2^{\hat{t},N,*}$, we have $\widetilde{w}_1^{\hat{t},N,*}(t,x_N)\geq \widetilde{w}^{\hat{t},N}_1(t,x_N), \widetilde{w}_{2}^{\hat{t},N,*}(t,y_N)$ $ \geq \widetilde{w}^{\hat{t},N}_{2}(t,y_N)$.  Then by also (\ref{0608aa}) and (\ref{0608a1}), for every $(t,x_N,y_N)\in [0,T]\times  H_N\times H_N$,
\begin{eqnarray}\label{0608abc}
\begin{aligned}
&\quad\widetilde{w}_{1}^{\hat{t},N,*}(t,x_N)+\widetilde{w}_{2}^{\hat{t},N,*}(t,y_N)-\varphi(x_N,y_N)\\
&\leq w_{1}(\hat{t},\hat{x})+w_{2}(\hat{t},\hat{y})
-\varphi(\hat{x}_N,\hat{y}_N)\\
&=\widetilde{w}^{\hat{t},N}_{1}(\hat{t},\hat{x}_N)+  \widetilde{w}^{\hat{t},N}_{2}(\hat{t},\hat{y}_N)
-\varphi(\hat{x}_N,\hat{y}_N)\\
&\leq\widetilde{w}_{1}^{\hat{t},N,*}(\hat{t},\hat{x}_N)+  \widetilde{w}_{2}^{\hat{t},N,*}(\hat{t},\hat{y}_N)
-\varphi(\hat{x}_N,\hat{y}_N).
\end{aligned}
\end{eqnarray}
Thus
we obtain that (\ref{2020020206}) holds true, and  $ \widetilde{w}_{1}^{\hat{t},N,*}(t,x_N)+\widetilde{w}_{2}^{\hat{t},N,*}(t,y_N)-\varphi(x_N, y_N)$ has a maximum at $(\hat{t}, \hat{x}_N,
\hat{y}_N)$
on $[0,T]\times  H_N\times  H_N$.
The proof is now complete.
\end{proof}

\begin{lemma}\label{lemma4.40615}\ \
	Let all the conditions in Theorem  \ref{theorem0513} hold. Recall that $\Gamma^1_{k}$ and $\Gamma^2_{k}$ are defined in  (\ref{4.11116}).
	Then the maximum points $(t_{k,j},x^{k,j})$
	of $\Gamma^1_{k}(t,x)
	-\sum_{i=0}^{\infty}
	\frac{1}{2^i}\Upsilon((\check{t}_{i},\check{x}^i),(t,x))$ and the maximum points
	$ (s_{k,j},y^{k,j})$
	of $\Gamma^2_{k}(s,y)
	-\sum_{i=0}^{\infty}
	\Upsilon((\check{s}_{i},\check{y}^{i}),(s,y))$
	satisfy  conditions (\ref{4.226}), (\ref{05231}) and (\ref{05232}).
\end{lemma}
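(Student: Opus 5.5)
We prove the three limits for $\Gamma^1_k$; the argument for $\Gamma^2_k$ is identical. The plan is to bound the perturbed maximizers uniformly, extract convergent finite-dimensional data, and then identify the limit through the strict maximum of $\widetilde{w}_{1}^{\hat{t},N,*}-\tilde{\varphi}_k$ at $(t_k,x^k_N)$.

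\emph{Preliminary bounds.} Fix $k$. By (ii) of the Borwein--Preiss construction and (\ref{20210509a}),
$$
\Gamma^1_k(t_{k,j},x^{k,j})\geq \mathbf{\Gamma}^1_k(t_{k,j},x^{k,j})\geq \sup_{[\hat t,T]\times H}\Gamma^1_k-\tfrac1j .
$$
So $(t_{k,j},x^{k,j})$ is a $\frac1j$-maximizing sequence for $\Gamma^1_k$. Since $\tilde\varphi_k$ is bounded from below and $w_1$ satisfies (\ref{051312024}), $\Gamma^1_k(t,x)\to-\infty$ as $|x|\to\infty$ uniformly in $t$. Hence $|x^{k,j}|\leq M$ uniformly in $j$. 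The pair $(t_{k,j},(x^{k,j})_N)$ lives in the compact set $[\hat t,T]\times\{|x_N|\le M\}$, so along any subsequence we may extract a further subsequence converging to some $(\bar t,\bar x_N)$.

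\emph{Identification of the limit.} By definition of $\widetilde w_1^{\hat t,N}$ (for $t\geq\hat t$),
$$
w_1(t_{k,j},x^{k,j})\leq \widetilde{w}_1^{\hat t,N}(t_{k,j},(x^{k,j})_N)\leq\widetilde{w}_1^{\hat t,N,*}(t_{k,j},(x^{k,j})_N).
$$
Combining this with the near-maximality gives
$$
\widetilde{w}_1^{\hat t,N,*}(t_{k,j},(x^{k,j})_N)-\tilde\varphi_k(t_{k,j},(x^{k,j})_N)\geq \sup_{[\hat t,T]\times H}\big(w_1-\tilde\varphi_k(\cdot,\cdot_N)\big)-\tfrac1j .
$$
The supremum on the right equals $\sup_{[\hat t,T]\times H_N}(\widetilde w_1^{\hat t,N}-\tilde\varphi_k)$, since the sup over the fiber $y_N=x_N$ is exactly $\widetilde w_1^{\hat t,N}$. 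This in turn equals $\sup(\widetilde w_1^{\hat t,N,*}-\tilde\varphi_k)$ over $[\hat t,T]\times H_N$: the upper envelope does not increase suprema of $f-$(continuous), apart from the boundary issue at $t<\hat t$. That issue is handled by the $-(\hat t-t)^{1/2}$ penalty together with (\ref{202105083})--(\ref{202105085}), which show the envelope at times $\ge\hat t$ is approximated from times $\ge\hat t$. We also know $t_k\geq\hat t$ from the proof of Theorem \ref{theorem0513}, so this supremum equals $0$, attained strictly at $(t_k,x^k_N)$.

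Passing to the limit with upper semicontinuity of $\widetilde w_1^{\hat t,N,*}$ gives $(\widetilde w_1^{\hat t,N,*}-\tilde\varphi_k)(\bar t,\bar x_N)\ge 0$. Strictness of the maximum then forces $(\bar t,\bar x_N)=(t_k,x^k_N)$. Because every subsequence has a further subsequence with this same limit, the whole sequence converges, which proves (\ref{4.226}).

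\emph{Convergence of values.} From the chain of inequalities above, $w_1(t_{k,j},x^{k,j})$ and $\widetilde w_1^{\hat t,N,*}(t_{k,j},(x^{k,j})_N)$ are both at least $\tilde\varphi_k(t_{k,j},(x^{k,j})_N)-\frac1j$. The right-hand side tends to $\tilde\varphi_k(t_k,x^k_N)=\widetilde w_1^{\hat t,N,*}(t_k,x^k_N)$. Both quantities are bounded above by $\widetilde w_1^{\hat t,N,*}(t_{k,j},(x^{k,j})_N)$, whose $\limsup$ is at most $\widetilde w_1^{\hat t,N,*}(t_k,x^k_N)$ by upper semicontinuity. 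Squeezing yields (\ref{05231}) and (\ref{05232}).

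\emph{Main obstacle.} The delicate step is the identity $\sup(w_1-\tilde\varphi_k)=\sup(\widetilde w_1^{\hat t,N,*}-\tilde\varphi_k)=0$ over $t\ge\hat t$. It requires that the envelope near $t\geq\hat t$ is approximated by values of $\widetilde w_1^{\hat t,N}$ at times $\geq\hat t$. For this I would reuse the time-regularity estimate (\ref{202105083})--(\ref{202105085}) with the uniform constant $M_4$ from the proof of Lemma \ref{lemma4.30615}.
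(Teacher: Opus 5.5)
Your proof is correct and follows the paper's argument. You sandwich $\Gamma^1_k(t_{k,j},x^{k,j})$ between $(\widetilde{w}_{1}^{\hat{t},N,*}-\tilde{\varphi}_k)(t_{k,j},(x^{k,j})_N)$ and $(\widetilde{w}_{1}^{\hat{t},N,*}-\tilde{\varphi}_k)(t_k,x^k_N)-\frac1j$, bound $|x^{k,j}|$ via (\ref{051312024}), and use upper semicontinuity plus strictness of the maximum at $(t_k,x^k_N)$ to identify the limit and squeeze the values; the paper does the same jointly for $\Gamma^1_k,\Gamma^2_k$, which makes no difference.

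The step you flag as the main obstacle is lighter than you fear. For $s<\hat t$ one has $\widetilde{w}_{1}^{\hat{t},N}(s,y_N)\le\widetilde{w}_{1}^{\hat{t},N}(\hat t,y_N)$ directly from Definition \ref{definition06072024}. Continuity of $\tilde\varphi_k$ then suffices to give $\sup_{[\hat t,T]\times H}\Gamma^1_k\ge(\widetilde{w}_{1}^{\hat{t},N,*}-\tilde{\varphi}_k)(t_k,x^k_N)$, without the $\rho_1$ estimates or $M_4$.
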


\begin{proof}
Recall that $\widetilde{w}_{i}^{\hat{t},N,*}\geq \widetilde{w}^{\hat{t},N}_{i}$, $i=1,2$, 
by  the definitions of $\widetilde{w}^{\hat{t},N}_i$, $i=1,2$, 
 we get that
\begin{eqnarray*}
\begin{aligned}
	& \widetilde{w}_{1}^{\hat{t},N,*}(t_{k,j},(x^{k,j})_N)
	-\tilde{\varphi}_k(t_{k,j},(x^{k,j})_N)\\
\geq& w_1(t_{k,j},x^{k,j})-\tilde{\varphi}_k(t_{k,j},x^{k,j})_N)
	=\Gamma^1_{k}(t_{k,j},x^{k,j}),\\
	&\widetilde{w}_{2}^{\hat{t},N,*}(s_{k,j},(y^{k,j})_N)
	-\tilde{\psi}_k(s_{k,j},(y^{k,j})_N)\\
	\geq& w_2(s_{k,j},y^{k,j})-\tilde{\psi}_k(s_{k,j},(y^{k,j})_N)
	=\Gamma^2_{k}(s_{k,j},y^{k,j}).
\end{aligned}
\end{eqnarray*}
We notice that, from (\ref{20210509a}) and the property (ii) of $(t_{k,j},x^{k,j}; s_{k,j},y^{k,j})$,
\begin{eqnarray*}
	&&\Gamma^1_{k}(t_{k,j},x^{k,j})\geq\Gamma^1_{k}({\check{t}_{0}},\check{x}^{0})
	\geq \sup_{(t,x)\in [\hat{t},T]\times H}\Gamma^1_{k}(t,x)-\frac{1}{j}, \\
	&& \Gamma^2_{k}(s_{k,j},y^{k,j})\geq\Gamma^2_{k}({\check{s}_{0}},\check{y}^{0})
	\geq \sup_{(s,y)\in [\hat{t},T]\times H}\Gamma^2_{k}(s,y)-\frac{1}{j},
\end{eqnarray*}
and by the  definitions of $\widetilde{w}_{1}^{\hat{t},N,*}$ and $\widetilde{w}_{2}^{\hat{t},N,*}$,
\begin{eqnarray*}
	&&\sup_{(t,x)\in [\hat{t},T]\times H}\Gamma^1_{k}(t,x)\geq\widetilde{w}_{1}^{\hat{t},N,*}(t_k,{x}_N^k)-\tilde{\varphi}_k(t_k,{x}_N^k),\\
	&&\sup_{(s,y)\in [\hat{t},T]\times H}\Gamma^2_{k}(s,y)\geq\widetilde{w}_{2}^{\hat{t},N,*}(s_k,{y}_N^k)-\tilde{\psi}_k(s_k,{y}_N^k).
\end{eqnarray*}
Therefore,
\begin{eqnarray}\label{0525b6}
&&\widetilde{w}_{1}^{\hat{t},N,*}(t_{k,j},(x^{k,j})_N)-\tilde{\varphi}_k(t_{k,j},(x^{k,j})_N)\nonumber\\
&\geq&\Gamma^1_{k}(t_{k,j},x^{k,j})\geq
\widetilde{w}_{1}^{\hat{t},N,*}(t_k,{x}_N^k)-\tilde{\varphi}_k(t_k,{x}_N^k)-\frac{1}{j},
\end{eqnarray}
\begin{eqnarray}\label{0525b0920}
&&\widetilde{w}_{2}^{\hat{t},N,*}(s_{k,j},(y^{k,j})_N)-\tilde{\psi}_k(s_{k,j},(y^{k,j})_N)\nonumber\\
&\geq&\Gamma^2_{k}(s_{k,j},y^{k,j})\geq
\widetilde{w}_{2}^{\hat{t},N,*}(s_k,{y}_N^k)-\tilde{\psi}_k(s_k,{y}_N^k)-\frac{1}{j}.
\end{eqnarray}
By (\ref{051312024}) and  that $\tilde{\varphi}_k$ and $\tilde{\psi}_k$ are bounded from below, 
there exists a constant  $M_5>0$  that is sufficiently  large   that
$|x^{k,j}|\vee
|y^{k,j}|<M_5$. In particular, $|(x^{k,j})_N|\vee|(y^{k,j})_N|$ $<M_5$.
We note that $M_5$ is independent of $j$.
Then letting $j\rightarrow\infty$ in (\ref{0525b6}) and (\ref{0525b0920}),
we obtain (\ref{4.226}).  Indeed, if not, we may assume that  there exist $(\grave{t},\grave{x}_N;\grave{s},\grave{y}_N)\in ([0, T]\times  H_N)^2$ and  a subsequence of $(t_{k,j},(x^{k,j})_N;s_{k,j},(y^{k,j})_N)$ still denoted by itself  such that
$$
\lim_{j\to \infty}(t_{k,j},(x^{k,j})_N;s_{k,j},(y^{k,j})_N)= (\grave{t},\grave{x}_N;\grave{s},\grave{y}_N)\neq (t_k,x_N^k;s_k,y_N^k).
$$
Letting $j\rightarrow\infty$ in (\ref{0525b6}) and (\ref{0525b0920}), by the upper semi-continuity of $\widetilde{w}_{1}^{\hat{t},N,*}+\widetilde{w}_{2}^{\hat{t},N,*}-\tilde{\varphi}_k-\tilde{\psi}_k$, we have
\begin{eqnarray*}
	 &&\widetilde{w}_{1}^{\hat{t},N,*}(\grave{t},\grave{x}_N)+\widetilde{w}_{2}^{\hat{t},N,*}(\grave{s},\grave{y}_N)
     -\tilde{\varphi}_k(\grave{t},\grave{x}_N)-\tilde{\psi}_k(\grave{s},\grave{y}_N)\\
	&\geq&\widetilde{w}_{1}^{\hat{t},N,*}(t_k,{x}_N^k)+\widetilde{w}_{2}^{\hat{t},N,*}(s_k,{y}_N^k) -\tilde{\varphi}_k(t_k,{x}_N^k)-\tilde{\psi}_k(s_k,{y}_N^k),
\end{eqnarray*}       which  contradicts that
$(t_k,x_N^{k},s_k,y_N^{k})$  is the  strict  maximum point of the function: $\widetilde{w}_{1}^{\hat{t},N,*}(t,x_N)+\widetilde{w}_{2}^{\hat{t},N,*}(s,y_N)-\tilde{\varphi}_k(t,x_N)-\tilde{\psi}_k(s,y_N), \, (t,x_N;s,y_N)\in ([0,T]\times H_N)^2$.\par
By (\ref{4.226}), the  upper semi-continuity of $\widetilde{w}_{1}^{\hat{t},N,*}$ and $\widetilde{w}_{2}^{\hat{t},N,*}$ and  the continuity of $\tilde{\varphi}_k$ and $\tilde{\psi}_k$, letting $j\rightarrow\infty$ in (\ref{0525b6})  and (\ref{0525b0920}), we obtain
\begin{eqnarray*}
\begin{aligned}
	&\widetilde{w}_{1}^{\hat{t},N,*}(t_k,x_N^k)
	\geq \limsup_{j\rightarrow\infty}\widetilde{w}_{1}^{\hat{t},N,*}(t_{k,j},({x}^{k,j})_N)\\
	\geq& \liminf_{j\rightarrow\infty}\widetilde{w}_{1}^{\hat{t},N,*}(t_{k,j},({x}^{k,j})_N)
	\geq\widetilde{w}_{1}^{\hat{t},N,*}(t_k,x_N^k),\\
	&\widetilde{w}_{2}^{\hat{t},N,*}(s_k,y_N^k)
	\geq \limsup_{j\rightarrow\infty}\widetilde{w}_{2}^{\hat{t},N,*}(s_{k,j},({y}^{k,j})_N)\\
	\geq& \liminf_{j\rightarrow\infty}\widetilde{w}_{2}^{\hat{t},N,*}(s_{k,j},({y}^{k,j})_N)
	\geq\widetilde{w}_{2}^{\hat{t},N,*}(s_k,y_N^k).
\end{aligned}
\end{eqnarray*}
Thus, we get (\ref{05231}) holds true.
Letting $j\rightarrow\infty$ in (\ref{0525b6}) and (\ref{0525b0920}), by (\ref{05231}) and the definitions of $\Gamma^1_{k}$ and $\Gamma^2_{k}$,
$$
\widetilde{w}_{1}^{\hat{t},N,*}(t_k,x_N^k)=\lim_{j\rightarrow\infty}w_1(t_{k,j}, x^{k,j}), \ \ \
\widetilde{w}_{2}^{\hat{t},N,*}(s_k,y_N^k)=\lim_{j\rightarrow\infty}w_2(s_{k,j},y^{k,j}).
$$
Thus, we obtain (\ref{05232}).
The proof is now complete.
\end{proof}

\section{Viscosity solutions to  HJB equations: uniqueness}
\par
This section is devoted to the  proof of uniqueness of  viscosity
solutions to equation (\ref{hjb1}). This result, together with
the results from  Section \ref{sect-exist}, will be used to characterize
the value functional defined by (\ref{value1}).
We require  the following assumption on $\sigma$.
\begin{assumption}\label{hypstate5666}
	For every $(t,x)\in [0,T)\times H$,
	\begin{eqnarray}\label{g5}
	\lim_{N\to \infty} \sup_{u\in U}\left|Q_N{\sigma}(t,x,u)\right|_{L_2(\Xi, H)}^2=0.
	\end{eqnarray}
\end{assumption}
\par
{By \cite[Proposition 11.2.13]{zhang}, without loss of generality we may assume that} there exists a constant $L>0$ such that,
for all $(t,x, p,l)\in [0,T]\times{H}\times H\times {\mathcal{S}}(H)$ and $r_1,r_2\in \mathbb{R}$ with $r_1<r_2$,
\begin{eqnarray}\label{5.1}
{\mathbf{H}}(t,x,r_1,p,l)-{\mathbf{H}}(t,x,r_2,p,l)\geq L(r_2-r_1).
\end{eqnarray}
We  now state the main result of this {section}.
\begin{theorem}\label{theoremhjbm2024}
	Let Assumption \ref{hypstate}  be satisfied.
	Let $  W_1\in C^{0+}([0,T]\times H)$ $(\mbox{resp}.,    W_2\in C^{0+}([0,T]\times H))$ be  a viscosity sub-solution (resp., super-solution) to equation (\ref{hjb1}) and  let  there exist a  constant $L>0$  and a local modulus of continuity  $\rho_2$
	such that, for any  $0\leq t\leq  s\leq T$ and
	$x,y\in{H}$,
	\begin{eqnarray}\label{w2024}
	|   W_1(t,x)|\vee |   W_2(t,x)|\leq L (1+|x|^2),
	\end{eqnarray}
	\begin{eqnarray}\label{w12024}
	\begin{aligned}
	&\left|   W_1(s,x^A_{t,s})-   W_1(t,y)\right|\vee\left|   W_2(s,x^A_{t,s})-   W_2(t,y)\right|\\
	\leq&\, \rho_2(|s-t|,\, |x|\vee|y|)+L(1+|x|+|y|)|x-y|.
	\end{aligned}
	\end{eqnarray}
	Then  $   W_1\leq    W_2$.
\end{theorem}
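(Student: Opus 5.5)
We argue by contradiction and follow the standard doubling-of-variables scheme, adapted so that every penalisation term lies in $\mathcal{G}_{\hat t}$ (Remark \ref{remarkv0129120240206ab}). First, using the monotonicity (\ref{5.1}) and the replacement $W_i\mapsto e^{\lambda t}W_i$ if needed, we may assume strict properness in $r$. Suppose $\sup(W_1-W_2)>0$. For small $\beta,\varepsilon,\delta>0$ define on $[0,T]\times H\times H$
\begin{eqnarray*}
\Psi(t,x,y):=W_1(t,x)-W_2(t,y)-\frac{\beta}{t}-\varepsilon e^{\mu(T-t)}(|x|^4+|y|^4)-\frac{1}{2\delta}|x_N-y_N|^2-(\text{tail terms}),
\end{eqnarray*}
where $\mu$ is large. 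The growth (\ref{w2024}) together with the quartic penalty gives condition (\ref{051312024}) for $w_1:=W_1-\varepsilon e^{\mu(T-t)}|x|^4-\cdots$ and $w_2:=-W_2-\cdots$. The coupling $|x-y|^2$ cannot be used directly, since $|x-\hat x^A|$-type shifts are the only admissible test terms. We therefore split $|x-y|^2=|P_N(x-y)|^2+|Q_N(x-y)|^2$, and bound the tail by $2|Q_N(x-\hat z^A_{\hat t,t})|^2+2|Q_N(y-\hat z^A_{\hat t,t})|^2$-type terms. More concretely, following Remark \ref{remarkv0129120240129f}, after locating an approximate maximiser $(\hat t,\hat x,\hat y)$ we add $\frac{1}{\delta}(|x-\hat x^A_{\hat t,t}|^2+|y-\hat y^A_{\hat t,t}|^2)$, which belongs to $\mathcal{G}_{\hat t}$.

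The steps are as follows. (1) Apply the Borwein--Preiss principle (Lemma \ref{theoremleft}), with gauge $\Upsilon$ on the doubled space, to obtain a strict maximum $(\hat t,\hat x,\hat y)$ of the perturbed $\Psi$ satisfying the quantitative strictness (\ref{101012024}). (2) Use (\ref{w12024}) to verify (\ref{0608a2024}) and to check that $\widetilde w_i^{\hat t,N,*}\in\Phi_N$. The latter follows from the sub/super-solution inequality and the bounds in Definition \ref{definition0513}, in the usual way via the linear growth of $\mathbf H$. (3) Show $\hat t<T$ for small parameters using $W_1(T,\cdot)\le\phi\le W_2(T,\cdot)$, and the penalisation estimates $\frac1\delta|\hat x_N-\hat y_N|^2\to0$ and $|\hat x-\hat y|\to0$ in the iterated limit (first $N\to\infty$, then $\delta\to0$, for fixed $\varepsilon,\beta$). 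These estimates come from (\ref{w12024}) and the Lipschitz-type continuity in $x$. (4) Invoke the Crandall--Ishii lemma (Theorem \ref{theorem0513}) with $\varphi(x_N,y_N)=\frac{1}{2\delta}|x_N-y_N|^2$. This produces test pairs in $\Phi_{\hat t}\times\mathcal G$ at points converging to $(\hat t,\hat x)$ and $(\hat t,\hat y)$, with $b_1+b_2=0$ and the matrix inequality (\ref{II0615}). Apply the sub- and super-solution definitions (Definition \ref{definition4.1}) and subtract.

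Passing $k\to\infty$, the unbounded term $\langle A^*\nabla_x\varphi,x\rangle$ acts only through $e_1,\dots,e_N\in\mathcal D(A^*)$. The part of $A$ acting on the $|x-\hat x^A|$ and $\Upsilon$-terms is absorbed by the Itô inequality built into $\mathcal G$, so it appears only as $\partial_t^o$. What remains is the difference of Hamiltonians, controlled using Lipschitz $b,\sigma,q$:
\begin{eqnarray*}
\frac{1}{\delta}\langle x_N-y_N,(b(\hat x)-b(\hat y))_N\rangle+\frac12\mathrm{Tr}\big[X_N\sigma\sigma^*(\hat x)+Y_N\sigma\sigma^*(\hat y)\big]\le \frac{C}{\delta}|\hat x-\hat y|^2+\text{tail},
\end{eqnarray*}
where the tail is handled by the $Q_N$-penalty and Assumption \ref{hypstate5666}. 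The $\varepsilon|x|^4$ terms contribute $-\mu\varepsilon(\ldots)+C\varepsilon(1+|x|^4)<0$ for $\mu$ large. The properness term then yields $L(W_1(\hat t,\hat x)-W_2(\hat t,\hat y))+\beta/T^2\le o(1)$, contradicting positivity of the maximum.

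I expect the main obstacle to be step (3)/(4): controlling the trace and drift terms in the infinite-dimensional tail $Q_N$. There $X_N,Y_N$ provide no information, and only the $\mathcal G$-terms $|x-\hat x^A_{\hat t,t}|^2$ carry second-order contributions. These contributions are bounded by $\frac{C}{\delta}|Q_N\sigma|_{L_2}^2$ plus $\frac{C}{\delta}|\hat x-\hat y|^2$. My first attempt would be to choose $N=N(\delta)$ large via Assumption \ref{hypstate5666}, uniformly on the bounded set where maximisers lie (compactness is unavailable, so I would use the pointwise limit at the limiting point together with continuity of $\sigma$). In addition, I would use a $B$-free estimate $\frac1\delta|\hat x-\hat y|^2\to0$, which comes from comparing $\Psi(\hat t,\hat x,\hat y)$ with $\Psi(\hat t,\hat x,\hat x)$ via (\ref{w12024}).
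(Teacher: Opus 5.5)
Your overall architecture (Borwein--Preiss with the gauge $\Upsilon$ on the doubled space, the finite-dimensional Crandall--Ishii lemma (Theorem \ref{theorem0513}) with $\varphi$ depending only on $(x_N,y_N)$, and Assumption \ref{hypstate5666} for the tail) is the paper's. Your devices $\beta/t$ and $\varepsilon e^{\mu(T-t)}|x|^4$ are workable substitutes for the paper's $\varrho/(t+1)$ and its short-interval choice of $\bar a,\nu$.

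The gap is in the one step that makes the argument work: decoupling the infinite-dimensional part of the quadratic penalty. You first name the right object, $2|Q_N(x-\hat z^A_{\hat t,t})|^2+2|Q_N(y-\hat z^A_{\hat t,t})|^2$. You then ``more concretely'' replace it by $\frac{1}{\delta}(|x-\hat x^A_{\hat t,t}|^2+|y-\hat y^A_{\hat t,t}|^2)$, which misreads Remark \ref{remarkv0129120240129f}, and this replacement fails in three ways.
\begin{itemize}
\item It does not majorise $\frac{1}{2\delta}|Q_N(x-y)|^2$ with equality at $(\hat t,\hat x,\hat y)$. So that point is no longer a maximum of $w_1(t,x)+w_2(t,y)-\varphi(x_N,y_N)$.
\item Its Hessian is $\frac{2}{\delta}I$, not a multiple of $Q_N$. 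After subtracting the two viscosity inequalities, the trace terms therefore contribute $\frac{1}{\delta}\bigl(|\sigma(\hat t,\hat x,u)|^2_{L_2(\Xi,H)}+|\sigma(\hat t,\hat y,u)|^2_{L_2(\Xi,H)}\bigr)$. This is of order $1/\delta$, so your claimed bound $\frac{C}{\delta}|Q_N\sigma|^2+\frac{C}{\delta}|\hat x-\hat y|^2$ is false for these terms.
\item The unbounded term from $\varphi$ leaves $\frac{1}{\delta}\langle A^*P_N(\hat x-\hat y),\hat x-\hat y\rangle$ after subtraction. Contractivity controls only $\langle A^*P_Nw,P_Nw\rangle$, not the cross part $\langle A^*P_Nw,Q_Nw\rangle$, and nothing bounds it as $N\to\infty$. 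Observing that $A^*$ ``acts only through $e_1,\dots,e_N$'' does not suffice.
\end{itemize}
Separately, since your $\Psi$ contains $|x_N-y_N|^2$ rather than $|x-y|^2$, its maximiser depends on $N$. Your proposed fix via a ``limiting point'' and continuity of $\sigma$ is then unavailable: without compactness there is no limiting point, and Assumption \ref{hypstate5666} is only pointwise.

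The paper avoids all of this as follows.
\begin{itemize}
\item The full coupling $\beta|x-y|^2$ is kept in $\Psi$. Admissibility of test functions plays no role at the Borwein--Preiss stage, so your claim that $|x-y|^2$ ``cannot be used directly'' is mistaken. Hence $(\hat t,\hat x,\hat y)$ and $(t_i,x^i,y^i)$ are independent of $N$.
\item Only afterwards, for each $N$, is $\beta|Q_N(x-y)|^2$ replaced by $2\beta|Q_N(x-\hat z^A_{\hat t,t})|^2+2\beta|Q_N(y-\hat z^A_{\hat t,t})|^2$ with $\hat z=\frac{\hat x+\hat y}{2}$. The identity $2|u-c|^2+2|v-c|^2=|u-v|^2+|u+v-2c|^2$ shows this is a majorant, with equality at $(\hat t,\hat x,\hat y)$.
\item Each piece is split as $2\beta|x-\hat z^A_{\hat t,t}|^2\in\mathcal{G}_{\hat t}$ minus $2\beta|P_N(x-\hat z^A_{\hat t,t})|^2\in\Phi_{\hat t}$ (Lemma \ref{lemma03302}, which uses $e_i\in\mathcal{D}(A^*)$). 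So the Hessian is $4\beta Q_N$, and the trace contribution is $2\beta|Q_N\sigma|^2$ at the \emph{fixed} points $\hat x,\hat y$. This tends to $0$ as $N\to\infty$ by the pointwise assumption alone.
\item Because $\hat z$ is the midpoint, the $A^*$-terms produced by these $\Phi$-parts cancel exactly against those from $\varphi=\beta|x_N-y_N|^2$.
\item The limits are taken as $k\to\infty$, then $N\to\infty$ at fixed $\beta$, then $\beta\to\infty$ using $\beta|\hat x-\hat y|^2\to0$.
\item Localisation around $(\hat x,\hat y)$ uses the quartic $\varepsilon\Upsilon$, whose first and second derivatives vanish at the point, not a $\frac{1}{\delta}$-weighted square.
\end{itemize}
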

Theorems    \ref{theoremvexist} and \ref{theoremhjbm2024} yield that the viscosity solution to   HJB equation  (\ref{hjb1})
is  the value functional  $  V$.

\begin{theorem}\label{theorem522024}
	Let  Assumption \ref{hypstate}   be satisfied.  Then the value
	functional $  V$ 
is the unique viscosity
	solution to HJB equation~(\ref{hjb1}) which satisfies (\ref{w2024}) and (\ref{w12024}).
\end{theorem}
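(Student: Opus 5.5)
The plan is to combine Theorem \ref{theoremvexist} (existence) with Theorem \ref{theoremhjbm2024} (comparison). Uniqueness is then a two-line consequence, and most of the work goes into checking that $V$ lies in the class where comparison applies.

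First, by Theorem \ref{theoremvexist}, $V$ is a viscosity solution to (\ref{hjb1}). By Theorem \ref{theorem3.9}, $V\in C^{0+}([0,T]\times H)$, so $V$ is admissible in Definition \ref{definition4.1}.

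Next I check (\ref{w2024}) for $V$. By Lemma \ref{lemmavaluev}, $|V(t,x)|\le C(1+|x|)\le 2C(1+|x|^2)$.

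Then I check (\ref{w12024}). For $0\le t\le s\le T$ and $x,y\in H$, I write
\[
|V(s,x^A_{t,s})-V(t,y)|\le |V(s,x^A_{t,s})-V(t,x)|+|V(t,x)-V(t,y)|.
\]
For the first term I apply (\ref{hold}) with $y=x$, which gives the bound $C(1+|x|)(s-t)^{1/2}$. For the second term I use the Lipschitz bound (\ref{valuelip}), which gives $C|x-y|$. I then set $\rho_2(r,R):=C(1+R)r^{1/2}$. This is continuous, satisfies $\rho_2(0,R)=0$, and is nondecreasing in $R$, so it is a local modulus of continuity. Finally $C|x-y|\le C(1+|x|+|y|)|x-y|$, so (\ref{w12024}) holds with $L=C$.

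For uniqueness, let $W$ be any viscosity solution satisfying (\ref{w2024}) and (\ref{w12024}). I apply Theorem \ref{theoremhjbm2024} twice:
\begin{itemize}
\item with $W_1=W$ (a sub-solution) and $W_2=V$ (a super-solution), which gives $W\le V$;
\item with $W_1=V$ and $W_2=W$, which gives $V\le W$.
\end{itemize}
Hence $W=V$.

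The only point needing care is the standing hypotheses of Theorem \ref{theoremhjbm2024}. Theorem \ref{theorem522024} assumes only Assumption \ref{hypstate}, and that is exactly what Theorem \ref{theoremhjbm2024} requires as stated. The monotonicity (\ref{5.1}) is obtained without loss of generality through the standard exponential change of the unknown noted before that theorem. Assumption \ref{hypstate5666} is used internally in the comparison proof; if it were needed explicitly, I would add it to the hypotheses. Beyond this bookkeeping, I expect no real obstacle.
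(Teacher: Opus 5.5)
Your proposal is correct and follows the paper's route: the paper obtains this theorem by combining Theorem \ref{theoremvexist} with the comparison Theorem \ref{theoremhjbm2024}. Your explicit check of (\ref{w2024}) and (\ref{w12024}) for $V$ via Lemma \ref{lemmavaluev} and (\ref{hold}) supplies details the paper leaves implicit, and your remark that the comparison proof actually uses Assumption \ref{hypstate5666} (through (\ref{1002d4})), although it is not listed in the statement, is accurate.
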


\begin{proof}[Proof of Theorem \ref{theoremhjbm2024}]
It is sufficient to prove that the functional $W_1-\frac{\varrho}{t+1}\leq W_2$ for $\varrho>0$. Since $W_1$ is a viscosity sub-solution of HJB equation (\ref{hjb1}),  the functional $\widetilde{w}^\varrho:=W_1-\frac{\varrho}{t+1}$  with $\varrho>0$, is a viscosity sub-solution
to the following HJB  equation
\begin{eqnarray}\label{1002a2024}
\begin{cases}
 (  \widetilde{w}^\varrho)_{t}(t,x)+\left\langle A^*\nabla_x  \widetilde{w}^\varrho(t,x),\,  x\right\rangle_H\\[3mm]
\quad+ {\mathbf{H}}\left(t,x,\, (  \widetilde{w}^\varrho, \nabla_x   \widetilde{w}^\varrho,\nabla^2_{x}   \widetilde{w}^\varrho)(t,x)\right)
=c:= \frac{\varrho}{(T+1)^2},  \qquad  (t, x)\in [0,T)\times H; \\[3mm]
  \widetilde{w}^\varrho(T,x)= {\phi}(x), \  x\in H.\\
\end{cases}
\end{eqnarray}
Therefore,  it is sufficient to prove
$W_1\leq W_2$ under the stronger  assumption that $W_1$ is a viscosity sub-solution of the last HJB equation.

The rest of the proof splits in the following four steps.
\par
$Step\  1.$ Definitions of auxiliary functionals.
\par
It is sufficient to prove that $   W_1\leq   W_2$ in $[T-\bar{a},T)\times
H$ with
$$\bar{a}:=\frac{1}{2(20L+8)L}\wedge{T},$$
for the desired comparison is easily extended  to the whole time interval $[0,T]$ via a backward iteration over the intervals
$[T-i\bar{a},T-(i-1)\bar{a})$.  Otherwise, there is  $(\tilde{t},\tilde{x})\in (T-\bar{a},T)\times
H$  such that
$\tilde{m}:=  W_1(\tilde{t},\tilde{x})-  W_2(\tilde{t},\tilde{x})>0$.
\par
Let  $\varepsilon >0$ be  a small number such that
$$
   W_1(\tilde{t},\tilde{x})-   W_2(\tilde{t},\tilde{x})-2\varepsilon \frac{\nu T-\tilde{t}}{\nu
	T}|\tilde{x}|^4
>\frac{\tilde{m}}{2}\ \mbox{and}
$$
\begin{eqnarray}\label{5.32024}
\frac{\varepsilon}{\nu T}\leq\frac{c}{4}
\end{eqnarray}
with
$$
\nu:=1+\frac{1}{2T(20L+8)L}.
$$
Next,  we define for any  $(t,x,y)\in [0,T]\times H\times H$,
\begin{eqnarray*}
	  \Psi(t,x,y):=  W_1(t,x)-  W_2(t,y)-\beta|x-y|^2
	-\varepsilon\frac{\nu T-t}{\nu
		T}\, \left(|x|^4+|y|^4\right),
\end{eqnarray*}
and  for all $(t,x,y; s,x',y')\in ([0,T]\times H\times H)^2$,
\begin{eqnarray*}
\Upsilon^{1}((t,x,y),\, (s,x',y')):= 2|s-t|^2+\left|x^A_{t,t\vee s}-(x')^A_{s,t\vee s}\right|^4 +\left|y^A_{t,t\vee s}-(y')^A_{s,t\vee s}\right|^4.
\end{eqnarray*}
In view of  (\ref{w2024}), the function $  \Psi\in USC^{0+}([{T-\bar{a}},T]\times H\times H)$  bounded from above and satisfies
$$
\limsup_{|x|+|y|\rightarrow\infty}\sup_{t\in[{T-\bar{a}},T]}\left[\frac{\Psi(t,x,y)}{|x|}\right]<0.
$$
Take
$\delta_i:=\frac{1}{2^i}$ for all $i\geq0$. From the same proof procedure of  Lemma \ref{theoremleft} it follows that,
for every  $(t_0,x^0,y^0)\in [{\tilde{t}},T)\times H\times H$ satisfying
$$
  \Psi({t_0},x^0,y^0)\geq \sup_{(s,x,y)\in  [\tilde{t},T]\times H\times H}  \Psi(t,x,y)-\frac{1}{\beta},\
\    \mbox{and} \ \   \Psi({t_0},x^0, {y^0})\geq   \Psi(\tilde{t},\tilde{x},\tilde{y}) >\frac{\tilde{m}}{2},
$$
there exist $(\hat{t},\hat{x},\hat{y})\in [\tilde{t},T]\times H\times H$, a sequence $\{(t_i,x^i,y^i)\}_{i\geq1}\subset
[\tilde{t},T]\times H\times H$ and constant $C>0$ such that
\begin{description}
	\item[(i)] $\Upsilon^{1}((t_i,x^i,y^i),(\hat{t},\hat{x},\hat{y}))
\leq \frac{1}{2^i\beta}$, $|x^i|\vee|y^i|\leq C$ for all $i\geq0$,
and $t_i\uparrow \hat{t}$ as $i\rightarrow\infty$,
	\item[(ii)]  $  \Psi_1(\hat{t},\hat{x},\hat{y})
\geq   \Psi({t_0},x^0,{y^0})$, and
	\item[(iii)]    for all $(s, x,y)\in [\hat{t},T]\times H\times H\setminus \{(\hat{t},\hat{x},\hat{y})\}$,
	{   \begin{eqnarray}\label{iii42024}
		  \Psi_1(s,x,y)
		<  \Psi_1(\hat{t},\hat{x},\hat{y}),
		\end{eqnarray}}
\end{description}
where we define, for $(t,x,y)\in  [\tilde t, T]\times H\times H$,
{$$
	  \Psi_1(t,x,y):=    \Psi(t,x,y)
	-\sum_{i=0}^{\infty}
	\frac{1}{2^i}\Upsilon^{1}((t_i,x^i,y^i),(t,x,y))
.
	$$}
Note that the point
$({\hat{t}},\hat{x},\hat{y})$ depends on  $\beta$ and
$\varepsilon$.
\par
\par
$Step\ 2.$
There exists ${{M}_0}>0$  {independent of $\beta$}
such that
\begin{eqnarray}\label{5.10jiajiaaaa}
|\hat{x}|\vee|\hat{y}|<M_0, \ \mbox{and}
\end{eqnarray}
\begin{eqnarray}\label{5.10}
\lim_{\beta\to \infty} \beta|\hat{x}-\hat{y}|^2
=0.
\end{eqnarray}
Moreover, there exists  $N_0>0$ {independent of $\beta$}
such that  $\hat{t}\in [\tilde{t},T)$ for all $\beta\geq N_0$.

The proof follows. First,   noting $\nu$ is independent of  $\beta$, by the definition of  ${\Psi}$,
there exists a constant  ${M}_0>0$ {independent of $\beta$}  that is sufficiently  large   that
$
\Psi(t,x, y)<0
$ for all $t\in [T-\bar{a},T]$ and $|x|\vee|y|\geq M_0$. Thus, we have $|\hat{x}|\vee|\hat{y}|\vee
|x^{0}|\vee|y^{0}|<M_0$. {We note that $M_0$ depends on $\varepsilon$.}
\par
Second, by (\ref{iii42024}), we have
{\begin{eqnarray}\label{5.56789}
	2\Psi_1(\hat{t},\hat{x},\hat{y})
	\geq  \Psi_1(\hat{t},\hat{x},\hat{x})
	+\Psi_1(\hat{t},\hat{y},\hat{y}).
	\end{eqnarray}}
This implies that
\begin{eqnarray}\label{5.6}
\begin{aligned}
2\beta|\hat{x}-\hat{y}|^2
&\leq|W_1(\hat{t},\hat{x})-W_1(\hat{t},\hat{y})|
+|W_2(\hat{t},\hat{x})-W_2(\hat{t},\hat{y})|\\
&\quad+
\sum_{i=0}^{\infty}\frac{1}{2^i}[\Upsilon((t_i,y^i),(\hat{t},\hat{x}))+\Upsilon((t_i,x^i),(\hat{t},\hat{y}))].
\end{aligned}
\end{eqnarray}
On the other hand, 
by the property (i) of $(\hat{t}, \hat{x},\hat{y})$,
\begin{eqnarray}\label{4.7jiajia130}
\begin{aligned}
&\quad
\sum_{i=0}^{\infty}\frac{1}{2^i}[\Upsilon((t_i,y^i), (\hat{t},\hat{x}))+\Upsilon((t_i,x^i),(\hat{t},\hat{y}))]\\
&\leq2^3\sum_{i=0}^{\infty}\frac{1}{2^i}[\Upsilon((t_i,y^i),(\hat{t},\hat{y}))
+\Upsilon((t_i,x^i),(\hat{t},\hat{x}))+2|\hat{x}-\hat{y}|^4]
\leq\frac{2^4}{\beta}+{2^5}|\hat{x}-\hat{y}|^4.
\end{aligned}
\end{eqnarray}
Combining (\ref{5.6}) and (\ref{4.7jiajia130}),  from  (\ref{w2024}) and (\ref{5.10jiajiaaaa})  we have
\begin{eqnarray}\label{5.jia6}
\begin{aligned}
&\quad(2{\beta}-2^7M^2_0)|\hat{x}-\hat{y}|^2\leq 2{\beta}|\hat{x}-\hat{y}|^2-2^5|\hat{x}-\hat{y}|^4\\
&\leq |W_1(\hat{t},\hat{x})-W_1(\hat{t},\hat{y})|
+|W_2(\hat{t},\hat{x})-W_2(\hat{t},\hat{y})|+\frac{2^4}{\beta}\\
&\leq 2L(2+|\hat{x}|^2+|\hat{y}|^2)+\frac{2^4}{\beta}
\leq 4L(1+M_0^2)+\frac{2^4}{\beta}.
\end{aligned}
\end{eqnarray}
Letting $\beta\rightarrow\infty$, we have
\begin{eqnarray}\label{5.66666123}
	|\hat{x}-\hat{y}|^2
	\leq \frac{1}{2{\beta}-2^7M^2_0}\left[4L(1+M^2_0)+\frac{2^4}{\beta}\right]\rightarrow0\ 
	\mbox{as} \ \beta\rightarrow\infty.
\end{eqnarray}
From   (\ref{w12024}), (\ref{5.6}), (\ref{4.7jiajia130}) and (\ref{5.66666123}), we conclude
that
\begin{eqnarray}\label{5.10112345}
\begin{aligned}
\beta|\hat{x}-\hat{y}|^2
&\leq L(1+2M_0)|\hat{x}-\hat{y}|
+\frac{2^3}{\beta}+{2^{4}}|\hat{x}-\hat{y}|^4
\rightarrow0 \ 
\mbox{as} \ \beta\rightarrow\infty.
\end{aligned}
\end{eqnarray}
Finally, by (\ref{5.66666123}), there is  a sufficiently large number $N_0>0$ such that
$$
L|\hat{x}-\hat{y}|
\leq
\frac{\tilde{m}}{4}
$$
for all $\beta\geq N_0$.
Then,  $\hat{t}\in [\tilde{t},T)$ for all $\beta\geq N_0$. Indeed, if  $\hat{t}=T$,  we  deduce the following contradiction:
\begin{eqnarray*}
	\frac{\tilde{m}}{2}\leq\Psi(\hat{t},\hat{x},\hat{y})\leq \phi(\hat{t},\hat{x})-\phi(\hat{t},\hat{y})\leq
	L|\hat{x}-\hat{y}|
	\leq
	\frac{\tilde{m}}{4}.
\end{eqnarray*}

{ $Step\ 3.$   Crandall-Ishii lemma.}
\par

For every $N\geq1$, we define,
for $(t,x,y)\in [0,T]\times H\times H$,
\begin{eqnarray}\label{060912024}
\begin{aligned}
w_{1}(t,x)&=  W_1(t,x)-\varepsilon\frac{\nu T-t}{\nu
	T}\, |x|^4
-\varepsilon \Upsilon((t,x),(\hat{t},\hat{x}))\\
&\quad
-2\beta\left|x^-_N-(\hat{z}^A_{\hat{t},t})^-_N\right|^2-\sum_{i=0}^{\infty}
\frac{1}{2^i}\Upsilon((t_i,x^i),(t,x)),
\end{aligned}
\end{eqnarray}
\begin{eqnarray}\label{060922024}
\begin{aligned}
w_{2}(t,y)&=-  W_2(t,y)-\varepsilon\frac{\nu T-t}{\nu
	T}\, |y|^4
-\varepsilon \Upsilon((t,y),(\hat{t},\hat{y}))\\
&\quad
-2\beta\left|y_N^--(\hat{z}^A_{\hat{t},t})^-_N\right|^2-\sum_{i=0}^{\infty}
\frac{1}{2^i}\Upsilon((t_i,y^i),(t,y)),
\end{aligned}
\end{eqnarray}
where $\hat{z}=\frac{\hat{x}+\hat{y}}{2}$.   We  note that $w_1,w_2$ depend on $\hat{z}$ and $N$, and thus on $\beta$,
$\varepsilon$ and  $N$.  Define $\varphi\in C^2(H_N\times H_N)$ by
\begin{eqnarray}\label{070632024}
\varphi(x_N,y_N)=\beta|x_N-y_N|^2,\ \ (x_N,y_N)\in H_N\times H_N.
\end{eqnarray}
By Lemma \ref{0611a} below, $w_1$ and $w_2$ satisfy the conditions of  Theorem \ref{theorem0513} with $\varphi$ defined by (\ref{070632024}).
Then by Theorem \ref{theorem0513}, there exist
sequences  $(l_{k},\check{x}^{k}), (s_{k},\check{y}^{k})\in [\hat{t},T]\times H$ and
sequences of functionals 
$(\varphi_{1,k}, \psi_{1,k}, \varphi_{2,k}, \psi_{2,k})$ $\in  \Phi_{\hat{t}}\times   \Phi_{\hat{t}}\times  {\mathcal{G}}_{l_k} \times  {\mathcal{G}}_{s_k}$ bounded from below such that 
\begin{eqnarray}\label{0609a2024}
w_{1}(t,x)-\varphi_{1,k}(t,x)-\varphi_{2,k}(t,x)
\end{eqnarray}
has a strict  maximum $0$ at  $(l_k,\check{x}^{k})$ over $[l_k, T]\times H$,
\begin{eqnarray}\label{0609b2024}
w_{2}(t,y)-\psi_{1,k}(t,y)-\psi_{2,k}(t,y)
\end{eqnarray}
has a strict  maximum $0$ at $(s_k,\check{y}^{k})$ over $ [s_k, T]\times H$, and
\begin{eqnarray}\label{0608v12024}
\begin{aligned}
  &\left(l_{k}, \check{x}^{k};\,  (w_1,(\varphi_{1,k})_{t},\nabla_x\varphi_{1,k},\nabla^2_{x}\varphi_{1,k}, \partial_{t}^o\varphi_{2,k},\nabla_x\varphi_{2,k},\nabla^2_{x}\varphi_{2,k})( l_{k},\check{x}^{k})\right)\\
&\underrightarrow{k\rightarrow\infty}\, \left({\hat{t}},\hat{x};\,  w_1(\hat{t},\hat{x}), b_1, 2\beta (\hat{x}_N-\hat{y}_N), X_N, 0,\mathbf{0},\mathbf{0}\right),
\end{aligned}
\end{eqnarray}
\begin{eqnarray}\label{0608vw12024}
\begin{aligned}
&\left(s_{k}, \check{y}^{k}; \, (w_2, (\psi_{1,k})_{t}, \nabla_x\psi_{1,k}, \nabla^2_{x}\psi_{1,k}, \partial_{t}^o\psi_{2,k}, \nabla_x\psi_{2,k}, \nabla^2_{x}\psi_{2,k})( s_{k},\check{y}^{k})\right)\\
&  \underrightarrow{k\rightarrow\infty}\, \left({\hat{t}},\hat{y};\,  w_2(\hat{t},\hat{y}), b_2, 2\beta(\hat{y}_N-\hat{x}_N), Y_N, 0,\mathbf{0},\mathbf{0}\right),
\end{aligned}
\end{eqnarray}
where $b_{1}+b_{2}=0$ and $X_N,Y_N\in \mathcal{S}(H_N)$ satisfy the following inequality:
\begin{eqnarray}\label{II10}
-6\beta\left(\begin{array}{cc}
I&0\\
0&I
\end{array}\right)\leq \left(\begin{array}{cc}
X_N&0\\
0&Y_N
\end{array}\right)\leq  6\beta \left(\begin{array}{cc}
I&-I\\
-I&I
\end{array}\right).
\end{eqnarray}
We note that  
sequence  $(\check{x}^{k},\check{y}^{k},l_{k},s_{k},\varphi_{1,k},\psi_{1,k},\varphi_{2,k},\psi_{2,k})$ and $b_{1},b_{2},X_N,Y_N$  depend on  $\beta$,
$\varepsilon$ and $N$. We also note that  (\ref{II10}) follows from (\ref{II0615}) choosing $\kappa=\frac{1}{2}\beta^{-1}$. In fact, by (\ref{070632024}),
$$
B= \nabla_x^2\varphi(\hat{x}_N,\hat{y}_N)
=2\beta\left(\begin{array}{cc}
I&-I\\
-I&I
\end{array}\right),
$$
and thus, if $\kappa=\frac{1}{2}\beta^{-1}$,
$$B+\kappa B^2=(1+4\kappa \beta)B=3B,\quad \mbox{and}
\quad -\left(\frac{1}{\kappa}+|B|\right)=-\left(2\beta+4\beta\right)=-6\beta.$$
Then from (\ref{II0615}) it follows that  (\ref{II10}) holds true.  For every $(t,x; s,y)\in ([T-\bar{a},T]\times H)^2$, define
\begin{eqnarray*}
\begin{aligned}
	\chi^{k,N,1}(t,x)  :=&\varphi_{1,k}(t,x)-2\beta\left|x_N-(\hat{z}^A_{\hat{t},t})_N\right|^2,\\[3mm]
	\chi^{k,N,2}(t,x):=& \varepsilon\frac{\nu T-t}{\nu
		T}|x|^4
	+\varepsilon \Upsilon((t,x),(\hat{t},\hat{x}))
	+\sum_{i=0}^{\infty}
	\frac{1}{2^i}\Upsilon((t_i,x^i),(t,x))\\
	&+\varphi_{2,k}(t,x)
	+2\beta\left|x-\hat{z}^A_{\hat{t},t}\right|^2, \\[3mm]
	\chi^{k,N}(t,x)  :=&\chi^{k,N,1}(t,x)+\chi^{k,N,2}(t,x); \quad \hbox{\rm and }
\end{aligned}
\end{eqnarray*}
\begin{eqnarray*}
\begin{aligned}
	\hbar^{k,N,1}(s,y)   :=&\psi_{1,k}(s,y)-2\beta\left|y_N-(\hat{z}^A_{\hat{t},t})_N\right|^2,\\[3mm]
	\hbar^{k,N,2}(s,y) :=& \varepsilon\frac{\nu T-s}{\nu
		T}|y|^4
	+\varepsilon \Upsilon((s,y),(\hat{t},{\hat{y}}))+\sum_{i=0}^{\infty}
	\frac{1}{2^i}\Upsilon((t_i,\eta^i),(s,y))\\
	&
	+\psi_{2,k}(s,y)
	+2\beta\left|y-\hat{z}^A_{\hat{t},t}\right|^2, \\[3mm]
	\hbar^{k,N}(s,y)   :=&\hbar^{k,N,1}(s,y)+\hbar^{k,N,2}(s,y).
\end{aligned}
\end{eqnarray*}
Clearly, we have  $\chi^{k,N,2}\in {\mathcal{G}}_{{l_k}}$ and $\hbar^{k,N,2}\in  {\mathcal{G}}_{{s_k}}$, and in view of  Lemma \ref{lemma03302}, we have  $\chi^{k,N,1},\hbar^{k,N,1}\in  \Phi_{\hat{t}}$.
Moreover, by  (\ref{0609a2024}), (\ref{0609b2024}) and definitions of $w_1$ and $w_2$, we have
$$
\left(  W_1-\chi^{k,N,1}-\chi^{k,N,2}\right)(l_k,\check{x}^k)=\sup_{(t,x)\in [{{l_k}},T]\times H}
\left(  W_1-\chi^{k,N,1}-\chi^{k,N,2}\right)(t,x)\quad \mbox{and}
$$
$$
\left(  W_2+\hbar^{k,N,1}+\hbar^{k,N,2}\right)(s_k,\check{y}^k)=\inf_{(s,y)\in [{s_k},T]\times H}
\left(  W_2+\hbar^{k,N,1}+\hbar^{k,N,2}\right)(s,y).
$$

Since $l_{k},s_{k}\rightarrow {\hat{t}}$ as $k\rightarrow\infty$ and ${\hat{t}}<T$
for $\beta>N_0$, we see that for every fixed $\beta>N_0$,   there is a constant $ K_\beta>0$ such that
$$
|l_{k}|\vee|s_{k}|<T 
\quad \mbox{for all}    \ \ k\geq K_\beta.
$$
Now, for every $\beta>N_0$ and  $k>K_\beta$, 
we have from the definition of viscosity solutions that
\begin{eqnarray}\label{vis12024}
\begin{aligned}
&\partial_{t}^o\chi^{k,N}(l_k,\check{x}^k)
+\left\langle A^*\nabla_{x}(\varphi_{1,k})(l_k,\check{x}^k),\, \check{x}^k\right\rangle_H
-4\beta\left\langle A^*\left((\check{x}^k)_N-(\hat{z}^A_{l_k-\hat{t}})_N\right),\,
\check{x}^k
\right\rangle_H \ \ \ \\
&+ {\mathbf{H}}{(}l_k,\check{x}^k,  (  W_1, \nabla_x\chi^{k,N},
\nabla^2_{x}\chi^{k,N})(l_k,\check{x}^k)
{)}\geq c
\end{aligned}
\end{eqnarray}
and
\begin{eqnarray}\label{vis22024}
&&
-\partial_{t}^o\hbar^{k,N}(s_k,\check{y}^k)-\left\langle A^*\nabla_{x}(\psi_{1,k})(s_k,\check{y}^k),\, \check{y}^k\right\rangle_H
+4\beta\left\langle A^*\left((\check{y}^k)_N-(\hat{z}^A_{s_k-\hat{t}})_N\right),\,
\check{y}^k
\right\rangle_H\nonumber\\
&&+ {\mathbf{H}}{(}s_k,\check{y}^k, (  W_2,
-\nabla_x\hbar^{k,N}, -\nabla^2_{x}\hbar^{k,N})(s_k,\check{y}^k){)}\leq0,
\end{eqnarray}
where, for every $(t,x)\in [{l_k},T]\times{H}$ and  $ (s,y)\in [{s_k},T]\times{H}$, we have from Lemma \ref{lemma03302},
\begin{eqnarray*}
\begin{aligned}
	\partial_{t}^o\chi^{k,N}(t,x):=&(\chi^{k,N,1})_{t}(t,x)+\partial_{t}^o\chi^{k,N,2}(t,x)\\
	=&
	(\varphi_{1,k})_{t}(t,x)+4\beta\left\langle A^*\left(x_N-(\hat{z}^A_{\hat{t},t})_N\right),\,
	\hat{z}^A_{\hat{t},t}\right\rangle_H\\
	&-\frac{\varepsilon}{\nu T}|x|^4
	+2\varepsilon({t}-{\hat{t}})+2\sum_{i=0}^{\infty}\frac{1}{2^i}(t-t_{i})+\partial_{t}^o\varphi_{2,k}(t,x),
\end{aligned}
\end{eqnarray*}
\begin{eqnarray*}
\begin{aligned}
	\nabla_x\chi^{k,N}(t,x):=&\nabla_x\chi^{k,N,1}(t,x)+\nabla_x\chi^{k,N,2}(t,x)\\
=&\nabla_{x}(\varphi_{1,k})(t,x)
	+4\beta(x^-_N-(\hat{z}^A_{\hat{t},t})^-_N)
	+\varepsilon\frac{\nu T-{t}}{\nu T}\nabla_x|x|^4\\
& +\varepsilon\nabla_x|x-\hat{x}^A_{\hat{t},t}|^4
	+\sum_{i=0}^{\infty}\frac{1}{2^i}\nabla_x|x-(x^i)^A_{t_i,t}|^4
	+\nabla_{x}(\varphi_{2,k})(t,x),
\end{aligned}
\end{eqnarray*}
\begin{eqnarray*}
\begin{aligned}
	\nabla^2_{x}\chi^{k,N}(t,x):=&\nabla^2_{x}\chi^{k,N,1}(t,x)+\nabla^2_{x}\chi^{k,N,2}(t,x)\\
	=&\nabla^2_{x}(\varphi_{1,k})(t,x)+4\beta Q_N+\varepsilon\frac{\nu T-{t}}{\nu T}\nabla^2_x|x|^4 \\
	&+\varepsilon\nabla^2_x|x-\hat{x}^A_{\hat{t},t}|^4
+\sum_{i=0}^{\infty}\frac{1}{2^i}\nabla^2_x|x-(x^i)^A_{t_i,t}|^4
+\nabla^2_{x}(\varphi_{2,k})(t,x),
     \end{aligned}
\end{eqnarray*}
\begin{eqnarray*}
\begin{aligned}
	\partial_{t}^o\hbar^{k,N}(s,y):=&(\hbar^{k,N,1}))_{t}(s,y)+\partial_{t}^o\hbar^{k,N,2}(s,y)\\
	=&
	(\psi_{1,k})_{t}(s,y)+4\beta\left\langle A^*\left(y_N-(\hat{z}^A_{\hat{t},s})_N\right),\,
	\hat{z}^A_{\hat{t},s}\right\rangle_H\\
	&-\frac{\varepsilon}{\nu T}|y|^4+2\varepsilon({s}-{\hat{t}})
	+2\sum_{i=0}^{\infty}\frac{1}{2^i}(s-t_{i})
	+\partial_{t}^o\psi_{2,k}(s,y),
\end{aligned}
\end{eqnarray*}
\begin{eqnarray*}
\begin{aligned}
	\nabla_x\hbar^{k,N}(s,y):=&\nabla_x\hbar^{k,N,1}(s,y)+\nabla_x\hbar^{k,N,2}(s,y)\\
	=& \nabla_{x}\psi_{1,k}(s,y)+4\beta(y^-_N-(\hat{z}^A_{\hat{t},s})^-_N)
	+\varepsilon\frac{\nu T-{s}}{\nu T}
	\nabla_x|y|^4\\
&+\varepsilon\nabla_x|y-\hat{y}^A_{\hat{t},s}|^4
+\sum_{i=0}^{\infty}\frac{1}{2^i}\nabla_x|y-(y^i)^A_{t_i,s}|^4
	+\nabla_{x}\psi_{2,k}(s,y),\quad \mbox{and}
\end{aligned}
\end{eqnarray*}
\begin{eqnarray*}
\begin{aligned}
	\nabla^2_{x}\hbar^{k,N}(s,y):=&\nabla^2_{x}\hbar^{k,N,1}(s,y)+\nabla^2_{x}\hbar^{k,N,2}(s,y)\\
	=&\nabla^2_{x}\psi_{1,k}(s,y)+4\beta Q_N+\varepsilon\frac{\nu T-{s}}{\nu T}
	\nabla^2_x|y|^4\\
	&+\varepsilon\nabla^2_x|y-\hat{y}^A_{\hat{t},s}|^4
+\sum_{i=0}^{\infty}\frac{1}{2^i}\nabla^2_x|y-(y^i)^A_{t_i,s}|^4
+\nabla^2_{x}\psi_{2,k}(s,y).
\end{aligned}
\end{eqnarray*}
\par
{ $Step\ 4.$  Calculation and  completion of the proof.}
\par
Letting 
$k\rightarrow\infty$ in (\ref{vis12024}) and (\ref{vis22024}), and using (\ref{0608v12024}) and (\ref{0608vw12024}),  we have
\begin{eqnarray}\label{031032024}
\begin{aligned}
& b_1+4\beta\langle A^*(\hat{x}_N-\hat{z}_N),\,
\hat{z}\rangle_H-\frac{\varepsilon}{\nu T}|\hat{x}|^4+2\sum_{i=0}^{\infty}\frac{1}{2^i}(\hat{t}-t_i)\\
&+2\beta\left\langle A^*\left(\hat{x}_N-\hat{y}_N\right),\,\hat{x}\right\rangle_H
-4\beta\left\langle A^*\left(\hat{x}_N-\hat{z}_N\right),\,
\hat{x}
\right\rangle_H\\
&+ {\mathbf{H}}(\hat{t},\hat{x}, (  W_1,
\nabla_x\chi^{N},\nabla^2_{x}\chi^{N})(\hat t,\hat{x}))
\geq c\quad \mbox{and}
\end{aligned}
\end{eqnarray}
\begin{eqnarray}\label{031042024}
\begin{aligned}
& -b_2-4\beta\left\langle A^*\left(\hat{y}_N-\hat{z}_N\right),\,
\hat{z}\right\rangle_H+ \frac{\varepsilon}{\nu T}|\hat{y}|^4-2\sum_{i=0}^{\infty}\frac{1}{2^i}(\hat{t}-t_i)\\
&+2\beta \left\langle A^*\left(\hat{x}_N-\hat{y}_N\right),\, \hat{y}\right\rangle _H
+4\beta\left\langle A^*\left(\hat{y}_N-\hat{z}_N\right),\,
\hat{y}
\right\rangle_H\\
&+ {\mathbf{H}}(\hat t, \hat{y}, (  W_2,-\nabla_x\hbar^{N},-\nabla^2_{x}\hbar^{N})(\hat t,\hat{y}))
\leq0,
\end{aligned}
\end{eqnarray}
where
\begin{eqnarray*}
\begin{aligned}
\nabla_x\chi^{N}(\hat{t}, \hat{x})
&:=
2\beta(\hat{x}-\hat{y})
 +\varepsilon\frac{\nu T-{\hat{t}}}{\nu T}\nabla_x|\hat{x}|^4
+\sum_{i=0}^{\infty}\frac{1}{2^i}\nabla_x|\hat{x}-(x^i)^A_{t_i,\hat{t}}|^4,\\
\nabla^2_{x}\chi^{N}(\hat{t}, \hat{x})
&:= X_N+4\beta Q_N
 +\varepsilon\frac{\nu T-{\hat{t}}}{\nu T}\nabla^2_{x}|\hat{x}|^4
+\sum_{i=0}^{\infty}\frac{1}{2^i}\nabla^2_{x}|\hat{x}-(x^i)^A_{t_i,\hat{t}}|^4,\\
\nabla_x\hbar^{N}(\hat{t},\hat{y})
&:=
2\beta(\hat{y}-\hat{x})
 +\varepsilon\frac{\nu T-{\hat{t}}}{\nu T}\nabla_x|\hat{y}|^4+\sum_{i=0}^{\infty}\frac{1}{2^i}
\nabla_x|\hat{y}-(y^i)^A_{t_i,\hat{t}}|^4\ \mbox{and}\\
\nabla^2_{x}\hbar^{N}(\hat{t}, \hat{y})
&:= Y_N+4\beta Q_N
+\varepsilon\frac{\nu T-{\hat{t}}}{\nu T}\nabla^2_{x}|\hat{y}|^4
+\sum_{i=0}^{\infty}\frac{1}{2^i}
\nabla^2_{x}|\hat{y}-(y^i)^A_{t_i,\hat{t}}|^4.
\end{aligned}
\end{eqnarray*}
Since $b_1+b_2=0$ and $\hat{z}=\frac{\hat{x}+\hat{y}}{2}$, subtracting
both sides of   (\ref{031032024}) from those  of (\ref{031042024}), we observe the following cancellations:
\begin{eqnarray*}
 4\beta\langle A^*(\hat{x}_N-\hat{z}_N),\,
\hat{z}\rangle_H-\left[-4\beta\left\langle A^*\left(\hat{y}_N-\hat{z}_N\right),\,
\hat{z}\right\rangle_H\right]=4\beta\langle A^*(\hat{x}_N+\hat{y}_N-2\hat{z}_N),\, \hat{z}\rangle_H=0,
\end{eqnarray*}
\begin{eqnarray*}
 2\beta\left\langle A^*\left(\hat{x}_N-\hat{y}_N\right),\,\hat{x}\right\rangle_H-2\beta \left\langle A^*\left(\hat{x}_N-\hat{y}_N\right),\, \hat{y}\right\rangle _H
=2\beta\left\langle A^*\left(\hat{x}_N-\hat{y}_N\right),\,\hat{x}-\hat{y}\right\rangle_H,
\end{eqnarray*}
 \begin{eqnarray*}
 \begin{aligned}
&-4\beta\left\langle A^*\left(\hat{x}_N-\hat{z}_N\right),\,
\hat{x}
\right\rangle_H-
4\beta\left\langle A^*\left(\hat{y}_N-\hat{z}_N\right),\,
\hat{y}
\right\rangle_H\\
=&-2\beta\langle A^*(\hat{x}_N-\hat{y}_N),\, \hat{x}\rangle_H-
2\beta\left\langle A^*\left(\hat{y}_N-\hat{x}_N\right),\,
\hat{y}
\right\rangle_H=-2\beta\left\langle A^*\left(\hat{x}_N-\hat{y}_N\right),\,\hat{x}-\hat{y}\right\rangle_H;
\end{aligned}
\end{eqnarray*}
%
{\it all the terms which involve the unbounded operator $A$  mutually cancels out},  and  thus we have
\begin{eqnarray}\label{vis1122024}
\begin{aligned}
&\quad c+ \frac{\varepsilon}{\nu T}(|\hat{x}|^4+|\hat{y}|^4)-4\sum_{i=0}^{\infty}\frac{1}{2^i}(\hat{t}-t_i)\\
&\leq {\mathbf{H}}(\hat{t},\hat{x}, (  W_1,\nabla_x\chi^{N},\nabla^2_{x}\chi^{N})(\hat{t},\hat{x}))- {\mathbf{H}}(\hat{t},\hat{y}, (  W_2,-\nabla_x\hbar^{N},-\nabla^2_{x}\hbar^{N})(\hat{t},\hat{y})).
\end{aligned}
\end{eqnarray}
On the  other hand, from  (\ref{5.1}) and via a simple calculation, we have
\begin{eqnarray}\label{v4}
\begin{aligned}
&{\mathbf{H}}(\hat{t},\hat{x}, (  W_1,\nabla_x\chi^{N},\nabla^2_{x}\chi^{N})(\hat{t},\hat{x}))- {\mathbf{H}}(\hat{t},\hat{y}, (  W_2,-\nabla_x\hbar^{N},-\nabla^2_{x}\hbar^{N})(\hat{t},\hat{y}))\\
\leq & {\mathbf{H}}(\hat{t},\hat{x},   W_2(\hat{t},\hat{y}),(\nabla_x\chi^{N},\nabla^2_{x}\chi^{N})(\hat{t},\hat{x}))- {\mathbf{H}}(\hat{t},\hat{y}, (  W_2,-\nabla_x\hbar^{N},-\nabla^2_{x}\hbar^{N})(\hat{t},\hat{y}))\\
\leq &\sup_{u\in U}(J_{1}+J_{2}+J_{3}).
\end{aligned}
\end{eqnarray}
Here,
from Assumption \ref{hypstate} (ii)   and (\ref{II10}),  we have
\begin{eqnarray}\label{j22024}
\begin{aligned}
J_{1}:=&\frac{1}{2}\mbox{Tr}\left[\nabla^2_{x}\chi^{N}(\hat{t}, \hat{x})\, (  \sigma\,   \sigma^*)(\hat{t}, \hat{x},u)\right]-\frac{1}{2}\mbox{Tr}\left[-\nabla^2_{x}\hbar^{N}(\hat{t}, \hat{y})\, (  \sigma\,   \sigma^*)(\hat{t}, \hat{y},u)\right]\\
\leq&\, 3\beta\left|  \sigma(\hat{t}, \hat{x},u)-  \sigma(\hat{t}, \hat{y},u)\right|_{L_2(\Xi,H)}^2\\
&+2\beta\left (\left|Q_N  \sigma(\hat{t}, \hat{x},u)\right|_{L_2(\Xi,H)}^2+\left |Q_N  \sigma(\hat{t}, \hat{y},u)\right|_{L_2(\Xi,H)}^2\right)\\
&+6\varepsilon\frac{\nu T-{\hat{t}}}{\nu    T}\left(|\hat{x}|^2\, \left|  \sigma(\hat{t}, \hat{x},u)\right|_{L_2(\Xi,H)}^2+|\hat{y}|^2\, \left|  \sigma(\hat{t}, \hat{y},u)\right|_{L_2(\Xi,H)}^2\right)
\\
&
+6\left|  \sigma(\hat{t}, \hat{x},u)\right|_{L_2(\Xi,H)}^2\sum_{i=0}^{\infty}\frac{1}{2^i}\left|(x^i)^A_{t_i,\hat{t}}- \hat{x}\right|^2\\
&
+6\left|  \sigma(\hat{t}, \hat{y},u)\right|_{L_2(\Xi,H)}^2\sum_{i=0}^{\infty}\frac{1}{2^i}\left|(y^i)^A_{t_i,\hat{t}}- \hat{y}\right|^2
\\
\leq&
\, 3\beta{L^2}\, \left|\hat{x}-\hat{y}\right|^2+2\beta\left(\left|Q_N  \sigma(\hat{t}, \hat{x},u)\right|_{L_2(\Xi,H)}^2+\left|Q_N  \sigma(\hat{t}, \hat{y},u)\right|_{L_2(\Xi,H)}^2\right)\\
&+6L^2(1+|\hat{x}|^2
+|\hat{y}|^2)\sum_{i=0}^{\infty}\frac{1}{2^i}\left[\left|(x^i)^A_{t_i,\hat{t}}-\hat{x}\right|^2
+\left|(y^i)^A_{t_i,\hat{t}}- \hat{y}\right|^2\right]
\\
&+12\varepsilon \frac{\nu T-{\hat{t}}}{\nu T}L^2(1+| \hat{x}|^4
+|\hat{y}|^4);
\end{aligned}
\end{eqnarray}
from Assumption \ref{hypstate} (ii),  we have
\begin{eqnarray}\label{j12024}
\begin{aligned}
J_{2}:=& {\left\langle   b(\hat{t}, \hat{x},u),\, \nabla_x\chi^{N}(\hat{t}, \hat{x})\right\rangle_{H}  -\left\langle   b(\hat{t}, \hat{y},u),\, -\nabla_x\hbar^{N}(\hat{t},\hat{y})\right\rangle_{H}}\\
\leq&\, 2\beta{L}\, \left| \hat{x}-\hat{y}\right|^2 +8\varepsilon \frac{\nu T-{\hat{t}}}{\nu T} L(1+|\hat{x}|^4+|\hat{y}|^4)\\
&+4L(1+|\hat{x}|+|\hat{y}|)\sum_{i=0}^{\infty}\frac{1}{2^i}\left[\left|(x^i)^A_{t_i,\hat{t}}-\hat{x}\right|^3
+\left|(x^i)^A_{t_i,\hat{t}}-\hat{y}\right|^3\right];
\end{aligned}
\end{eqnarray}
from Assumption \ref{hypstate} (iii),  we have
\begin{eqnarray}\label{j32024}
\begin{aligned}
J_{3}:=&  q{(}\hat{t}, \hat{x},   W_2(\hat{t}, \hat{y}), \nabla_x\chi^{N}(\hat{t}, \hat{x})  \sigma(\hat{t}, \hat{x},u),u{)}\\
&-
  q(\hat{t}, \hat{y},   W_2(\hat{t}, \hat{y}), -\nabla_x\hbar^{N}(\hat{t}, \hat{y})  \sigma(\hat{t}, \hat{y},u),u{)}\\
\leq&
L|\hat{x}-\hat{y}|
+2\beta L^2|\hat{x}
-\hat{y}|^2
+8\varepsilon \frac{\nu T-{\hat{t}}}{\nu T} L^2(1+|\hat{x}|^4
+|\hat{y}|^4)\\
&
+4L^2(1+|\hat{x}|+|\hat{y}|)\sum_{i=0}^{\infty}\frac{1}{2^i}\left[\left|(x^i)^A_{t_i,\hat{t}}-\hat{x}\right|^3+\left|(y^i)^A_{t_i,\hat{t}}-\hat{y}\right|^3\right]
.
\end{aligned}
\end{eqnarray}
In view of  the property (i) of $(\hat{t}, \hat{x},\hat{y})$, we have
\begin{eqnarray}\label{1002d12024}
4\sum_{i=0}^{\infty}\frac{1}{2^i}(\hat{t}-t_i)
\leq4\sum_{i=0}^{\infty}\frac{1}{2^i}\bigg{(}\frac{1}{2^i\beta}\bigg{)}^{\frac{1}{2}}\leq 8{\bigg{(}\frac{1}{{\beta}}\bigg{)}}^{\frac{1}{2}},
\end{eqnarray}
\begin{eqnarray}\label{1002d2}
\begin{aligned}
\sum_{i=0}^{\infty}\frac{1}{2^i}\left[\left|(x^i)^A_{t_i,\hat{t}}-\hat{x}\right|^3
+\left|(y^i)^A_{t_i,\hat{t}}-\hat{y}\right|^3\right]\leq
2\sum_{i=0}^{\infty}\frac{1}{2^i}\bigg{(}\frac{1}{2^i\beta}\bigg{)}^{\frac{3}{4}}\leq 4{\bigg{(}\frac{1}{{\beta}}\bigg{)}}^{\frac{3}{4}},\quad \mbox{and}
\end{aligned}
\end{eqnarray}
\begin{eqnarray}\label{1002d3}
\begin{aligned}
\sum_{i=0}^{\infty}\frac{1}{2^i}\left[\left|(x^i)^A_{t_i,\hat{t}}-\hat{x}\right|^2
+\left|(y^i)^A_{t_i,\hat{t}}-\hat{y}\right|^2\right]\leq
2\sum_{i=0}^{\infty}\frac{1}{2^i}\bigg{(}\frac{1}{2^i\beta}\bigg{)}^{\frac{1}{2}}\leq 4{\bigg{(}\frac{1}{{\beta}}\bigg{)}}^{\frac{1}{2}};
\end{aligned}
\end{eqnarray}
and since $\hat{x}$ and $\hat{y}$ are independent of $N$, by Assumption  \ref{hypstate5666},
\begin{eqnarray}\label{1002d4}
\qquad\qquad \sup_{u\in U}\left[\left|Q_N  \sigma(\hat{t}, \hat{x},u)\right|_{L_2(\Xi,H)}^2+\left|Q_N  \sigma(\hat{t}, \hat{y},u)\right|_{L_2(\Xi,H)}^2\right]\rightarrow0\ \mbox{as}\ N\rightarrow \infty.
\end{eqnarray}
Combining (\ref{vis1122024})-(\ref{j32024}),  and letting $N\rightarrow\infty$ and then 
by (\ref{5.10jiajiaaaa}) and (\ref{5.10}), we have for sufficiently large $\beta>0$,  
\begin{eqnarray}\label{vis1222024}
\qquad\qquad
c
\leq
-\frac{\varepsilon}{\nu T}(|\hat{x}|^4
+|\hat{y}|^4)+ \varepsilon \frac{\nu T-{\hat{t}}}{\nu T} (20L+8)L(1+|\hat{x}|^4
+|\hat{y}|^4
)+\frac{c}{4}.
\end{eqnarray}
Since $$
\nu=1+\frac{1}{2T(20L+8)L}
\quad \text{and} \quad\bar{a}=\frac{1}{2(20L+8)L}\wedge{T},$$
we see by  (\ref{5.32024}) the following contradiction:
\begin{eqnarray*}\label{vis1222024}
	c\leq
	\frac{\varepsilon}{\nu
		T}+\frac{c}{4}\leq \frac{c}{2}.
\end{eqnarray*}
The proof is  complete.   \end{proof} 

In Lemmas below of this {section}, let $\widetilde{w}_{1}^{\hat{t},N}, \widetilde{w}_{1}^{\hat{t},N,*}$ and $\widetilde{w}_{2}^{\hat{t},N}, \widetilde{w}_{2}^{\hat{t},N,*}$ be defined in Definition \ref{definition06072024} with respect to $w_1$ defined by (\ref{060912024}) and  $w_2$ defined by  (\ref{060922024}), respectively. To complete the previous proof, it remains to state and prove the following lemmas.

\begin{lemma}\label{lemma03302}\ \
	For every fixed $(t, a) \in [0,T)\times H$ and $N\in \mathbb{N}^+$, define   $f:[t,T]\times H\rightarrow \mathbb{R}$ by
	$$f(s,x):=\left|x_N-\left(a^A_{t,s}\right)_N\right|^2,\quad (s,x)\in [t, T]\times H.$$ Then
	$f\in \Phi_t$.
\end{lemma}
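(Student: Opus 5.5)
The plan is to write $f$ in coordinates, using that $P_N$ has finite rank with range spanned by $e_1,\dots,e_N\in\mathcal{D}(A^*)$:
\[
f(s,x)=\sum_{i=1}^N\big(\langle x,e_i\rangle_H-\langle e^{(s-t)A}a,e_i\rangle_H\big)^2
=\sum_{i=1}^N\big(\langle x,e_i\rangle_H-\langle a,e^{(s-t)A^*}e_i\rangle_H\big)^2 .
\]
The key observation is that the time dependence enters only through the scalar functions $c_i(s):=\langle a,e^{(s-t)A^*}e_i\rangle_H$. Since $e_i\in\mathcal{D}(A^*)$ and $e^{lA^*}$ is the $C_0$ semigroup generated by $A^*$, each $s\mapsto e^{(s-t)A^*}e_i$ is $C^1$ in $H$ with derivative $e^{(s-t)A^*}A^*e_i$. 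Hence $c_i\in C^1([t,T])$ with
\[
c_i'(s)=\langle a,e^{(s-t)A^*}A^*e_i\rangle_H=\langle A^*e_i,a^A_{t,s}\rangle_H .
\]

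With this in hand I will compute all the derivatives explicitly. First, $f_t(s,x)=-2\sum_{i=1}^N(\langle x,e_i\rangle-c_i(s))c_i'(s)$, which equals $-2\langle A^*(x_N-(a^A_{t,s})_N),a^A_{t,s}\rangle_H$. Second, $\nabla_xf(s,x)=2(x_N-(a^A_{t,s})_N)$. Third, $\nabla_x^2f=2P_N$. These are visibly continuous in $(s,x)$ for the metric $d$, because $s\mapsto a^A_{t,s}$ is continuous, $c_i'$ is continuous, and $x\mapsto x_N$ is bounded linear. Polynomial growth follows from $|c_i|\le M_1|a|$ and $|c_i'|\le M_1|a||A^*e_i|$: $f$ and $f_t$ grow at most quadratically in $|x|$, $\nabla_xf$ linearly, and $\nabla^2_xf$ is constant. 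This gives $f\in C^{1,2}_p([t,T]\times H)$.

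It remains to check $A^*\nabla_xf\in C^0_p([t,T]\times H,H)$. The function $\nabla_xf(s,x)=2\sum_{i=1}^N(\langle x,e_i\rangle-c_i(s))e_i$ takes values in $H_N\subset\mathcal{D}(A^*)$. Therefore
\[
A^*\nabla_xf(s,x)=2\sum_{i=1}^N(\langle x,e_i\rangle-c_i(s))A^*e_i ,
\]
a finite sum of fixed vectors $A^*e_i$ with continuous, linearly growing scalar coefficients. It is thus continuous and of linear growth, so $f\in\Phi_t$.

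The only step needing any care is the differentiability of $c_i$. The whole point is to move the semigroup onto $e_i$ through the adjoint, rather than differentiating $e^{(s-t)A}a$, which is not differentiable for general $a\notin\mathcal{D}(A)$. The rest is routine. I would also record the formula for $f_t$, since it is exactly the term $4\beta\langle A^*(x_N-(\hat z^A_{\hat t,t})_N),\hat z^A_{\hat t,t}\rangle_H$ used in the proof of Theorem \ref{theoremhjbm2024}.
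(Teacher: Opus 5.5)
Your computations are correct and follow the paper's proof essentially step for step. Both arguments work in the coordinates $e_1,\dots,e_N$ and move the semigroup onto $e_i\in\mathcal{D}(A^*)$ via the adjoint. Both obtain $f_t(s,x)=-2\langle a^A_{t,s},A^*(x-a^A_{t,s})_N\rangle_H$, $\nabla_xf=2(x_N-(a^A_{t,s})_N)$ and $\nabla^2_xf=2P_N$. Both also write $A^*\nabla_xf$ as a finite combination of the fixed vectors $A^*e_i$. Your appeal to the $C^1$-regularity of the orbit $r\mapsto e^{rA^*}e_i$ is a tidier version of the paper's separate right and left difference quotients.

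What you omit is one of the defining conditions of $\Phi_t$. By Definition \ref{definitionc04091}(i), $C^{1,2}([t,T]\times H)$ is a subclass of $C^{0+}([t,T]\times H)$. So $f$ must be continuous under $\Upsilon$ on bounded sets, whereas you only check continuity for the metric $d$.

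This is genuinely stronger than $d$-continuity. For $l<s$, $\Upsilon((l,y),(s,x))\to0$ forces only $y^A_{l,s}\to x$, not $y\to x$. For example, with the Dirichlet Laplacian, take $x=0$ and $y$ a normalized eigenfunction whose eigenvalue $-\lambda$ satisfies $\lambda(s-l)\to\infty$. This shows $|x|^2\notin C^{0+}$. That is why the paper's proof records ``Moreover, $f\in C^{0+}([t,T]\times H)$''.

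For your $f$, the condition follows from the same adjoint trick, since $f$ depends on $x$ only through $\langle x,e_i\rangle_H$, $i\le N$. For $l<s$ and $|y|\le C$,
\[
\langle y-x,e_i\rangle_H=\langle y,\,e_i-e^{(s-l)A^*}e_i\rangle_H+\langle y^A_{l,s}-x,\,e_i\rangle_H ,
\]
and the first term is at most $C|e_i-e^{(s-l)A^*}e_i|\to0$. For $l\ge s$, simply $|y-x|\le|y-x^A_{s,l}|+|x^A_{s,l}-x|\to0$. Combined with the continuity of your $c_i$, this gives $f(l,y)\to f(s,x)$ and closes the proof.
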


\begin{proof}
First, it is clear that
$$
\nabla_xf(s,x)=2x_N-2\left(a^A_{t,s}\right)_N;\ \ \ \nabla^2_{x}f(s,x)=2P_N,\ \ \ (s,x)\in [t, T]\times H,
$$
and $(f,\nabla_xf, \nabla^2_{x}f)$ are continuous and grow in a polynomial way. Moreover, $f\in C^{0+}([t, T]\times H)$.
Second, we consider $f_{t}$. For every $(s,x)\in (t, T)\times H$,
\begin{eqnarray*}
	\begin{aligned}
		f_{t}(s,x)
		&=
		\lim_{h\to 0}\frac{1}{h}\left(\left|x_N-\left(a^A_{t,(s+h)\vee t}\right)_N\right|^2-\left|x_N-\left(a^A_{t,s}\right)_N\right|^2\right)\\
		&=\lim_{h\to 0}\frac{1}{h}\left(\sum^{N}_{i=1}\left\langle x-a^A_{t,(s+h)\vee t},\,  e_i\right\rangle_H^2-\sum^{N}_{i=1}\left\langle x-a^A_{t,s}, \, e_i\right\rangle_H^2\right).
	\end{aligned}
\end{eqnarray*}
Recall that $e^{sA^*}:=(e^{sA})^*$ and  $e^{sA^*}$ is a $C_0$ {semi-group} on $H$ generated by $A^*$. Since, for all $i=1,2,\ldots, N$,
\begin{eqnarray*}
	\begin{aligned}
		&\quad\left\langle x-a^A_{t,(s+h)\vee s}, \, e_i\right\rangle_H^2-\left\langle x-a^A_{t,s},\, e_i\right\rangle_H^2\\
		&=-\left\langle 2x-a^A_{t,(s+h)\vee t}-a^A_{t,s}, \, e_i\right\rangle_H\times \left\langle a^A_{t,(s+h)\vee t}-a^A_{t,s}, \, e_i\right\rangle_H,
	\end{aligned}
\end{eqnarray*}
\begin{eqnarray*}
	\lim_{h\to 0}\left\langle 2x-a^A_{t,(s+h)\vee t}-a^A_{t,s},\,  e_i\right\rangle_H=2\left\langle x-a^A_{t,s},\, e_i\right\rangle_H,
\end{eqnarray*}
and \begin{eqnarray*}
	\lim_{h\to 0^+}\frac{1} {h}\left\langle a^A_{t,(s+h)\vee t}-a^A_{t,s}, \, e_i\right\rangle_H=\lim_{h\to 0^+}\frac{1} {h}\left\langle a^A_{t,s}, \, e^{hA^*}e_i-e_i\right\rangle_H=\left\langle a^A_{t,s}, \, A^*e_i\right\rangle_H,
\end{eqnarray*}
\begin{eqnarray*}
	\lim_{h\to 0^-}\frac{1} {h}\left\langle a^A_{t,(s+h)\vee t}-a^A_{t,s}, \, e_i\right\rangle_H=\lim_{h\to 0^-}\frac{1} {h}\left\langle a^A_{t,(s+h)\vee t}, \, e_i-e^{-hA^*}e_i\right\rangle_H=\left\langle a^A_{t,s}, \, A^*e_i\right\rangle_H,
\end{eqnarray*}
we have
\begin{eqnarray}\label{f-t}
\begin{aligned}
	f_{t}(s,x)=-2\sum^{N}_{i=1}\left\langle x-a^A_{t,s}, \, e_i\right\rangle_H\times\left\langle a^A_{t,s},\,  A^*e_i\right\rangle_H
	=-2\left\langle a^A_{t,s}, \, A^*\left(x-a^A_{t,s}\right)_N\right\rangle_H.
\end{aligned}
\end{eqnarray}
It is easy to see that the above equality also holds at $s=t$ for the right time derivative and at $s=T$ for the left time derivative.

It is sufficient to prove $f_{t}\in C^0_p([t,T]\times H)$ and $A^*\nabla_xf\in C^0_p([t,T]\times H,H)$. Write $\bar{M}:=\max_{1\leq i\leq N}|A^*e_i|$. Then, for every $(s,x; l,y)\in ([t, T]\times H)^2,$
\begin{eqnarray*}
\begin{aligned}
	&\left|A^*\left(x-a^A_{t,s}\right)_N-A^*\left(y-a^A_{t,l}\right)_N\right|=\left|\sum^{N}_{i=1}\left\langle x-a^A_{t,s}-y+a^A_{t,l},\,  e_i\right\rangle_HA^*e_i \right|\\
	\leq&\bar{M}\sum^{N}_{i=1}\left|\left\langle x-a^A_{t,s}-y+a^A_{t,l}, \, e_i\right\rangle_H\right|
\leq N\bar{M}\left[|x-y|+\left|a^A_{t,s}-a^A_{t,l}\right|\right].
\end{aligned}
\end{eqnarray*}
Thus, 
we have $f_{t}\in C^0([t,T]\times H)$ and $A^*\nabla_xf\in C^0([t,T]\times H,H)$. By a simple calculation, we see
that $f_{t}$ and  $A^*\nabla_xf$ grow in a polynomial way. The proof is complete.
\end{proof}

\begin{lemma}\label{lemma4.3}\ \
	There exists a local modulus of continuity  $\rho_1$ 
	such that  the functionals $w_1$ and $w_2$ defined by (\ref{060912024}) and (\ref{060922024}) satisfy condition (\ref{0608a2024}).
\end{lemma}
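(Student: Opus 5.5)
The plan is to verify (\ref{0608a2024}) for $w_1$ term by term; $w_2$ is handled in exactly the same way, with $-W_2$ in place of $W_1$. Fix $\hat t\le t\le s\le T$ and $x\in H$, and put $x':=x^{A,N}_{t,s}=x_N+(e^{(s-t)A}x)^-_N$. Then $w_1(t,x)-w_1(s,x')$ splits into five differences.

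\textbf{The $W_1$ difference.} Write
\[
W_1(t,x)-W_1(s,x')=\bigl[W_1(t,x)-W_1(s,x^A_{t,s})\bigr]+\bigl[W_1(s,x^A_{t,s})-W_1(s,x')\bigr].
\]
Apply (\ref{w12024}) with $y=x$ to the first bracket; it is at most $\rho_2(|s-t|,|x|)$. For the second bracket, note that $x^A_{t,s}-x'=(e^{(s-t)A}x-x)_N$. Apply (\ref{w12024}) with $s=t$, so the $\rho_2$ term vanishes, to get the bound $L(1+2M_1|x|+|x|)\,|P_N(e^{(s-t)A}x-x)|$. Since $e_i\in\mathcal D(A^*)$,
\[
|P_N(e^{(s-t)A}x-x)|\le \sum_{i\le N}\bigl|\langle x,e^{(s-t)A^*}e_i-e_i\rangle\bigr|\le |x|\,(s-t)\sum_{i\le N}M_1|A^*e_i|.
\]
This is a local modulus in $(|s-t|,|x|)$; it depends on $N$, which is fixed here.

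\textbf{The polynomial and perturbation terms.}
\begin{itemize}
\item $\varepsilon\frac{\nu T-t}{\nu T}|x|^4$: the weight changes by $O(|s-t|)|x|^4$. Moreover $|x'|\le |x_N|+|e^{(s-t)A}x|$, and $\bigl||x'|-|x|\bigr|\le \bigl|(e^{(s-t)A}x-x)^-_N\bigr|$.
\item $\varepsilon\Upsilon((t,x),(\hat t,\hat x))$: use $x'=x^A_{t,s}-(e^{(s-t)A}x-x)_N$ and the contraction property (i'). This gives
\[
|x'-\hat x^A_{\hat t,s}|\le |e^{(s-t)A}(x-\hat x^A_{\hat t,t})|+C_N|x|(s-t)\le |x-\hat x^A_{\hat t,t}|+C_N|x|(s-t).
\]
Hence $\Upsilon((s,x'),(\hat t,\hat x))\ge \Upsilon((t,x),(\hat t,\hat x))-\rho(|s-t|,|x|)$, with the quartic handled by the mean value bound and the time part $|s-\hat t|^2\ge|t-\hat t|^2$. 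Because the term enters $w_1$ with a minus sign, this is exactly the direction needed.
\item $\sum_i 2^{-i}\Upsilon((t_i,x^i),\cdot)$: the same argument applies with $\hat x^A_{\hat t,t}$ replaced by $(x^i)^A_{t_i,t}$. The bound $|x^i|\le C$ from property (i) makes the estimate uniform in $i$, and $\sum_i 2^{-i}<\infty$.
\end{itemize}

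\textbf{The $Q_N$-penalty.} This term needs the most care: $-2\beta|x^-_N-(\hat z^A_{\hat t,t})^-_N|^2$ at $(t,x)$ against $(s,x')$. Here
\[
(x')^-_N-(\hat z^A_{\hat t,s})^-_N=Q_Ne^{(s-t)A}\bigl(x-\hat z^A_{\hat t,t}\bigr).
\]
Its norm is $\le |Q_N(x-\hat z^A_{\hat t,t})|+|P_Ne^{(s-t)A}(x-\hat z^A_{\hat t,t})|+|P_N(x-\hat z^A_{\hat t,t})|$. The issue is that $Q_N$ does not commute with the semigroup, so the two $P_N$ pieces have to be controlled separately. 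Controlling them needs an estimate of $\bigl|P_N\bigl(e^{(s-t)A}y-y\bigr)\bigr|$ for $y=x-\hat z^A_{\hat t,t}$, which has to come from the pairing with $A^*e_i$ as above. I expect exactly this estimate to be the main obstacle: it is where the choice $e_i\in\mathcal D(A^*)$ and the contraction assumption are used. Doing so gives
\[
|(x')^-_N-(\hat z^A_{\hat t,s})^-_N|\le |x^-_N-(\hat z^A_{\hat t,t})^-_N|+C_N(|x|+|\hat z|)(s-t).
\]
Squaring then bounds the difference by $\beta C_N(1+|x|+|\hat z|)^2(s-t)$.

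Collecting the five pieces gives $\rho_1(r,R)=\rho_2(r,R)+C(1+R)^4 r$, with $C$ depending on $N,\beta,\varepsilon,\hat z,L,M_1$. This $\rho_1$ is continuous, vanishes at $r=0$, and is nondecreasing in $R$.
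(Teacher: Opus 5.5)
Your overall decomposition matches the paper's: split $W_1$ through $x^A_{t,s}$ using (\ref{w12024}), use contraction for the $\Pi$-terms, and use $\langle\cdot,e^{lA^*}e_i-e_i\rangle$ for the $P_N$-corrections. The $Q_N$-penalty step, however, which you rightly single out as the main obstacle, has a genuine gap.

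Put $l:=s-t$ and $y:=x-\hat z^A_{\hat t,t}$. The term to bound is $2\beta\bigl(|Q_Ne^{lA}y|^2-|Q_Ny|^2\bigr)$. Your triangle-inequality bound $|Q_Ne^{lA}y|\le|Q_Ny|+|P_Ne^{lA}y|+|P_Ny|$ is valid, but no estimate on $|P_N(e^{lA}y-y)|$ can sharpen it. The two $P_N$ pieces enter with the same sign, so their sum tends to $2|P_Ny|$, not to $0$. Moreover, the unsquared inequality you then assert, $|Q_Ne^{lA}y|\le|Q_Ny|+C_N(|x|+|\hat z|)\,l$, is false in general. Take the right-shift semigroup on $L^2(0,\infty)$; it is isometric, and $A^*=\frac{d}{d\xi}$ on $H^1(0,\infty)$, so the Laguerre functions form an admissible basis. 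With $N=1$, $e_1(\xi)=e^{-\xi/2}$ and $y=e_1$ (i.e.\ $x=e_1+\hat z^A_{\hat t,t}$), one gets $|Q_1e^{lA}e_1|^2=1-e^{-l}$. Hence $|Q_1e^{lA}y|\sim\sqrt l$, while $|Q_1y|=0$.

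The missing idea is to compare squares through Pythagoras, $|Q_Nv|^2=|v|^2-|P_Nv|^2$, so that contraction acts on the full norm:
\[
|Q_Ne^{lA}y|^2-|Q_Ny|^2=\bigl(|e^{lA}y|^2-|y|^2\bigr)+\bigl(|P_Ny|^2-|P_Ne^{lA}y|^2\bigr)\le 2|y|\,\bigl|P_N(e^{lA}y-y)\bigr|\le 2|y|^2\sum_{i\le N}|e^{lA^*}e_i-e_i|.
\]
This is how the paper proceeds. It rewrites $-2\beta|x^-_N-(\hat z^A_{\hat t,t})^-_N|^2$ as $-2\beta|x-\hat z^A_{\hat t,t}|^2+2\beta|x_N-(\hat z^A_{\hat t,t})_N|^2$, drops the full-norm difference by contraction, and bounds the finite-dimensional remainder. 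With this, your final squared bound $\beta C_N(1+|x|+|\hat z|)^2(s-t)$ is correct, since the $\sqrt l$ behaviour disappears after squaring. It does not, however, follow from your unsquared route.

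Two smaller repairs are also needed.
\begin{itemize}
\item In the $|x|^4$ bullet, neither $|x'|\le|x_N|+|e^{lA}x|$ nor $\bigl||x'|-|x|\bigr|\le|Q_N(e^{lA}x-x)|$ gives a local modulus; the latter is not uniform on balls. Use instead $|x'|\le|e^{lA}x|+|P_N(e^{lA}x-x)|\le(1+C_Nl)|x|$, as in your $\Upsilon$ bullet.
\item In the $\Upsilon$ bullet, the inequality you need is $\Upsilon((s,x'),(\hat t,\hat x))\le\Upsilon((t,x),(\hat t,\hat x))+\rho(l,|x|)$, not the reverse. This is because $w_1(t,x)-w_1(s,x')$ contains $+\varepsilon\bigl[\Upsilon((s,x'),(\hat t,\hat x))-\Upsilon((t,x),(\hat t,\hat x))\bigr]$. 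Your displayed estimate does yield this direction, with the time part entering through $(s-\hat t)^2-(t-\hat t)^2\le 2Tl$.
\end{itemize}
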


\begin{proof}
By the definition of $w_{1}$, we have that,
for every $\hat{t}\leq t\leq s\leq T$ and $x\in H$,
\begin{eqnarray*}
\begin{aligned}
	w_1(t,x)-w_1(s,x_{t,s}^{A,N})
	 =&W_1(t,x)-\Pi(t,x)-2\beta\left|x^-_N-\left(\hat{z}^A_{\hat{t},t}\right)^-_N\right|^2-W_1(s,x_{t,s}^{A,N})\\
	 &+\Pi(s,x_{t,s}^{A,N})+2\beta\left|\left(x^A_{t,s}\right)^-_N-\left(\hat{z}^A_{\hat{t},s}\right)^-_N\right|^2,
\end{aligned}
\end{eqnarray*}
where
$$
\Pi(l,x):=\varepsilon\frac{\nu T-l}{\nu
	T}|x|^4
+\varepsilon {\Upsilon}((l,x),(\hat{t},\hat{x}))
+\sum_{i=0}^{\infty}
\frac{1}{2^i}\, {\Upsilon}((t_i,x^i),(l,x)), \ \ (l,x)\in [\hat{t},T]\times H.
$$
Since the operator $A$ generates a $C_0$ contraction
{semi-group}  of bounded linear operators, we have
$$|x|\geq |x^A_{t,s}|, \quad |x-x^A_{\hat{t},t}|\geq |x^A_{t,s}-x^A_{\hat{t},s}|,\quad |x-(x^i)^A_{t_i,t}|\geq |x^A_{t,s}-(x^i)^A_{t_i,s}|.
 $$
Thus,
\begin{eqnarray*}
	&&w_1(t,x)-w_1(s,x_{t,s}^{A,N})\\
	 &\leq&W_1(t,x)-\Pi(s,x_{t,s}^A)+2\beta\left|x_N-(\hat{z}^A_{\hat{t},t})_N\right|^2-W_1(s,x_{t,s}^{A,N})+\Pi(s,x_{t,s}^{A,N})\\
&&-2\beta\left|(x^A_{t,s})_N-(\hat{z}^A_{\hat{t},s})_N\right|^2+\varepsilon\left[(s-\hat{t}\,)^2-(t-\hat{t}\,)^2\right]+\sum_{i=0}^{\infty}
	\frac{1}{2^i}\left[(s-t_i)^2-(t-t_i)^2\right].
\end{eqnarray*}
From (\ref{w12024}),
\begin{eqnarray*}
	&&W_1(t,x)-W_1(s,x_{t,s}^{A,N})= W_1(t,x)-W_1(s,x_{t,s}^A)+ W_1(s,x_{t,s}^A)-W_1(s,x_{t,s}^{A,N})\\
	&\leq& \rho_2(|s-t|,|x|) +4L\, (1+|x|)\left |x_N-(x^A_{t,s})_N\right|;
\end{eqnarray*}
noting that, for all $x,y\in H$,
\begin{eqnarray*}
	\left||x|^4-|y|^4\right|\leq C\left(|x|^3\vee|y|^3\right)|x-y|,
\end{eqnarray*}
we have
\begin{eqnarray*}
	|\Pi(s,x_{t,s}^A)-\Pi(s,x_{t,s}^{A,N})|\leq C\left(|x|^3+|\hat{x}|^3+\sum_{i=0}^{\infty}\frac{1}{2^i}|x^i|^3\right)
      \left|x_N-(x^A_{t,s})_N\right|;
\end{eqnarray*}
since, for $ 0\leq t\leq s\leq T, \ x\in H$,
\begin{eqnarray*}
\begin{aligned}
	&\left|(x^A_{t,s})_N-x_N\right|=\left|\sum^{N}_{i=1}\left\langle e^{\left(s-t\right)A}x-x, e_i\right\rangle_He_i\right|  \\
	=&\left|\sum^{N}_{i=1}\left\langle x, e^{\left(s-t\right)A^*}e_i-e_i\right\rangle_He_i\right|\leq \sum^{N}_{i=1}|x|\left|e^{\left(s-t\right)A^*}e_i-e_i\right|,
\end{aligned}
\end{eqnarray*}
we have
\begin{eqnarray*}
	\begin{aligned}
		&2\beta\left|x_N-(\hat{z}^A_{\hat{t},t})_N\right|^2-2\beta\left|(x^A_{t,s})_N-(\hat{z}^A_{\hat{t},s})_N\right|^2
		 \leq4\beta(|x|+|\hat{z}|)\left(\left|x_N-(x^A_{t,s})_N\right|
		+\left|\hat{z}^A_{t,s}-\hat{z}\right|\right)\\
		&\leq 4\beta(|x|+|\hat{z}|)\left(|x|\sum^{N}_{i=1}\left|e^{\left(s-t\right)A^*}e_i-e_i\right|
		+\left|\hat{z}^A_{t,s}-\hat{z}\right|\right).
	\end{aligned}
\end{eqnarray*}
Taking for $(l,\lambda)\in[0,\infty)\times [0,\infty)$,
\begin{eqnarray*}
\begin{aligned}
\rho_1(l,\lambda):=& \rho_2(l,\lambda)+(2\varepsilon+4)Tl+\biggl [4L(1+\lambda)+C\Bigl(\lambda^3+|\hat{x}|^3+\sum_{i=0}^{\infty}\frac{1}{2^i}|x^i|^3\Bigr)\\
&+
4\beta(\lambda+|\hat{z}|)\biggr]\times  \left(\lambda\sum^{N}_{i=1}\left|e^{lA^*}e_i-e_i\right|
+\left|e^{lA}\hat{z}-\hat{z}\right|\right),
\end{aligned}
\end{eqnarray*}
we see that $\rho_1$ is a local modulus of continuity 
and $w_1$  satisfies condition (\ref{0608a2024}) with it.  In a similar way, we show that  $w_2$  satisfies condition (\ref{0608a2024}) with this $\rho_1$.
The proof is now complete.
\end{proof}

\begin{lemma}\label{lemma4.344} We have $\widetilde{w}_{1}^{\hat{t},N,*}\in \Phi_N(\hat{t},\hat{x}_N)$ and $\widetilde{w}_{2}^{\hat{t},N,*}\in \Phi_N(\hat{t},\hat{y}_N)$. 
\end{lemma}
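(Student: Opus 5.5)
The plan is to verify the defining property of $\Phi_N(\hat t,\hat x_N)$ for $\widetilde{w}_{1}^{\hat{t},N,*}$ directly, by transferring the sub-solution inequality for $W_1$ (in its strengthened form, with right-hand side $c$ in (\ref{1002a2024})) to the finite-dimensional envelope. The argument for $\widetilde{w}_{2}^{\hat{t},N,*}$ is symmetric, using the super-solution property of $W_2$.

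\textbf{Setup and the case $t<\hat t$.} Fix $r>0$ small and $L>0$. Let $\varphi\in C^{1,2}([0,T]\times H_N)$ be such that $\widetilde{w}_{1}^{\hat{t},N,*}-\varphi$ attains a maximum at $(t,x_N)$, with $|t-\hat t|+|x_N-\hat x_N|<r$ and $(|\widetilde{w}_{1}^{\hat{t},N,*}|+|\nabla_x\varphi|+|\nabla_x^2\varphi|)(t,x_N)\le L$. I split according to the position of $t$.

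If $t<\hat t$, then near $t$ the envelope equals $\widetilde{w}_{1}^{\hat{t},N,*}(\hat t,\cdot)-(\hat t-s)^{1/2}$, up to the upper envelope in $x_N$. Comparing the values at $(s,x_N)$ for $s$ slightly below $t$ gives
\[
\varphi_t(t,x_N)\ \ge\ \tfrac12(\hat t-t)^{-1/2}\ >\ 0 .
\]
So (\ref{0528110}) holds trivially in this case, exactly as in the argument excluding $t_k<\hat t$ in the proof of Theorem \ref{theorem0513}.

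\textbf{The case $t\ge\hat t$: approximating maxima.}
\begin{itemize}
\item First make the maximum strict by adding $|s-t|^2+|y_N-x_N|^4$ to $\varphi$. This does not change $\varphi_t$, $\nabla_x\varphi$ or $\nabla^2_x\varphi$ at $(t,x_N)$.
\item Consider $\Gamma(s,y):=w_1(s,y)-\varphi(s,y_N)$ on $[\hat t,T]\times H$. It lies in $USC^{0+}$, is bounded above and satisfies (\ref{051312024}).
\item Apply Lemma \ref{theoremleft} with $\delta_i=2^{-i}$ and accuracy $1/j$. This produces points $(t_j,x^j)$ and a perturbation $\sum_i2^{-i}\Upsilon((\check t_i,\check x^i),\cdot)$ such that $\Gamma$ minus the perturbation has a strict maximum at $(t_j,x^j)$ over $[t_j,T]\times H$.
\item Exactly as in Lemma \ref{lemma4.40615}, using Lemma \ref{lemma4.30615}-type estimates (\ref{0608a2024}) and the strictness, show that $(t_j,(x^j)_N)\to(t,x_N)$ and $w_1(t_j,x^j)\to\widetilde{w}_{1}^{\hat{t},N,*}(t,x_N)$.
\item Show that $|x^j|\le M$, with $M$ depending only on $L$ and the data, using (\ref{051312024}) together with $|\widetilde{w}_{1}^{\hat{t},N,*}(t,x_N)|\le L$.
\end{itemize}

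\textbf{Rewriting the maximum as a test for $W_1$.} Unwinding the definition (\ref{060912024}) of $w_1$, the maximum above says that $W_1-\chi^1-\chi^2$ attains its maximum at $(t_j,x^j)$. For the term $2\beta|x^-_N-(\hat z^A_{\hat t,s})^-_N|^2$ I use the splitting
\[
2\beta|x^-_N-(\hat z^A_{\hat t,s})^-_N|^2=2\beta|x-\hat z^A_{\hat t,s}|^2-2\beta|x_N-(\hat z^A_{\hat t,s})_N|^2 .
\]
With it, the two pieces are
\[
\chi^1=\varphi(s,y_N)-2\beta|y_N-(\hat z^A_{\hat t,s})_N|^2 ,
\]
which lies in $\Phi_{t_j}$: $\varphi$ depends only on $y_N$ and $e_i\in\mathcal D(A^*)$, and the second term is handled by Lemma \ref{lemma03302}. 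The remaining terms form $\chi^2$, which lies in $\mathcal G_{t_j}$ by Remark \ref{remarkv0129120240206ab}.

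\textbf{Applying the sub-solution inequality.} Definition \ref{definition4.1} then gives
\[
\varphi_t(t_j,x^j_N)\ \ge\ c-\partial^o_t\chi^2-\text{(time derivative of the }\beta\text{-term)}-\langle A^*\nabla_x\chi^1,x^j\rangle-\mathbf H(\cdots).
\]
Every term on the right is bounded below by $-\tilde C_0(L)$:
\begin{itemize}
\item $A^*\nabla_x\chi^1=\sum_{i\le N}\partial_i\chi^1A^*e_i$, which is bounded by $N\bar M(L+\beta C)$.
\item The $\hat z$-terms and the Borwein--Preiss terms are bounded because $|x^j|,|\check x^i|\le M$.
\item $\mathbf H$ is bounded using the linear growth of $b,\sigma,q$ and the bounds on $\nabla\chi,\nabla^2\chi$, the Hessian including $4\beta Q_N$ and the $\varepsilon$-terms.
\end{itemize}
Letting $j\to\infty$ and using the continuity of $\varphi_t$ then yields $\varphi_t(t,x_N)\ge-\tilde C_0$.

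\textbf{Main obstacle.} The delicate step is the convergence of the approximate maximizers and the uniform bound on $|x^j|$. In particular, one must ensure $t_j\ge t$, or at least $t_j\to t$, so that the one-sided (forward in time) maximum supplied by Lemma \ref{theoremleft} still lets the sub-solution definition apply. The constant $\tilde C_0$ must also be independent of $\varphi$, which is why all bounds have to be expressed through $L$ alone. I would handle the convergence first by mimicking Lemma \ref{lemma4.40615}, starting the variational principle from a point with time $\ge\hat t$ and using (\ref{0608a2024}) to control backward-in-time shifts.
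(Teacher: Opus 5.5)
Your proposal follows essentially the same route as the paper's proof: the case split at $\hat t$, the Borwein--Preiss principle applied to $w_1(s,y)-\varphi(s,y_N)$, convergence of the approximate maximizers as in Lemma \ref{lemma4.40615}, an $L$-dependent bound on $|x^j|$, the splitting of the $\beta$-term into a $\Phi$-part and a $\mathcal G$-part, and the sub-solution inequality; the only difference is cosmetic, since you run the variational principle on $[\hat t,T]\times H$ rather than on $[\bar t,T]\times H$ (with $\bar t$ the time of the maximum), which works equally well. Two small fixes are needed: in the case $t<\hat t$ the lower bound on $\varphi_t$ comes from comparing with $s$ slightly \emph{above} $t$ (and in fact one gets equality), and you should first replace $\varphi$ by a function with polynomially growing derivatives, as the paper does via the Yong--Zhou modification lemma, so that $\chi^1$ genuinely lies in $\Phi_{t_j}$.
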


\begin{proof}  We only prove the first inclusion $\widetilde{w}_{1}^{\hat{t},N,*}\in \Phi_N(\hat{t},\hat{x}_N)$. The second inclusion
$\widetilde{w}_{2}^{\hat{t},N,*}\in \Phi_N(\hat{t},\hat{y}_N)$ is proved in a symmetric  way.

Set $r=\frac{1}{2}(|T-{\hat{t}}|\wedge \hat{t}\,)$. For  given $L>0$, let $\varphi\in C^{1,2}([0,T]\times H_N)$ be a function such that the function
$(\widetilde{w}_{1}^{\hat{t},N,*}-\varphi)(t,x_N)$  attains the  maximum at $(\bar{{t}},\bar{x}_N)\in (0, T)\times H_N$, and moreover, the following are satisfied:
\begin{eqnarray}\label{0815zhou1}
|\bar{{t}}-{\hat{t}}|+\left|\bar{x}_N-\hat{x}_N\right|<r=\frac{1}{2}\left(|T-{\hat{t}}|\wedge \hat{t}\,\right),
\end{eqnarray}
\begin{eqnarray}\label{0815zhou2}
\left|\widetilde{w}_{1}^{\hat{t},N,*}(\bar{{t}},\bar{{x}}_N)\right|+\left|\nabla_x\varphi(\bar{{t}},\bar{{x}}_N)\right|
+\left|\nabla^2_x\varphi(\bar{{t}},\bar{{x}}_N)\right|\leq L.
\end{eqnarray}
By \cite[Lemma 5.4, Chapter 4 ]{yong}, we  modify $\varphi$  such  that $\varphi\in C^{1,2}([0,T]\times H_N)$ bounded from below, ${\varphi}$, ${\varphi_t}$, $\nabla_{x}{\varphi}$ and $\nabla^2_{x}{\varphi}$  grow  in a polynomial way,
$\widetilde{w}_{1}^{\hat{t},N,*}(t,x_N)-\varphi(t,x_N)$  has a strict   maximum 0 at $(\bar{{t}},\bar{x}_N)\in (0, T)\times H_N$ on $[0, T]\times H_N$ and the above two inequalities hold true.
If $\bar{{t}}<\hat{t}$, we have $\varphi_{t}(\bar{{t}},\bar{x}_N)=\frac{1}{2}(\hat{t}-\bar{{t}})^{-\frac{1}{2}} > 0$. If $\bar{{t}}\geq \hat{t}$,  recall that $w_1$ is defined in (\ref{060912024}), we consider the functional
$$
\Gamma(t,x)= w_{1}(t,x)
-\varphi(t,x_N),\ (t,x)\in [\hat{t},T]\times H.
$$
Note that $w_1\in USC^{0+}([0,T]\times H)$
bounded from above and satisfies (\ref{051312024}),  and $\varphi$ is a continuous  function defined on $[0,T]\times H_N$ bounded from below, 
then $\Gamma\in  USC^{0+}([\bar{t},T]\times H)$  bounded from above and satisfies
$$
	\limsup_{|x|\rightarrow\infty}\sup_{t\in[\bar{t},T]}\left[\frac{\Gamma(t,x)}{|x|}\right]<0.
$$
%
%
Take
$\delta_i:=\frac{1}{2^i}$ for all $i\geq0$.
 For every  $0<\delta<1$, by Lemma \ref{theoremleft} we have that,
for every  $(\breve{t}_0,\breve{x}^0)\in [\bar{t},T]\times H$  {satisfying}
\begin{eqnarray}\label{0615a}
\Gamma(\breve{t}_0,\breve{x}^0)\geq \sup_{(s,x)\in [\bar{t},T]\times H}\Gamma(s,x)-\delta,
\end{eqnarray}
there exist $(\breve{t},\breve{x})\in [\bar{t},T]\times H$, a sequence $\{(\breve{t}_i,\breve{x}^i)\}_{i\geq1}\subset
[\bar{t},T]\times H$ and constant $C>0$ such that
\begin{description}
	\item[(i)] 
	${\Upsilon}((\breve{t}_i,\breve{x}^i),(\breve{t},\breve{x}))\leq \frac{\delta}{2^i}$, $|\breve{x}^i|\leq C$ for all $i\geq0$,  and $t_i\uparrow \breve{t}$ as $i\rightarrow\infty$,
	\item[(ii)]  $\mathbf{\Gamma}(\breve{t},\breve{x}):=\Gamma(\breve{t},\breve{x})
	-\sum_{i=0}^{\infty}\frac{1}{2^i}{\Upsilon}((\breve{t}_i,\breve{x}^i),(\breve{t},\breve{x}))
	\geq \Gamma(\breve{t}_0,\breve{x}^0)$, and
	\item[(iii)]  $\mathbf{\Gamma}(s,x)<\mathbf{\Gamma}(\breve{t},\breve{x})$  for all $(s,x)\in [\breve{t},T]\times H\setminus \{(\breve{t},\breve{x})\}$.
\end{description}
{We should note that the point
	$(\breve{t},\breve{x})$ depends on $\delta$.}   By the definitions of $\widetilde{w}^{\hat{t},N}_1$ and $\widetilde{w}_{1}^{\hat{t},N,*}$, we have
\begin{eqnarray}\label{220901a}
\begin{aligned}
\widetilde{w}_{1}^{\hat{t},N,*}(\bar{{t}},\bar{x}_N)-\varphi(\bar{{t}},\bar{x}_N)
=&\limsup_{\scriptsize \begin{matrix}
	(s,y_N)\rightarrow(\bar{{t}},\bar{x}_N)\\
	s\geq \hat{t}
	\end{matrix}}(\widetilde{w}^{\hat{t},N}_1(s,y_N)-\varphi(s,y_N))\\
=&\limsup_{\scriptsize \begin{matrix}
	(s,y_N)\rightarrow(\bar{{t}},\bar{x}_N)\\
	s\geq \hat{t}
	\end{matrix}}\left(\sup_{\scriptsize  z_N=y_N}
w_1(s,z)-\varphi(s,y_N)\right).
\end{aligned}
\end{eqnarray}
Note that by Lemma \ref{lemma4.3}, $w_1$   satisfies condition (\ref{0608a2024}). Then, for every $(s,z)\in [\hat{t},\bar{t}]\times H$,
\begin{eqnarray}\label{220901b}
w_1(s,z)\leq w_1(\bar{t},z_{s,\bar{t}}^{A,N})+\rho_1(|\bar{t}-s|,|z|).
\end{eqnarray}
By the definition of $w_1$, there exists a constant {$M_6>0$} such that
\begin{eqnarray}\label{220901c}
\sup_{\scriptsize z_N=y_N}
w_1(s,z)=\sup_{\scriptsize  z_N=y_N,
	|z|\leq {M_6}}
w_1(s,z).
\end{eqnarray}
Thus, by (\ref{220901b}) and (\ref{220901c}),
\begin{eqnarray}\label{220901d}
\begin{aligned}
&\limsup_{\scriptsize \begin{matrix}  (s,y_N)\rightarrow(\bar{{t}},\bar{x}_N)\\
	s\geq \bar{t}\end{matrix}}\sup_{\scriptsize z_N=y_N}
[w_1(s,z)-\varphi(s,z_N)]\\
&\leq \limsup_{\scriptsize \begin{matrix}
	(s,y_N)\rightarrow(\bar{{t}},\bar{x}_N)\\
	s\geq \hat{t}
	\end{matrix}}
\left(\sup_{\scriptsize z_N=y_N
}
w_1(s,z)-\varphi(s,y_N)\right)\\
&\leq \limsup_{\scriptsize \begin{matrix}
	(s,y_N)\rightarrow(\bar{{t}},\bar{x}_N)\\
	s\geq \hat{t}
	\end{matrix}}\sup_{ z_N=y_N,
	 |z|\leq {M_6}}\!\!\!\!\!\!
\left[w_1(\bar{t},z_{s,\bar{t}}^{A,N})+\rho_1(|s\vee\bar{t}-s|,{M_6})-\varphi(s,y_N)\right]\\
&\leq\limsup_{\scriptsize \begin{matrix}  (s,y_N)\rightarrow(\bar{{t}},\bar{x}_N)\\
s\geq \bar{t}\end{matrix}}\sup_{z_N=y_N}
[w_1(s,z)-\varphi(s,z_N)].
\end{aligned}
\end{eqnarray}
Therefore, by (\ref{220901a}) and (\ref{220901d}),
\begin{eqnarray*}
\begin{aligned}
	\widetilde{w}_{1}^{\hat{t},N,*}(\bar{{t}},\bar{x}_N)-\varphi(\bar{{t}},\bar{x}_N)
	=&\limsup_{\scriptsize \begin{matrix}
			(s,y_N)\rightarrow(\bar{{t}},\bar{x}_N)\\
			s\geq \bar{t}
	\end{matrix}}\sup_{z_N=y_N}
	[w_1(s,z)-\varphi(s,y_N)]\\
	\leq&\sup_{(s,x)\in [\bar{t},T]\times H}\Gamma(s,x).
\end{aligned}
\end{eqnarray*}
Combining with (\ref{0615a}),
\begin{eqnarray}\label{220901e}
\Gamma(\breve{t}_0,\breve{x}^0)\geq \sup_{(s,x)\in [\bar{t},T]\times H}\Gamma(s,x)-\delta
\geq\widetilde{w}_{1}^{\hat{t},N,*}(\bar{{t}},\bar{x}_N)-\varphi(\bar{{t}},\bar{x}_N)-\delta.
\end{eqnarray}
Recall that $\widetilde{w}_{1}^{\hat{t},N,*}\geq\widetilde{w}^{\hat{t},N}_1$.  Then, by 
the definition of $\widetilde{w}^{\hat{t},N}_1$, the property (ii) of $(\breve{t},\breve{x})$ and (\ref{220901e}),
\begin{eqnarray}\label{20210509}
\begin{aligned}
\widetilde{w}_{1}^{\hat{t},N,*}(\breve{t},\breve{x}_N)
-\varphi(\breve{t},\breve{x}_N)
&\geq\widetilde{w}^{\hat{t},N}_1(\breve{t},\breve{x}_N)
-\varphi(\breve{t},\breve{x}_N)
\geq w_1(\breve{t},\breve{x})
-\varphi(\breve{t},\breve{x}_N)\\
&\geq \Gamma(\breve{t}_0,\breve{x}^0)
\geq\widetilde{w}_{1}^{\hat{t},N,*}(\bar{{t}},\bar{x}_N)-\varphi(\bar{{t}},\bar{x}_N)-\delta=-\delta.
\end{aligned}
\end{eqnarray}
Noting that $\nu$ is independent of  $\delta$ and $\varphi$ is a continuous  function bounded from below, 
by the definitions of  $\Gamma$ and $w_1$, and (\ref{w2024}),
there exists a constant  $M_7>0$   depending only on $\varphi$ 
that is sufficiently  large   that
$
\Gamma(t,x)<\sup_{(s,z)\in [\bar{t},T]\times H}\Gamma(s,x)-1
$ for all $t\in [\bar{t},T]$ and $|x|\geq M_7$. Thus, we have $|\breve{x}|\vee
|{\breve{x}}^{0}|<M_7$. In particular, $|\breve{x}_N|<M_7$.
Letting $\delta\rightarrow0$, by  the similar proof procedure of (\ref{4.226}) and (\ref{05231}), we obtain
\begin{eqnarray}\label{delta0}
&& \breve{t}\rightarrow \bar{t},\ \breve{x}_N\rightarrow \bar{x}_N,\  \widetilde{w}_{1}^{\hat{t},N,*}(\breve{t},\breve{x}_N)\rightarrow\widetilde{w}_{1}^{\hat{t},N,*}(\bar{{t}},\bar{x}_N) \ \mbox{as}\ \delta\rightarrow0.
\end{eqnarray}
Noting that $\widetilde{w}_{1}^{\hat{t},N,*}(t,x_N)-\varphi(t,x_N)$  has a strict   maximum 0 at $(\bar{{t}},\bar{x}_N)\in (0, T)\times H_N$ on $[0, T]\times H_N$, by (\ref{0815zhou2})
and (\ref{delta0}) there exists a constant $0<\Delta<1$ such that
for all $0<\delta<\Delta$,
$$
\varphi(\breve{t},\breve{x}_N)\geq \widetilde{w}_{1}^{\hat{t},N,*}(\breve{t},\breve{x}_N)\geq \widetilde{w}_{1}^{\hat{t},N,*}(\bar{{t}},\bar{x}_N)-1\geq -(L+1).
$$
Then, by the definitions of  $\Gamma$ and $w_1$, (\ref{w2024}) and (\ref{20210509}),
there exists a constant  $M_8>0$   depending only on $L$ 
such that, for all $0<\delta<\Delta$ and $|x|\geq M_8$ satisfying $x_N=\breve{x}_N$,
$$
\Gamma(\breve{t},x)<-2\leq \Gamma(\breve{t}_0,\breve{x}^0)-1 \leq\sup_{(s,z)\in [\bar{t},T]\times H}\Gamma(s,z)-1.
$$
Thus, we have $|\breve{x}|<M_8$ for all $0<\delta<\Delta$.   From (\ref{0815zhou1}), it follows that $\bar{t}<{\hat{t}}+\frac{|T-{\hat{t}}|}{2}\leq T$. 
Then, by (\ref{delta0}),  we have  $\breve{t}<T$ 
provided that  $\delta>0$ is small enough.
Thus,  we have from the definition of the viscosity sub-solution
\begin{eqnarray}\label{5.15}
\begin{aligned}
&
{ \partial_t^o}\chi(\breve{t},\breve{x})
+\left\langle A^*\nabla_{x}\varphi(\breve{t}, \breve{x}_N),\, \breve{x}\right\rangle_H-4\beta\left\langle A^*\left(\breve{x}_N-\left(\hat{z}^A_{\hat{t},\breve{t}}\right)_N\right),\, \breve{x}\right\rangle_H\\
&
+{\mathbf{H}}\left(\breve{t},\breve{x}, \, W_1(\breve{t},\breve{x}),\, \nabla_x\chi(\breve{t},\breve{x}),\, \nabla^2_{x}\chi(\breve{t},\breve{x})\right)\geq c,
\end{aligned}
\end{eqnarray}
where, for every $(t,x)\in  {[\breve{t},T]\times{{\Lambda}}}$,  define $\chi(t,x) :=\chi_{1}(t,x)+\chi_{2}(t,x)$ with
\begin{eqnarray*}
	\chi_{1}(t,x)  &:=&\varphi(t,x_N)-2\beta\left|x_N-\left(\hat{z}^A_{\hat{t},t}\right)_N\right|^2\quad \hbox{ \rm and }\\
	\chi_{2}(t,x)  &:=&
	\varepsilon\frac{\nu T-t}{\nu
		T}\, |x|^4
	+\varepsilon \, {\Upsilon}((t,x),(\hat{t},\hat{x}))
	+\sum_{i=0}^{\infty}
	\frac{1}{2^i}\, {\Upsilon}((t_i,x^i),(t,x)\\
	&&
	+\sum_{i=0}^{\infty}
	\frac{1}{2^i}\, {\Upsilon}((\breve{t}_i,\breve{x}^{i}),(t,x))
	+2\beta\left|x-\hat{z}^A_{\hat{t},t}\right|^2;
\end{eqnarray*}
and therefore, by (\ref{f-t}),
\begin{eqnarray*}
\begin{aligned}
	{ \partial_t^o}\chi(t,x):=&(\chi_{1})_t(t,x)+{\partial_t^o}\chi_{2}(t,x)\\
	=&
	\varphi_t({t},x_N)+4\beta\left\langle A^*\left(x_N-\left(\hat{z}^A_{\hat{t},t}\right)_N\right),\,
	\hat{z}^A_{\hat{t},t}\right\rangle_H-\frac{\varepsilon}{\nu T}|x|^4\\
&
	+2\varepsilon({t}-{\hat{t}})+2\sum_{i=0}^{\infty}\frac{1}{2^i}[(t-t_{i})+(t-\breve{t}_i)],
\end{aligned}
\end{eqnarray*}
\begin{eqnarray*}
\begin{aligned}
	\nabla_x\chi(t,x):=&\nabla_x\chi_{1}(t,x)+\nabla_x\chi_{2}(t,x)\\
	=&\nabla_{x}\varphi({t}, x_N)
	+4\beta\left(x^-_N-\left(\hat{z}^A_{\hat{t},t}\right)^-_N\right)
	+\varepsilon\, \frac{\nu T-{t}}{\nu T}\, \nabla_x|x|^4 +\varepsilon\, \nabla_x|x-\hat{x}_{{\hat{t},t}}^A|^4\\
	&+\nabla_x\left[\sum_{i=0}^{\infty}\frac{1}{2^i}
	\, |x-(x^{i})_{t_{i},t}^A|^4
	+\sum_{i=0}^{\infty}\frac{1}{2^i}\, |x-(\breve{x}^i)_{\breve{t}_i,t}^A|^4\right], \quad \hbox{\rm and }
\end{aligned}
\end{eqnarray*}
\begin{eqnarray*}
\begin{aligned}
	\partial^2_{x}\chi(t,x):=&\nabla^2_{x}\chi_{1}(t,x)+\nabla^2_{x}\chi_{2}(t,x)\\
	=&\nabla^2_{x}\varphi({t}, x_N)+4\beta Q_N+\varepsilon\frac{\nu T-{t}}{\nu T}\, \nabla^2_{x}|x|^4 +\varepsilon\, \nabla^2_{x}|x-\hat{x}_{{\hat{t},t}}^A|^4
	\\
	&
	+\nabla^2_{x}\left[\sum_{i=0}^{\infty}\frac{1}{2^i}
	|x-(\gamma^{i})_{t_{i},t}^A|^4
	+\sum_{i=0}^{\infty}\frac{1}{2^i}|x-(\breve{x}^i)_{\breve{t}_i,t}^A|^4\right].
\end{aligned}
\end{eqnarray*}
We notice that  $|\breve{x}|< M_8$ for all $0<\delta<\Delta$ and $M_8$ only depends on $L$.
Then    by 
  (\ref{5.15}) and the definition of ${\mathbf{H}}$, it follows that there exists a constant $C'_0\geq0$ depending only on $L$ such that
$$\varphi_t\left(\breve{t},\breve{x}_N\right)\geq -C_0'\left(1+\left|\nabla_{x}\varphi(\breve{t},\breve{x}_N)\right|+\left|\nabla^2_{x}\varphi(\breve{t},\breve{x}_N)\right|\right).$$
Letting $\delta\rightarrow0$, by (\ref{0815zhou2}) and (\ref{delta0}) , we obtain 
$$
\varphi_{t}(\bar{t},\bar{x}) \geq -{C}_0'(1+|\nabla_{x}\varphi(\bar{t},\bar{x})|+|\nabla^2_{x}\varphi(\bar{t},\bar{x})|)\geq -{C}_0'(1+2L).
$$
Letting $ \tilde{C}_0={C}_0'(1+2L)$, we get $\varphi_{t}(\bar{t},x_1) \geq -{\tilde{C}_0}$.
The proof is now complete.
\end{proof}

\begin{lemma}\label{0611a}\ \
	The functionals $w_1$ and $w_2$ defined by (\ref{060912024}) and (\ref{060922024}) satisfy the conditions of  Theorem \ref{theorem0513} with  $\varphi$, where $\varphi$ is the function defined by (\ref{070632024}).
\end{lemma}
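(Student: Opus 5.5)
The plan is to go through the hypotheses of Theorem \ref{theorem0513} one at a time for $w_1,w_2$ from (\ref{060912024})--(\ref{060922024}) and $\varphi(x_N,y_N)=\beta|x_N-y_N|^2$. Two of them are already in hand: condition (\ref{0608a2024}) is Lemma \ref{lemma4.3}, and the membership $\widetilde{w}_{1}^{\hat{t},N,*}\in \Phi_N(\hat{t},\hat{x}_N)$, $\widetilde{w}_{2}^{\hat{t},N,*}\in \Phi_N(\hat{t},\hat{y}_N)$ is Lemma \ref{lemma4.344}. The remaining items are regularity and growth, the maximum at $(\hat t,\hat x,\hat y)$ with the quantitative gap (\ref{101012024}), and $\hat t\in(0,T)$.

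\emph{Regularity and growth.} $W_1$ and $-W_2$ lie in $C^{0+}$, and the functions $\Upsilon(\cdot,(t_i,x^i))$ and $|x^-_N-(\hat z^A_{\hat t,t})^-_N|^2$ are lower semicontinuous under $\Upsilon$ by the argument of Lemma \ref{lemma-0902}. Subtracting them therefore keeps $w_1,w_2$ in $USC^{0+}$. By (\ref{w2024}) the functions $W_1,-W_2$ grow at most quadratically, while the subtracted term $\varepsilon\frac{\nu T-t}{\nu T}|x|^4$ is at least $\varepsilon\frac{\nu-1}{\nu}|x|^4$. Hence $w_i$ is bounded above and $w_i(t,x)/|x|\to-\infty$ uniformly in $t$, which gives (\ref{051312024}).

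\emph{Maximum structure.} Expanding $|x-y|^2$ into its $H_N$ and $H_N^\perp$ components and using $\hat z=\frac{\hat x+\hat y}{2}$, I would prove, for $t\ge\hat t$,
\[
\beta|x^-_N-y^-_N|^2\le 2\beta|x^-_N-(\hat z^A_{\hat t,t})^-_N|^2+2\beta|y^-_N-(\hat z^A_{\hat t,t})^-_N|^2 ,
\]
with equality at $(\hat t,\hat x,\hat y)$. It follows that $w_1(t,x)+w_2(t,y)-\varphi(x_N,y_N)\le \Psi_1(t,x,y)-\varepsilon[\Upsilon((t,x),(\hat t,\hat x))+\Upsilon((t,y),(\hat t,\hat y))]$. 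I need to check that the sums $\sum 2^{-i}\Upsilon$ in $w_1,w_2$ agree with $\sum 2^{-i}\Upsilon^1$ in $\Psi_1$: $\Upsilon^1$ carries $2|s-t|^2$, exactly the two time terms. Then the strict maximum (\ref{iii42024}) of $\Psi_1$, together with equality at $(\hat t,\hat x,\hat y)$, gives a strict maximum over $[\hat t,T]\times H\times H$. It also gives (\ref{101012024}) with $\tilde\rho(r)=\varepsilon r/2$, since $|t-\hat t|^2+|x-\hat x^A_{\hat t,t}|^4+|y-\hat y^A_{\hat t,t}|^4\le 2\Upsilon((t,x),(\hat t,\hat x))+\Upsilon((t,y),(\hat t,\hat y))$ (up to constants). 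Finally, Step 2 gives $\hat t\in[\tilde t,T)$ for $\beta\ge N_0$, and $\tilde t>0$ because $T-\bar a\ge0$, so $\hat t\in(0,T)$.

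\emph{Main obstacle.} The hard step is the cross-term comparison with the moving centre $\hat z^A_{\hat t,t}$. The inequality $|a-b|^2\le 2|a-c|^2+2|b-c|^2$ holds for every $c$, so the $H_N^\perp$ bound itself is fine. What needs care is equality at $\hat t$: at $t=\hat t$ we have $(\hat z^A_{\hat t,\hat t})=\hat z$, so $\hat x^-_N-\hat z^-_N=-(\hat y^-_N-\hat z^-_N)$ and equality holds. I will also have to check the bookkeeping between $W_2$ and $-W_2$ in $w_2$, so that the sign of $\Psi$'s $-W_2$ term matches.
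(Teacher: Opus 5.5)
Your proposal is correct and follows essentially the paper's route. Lemmas \ref{lemma4.3} and \ref{lemma4.344} supply (\ref{0608a2024}) and the $\Phi_N$ memberships, and (\ref{w2024}) with the $|x|^4$ penalty gives boundedness from above and (\ref{051312024}). The strict maximum and (\ref{101012024}) come from (\ref{iii42024}) via $|a-b|^2\le 2|a-c|^2+2|b-c|^2$ on $H_N^\perp$ and the identity $\Upsilon^{1}((t_i,x^i,y^i),(t,x,y))=\Upsilon((t_i,x^i),(t,x))+\Upsilon((t_i,y^i),(t,y))$. The paper compresses this into one inequality chain with $\tilde\rho(l)=\varepsilon l$ and leaves your check that $\hat t\ge\tilde t>T-\bar a\ge 0$ implicit.

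One small point concerns lower semicontinuity under $\Upsilon$ of $|x^-_N-(\hat z^A_{\hat t,t})^-_N|^2$. It does not follow verbatim from the argument of Lemma \ref{lemma-0902}, because $Q_N$ does not commute with $e^{tA}$. It does follow from $|Q_Nv|^2=|v|^2-|P_Nv|^2$: the first term is handled by that argument, and the second is $\Upsilon$-continuous on bounded sets since $e_1,\dots,e_N\in\mathcal{D}(A^*)$.
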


\begin{proof}
From (\ref{w2024}),  the functionals $w_1$ and $w_2$ are continuous and bounded from above and satisfy (\ref{051312024}).
By  Lemmas \ref{lemma4.3} and \ref{lemma4.344}, $w_1$ and $w_2$ satisfy condition  (\ref{0608a2024}),
and  $\widetilde{w}_{1}^{\hat{t},N,*}\in \Phi_N(\hat{t},\hat{x}_N)$ and $\widetilde{w}_{2}^{\hat{t},N,*}\in \Phi_N(\hat{t},\hat{y}_N)$.
Moreover, let $\varphi$ be the function defined by (\ref{070632024}). By (\ref{iii42024}) we obtain that, for all 
$(t,x,y)\in [\hat{t},T]\times H\times H$,
\begin{eqnarray}\label{wv}
\ \ \ \ \ \ \  \ \
\begin{aligned}
&w_{1}(t,x)+w_{2}(t,y)-\varphi(x_N,y_N)+\varepsilon ({\Upsilon}((t,x),(\hat{t},{\hat{x}}))+ {\Upsilon}((t,y),(\hat{t},\hat{y})))  \\
=&w_{1}(t,x)+w_{2}(t,y)-\beta|x_N-y_N|^2+\varepsilon ({\Upsilon}((t,x),(\hat{t},{\hat{x}}))+ {\Upsilon}((t,y),(\hat{t},{\hat{y}})))  \\
\leq&
{\Psi}_1(t,x,y)\leq\Psi_1(\hat{t},{\hat{x}},{\hat{y}})=w_{1}(\hat{t},\hat{x})+w_{2}(\hat{t},\hat{y})
-\beta|\hat{x}_N-\hat{y}_N|^2\\
=&w_{1}(\hat{t},\hat{x})+w_{2}(\hat{t},\hat{y})
-\varphi(\hat{x}_N,\hat{y}_N),
\end{aligned}
\end{eqnarray}
where the last inequality becomes equality if and only if $t={\hat{t}}$, $t,x=\hat{x}, t,y=\hat{y}$.
Then we obtain that $
w_1(t,x)+w_2(t,y)-\varphi((t,x_N, y_N)
$
has a 
maximum over $[\hat{t},T]\times H\times H$ at a point $(\hat{t},{\hat{x}},{\hat{y}})$ with $\hat{t}\in (0,T)$ and satisfies (\ref{101012024}) with  ${\tilde{\rho}}(l)=\varepsilon l, l\in [0,\infty)$. Thus $w_1$ and $w_2$ satisfy the conditions of  Theorem \ref{theorem0513}  with  $\varphi$ defined by (\ref{070632024}).
\end{proof}

\begin{example}\label{202401281}\ \
	Parabolic   equation
\end{example}
Let $\mathcal{O} \subset \mathbb{R}^d$ be a bounded  domain with a smooth boundary $\partial\mathcal{O}$. Let
$$
                  A:=\sum^{n}_{i,j}\partial_i(a_{ij}\partial_j),\ \ D(A):=H^1_0(\mathcal{O})\cap H^2(\mathcal{O}),
$$
where $a_{ij}=a_{ji}\in L^{\infty}(\mathcal{O})$ for $i,j\in \{1,\ldots,d\}$, and there exists a $\theta>0$ such that
$$
                    \sum^{n}_{i,j}a_{ij}\xi_i\xi_j\geq \theta |\xi|^2, \forall x\in \mathbb{R}^d
$$
a.e. in $\mathcal{O}$. Then $A$ generates a contraction semigroup $e^{tA}$ in $H:=L^2(\mathcal{O})$ (see ~\cite[Example 3.16, page 178]{fab1}). We consider a control problem for the stochastic parabolic equation
\begin{eqnarray}\label{202401291}
\begin{cases}
\frac{\partial y}{\partial s}(s,\xi)=Ay(s,\xi)+f(s,\xi,y(s,\xi),u_s)\\
\  \  \ \ \ \ \ \ \  \ \ \   +h(s,\xi,y(s,\xi),u_s)\frac{\partial}{\partial s}W_{Q}(s,\xi),  \quad s\in (t,T], \ \xi\in\mathcal{O},\\
 y(s,\xi)=0,\ \ s\in (t,T]\times \partial \mathcal{O},\\
y(t,\xi)=y^0(\xi), \  \xi\in\mathcal{O},
\end{cases}
\end{eqnarray}
where $Q$ is an operator in $\mathcal{L}^+_1(L^2(\mathcal{O}))$ (see ~\cite[Proposition B.30, page 801]{fab1}) and $W_Q$ is a $Q$-Wiener process
(see ~ \cite[Definition 1.88, page 28]{fab1}), $y^0\in L^2(\mathcal{O})$, $U$ is a Polish space,
$\mathbf{u}=\{u_s, s\in [t,T]\}\in \mathcal{U}[t,T]$.
 Suppose we want to maximize the cost functional
\begin{eqnarray}\label{202401291d}
               ~~~~~  I(t,y^0, \mathbf{u})=\mathbb{E}\left[\int^{T}_{t}\int_{\mathcal{O}}\alpha(s,y(s,\xi),u_s)d\xi ds+\int_{\mathcal{O}}\beta(y(T,\xi))d\xi\right]
\end{eqnarray}
over all $\mathbf{u}\in \mathcal{U}[t,T]$. 
  We assume the following.
\begin{assumption}\label{hypstate20240128}
	\begin{description}
		\item[(i)]
		 $(f, h) :[0,T]\times\mathcal{O}\times \mathbb{R}\times U\to  \mathbb{R}\times  \mathbb{R}$ is continuous, {and
        $(f,h)(\cdot,\cdot,\cdot,u)$ is continuous in $ [0,T]\times \mathcal{O}\times \mathbb{R}$, uniformly in $u\in U$.} Moreover,
		there is a constant $L>0$ such that,  for all $(t,\xi, x, u) \in [0,T]\times \mathcal{O}\times \mathbb{R}\times U$,
		\begin{eqnarray*}
		&&\displaystyle \left|f(t,\xi,x,u)\right|^2\vee\left|h(t,\xi,x,u)\right|^2\leq
		L^2(1+|x|^2),\\
		&&\displaystyle \left|f(t,\xi,x,u)-f(t,\xi,y,u)\right|\vee\left|h(\xi,x,u)-h(\xi,y,u)\right|\leq
		L|x-y|.
		\end{eqnarray*}
\item[(ii)]
	$
	\alpha: [0,T]\times \mathbb{R}\times U\rightarrow \mathbb{R}$ and $\beta:  \mathbb{R}\rightarrow \mathbb{R}$ are continuous. 
         Moreover,     there is a  constant $L>0$
	such that, for all $(t,x,    y,  u)
	\in [0,T]\times\mathbb{R}\times\mathbb{R}\times U$,
	\begin{eqnarray*}
		&&| \alpha(t,x,u)|\leq L(1+|x|),
		\ \ \ \ |\alpha(t,x,u)-\alpha(t,y,u)|\leq L|x-y|,
		\\
		&&|\beta(x)-\beta(y)|\leq L|x-y|.
	\end{eqnarray*}
\item[(iii)] For every $(t,\gamma)\in [0,T)\times H$,
	\begin{eqnarray*}
	\lim_{N\to \infty} \sup_{u\in U}\left|Q_N{h}(t,\cdot,\gamma(\cdot),u)\right|_{H}^2=0.
	\end{eqnarray*}
	\end{description}
\end{assumption}
Define, for $s\in [0,T], \gamma\in H$,
 $z\in H$ and $u\in U$,
\begin{eqnarray*}
                         && b(s,\gamma,u)=f(s,\cdot,\gamma(\cdot),u),\ \ \sigma(s,\gamma,u)z=h(s,\cdot,\gamma(\cdot),u)z(\cdot),\\
                          && q(s,\gamma,u)=\int_{\mathcal{O}}\alpha(s,\gamma(\xi),u)d\xi,\ \  \phi(\gamma)=\int_{\mathcal{O}}\beta(\gamma(\xi))d\xi.
\end{eqnarray*}
Equation (\ref{202401291}) can be rewritten as the following evolution equation in $H$
$$
    dX_s=AX_sds+b(s,X_s,u_s)ds+\sigma(s,X_s,u_s)dW_Q(s),\ \ X_t=y^0,
$$
and the  cost functional $I$ can be rewritten as
$$
                    J(t,y^0,\mathbf{u})=\mathbb{E}\left[\int^T_tq(s,X_s,u_s)ds+\phi(X_T)\right].
$$
It is easy
to check that Assumption \ref{hypstate20240128} implies that Assumption \ref{hypstate} is satisfied. For example,
let $\Xi=Q^{\frac{1}{2}}H$, by  \cite[Theorem 1.87, page 28]{fab1},  $W_Q$ is a   cylindrical Wiener process in Hilbert space $\Xi$, by {Assumption}  \ref{hypstate20240128} (i),
\begin{eqnarray*}
                    &&|\sigma(s,\gamma,u)|^2_{L_2(\Xi,H)}=|h(s,\cdot,\gamma(\cdot)),u)Q^{\frac{1}{2}}|^2_{L_2(H)}\\
                    &\leq& |h(s,\cdot,\gamma(\cdot),u)|^2_{L(H)}|Q^{\frac{1}{2}}|^2_{L_2(H)}
                    =|Q^{\frac{1}{2}}|^2_{L_2(H)}\sup_{|z|_H\leq 1}\left(\int_{\mathcal{O}}h(s,\xi,\gamma(\xi),u)z(\xi)d\xi\right)^2\\
                    &\leq&|Q^{\frac{1}{2}}|^2_{L_2(H)}\int_{\mathcal{O}}h^2(s,\xi,\gamma(\xi),u)d\xi
                    \leq L^2|Q^{\frac{1}{2}}|^2_{L_2(H)}\int_{\mathcal{O}}(1+|\gamma(\xi)|^2)d\xi\\
                    &\leq&L^2|Q^{\frac{1}{2}}|^2_{L_2(H)}\left(m(\mathcal{O})+|\gamma|^2_H\right);
\end{eqnarray*}
and by {Assumption}  \ref{hypstate20240128} (iii),
	\begin{eqnarray*}
	&&\lim_{N\to \infty} \sup_{u\in U}\left|Q_N\sigma(s,\gamma,u)\right|_{L_2(\Xi,H)}^2
=\lim_{N\to \infty} \sup_{u\in U}|Q_Nh(s,\cdot,\gamma(\cdot)),u)Q^{\frac{1}{2}}|^2_{L_2(H)}\\
&\leq&\lim_{N\to \infty} \sup_{u\in U}|Q_Nh(s,\cdot,\gamma(\cdot)),u)|^2_{H}|Q^{\frac{1}{2}}|^2_{L_2(H)}=0.
\end{eqnarray*}
\begin{example}\label{20240129a}\ \
	Hyperbolic  equation
\end{example}
Consider  a control problem for
the stochastic Hyperbolic  equation in  a bounded  domain $\mathcal{O}$ with a smooth boundary $\partial\mathcal{O}$
\begin{eqnarray}\label{20240129a1}
\begin{cases}
\frac{\partial^2 y}{\partial s^2}(s,\xi)=Ay(s,\xi)+f(\xi,y(s,\xi),u_s)\\
\  \  \ \ \ \ \ \ \  \ \ \   +h(\xi,y(s,\xi),u_s)\frac{\partial}{\partial s}{W}_{{Q}}(s,\xi),  \quad s\in (t,T], \ \xi\in\mathcal{O},\\
 y(s,\xi)=0,\ \ s\in (t,T]\times \partial \mathcal{O},\\
y(t,\xi)=y^0(\xi), \qquad
\frac{\partial y}{\partial s} (t,\xi)=z^0(\xi),\ \xi\in\mathcal{O},
\end{cases}
\end{eqnarray}
where $A$ is given in Example \ref{202401281}, $y^0\in H^1_0(\mathcal{O})$ and  $z^0\in L^2(\mathcal{O})$.  Suppose we also want to maximize the cost functional (\ref{202401291d}). Set
$$
\mathbb{H}:=\left(\begin{matrix}
	H^1_0(\mathcal{O})\\
\times\\
	H
	\end{matrix}\right)
$$ equipped with the inner product
$$
\left\langle\left(\begin{matrix}
	y\\
	z
	\end{matrix}\right), \left(\begin{matrix}
	y'\\
	z'
	\end{matrix}\right)\right\rangle_\mathbb{H}=\langle(-A)^{\frac{1}{2}}y,(-A)^{\frac{1}{2}}y'\rangle_H+\langle z,z'\rangle_H,\ \ (x,y), (x',y')\in \mathbb{H}.
$$
The operator
 $$
         \mathcal{A}=\left(\begin{matrix}
	0&I\\
	A&0
	\end{matrix}\right),\ \        D(\mathcal{A})=\left(\begin{matrix}
	D(A)\\
	\times \\
H^1_0(\mathcal{O})
	\end{matrix}\right)
 $$
generates a contraction semigroup $e^{t\mathcal{A}}$ in $\mathbb{H}$ (see ~\cite[Example 3.13, page 176]{fab1}).
Define, for $\left(\begin{matrix}
	\gamma\\
	\eta
	\end{matrix}\right)\in \mathbb{H}$,
 $\left(\begin{matrix}
	x\\
	y
	\end{matrix}\right)\in \mathbb{H}$ and $u\in U$,
\begin{eqnarray*}
                         && b\left(\left(\begin{matrix}
	\gamma\\
	\eta
	\end{matrix}\right),u\right)=\left(\begin{matrix}
	0\\
	f(\cdot,\gamma(\cdot),u)
	\end{matrix}\right),\qquad \sigma\left(\left(\begin{matrix}
	\gamma\\
	\eta
	\end{matrix}\right),u\right)\left(\begin{matrix}
	y\\
	z
	\end{matrix}\right)=\left(\begin{matrix}
	0\\
	h(\cdot,\gamma(\cdot),u)z
	\end{matrix}\right),\\
&&\widetilde{W}_{\tilde{Q}}=\left(\begin{matrix}
	0\\
	W_Q
	\end{matrix}\right),
\ \ \tilde{Q}\left(\begin{matrix}
	y\\
	z
	\end{matrix}\right)=\left(\begin{matrix}
	0\\
	Qz
	\end{matrix}\right),\\
                          && q\left(s,\left(\begin{matrix}
	\gamma\\
	\eta
	\end{matrix}\right),u\right)=\int_{\mathcal{O}}\alpha(s,\gamma(\xi),u)d\xi,\ \  \phi\left(\left(\begin{matrix}
	\gamma\\
	\eta
	\end{matrix}\right)\right)=\int_{\mathcal{O}}\beta(\gamma(\xi))d\xi.
\end{eqnarray*}
Equation (\ref{20240129a1}) can be rewritten in an abstract way in $\mathbb{H}$ as
$$
    dX_s=\mathcal{A}X_sds+b(X_s,u_s)ds+\sigma(X_s,u_s)d\widetilde{W}_Q(s),\ \ X_t=x:=\left(\begin{matrix}
	y^0\\
	z^0
	\end{matrix}\right),
$$
and the  cost functional (\ref{202401291d}) can be rewritten as
$$
                    J(t,x,u(\cdot))=\mathbb{E}\left[\int^T_tq(s,X_s,u_s)ds+\phi(X_T)\right].
$$
It is easy to see that Assumption \ref{hypstate20240128} implies that Assumptions \ref{hypstate} and \ref{hypstate5666} are satisfied.
\begin{remark}\label{202401301}\ \
	In the above two examples, we  assume 
 $A$ generates a contraction semigroup $e^{tA}$ in $L^2(\mathcal{O})$ and $Q$ is an operator in $\mathcal{L}^+_1(L^2(\mathcal{O}))$ without any additional assumptions, while, for example ~\cite[Example 3.69, page 238]{fab1}, the additional assumptions $A=\Delta$, $d>1$ and $(-\Delta)^{\frac{d-1}{4}}Q^{\frac{1}{2}}\in L_2(L^2(\mathcal{O}))$ are  imposed  to ensure  the  $B$-continuity of the coefficients in~\cite[(3.22), page 184]{fab1}.
\end{remark}

\appendix

 \section{Properties of the value functional}\label{appendixc}
\setcounter{equation}{0}
\renewcommand{\theequation}{A.\arabic{equation}}
\begin{proof}[Proof of Theorem \ref{theoremj=y}]
 Let $\{h^n\}$, $n\in N$,
be a dense subset of $H$, $B(h^n,\frac{1}{k})$
be the open sphere in $H$ with radius
$\frac{1}{k}$ and center $h^n$.  Set $B_{n,k}:=B(h^n,\frac{1}{k})\setminus
\bigcup_{m<n}B(h^m,\frac{1}{k})$  and $A_{n,k}:=\{\omega\in \Omega|\xi(\omega)\in
B_{n,k}\}$. Then $\cup_{n=1}^{\infty}A_{n,k}=\Omega$ and the
sequence $f^k(\omega):= \Sigma^{\infty}_{n=1}h^n1_{
	A_{n,k}}(\omega)$ is ${\mathcal
	{F}}_{t}$-measurable and
converges to $\xi$ strongly and
uniformly.
 For every  $n$ and $\mathbf{u}\in {\mathcal {U}}[t,T]$, we put $(X^{n}_s,Y^{n}_s,Z^{n}_s)=(X^{t,h^n,\mathbf{u}}_s,Y^{t,h^n,\mathbf{u}}_s,Z^{t,h^n,\mathbf{u}}_s)$.
Then $X^{n}_s$ is the solution of the SEE
\begin{eqnarray*}
X^n_s=e^{(s-t)A}h^n+\int_{t}^{s}e^{(s-l)A}b(l,X^n_l,u_l)dl
+\int_{t}^{s}e^{(s-l)A}\sigma(l,X^n_l,u_l)dW_l, \ \
s\in [t,T];
\end{eqnarray*}
 and $(Y^{n}_s,Z^{n}_s)$ is the solution of the associated  BSDE
\begin{eqnarray*}
Y^{n}_s=\phi(X_T^{n})+\int^{T}_{s}q(l,X^{n}_l,Y^{n}_l,Z^{n}_l,u_l)dl-\int^{T}_{s}Z^{n}_ldW_l,\  \ s\in [t,T].
\end{eqnarray*}
The above two equations are multiplied by $1_{A_{n,k}}$  and summed up with respect to $n$. Thus, taking into account that $\sum_{n=1}^{\infty}\varphi(h^n)1_{A_{n,k}}=\varphi(\sum_{n=1}^{\infty}h^n1_{A_{n,k}})$, we obtain
\begin{eqnarray*}
	\sum_{n=1}^{\infty}1_{A_{n,k}}X^n_s
	&=&\sum_{n=1}^{\infty}1_{A_{n,k}}e^{(s-t)A}h^n+\int_{t}^{s}e^{(s-l)A}b\left(l,\sum_{n=1}^{\infty}1_{A_{n,k}}X^n_l, u_l\right)dl\nonumber\\
	&& \ \
	+\int_{t}^{s}e^{(s-l)A}\sigma\left(l,\sum_{n=1}^{\infty}1_{A_{n,k}}X^n_l,u_l\right)dW_l, \nonumber
\end{eqnarray*}
and
\begin{eqnarray*}
\quad\sum_{n=1}^{\infty}1_{A_{n,k}}Y^{n}_s
&=&\phi\left(\sum_{n=1}^{\infty}1_{A_{n,k}}X_T^{n}\right)-\int^{T}_{s}\sum_{n=1}^{\infty}1_{A_{n,k}}Z^{n}_ldW_l\\
&&+\int^{T}_{s}q\left(l,\sum_{n=1}^{\infty}1_{A_{n,k}}X_l^{n},
\sum_{n=1}^{\infty}1_{A_{n,k}}Y^{n}_l,\sum_{n=1}^{\infty}1_{A_{n,k}}Z^{n}_l,u_l\right)dl.
\end{eqnarray*}
Then,  the strong uniqueness property of the solution to the SEE and the BSDE yields
\begin{eqnarray*}
 \left(X^{t,f^k,\mathbf{u}}_s,Y^{t,f^k,\mathbf{u}}_s,Z^{t,f^k,\mathbf{u}}_s\right)
=\left(\sum_{n=1}^{\infty}1_{A_{n,k}}X^n_s,\sum_{n=1}^{\infty}1_{A_{n,k}}Y^{n}_s,\sum_{n=1}^{\infty}1_{A_{n,k}}Z^{n}_s\right), \ \ s\in [t,T].
\end{eqnarray*}
Finally, from $J(t,h^n,\mathbf{u})=Y^n_t$ for $n\geq 1$, we deduce that
\begin{eqnarray*}
Y^{t,f^k,\mathbf{u}}_t
=\sum_{n=1}^{\infty}1_{A_{n,k}}Y^{n}_t=\sum_{n=1}^{\infty}1_{A_{n,k}}J(t,h^n,\mathbf{u})
=J\left(t,\sum_{n=1}^{\infty}1_{A_{n,k}}h^n,\mathbf{u}\right)=J(t,f^k,\mathbf{u}).
\end{eqnarray*}
Consequently, from the estimates (\ref{fbjia4}) and (\ref{fbjia41022}),  we get
\begin{eqnarray*}
	\mathbb{E}|Y^{t,\xi,\mathbf{u}}_t-J(t,\xi,\mathbf{u})|^p
	&\leq&  C\mathbb{E}|Y^{t,\xi,\mathbf{u}}_t-Y^{t,f^{k},\mathbf{u}}_t|^p+C \mathbb{E}|J(t,f^k,\mathbf{u})-J(t,\xi,\mathbf{u})|^p\\
	&\leq& C\mathbb{E}|\xi-f^k|_H^p\rightarrow0\ \mbox{as}\ k\rightarrow \infty.
\end{eqnarray*}
Now let us prove $(\ref{0903jia1022})$. By (\ref{fbjia41022}) and (\ref{j=y}),
\begin{eqnarray*}\label{0903jia}
\begin{aligned}
	|Y^{t,\xi,\mathbf{u}}_t-Y^{t,\eta,\mathbf{u}}_t|
	&=|J(t,\xi,\mathbf{u})-J(t,\eta,\mathbf{u})|=\Big{|}Y^{t,x,\mathbf{u}}_t|_{x=\xi}-Y^{t,y,\mathbf{u}}_t|_{y=\eta}\Big{|}\\
	&=\Big{|}(Y^{t,x,\mathbf{u}}_t-Y^{t,y,\mathbf{u}}_t)|_{x=\xi,y=\xi'}\Big{|}\leq C|\xi-\eta|_H.
\end{aligned}
\end{eqnarray*}
The proof is complete.
\end{proof}
\begin{proof}[Proof of Theorem \ref{valuedet}]
First, we show that there exists a sequence $\{\mathbf{u}_i\}_{i\geq1}\subset {\mathcal{U}}[t,T]$ such that $\{Y^{t,x,\mathbf{u}_i}_t\}_{n\geq1}$ is a nondecreasing sequence and
\begin{eqnarray}\label{jiale}
V(t,x)=\mathop{\esssup}\limits_{\mathbf{u}\in {\mathcal{U}}[t,T]}Y^{t,x,\mathbf{u}}_t=\lim_{i\rightarrow\infty} Y^{t,x,\mathbf{u}_i}_t, \ \  \mathbb{P}\mbox{-a.s.}.
\end{eqnarray}
From \cite[Theorem A.3]{kar}, it is sufficient to prove that, for any $\mathbf{u}_1,\mathbf{u}_2\in {\mathcal{U}}[t,T]$, we have
\begin{eqnarray}\label{jiale1}
Y^{t,x,\mathbf{u}_1}_t\vee Y^{t,x,\mathbf{u}_2}_t=Y^{t,x,\mathbf{u}}_t,\ \ \mathbb{P}\mbox{-a.e.,         }
\end{eqnarray}
with $\mathbf{u}\in {\mathcal{U}}[t,T]$ satisfying $\mathbf{u}:=\mathbf{u}_1{\mathbf{1}}_{A_1}+\mathbf{u}_2{\mathbf{1}}_{A_2}$, where $A_1:=\{Y^{t,x,\mathbf{u}_1}_t\geq Y^{t,x,\mathbf{u}_2}_t\}$ and
$A_2:=\{Y^{t,x,\mathbf{u}_1}_t<Y^{t,x,\mathbf{u}_2}_t\}$.
Taking into account that $\sum_{i=1}^{2}\varphi(x_i)1_{A_i}=\varphi(\sum_{i=1}^{2}x_i1_{A_{i}})$, we obtain
\begin{eqnarray*}
\begin{aligned}
\sum_{i=1}^{2}1_{A_{i}}X^{t,x,\mathbf{u}_i}_s=&e^{(s-t)A}x+\int_{t}^{s}e^{(s-l)A}b\left(l,\sum_{i=1}^{2}1_{A_{i}}X^{t,x,\mathbf{u}_i}_l,\sum_{i=1}^{2}1_{A_{i}}(u_i)_l\right)dl\\
	&+\int_{t}^{s}e^{(s-l)A}\sigma\left(l,\sum_{i=1}^{2}1_{A_{i}}X^{t,x,\mathbf{u}_i}_l,\sum_{i=1}^{2}1_{A_{i}}(u_i)_l\right)dW_l, \quad \mbox{and}
\end{aligned}
\end{eqnarray*}
\begin{eqnarray*}
\begin{aligned}
	\sum_{i=1}^{2}1_{A_{i}}Y^{t,x,\mathbf{u}_i}_s
	=&\phi\left(\sum_{i=1}^{2}1_{A_{i}}X_T^{t,x,\mathbf{u}_i}\right)-\int^{T}_{s}\sum_{i=1}^{2}1_{A_i}Z^{t,x,\mathbf{u}_i}_ldW_l\\
	&+\int^{T}_{s}q\left(l,\sum_{i=1}^{2}1_{A_i}X_l^{t,x,\mathbf{u}_i}, \sum_{i=1}^{2}1_{A_{i}}Y^{t,x,\mathbf{u}_i}_l,\sum_{i=1}^{2}1_{A_i}Z^{t,x,\mathbf{u}_i}_l,\sum_{i=1}^{2}1_{A_{i}}(u_i)_l\right)dl.
\end{aligned}
\end{eqnarray*}
Then the strong uniqueness property of the solution to FSEE and BSDE yields
\begin{eqnarray}\label{10081118}
Y^{t,x,\mathbf{u}}_t=\sum_{i=1}^{2}1_{A_i}Y^{t,x,\mathbf{u}_i}_t=Y^{t,x,\mathbf{u}_1}_t\vee Y^{t,x,\mathbf{u}_2}_t,\ \ \mathbb{P}\mbox{-a.e.}.
\end{eqnarray}

Suppose that $\{\mathbf{u}_i\}_{i\geq1}\subset {\mathcal{U}}[t,T]$ satisfy (\ref{jiale}). By (\ref{fbjia4}),
 we  suppose without lost of generality that $\mathbf{u}_i$ takes the following form:
$$
\mathbf{u}_i=\sum^{n}_{j=1}\mathbf{1}_{A_{ij}}\mathbf{u}_{ij}.
$$
Here $\mathbf{u}_{ij}$ is ${\mathcal{F}}^t$-progressively measurable and $\{A_{ij}\}_{j=1}^{n}$ is a partition of $(\Omega,{\mathcal{F}}_t)$. {Like (\ref{10081118})}, we show that
$$
J(t,x, \mathbf{u}_i)=\sum^{n}_{j=1}\mathbf{1}_{A_{ij}}J(t,x, \mathbf{u}_{ij}).
$$
It is clear that $X^{t,x,\mathbf{u}_{ij}}_s$ is ${\mathcal{F}}_t^s$-measurable for all $s\in[t,T]$, then $Y^{t,x,\mathbf{u}_{ij}}_s$ is ${\mathcal{F}}_t^s$-measurable for all $s\in[t,T]$.
In particular, $J(t,x, \mathbf{u}_{ij})=Y^{t,x,\mathbf{u}_{ij}}_t$ is deterministic. Without lost of generality, we may assume
$$
J(t,x, \mathbf{u}_{ij})\leq J(t,x, \mathbf{u}_{i1}), \ \ j\geq 1.
$$
Then we have $J(t,x,\mathbf{u_i})\leq J(t,x, \mathbf{u}_{i1})$. Combining (\ref{jiale}), we get
$$
\lim_{i\rightarrow\infty}J(t,x, \mathbf{u}_{i1})=V(t,x).
$$
Therefore, $V(t,x)$ is deterministic.
The proof is complete.
\end{proof}

 \section{Consistency and stability  for  viscosity
solutions}\label{appendixd}

\setcounter{equation}{0}
\renewcommand{\theequation}{B.\arabic{equation}}
\begin{proof}[Proof of Theorem \ref{theorem1223}]  Assume that $v$ is a viscosity solution. It is clear that $v(T,x)=\phi(x)$ for all $x\in H$. For any $(t,x)\in [0,T)\times H$, since $v\in C_p^{1,2}([0,T]\times H)$ and $A^*\nabla_xv\in C_p^0([0,T]\times H,H)$, by definition of viscosity solutions,  we have $$
 v_{t}(t,x)+\langle A^*\nabla_xv(t,x), x\rangle_H + {\mathbf{H}}(t,x,v(t,x),\nabla_x v(t,x),\nabla^2_{x} v(t,x))=0.
$$
On the other hand, assume $v$ is a  classical solution. Let  $(\varphi, g)\in \Phi_t\times {\mathcal{G}}_t$ with $t\in [0,T)$
such that for some  $x\in H$,
$$
0=(v-\varphi-g)(t,x)=\sup_{(s,y)\in [t,T]\times H}
(v- \varphi-g)(s,y).
$$ For every $\alpha\in H$ and $\beta\in L_2(\Xi,H)$, let $\xi=x$, $\vartheta(\cdot)\equiv \alpha$ and $\varpi(\cdot)\equiv \beta$ in (\ref{formular1}), 
applying  It\^o formula  to $\varphi$ and inequality (\ref{statesop020240206a})
 to $g$ and noticing that $(v-\varphi-g)(t,x)=0$, we have, for every $0<\delta\leq T-t$,
\begin{eqnarray}\label{1224}
\begin{aligned}
0&\leq\mathbb{E}(\varphi+g-v)(t+\delta,X_{t+\delta})\\
&\leq\mathbb{E}\int^{t+\delta}_{t}[(\varphi-v)_{t}(X_l)+{ \partial_t^o}g(l,X_l)
+\langle A^*\nabla_x(\varphi-v)(l,X_l), X_l\rangle_H\\
&\quad+\langle\nabla_x(\varphi+g-v)(l,X_l), \alpha\rangle_{H}]\,dl+\frac{1}{2}\mathbb{E}\int^{t+\delta}_{t}\mbox{Tr}((\nabla^2_{x}(\varphi+g-v)(l,X_l))\beta\beta^*)\, dl\\
&=
\mathbb{E}\int^{t+\delta}_{t}\widetilde{{\mathcal{H}}}(l,X_l)\, dl,
\end{aligned}
\end{eqnarray}
where, for  $(s,y)\in [0,T]\times H$,
\begin{eqnarray*}
	\widetilde{{\mathcal{H}}}(s,y)&:=&(\varphi-v)_{t}(s,y)+{ \partial_t^o}g(s,y)
	+\langle A^*\nabla_x(\varphi-v)(s,y), y\rangle_H\\
	&&
	+\langle\nabla_x(\varphi+g-v)(s,y), \alpha\rangle_H+\frac{1}{2}\mbox{Tr}((\nabla^2_{x}(\varphi+g-v)(s,y))\beta\beta^*).
\end{eqnarray*}
Letting $\delta\rightarrow0$ in (\ref{1224}),
\begin{eqnarray}\label{040912}
\widetilde{{\mathcal{H}}}(t,x)\geq0.
\end{eqnarray}
Let $\beta=\mathbf{0}$, by the arbitrariness of  $\alpha$,
\begin{eqnarray*}
\begin{aligned}
&\varphi_{t}(t,x)+{ \partial_t^o}g(t,x)+\langle A^*\nabla_x(\varphi-v)(t,x), x\rangle_H\geq v_{t}(t,x), \\
&\nabla_x\varphi(t,x)+\nabla_xg(t,x)=\nabla_xv(t,x).
\end{aligned}
\end{eqnarray*}
Then, for every $u\in U$, let $\beta=\sigma(t,x,u)$ in (\ref{040912}).
Noting that $\varphi(t,x)+g(t,x)=v(t,x)$,
we have
\begin{eqnarray*}
	&&\varphi_{t}(t,x)+{ \partial_t^o}g(t,x)+\langle \nabla_x(\varphi+g)(t,x), b(t,x,u)\rangle_{H}+\langle A^*\nabla_x\varphi(t,x), x\rangle_H\\
	&&
	+\frac{1}{2}\mbox{Tr}((\nabla^2_{x}(\varphi+g)(t,x))\sigma(t,x,u)\sigma^*(t,x,u))\\
	&&+q(t,x,({\varphi}+g)(t,x),({\nabla_x({\varphi}+g)(t,x)})\sigma(t,x,u),u)\\
	&\geq& v_{t}(t,x)+\langle \nabla_xv(t,x), b(t,x,u)\rangle_{H}+\langle A^*\nabla_xv(t,x),  x\rangle_H
	\\&&+\frac{1}{2}\mbox{Tr}(\nabla^2_{x}v(t,x)\sigma(t,x,u)\sigma^*(t,x,u))+q(t,x,v(t,x),{\nabla_xv(t,x)}\sigma(t,x,u),u).
\end{eqnarray*}
Since
$$v_{t}(t,x)+\langle A^*\nabla_xv(t,x), x\rangle_H
+{\mathbf{H}}(t,x,v(t,x),\nabla_xv(t,x),\nabla^2_{x}v(t,x))=0, $$   taking the supremum over $u\in U$, we see that
\begin{eqnarray*}
	&&\varphi_{t}(t,x)+{\partial_t^o}g(t,x)+\langle A^*\nabla_x\varphi(t,x), x\rangle_H\\
	&&+{\mathbf{H}}(t,x,(\varphi+g)(t,x),\nabla_x(\varphi+g)(t,x),\nabla^2_{x}(\varphi+g)(t,x))\\
	&\geq& v_{t}(t,x)+\langle A^*\nabla_xv(t,x), x\rangle_H+{\mathbf{H}}(t,x,v(t,x),\nabla_xv(t,x),\nabla^2_{x}v(t,x))=0.
\end{eqnarray*}
Thus,  $v$ is a viscosity  {sub-solution} of equation (\ref{hjb1}). In a symmetric way, we show that $v$ is also a viscosity  {super-solution} to equation (\ref{hjb1}).
\end{proof}
\begin{proof}[Proof of Theorem \ref{theoremstability}]  Without loss of generality, we shall only prove the viscosity {sub-solution} property.
First,  from $v^{\varepsilon}$ is a viscosity {sub-solution} of  equation (\ref{hjb1}) with generators $b^{\varepsilon}, \sigma^{\varepsilon},  q^{\varepsilon}, \phi^{\varepsilon}$, it follows that
$$
v^{\varepsilon}(T,x)\leq \phi^{\varepsilon}(x),\ \ x\in  H.
$$
Letting $\varepsilon\rightarrow0$, we  have
$$
v(T,x)\leq \phi(x),\ \ x\in H.
$$
Next,  let  $\varphi\in \Phi_{\hat{t}}$ and $g\in {\mathcal{G}}_{\hat{t}}$ with $\hat{t}\in [0,T)$
such that
$$
0=(v-\varphi-g)(\hat{t},\hat{x})=\sup_{(s,y)\in[\hat{t},T]\times H}
(v- \varphi-g)(s,y),
$$
where $\hat{x}\in H$.
By (\ref{sss}), there exists a constant $\delta>0$ such that for all $\varepsilon\in (0,\delta)$,
\begin{eqnarray}\label{09032025g}
\sup_{(t,x)\in [\hat{t},T]\times H}(v^\varepsilon(t,x)-\varphi(t,x)-g(t,x))\leq 1.
\end{eqnarray}
Recall (\ref{upsilon3}), denote $g_{1}(t,x):=g(t,x)+\Upsilon((t,x),(\hat{t},\hat{x}))$
for all
$(t,x)\in [\hat{t},T]\times H$.
Then we have  $g_{1}\in {\mathcal{G}}_{\hat{t}}$. For every $\varepsilon\in (0,\delta)$, by (\ref{09032025g}), it is clear that $v^{\varepsilon}-{{\varphi}}-g_1\in USC^{0+}([\hat{t},T]\times H)$
 bounded from above and satisfies (\ref{09032025c}).
Take
$\delta_i:=\frac{1}{2^i}$ for all $i\geq0$. From Lemma \ref{theoremleft} it follows that,
for every  $(t_0,x^0)\in [\hat{t},T]\times H$ {satisfying}
$$
(v^{\varepsilon}-\varphi-g_1)(t_0,x^0)\geq \sup_{(s,y)\in [\hat{t},T]\times H}(v^{\varepsilon}-\varphi-g_1)(s,y)-\varepsilon, \quad \mbox{and}
$$
$$ (v^{\varepsilon}-\varphi-g_1)(t_0,x^0)\geq (v^{\varepsilon}-\varphi-g_1)(\hat{t},\hat{x}),
$$
there exist $(t_{\varepsilon},x^{\varepsilon})\in [\hat{t},T]\times H$, a sequence $\{(t_i, x^i)\}_{i\geq1}\subset [\hat{t},T]\times H$ and constant $C>0$ such that
\begin{description}
	\item[(i)] 
$\Upsilon((t_i, x^i),(t_{\varepsilon},x^{\varepsilon}))\leq \frac{\varepsilon}{2^i}$, $|x^i|\leq C$ for all $i\geq0$, and $t_i\uparrow t_{\varepsilon}$ as $i\rightarrow\infty$.
	\vskip1mm
	\item[(ii)]  $\displaystyle \Psi(t_{\varepsilon},x^{\varepsilon}):=(v^{\varepsilon}-\varphi-g_1)(t_{\varepsilon},x^{\varepsilon})-\sum_{i=0}^{\infty}\frac{1}{2^i}\Upsilon((t_i, x^i),(t_{\varepsilon},x^{\varepsilon}))\geq (v^{\varepsilon}-\varphi-g_1)(t_0,x^0)$, \quad and
		\vskip1mm
	\item[(iii)]  $\displaystyle \Psi(s,y)
	<\Psi(t_{\varepsilon},x^{\varepsilon})$  for all $(s,y)\in [t_{\varepsilon},T]\times H\setminus \{(t_{\varepsilon},x^{\varepsilon})\}$.
	
\end{description}
We claim that
\begin{eqnarray}\label{gamma}
|t^{\varepsilon}-\hat{t}|+|x^{\varepsilon}-\hat{x}^A_{\hat{t},t^{\varepsilon}}|\rightarrow0  \ \ \mbox{as} \ \ \varepsilon\rightarrow0.
\end{eqnarray}
Otherwise,  there is a constant  $\nu_0>0$
such that
$$
\Upsilon((t^{\varepsilon},x^{\varepsilon}),(\hat{t},\hat{x}))
\geq\nu_0.
$$
Thus, {by the property (ii) of $(t_{\varepsilon},x^{\varepsilon})$,}  we obtain that
\begin{eqnarray*}
\begin{aligned}
	0&=(v- {{\varphi}}-g)(\hat{t},\hat{x})= \lim_{\varepsilon\rightarrow0}(v^\varepsilon-\varphi-g_1)(\hat{t},\hat{x})\\
	&\leq \limsup_{\varepsilon\rightarrow0}\bigg{[}(v^{\varepsilon}-\varphi-g_1)(t_{\varepsilon},x^{\varepsilon})-\sum_{i=0}^{\infty}\frac{1}{2^i}\Upsilon((t_i, x^i),(t_{\varepsilon},x^{\varepsilon}))\bigg{]}\\
	 &\leq\limsup_{\varepsilon\rightarrow0}{[}(v-{{\varphi}}-g)(t_{\varepsilon},x^{\varepsilon})+(v^\varepsilon-v)(t_{\varepsilon},x^{\varepsilon})
	{]}-\nu_0\leq (v- {{\varphi}}-g)(\hat{t},\hat{x})-\nu_0=-\nu_0,
\end{aligned}
\end{eqnarray*}
contradicting $\nu_0>0$. Thus,
$$
d((t^{\varepsilon},x^{\varepsilon}),(\hat{t},\hat{x}))\leq |t^{\varepsilon}-\hat{t}|+|x^{\varepsilon}-\hat{x}^A_{\hat{t},t^{\varepsilon}}|
+|\hat{x}^A_{\hat{t},t^{\varepsilon}}-\hat{x}|\rightarrow0  \ \ \mbox{as} \ \ \varepsilon\rightarrow0.
$$ We note that, by  the definition of $\Upsilon$ and the property (i) of $(t_{\varepsilon},x^{\varepsilon})$,
	there is a generic constant $C>0$ such that
\begin{eqnarray*}
\begin{aligned}
	2\sum_{i=0}^{\infty}\frac{1}{2^i}({t_{\varepsilon}}-{t}_{i})
	&\leq 2\sum_{i=0}^{\infty}\frac{1}{2^i}\bigg{(}\frac{\varepsilon}{2^i}\bigg{)}^{\frac{1}{2}}\leq C\varepsilon^{\frac{1}{2}},\\[3mm]
	\left|\nabla_x|x^{\varepsilon}-\hat{x}_{\hat{t},{t}_{\varepsilon}}^A|^4\right|
	&\leq C|\hat{x}^A_{\hat{t},t_{\varepsilon}}-x^{\varepsilon}|^3,\quad
	\left|\nabla^2_{x}|x^{\varepsilon}-\hat{x}_{\hat{t},{t}_{\varepsilon}}^A|^4\right|
	\leq  C|\hat{x}^A_{\hat{t},t_{\varepsilon}}-x^{\varepsilon}|^2,
\end{aligned}
\end{eqnarray*}
\begin{eqnarray*}
	\bigg{|}\nabla_x\sum_{i=0}^{\infty}\frac{1}{2^i}|x^{\varepsilon}-(x^i)_{t_i,t_{\varepsilon}}^A|^4
	\bigg{|}
	\leq C\sum_{i=0}^{\infty}\frac{1}{2^i}|(x^i)_{t_i,t_{\varepsilon}}^A-x^{\varepsilon}|^3
	\leq C\sum_{i=0}^{\infty}\frac{1}{2^i}\bigg{(}\frac{\varepsilon}{2^i}\bigg{)}^{\frac{3}{4}}
	\leq C{\varepsilon}^{\frac{3}{4}}, \ \ \mbox{and}
\end{eqnarray*}
\begin{eqnarray*}
	\bigg{|}\nabla^2_{x}\sum_{i=0}^{\infty}\frac{1}{2^i}|x^{\varepsilon}-(x^i)_{t_i,t_{\varepsilon}}^A|^4
	\bigg{|}
	\leq C\sum_{i=0}^{\infty}\frac{1}{2^i}|(x^i)_{t_i,t_{\varepsilon}}^A-x^{\varepsilon}|^2
\leq C\sum_{i=0}^{\infty}\frac{1}{2^i}\bigg{(}\frac{\varepsilon}{2^i}\bigg{)}^{\frac{1}{2}}
	\leq C{\varepsilon}^{\frac{1}{2}}.
\end{eqnarray*}
Then for any $\varrho>0$, by (\ref{sss}) and (\ref{gamma}), there is sufficiently small $\varepsilon>0$  such that
$$
2|{t}_{\varepsilon}-\hat{t}|+2\sum_{i=0}^{\infty}\frac{1}{2^i}({t_{\varepsilon}}-{t}_{i})+|\varphi_{t}(t_{\varepsilon},x^{\varepsilon})-\varphi_{t}(\hat{t},\hat{x})|
+|{\partial_t^o}g(t_{\varepsilon},x^{\varepsilon})-{ \partial_t^o}g(\hat{t},\hat{x})|\leq \frac{\varrho}{3},$$
$$
\hat{t}\leq {t}_{\varepsilon}< T,  \ \left|\langle A^*\nabla_x\varphi(t_{\varepsilon},x^{\varepsilon}),  x^{\varepsilon}\rangle_H-\langle A^*\nabla_x{\varphi}(\hat{t},\hat{x}),\hat{x}\rangle_H\right| \leq \frac{\varrho}{3}, \ \mbox{and}\  |I|\leq \frac{\varrho}{3},
$$
where
\begin{eqnarray*}
\begin{aligned}
	I:=&{\mathbf{H}}^{\varepsilon}(t_{\varepsilon},x^{\varepsilon}, v^\varepsilon(t_{\varepsilon},x^{\varepsilon}),
	\nabla_x\varphi(t_{\varepsilon},x^{\varepsilon})+ \nabla_xg_2(t_{\varepsilon},x^{\varepsilon}),\nabla^2_{x}\varphi(t_{\varepsilon},x^{\varepsilon})+ \nabla^2_{x}g_2(t_{\varepsilon},x^{\varepsilon}))\\[2mm]
	 &-{\mathbf{H}}(\hat{t},\hat{x},v(\hat{t},\hat{x}),\nabla_x{\varphi}(\hat{t},\hat{x})+\nabla_xg(\hat{t},\hat{x}),\nabla^2_{x}{\varphi}(\hat{t},\hat{x})+\nabla^2_{x}g(\hat{t},\hat{x})),\\
	 g_2(t_{\varepsilon},x^{\varepsilon}):=&g(t_{\varepsilon},x^{\varepsilon})
+{{\Upsilon}}((t_{\varepsilon},x^{\varepsilon}),(\hat{t},\hat{x}))
	+\sum_{i=0}^{\infty}\frac{1}{2^i}{{\Upsilon}}((t_{\varepsilon},x^{\varepsilon}),(t_i, x^i)), \quad \mbox{and}
\end{aligned}
\end{eqnarray*}
\begin{eqnarray*}
\begin{aligned}
	&{\mathbf{H}}^{\varepsilon}(t,x,r,p,l)\\&:=\sup_{u\in{
			{U}}}\left[
	\langle p, b^{\varepsilon}(t,x,u)\rangle_{H}+\frac{1}{2}\mbox{Tr}(l \sigma^{\varepsilon}(t,x,u){\sigma^{\varepsilon}}^*(t,x,u))+q^{\varepsilon}(t,x,r,p{\sigma^{\varepsilon}}(t,x,u),u)\right],  \\
	&\qquad\qquad\qquad\qquad\qquad\qquad (t,x,r,p,l)\in [0,T]\times H\times \mathbb{R}\times H\times {\mathcal{S}}(H).
\end{aligned}
\end{eqnarray*}
Since $v^{\varepsilon}$ is a viscosity {sub-solution} of HJB equation (\ref{hjb1}) with coefficients $(b^{\varepsilon}, \sigma^{\varepsilon}, $ $  q^{\varepsilon}, \phi^{\varepsilon})$, we have
\begin{eqnarray*}
\begin{aligned}
	&\varphi_{t}(t_{\varepsilon},x^{\varepsilon})+{ \partial_t^o}g_2(t_{\varepsilon},x^{\varepsilon})
	+\langle A^*\nabla_x\varphi(t_{\varepsilon},x^{\varepsilon}), x^{\varepsilon}\rangle_H\\
	&+{\mathbf{H}}^{\varepsilon}(t_{\varepsilon},x^{\varepsilon},
	\nabla_x\varphi(t_{\varepsilon},x^{\varepsilon})+\nabla_xg_2(t_{\varepsilon},x^{\varepsilon}),
	\nabla^2_{x}\varphi(t_{\varepsilon},x^{\varepsilon})+\nabla^2_{x}g_2(t_{\varepsilon},x^{\varepsilon}))\geq0.
\end{aligned}
\end{eqnarray*}
Thus
\begin{eqnarray*}
\begin{aligned}
	0\leq&  \varphi_{t}(t_{\varepsilon},x^{\varepsilon})+{ \partial_t^o}g(t_{\varepsilon},x^{\varepsilon})
	+2({t}_{\varepsilon}-\hat{t}\,)+2\sum_{i=0}^{\infty}\frac{1}{2^i}({t_{\varepsilon}}-{t}_{i})
	+\langle A^*\nabla_x\varphi(t_{\varepsilon},x^{\varepsilon}), x^{\varepsilon}\rangle_H\\
	 &+{\mathbf{H}}(\hat{t},\hat{x},v(\hat{t},\hat{x}),\nabla_x{\varphi}(\hat{t},\hat{x})+\nabla_xg(\hat{t},\hat{x}),\nabla^2_{x}{\varphi}(\hat{t},\hat{x})+\nabla^2_{x}g(\hat{t},\hat{x}))+I\\[3mm]
	\leq&\varphi_{t}(\hat{t},\hat{x})+{\partial_t^o}g(\hat{t},\hat{x})+\langle A^*\nabla_x{\varphi}(\hat{t},\hat{x}), \hat{x}\rangle_H\\
	&
	 +{\mathbf{H}}(\hat{t},\hat{x},v(\hat{t},\hat{x}),\nabla_x{\varphi}(\hat{t},\hat{x})+\nabla_xg(\hat{t},\hat{x}),\nabla^2_{x}{\varphi}(\hat{t},\hat{x})+\nabla^2_{x}g(\hat{t},\hat{x}))+\varrho.
\end{aligned}
\end{eqnarray*}
Letting $\varrho\downarrow 0$, we show that
\begin{eqnarray*}
\begin{aligned}
&\varphi_{t}(\hat{t},\hat{x})+{ \partial_t^o}g(\hat{t},\hat{x})+\langle A^*\nabla_x{\varphi}(\hat{t},\hat{x}), \hat{x}\rangle_H\\
	 &+{\mathbf{H}}(\hat{t},\hat{x},v(\hat{t},\hat{x}),\nabla_x{\varphi}(\hat{t},\hat{x})+\nabla_xg(\hat{t},\hat{x}),\nabla^2_{x}{\varphi}(\hat{t},\hat{x})+\nabla^2_{x}g(\hat{t},\hat{x}))\geq0.
\end{aligned}
\end{eqnarray*}
Since ${\varphi}\in \Phi_{\hat{t}}$ and $g\in {\mathcal{G}}_{\hat{t}}$ with ${\hat{t}}\in [0,T)$  are arbitrary, we see that $v$ is a viscosity sub-solution of HJB equation (\ref{hjb1}) with the coefficients $b, \sigma, q, \phi$.  The proof  is complete.
\end{proof}



\end{document}